\subjclass[2000]{Primary  37A17; Secondary 37A40, 37J35, 11K38}
\keywords{homogeneous dynamics, translation  surfaces,
Kontsevich-Zorich cocycle, equidistribution, ergodic theorem,
Oseledets theorem, Eaton lenses, gap distribution,
pseudo-integrable billiards}
\date{\today}
\title[Genericity on curves and applications]{Genericity on curves and applications: pseudo-integrable billiards, Eaton lenses and gap distributions}
\author[K.\ Fr\k{a}czek]{Krzysztof Fr\k{a}czek}
\address{Faculty of Mathematics and Computer Science, Nicolaus
Copernicus University, ul. Chopina 12/18, 87-100 Toru\'n, Poland}
\email{fraczek@mat.umk.pl}
\author[R.\ Shi]{Ronggang Shi}
\address{Shanghai Center for Mathematical Sciences, Fudan University, Shanghai 200433, PR China}
 \email{ronggang@fudan.edu.cn}
\author[C.\ Ulcigrai]{Corinna Ulcigrai}
\address{School of Mathematics, University of Bristol, Howard House, Queens Ave, BS8 1SD, Bristol, United Kingdom}
 \email{corinna.ulcigrai@bris.ac.uk}
\newtheorem{thm}{Theorem}[section]
\newtheorem{lem}[thm]{Lemma}
\newtheorem{cor}[thm]{Corollary}
\newtheorem{prop}[thm]{Proposition}
\theoremstyle{definition}
\newtheorem{de}[thm]{Definition}
\theoremstyle{remark}
\newtheorem{rem}[thm]{Remark}
\newtheorem*{nota}{Notation}
\newtheorem*{out}{Structure of the paper}
\newcommand{\rmnum}[1]{\romannumeral #1}
\newcommand{\Rmnum}[1]{\expandafter\@slowromancap\romannumeral #1@}
\newcommand{\R}{{\mathbb{R}}}
\newcommand{\Q}{{\mathbb{Q}}}
\newcommand{\C}{{\mathbb{C}}}
\newcommand{\Z}{{\mathbb{Z}}}
\newcommand{\N}{{\mathbb{N}}}
\newcommand{\T}{{\mathbb{T}}}
\newcommand{\vep}{\varepsilon}
\newcommand{\Int}{\operatorname{Int}}
\newcommand{\Fix}{\operatorname{Fix}}
\newcommand{\dd}{\, d}
\renewcommand{\xi}{v}
\begin{document}

\begin{abstract}
In this paper we prove results on  Birkhoff and Oseledets
genericity along certain curves in the space of affine lattices
and in moduli spaces of translation surfaces. We also prove
 applications of
these results to
dynamical billiards, mathematical physics and  number theory.
In the space of affine lattices $ASL_2(
\mathbb{R})/ASL_2( \mathbb{Z})$, we prove that almost every point
on a curve with some non-degeneracy assumptions is Birkhoff
generic for the geodesic flow. This
implies almost
everywhere genericity for some curves in the locus of branched
covers of the torus inside the stratum $\mathcal{H}(1,1)$ of
translation surfaces. For these curves (and more in general curves
which are well-approximated by horocycle arcs and satisfy almost
everywhere Birkhoff genericity) we also prove that almost every
point is Oseledets generic for the Kontsevitch-Zorich cocycle,
generalizing a recent result by Chaika and Eskin. As applications,
we first consider a class of pseudo-integrable billiards,
billiards in ellipses with barriers, which was recently explored
by Dragovi\'c and Radnovi\'c, and prove that for almost every
parameter, the billiard flow is uniquely ergodic within the region
of phase space in which it is trapped. We then consider any
periodic array of Eaton retroreflector lenses, placed on vertices
of a lattice, and prove that in almost every direction light rays
are each confined to a band of finite width. This generalizes a
phenomenon recently discovered by Fr\k{a}czek and Schmoll which
could so far only be proved for random periodic configurations.
Finally, a result on the gap distribution of fractional parts of
the sequence of square roots of positive integers, which extends previous work by
Elkies and McMullen, is also obtained.
\end{abstract}

\maketitle

\section{Introduction}
\begin{sloppypar}In this paper we prove three quite different results  which answer
recent open questions in dynamical systems, mathematical physics
and number theory. These three results all turn out to be
applications of two more  results on genericity  in homogeneous and
Teichm\"uller dynamics, which constitute the heart of this paper.
The three applications, which are explained in the following
sections of the introduction, concern more precisely the chaotic
properties (specifically ergodicity) of a recently discovered
class of \emph{pseudo-integrable billiards} (see
\S~\ref{sec:pseudo_integrable}), the behaviour of \emph{light rays
in periodic arrays of Eaton lenses} (see \S~\ref{sec:Eaton}) and
the \emph{gap distribution} of the sequence of \emph{fractional
    parts of square roots of positive integers} (see \S~\ref{sec:gap_intro}). The common
 result they exploit, which is stated in
\S~\ref{sec:Birkhoff},  concerns Birkhoff genericity (under the
geodesic flow) for almost every point on certain curves in the
space of affine lattices and in the moduli space of certain
translation surfaces.  Furthermore, for the application to Eaton
lenses, we also need a result on Oseledets genericity (for the
Kontsevich-Zorich cocycle) for almost every parameter describing
certain curves in the moduli space of translation surfaces, which
is stated in \S~\ref{sec:Oseledets}. The final section
\S~\ref{sec:outline} of this introduction provides an outline of
how the three main applications are related to these two
genericity results and describes the structure of the rest of the
paper.
\end{sloppypar}

\subsection{Pseudo-integrable billiards in ellipses}\label{sec:pseudo_integrable}
In this section we answer an open question 
on the ergodic properties of  billiards in ellipses
with barriers, a new class of pseudo-integrable billiards
recently described by Dragovi\'c and Radnovi\'c in \cite{Dra-Ra}.

A (planar) \emph{mathematical billiard} is a dynamical  system in
which a point-mass moves inside a \emph{billiard table} $T \subset
\mathbb{R}^2$, i.e.\ a bounded domain $T\subset\R^2$ whose
boundary $\partial T$ consists of finite number of smooth curves.
A \emph{billiard trajectory} is the trajectory described by the
point-mass which moves freely inside the table on segments of
straight lines and undergoes elastic collisions (angle of
incidence equals to the angle of reflection) when it hits the
boundary of the table.  The billiard flow $\{b_t\}_{t \in \R}$ is
defined on a subset of the phase space $S^1T$  that consists of
the points $(x, \xi ) \in T \times S^1$, where $S^1=\{ \xi \in \mathbb{C}:|\xi|=1\}$, such that if $x$ belongs
to the boundary of $T$ then $\xi $ is an inward unit tangent vector. For
$t \in \R$ and  $(x,\xi  )$ in the domain of $\{b_t\}_{t\in\R}$,
$b_t$ maps $ (x, \xi )$ to $b_t(x,\xi ) = (x_t,\xi_t)$,
where $x_t$ is the point reached after time $t$ by flowing at unit
speed along the billiard trajectory starting at $x$ in direction
of the unit vector $\xi $ and $\xi _t$ is the unit tangent vector to the trajectory
at $x_t$.

Billiards are sometimes divided into \emph{convex}, \emph{chaotic}
and \emph{polygonal}   billiards (see for example the survey by Tabachnikov
\cite{Tab}). The billiard in $T$ is called
\emph{integrable} if an open  subset of $S^1T$ is filled by
invariant sets so that the billiard flow restricted to each such
set is isomorphic to a linear flow on a two-dimensional torus
(only convex or polygonal billiards can be integrable). The
billiard  system inside any ellipse is integrable. Each invariant
set is determined by a confocal ellipse or hyperbola (called a
\emph{caustic}) and consists of all trajectories tangent the
caustic.
In \emph{chaotic billiards}, such as the famous Sinai billiard (a
square with a convex scatterer), no such invariant sets exist and
the billiard flow exhibits strong chaotic properties and in
particular is ergodic on the whole phase space (with respect to
the billiard invariant measure, see \cite{Tab}), i.e.\ there are
no billiard flow invariant sets of positive measure. An
intermediate behaviour is exhibited by  \emph{rational polygonal
billiards} (when $T$ is a polygon whose angles are rational
multiples of $\pi$), sometimes referred to as
\emph{pseudo-integrable} billiards: for every direction $v \in
S^1$ the billiard flow in direction $v$ on $ S^1T$ is confined to an invariant
surface in the phase space and the billiard flow restricted to
this invariant surface is typically ergodic, but in contrast with
integrable billiards, the invariant surface is not a torus but has
higher genus.  The study of rational polygonal billiards is
intimately connected to the rich area of research in translation
surfaces and Teichm\"uller dynamics.

Recently, Dragovi\'c and Radnovi\'c discovered a new class of
pseudo-integrable  billiards, see \cite{Dra-Ra}, given by
\emph{billiards in ellipses with barriers}, that we now describe.
Let $0<b<a$. Denote by $\{\mathcal{C}_{\lambda}: 0<\lambda<a\}$ the
family of confocal ellipses
(for $ \lambda\le b$ with $\lambda=b$ the set of two focal points) and hyperbolas (for
$b<\lambda<a$)
\[\frac{x^2}{a-\lambda}+\frac{y^2}{b-\lambda}=1.\]
Let
us consider the billiard flow inside the ellipse $\mathcal{C}_{0}$
with one linear vertical obstacle of length
$\sqrt{b}-\sqrt{b-\lambda_0}$, $0<\lambda_0<b$, which is
positioned as shown in Figure~\ref{EllipticBilliardEandH}. This
billiard table is denoted by $\mathcal{D}_{\lambda_0}$.

\begin{figure}[h]
\includegraphics[width=0.8\textwidth]{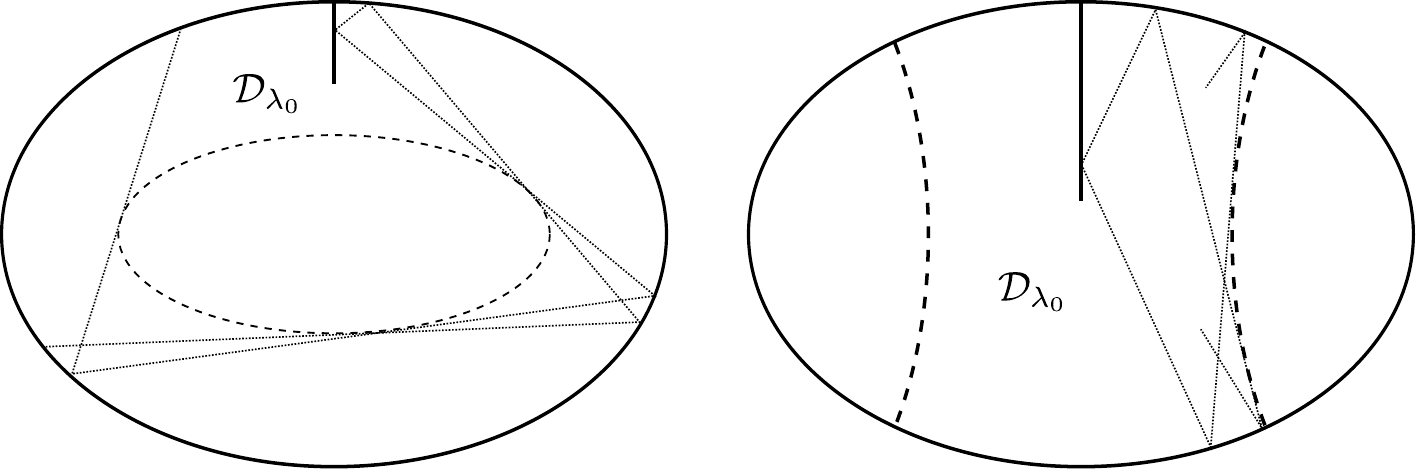}
\caption{}\label{EllipticBilliardEandH}
\end{figure}

Dragovi\'c and Radnovi\'c  observed in \cite{Dra-Ra} that the
phase space of the billiard flow on $\mathcal{D}_{\lambda_0}$
splits into invariant subsets $\mathcal{S}_{\lambda}$,
$0<\lambda<a$ so that the ellipse $\mathcal{C}_{\lambda}$ for
$0<\lambda\leq b$  or the hyperbola $\mathcal{C}_{\lambda}$ for
$b<\lambda<a$  is a caustic of all billiard trajectories in
$\mathcal{S}_{\lambda}$ (see Figure~\ref{EllipticBilliardEandH}).
Thus, the ellipse with a barrier display the typical trapping
phenomenon of a pseudo-integrable billiard.
In \cite{Dra-Ra} they construct examples of regions where
the flow is minimal but not uniquely ergodic, i.e.\  every trajectory is
 dense but not every trajectory is uniformly distributed. It is natural to expect
that this behaviour is exceptional.

We answer affirmatively the natural conjecture (raised e.g.~by Zorich)
that  for typical parameters $\lambda \in (0,a)$ of
the caustic parameterizing the invariant region
$\mathcal{S}_{\lambda}$   billiard trajectories are typically
dense in the invariant subset and furthermore, the billiard flow
restricted to  $\mathcal{S}_{\lambda}$ is uniquely ergodic.
Let us recall that a flow is \emph{uniquely ergodic} if it admits
a unique invariant probability measure, in which case it is also
automatically ergodic with respect to this measure.

\begin{thm}\label{thm:ellbill}
For almost every $\lambda\in(0,a)$ the billiard flow on $\mathcal{D}_{\lambda_0}$ restricted to $\mathcal{S}_\lambda$ is uniquely ergodic.
\end{thm}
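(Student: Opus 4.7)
The plan is to reduce the billiard dynamics on each invariant region $\mathcal{S}_\lambda$ to a directional flow on a translation surface, and then invoke the Birkhoff genericity theorem from \S\ref{sec:Birkhoff} together with Masur's criterion for unique ergodicity.

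First I would make explicit the unfolding of the ellipse-with-barrier billiard, following the framework of Dragovi\'c and Radnovi\'c. Passing to elliptic coordinates adapted to the confocal family $\{\mathcal{C}_\lambda\}$, trajectories in $\mathcal{S}_\lambda$ (i.e.\ trajectories tangent to the caustic $\mathcal{C}_\lambda$) straighten, and unfolding the reflections in $\mathcal{C}_0$ alone produces a flat torus $T_\lambda$ whose period lattice is given by complete elliptic integrals depending on $\lambda$. The vertical barrier of length $\sqrt{b}-\sqrt{b-\lambda_0}$ becomes a slit on $T_\lambda$, and the branched double cover of $T_\lambda$ ramified at the two slit endpoints yields a translation surface $(M_\lambda,\omega_\lambda)$ of genus two in the stratum $\mathcal{H}(1,1)$, sitting inside the locus of branched covers of tori. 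Up to a finite-to-one, measure-preserving identification, the directional flow on $(M_\lambda,\omega_\lambda)$ is conjugate to the billiard flow on $\mathcal{S}_\lambda$, so it suffices to prove that this directional flow is uniquely ergodic for almost every $\lambda$. The parameter interval will be split into the two subintervals $(0,b)$ and $(b,a)$ (ellipse versus hyperbola caustic), each giving a separate real-analytic curve in $\mathcal{H}(1,1)$ to be analysed independently.

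Second, I would regard each curve $\lambda \mapsto (M_\lambda,\omega_\lambda)$ as a real-analytic arc in $\mathcal{H}(1,1)$ and verify the non-degeneracy hypotheses of the Birkhoff genericity theorem of \S\ref{sec:Birkhoff} along it. The key point is to compute the period coordinates of $(M_\lambda,\omega_\lambda)$ in terms of complete and incomplete elliptic integrals and show that the velocity of the curve is nowhere aligned with the strong unstable horocyclic foliation of the Teichm\"uller geodesic flow, i.e.\ that the curve is suitably transverse. I expect this transversality computation, carried out via explicit differentiation of elliptic integrals and a careful asymptotic analysis near the degenerate parameters $\lambda=0,\,b,\,a$, to be the main technical obstacle.

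Once the hypotheses are verified, the Birkhoff genericity theorem produces a full-measure set of $\lambda$ for which the Teichm\"uller geodesic orbit of $(M_\lambda,\omega_\lambda)$ equidistributes in its ambient affine invariant submanifold; in particular its forward trajectory does not diverge in moduli space. Masur's criterion then implies that the vertical flow on $(M_\lambda,\omega_\lambda)$ is uniquely ergodic, and unique ergodicity transfers through the branched cover and the elliptic-coordinate unfolding to the billiard flow on $\mathcal{S}_\lambda$, yielding the theorem.
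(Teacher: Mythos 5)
Your overall route is the same as the paper's: reduce via the Dragovi\'c--Radnovi\'c change of coordinates and unfolding to a curve $\lambda\mapsto(M_\lambda,\omega_\lambda)$ in the locus $\mathcal{M}^{dc}_2$ of double covers of tori, verify a non-degeneracy condition, and conclude by Birkhoff genericity plus Masur's criterion. However, there is a genuine gap in what you propose to verify. The hypothesis of Theorem~\ref{thm:BirOs} (equivalently Corollary~\ref{cor:gencurve}) is \emph{not} transversality of the curve to the horocyclic foliation. Because $\mathcal{M}^{dc}_2$ is identified with an open subset $X_2$ of $ASL_2(\R)/\Gamma_2$ obtained by removing closed $SL_2(\R)$-orbits, and because the obstruction to equidistribution lives on the closed $H$-orbits $X[n]$ (affine lattices with \emph{rational} marked point), the required condition is the almost-everywhere non-vanishing of the full $3\times 3$ Wronskian $\det M_\psi(\lambda)$, which involves the translation (barrier/marked-point) coordinate $d(\lambda)/r(\lambda)$ and \emph{second} derivatives. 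A curve can be everywhere transverse to the horocycle direction in the $SL_2$ factor and still have its marked point move along a rational line, in which case the limit measure concentrates on $X[n]$ and Birkhoff genericity fails; see Remark~\ref{rk:Birkhoff_vs_recurrence}. The paper checks the Wronskian condition not by asymptotics near $\lambda=0,b,a$ but by a clean algebraic mechanism: each $\lambda$-derivative of the elliptic integrals inserts a factor $(\lambda-s)^{-1}$, so $\det M_\psi(\lambda)$ reduces to a determinant of integrals $\int_{A_i} e(s,\lambda)(\lambda-s)^{1-j}\,ds$ over pairwise disjoint intervals $A_i$, which is nonzero for every admissible $\lambda$ by a Vandermonde-type sign argument (Lemma~\ref{lem:nonzerodet}).

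A second, smaller omission: your claim that the reduction always produces a genus-two surface in $\mathcal{H}(1,1)$ fails on $(0,\lambda_0]$. There (case {\bf(E')} of Proposition~\ref{prop:DraRa_reduction}) the slit disappears and $(M_\lambda,\omega_\lambda)$ is a flat torus, so Masur's criterion in $\mathcal{H}(1,1)$ does not apply; the paper handles this range separately by showing $w'(\lambda)l(\lambda)-w(\lambda)l'(\lambda)\neq 0$, hence $w(\lambda)/l(\lambda)$ is strictly monotone and irrational off a countable set. You would need to add both the correct non-degeneracy verification and this degenerate-torus case to complete the argument.
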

This implies in particular that for typical parameters
$\lambda\in(0,a)$, every billiard trajectory inside
$\mathcal{S}_{\lambda}$   is dense and uniformly distributed. This
result is proved in \S~\ref{sec:ellipses_proof} and will be
deduced by the Birkhoff genericity results in
\S~\ref{sec:Birkhoff}.

\subsection{Periodic systems of Eaton lenses}\label{sec:Eaton}
In this section we study the behavior of light trajectories in  a
plane on which a lattice system of round retroreflector lenses of
the same size (called \emph{Eaton lenses}) is arranged.
An \emph{Eaton lens} is a circular lens 
which acts as a perfect retroreflector, i.e.\ so
that each ray of light after passing through the Eaton lens is
directed back toward its source,  see Figure~\ref{Eaton_lens}.
Let   $R>0$   denote the radius of the lens. The refractive index (RI for short) in an Eaton lens varies from
$1$ (outside the lens) to infinity (at the center of the lens, where it is not defined) according to the formula $RI=\sqrt{{2R}/{r}-1}$,  where $0<r\leq R$. As it was observed in \cite{Ha-Ha}, a
light ray entering the Eaton lens at a point $x_e$ moves (inside
the lens) in an elliptic orbit whose focal point coincides with
the center of the lens and then it leaves the lens at a point
$x_l$ so that the points $x_e$, $x_l$ are the ends of the minor
axis of the ellipse. Therefore, the direction of the light ray is
reversed after passing through the lens. There is only one
exception when the light ray hits the center of the lens and
disappears. We adopt the convention that when the light ray hits
the center, it turns back at the center and continues its motion
backwards.
\begin{figure}[h]
\includegraphics[width=0.4\textwidth]{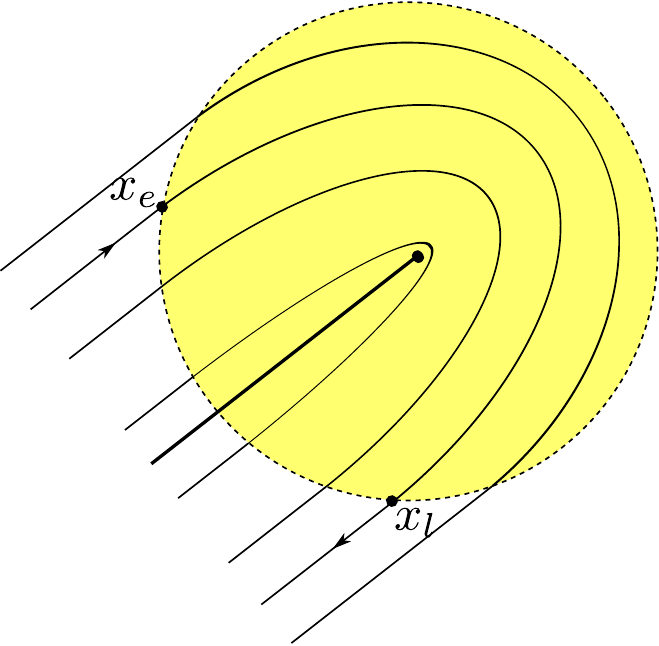}
\caption{Eaton lens and a parallel family of light rays}
\label{Eaton_lens}
\end{figure}

Denote by $L(\Lambda,R)$ the system of identical Eaton lenses of
radius $R>0$ arranged on the plane  $\R^2$ so that their
centers are placed at the points of a unimodular lattice
$\Lambda\subset\R^2$, see Figure~\ref{lattice_eaton}.
\begin{figure}[h]
\includegraphics[width=0.5\textwidth]{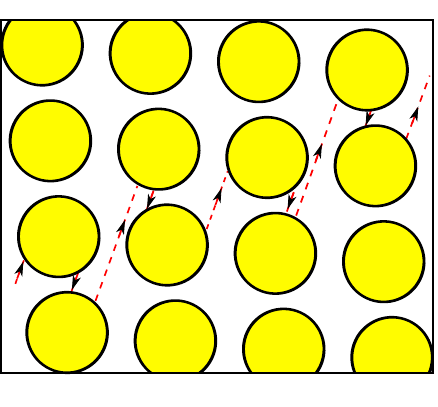}
\caption{The system of lenses $L(\Lambda,R)$}
\label{lattice_eaton}
\end{figure}
We deal only with pairs $(\Lambda,R)$ for which the lenses are pairwise disjoint and
such pairs are called \emph{admissible}. Admissibility is equivalent to $R<s(\Lambda)/2$, where $s(\Lambda)$
is the length of the shortest non-zero vector in  $\Lambda$.

Fr\k{a}czek and Schmoll in \cite{Fr-Sch} first observed the
phenomenon that light rays in lattice systems of Eaton lenses are
often trapped inside bands of finite width.  Note that
 for every
light ray there is a direction $\theta \in [0,2\pi]$ such
that the light ray flows in direction $\theta $ or $-\theta $
outside the lenses. Then $\theta $ is called the direction of the
light ray. This direction is unique modulo $\pi$.  Let us
say that a direction $\theta  $ is \emph{trapped} if there exists
$C(\theta )>0$ and $v(\theta ) \in S^1$ such
that every light ray on $L(\Lambda,R)$ in direction $\theta $ is
trapped in an infinite band of width $C(\theta)>0$ parallel to the unit tangent vector
$v(\theta)$.

Fr\k{a}czek and Schmoll considered \emph{randomly chosen lattices}
and  proved that for every $0<R< \sqrt{2\sqrt{3}}$ and for almost
every $R$-admissible lattice $\Lambda$ (with respect to the Haar
measure on the space of lattices),  light rays in the vertical
direction are trapped. They also provided explicit examples of
specific lattices and directions which are trapped. Their result,
though, does not provide any information for the behaviour of
typical light rays in a \emph{fixed} admissible lattice
configuration.

In this paper we answer this natural question (asked for example
by Marklof and by the referee of~\cite{Fr-Sch}) by describing
the behavior of light  orbits on $L(\Lambda,R)$ in direction
$\theta$ when an admissible pair $(\Lambda,R)$ is fixed and the
parameter $\theta$ varies.
\begin{thm}\label{thm:main}
Let $(\Lambda,R)$ be an admissible pair. Then a.e.~$\theta\in
[0,2\pi]$  is trapped.
\end{thm}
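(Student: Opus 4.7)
We extend the strategy of Fr\k{a}czek-Schmoll \cite{Fr-Sch}, replacing the probabilistic input (random lattices) used there by the deterministic genericity results stated in \S\ref{sec:Birkhoff} and \S\ref{sec:Oseledets} of this paper.

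\emph{Translation-surface model.} Following \cite{Fr-Sch}, associate to the admissible pair $(\Lambda,R)$ a translation surface $X=X(\Lambda,R)$ realized as a branched double cover of the flat torus $\mathbb{R}^2/\Lambda$ ramified at the lens centers, in such a way that the Eaton-lens dynamics in direction $\theta$ lifts to a time change of the straight-line flow $\phi_t^\theta$ on $X$, and the displacement of a light ray in the physical plane $\mathbb{R}^2$ becomes an $\mathbb{R}^2$-valued cocycle over $\phi_t^\theta$. The deck involution of the double cover splits $H^1(X;\mathbb{R}) = H^1_+\oplus H^1_-$, inducing a corresponding splitting of the displacement cocycle: the $H^1_+$ part descends to the base torus and produces the linear drift in a direction $v(\theta)$, while the $H^1_-$ part captures the deviation transverse to that drift. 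Trapping of light rays in direction $\theta$ is then equivalent to the $H^1_-$ component of the displacement cocycle staying bounded along $\phi_t^\theta$-orbits.

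\emph{Reduction to Oseledets genericity.} Since the $\mathrm{SL}(2,\mathbb{R})$-orbit of $X$ lies in the branched-cover locus of $\mathcal{H}(1,1)$ considered in \S\ref{sec:Birkhoff}, the top Lyapunov exponent $\lambda_2$ of the Kontsevich-Zorich cocycle on $H^1_-$ is strictly less than the tautological exponent $1$. Therefore Oseledets genericity of $\phi_t^\theta$ with respect to the KZ cocycle on $H^1_-$ yields sub-ballistic growth of the deviation component; the arguments already present in \cite{Fr-Sch} then upgrade subballistic growth to actual boundedness, hence to trapping in direction $\theta$.

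\emph{Applying the new genericity theorems.} Flowing in direction $\theta$ on $X$ is the same as flowing in a fixed reference direction on the rotated surface $r_\theta X$, so it suffices to show that almost every point of the circular curve $\mathcal{R}=\{r_\theta X:\theta\in[0,2\pi]\}$ in the moduli space of $\mathcal{H}(1,1)$ is Oseledets generic for the Teichm\"uller geodesic flow and the KZ cocycle. The curve $\mathcal{R}$ is contained in the branched-cover locus handled in \S\ref{sec:Birkhoff}, and $\mathrm{SO}(2)$-orbits of translation surfaces are classically well approximated by horocycle arcs after renormalization by the geodesic flow, placing $\mathcal{R}$ in the class of curves covered by \S\ref{sec:Oseledets}. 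The Birkhoff genericity theorem of \S\ref{sec:Birkhoff} (applied to the horocycle approximations, viewed in $ASL_2(\mathbb{R})/ASL_2(\mathbb{Z})$ and then lifted to the branched-cover locus) gives Birkhoff genericity for a.e.~$\theta$, and the Oseledets genericity theorem of \S\ref{sec:Oseledets} promotes this to Oseledets genericity for a.e.~$\theta$. Combined with the previous step this proves Theorem~\ref{thm:main}.

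\emph{Main obstacle.} The principal difficulty is verifying that the specific rotation curve $\theta\mapsto r_\theta X$ satisfies the non-degeneracy hypothesis of the Birkhoff genericity theorem when lifted from the affine-lattice quotient to the branched-cover locus of $\mathcal{H}(1,1)$, and the horocycle-approximation hypothesis of the Oseledets genericity theorem in a uniform way along the whole circle $\mathcal{R}$. A secondary difficulty is the careful identification of Eaton-lens trapping with boundedness of the $H^1_-$ cocycle, including the exclusion of the measure-zero set of rational slopes on the base torus, for which the translation-surface framework of \cite{Fr-Sch} already provides the necessary machinery.
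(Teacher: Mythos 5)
Your proposal misidentifies the relevant curve in moduli space, and this is not a minor slip but the central obstruction that the paper's new genericity theorems are designed to overcome. You assert that the Eaton-lens dynamics in direction $\theta$ lift to the straight-line flow in direction $\theta$ on a \emph{fixed} translation surface $X = X(\Lambda,R)$, so that the family of problems corresponds to the circle $\theta \mapsto r_\theta X$. This is false. After the reduction to flat lenses, the flat lens attached to each lattice point must be \emph{perpendicular} to the light direction, so as $\theta$ varies the slit direction on the double cover varies too: one cannot realize the entire family as the directional flows on a single surface. The paper's reduction (see \S~\ref{sec;Eaton reduction} and \S~\ref{sec:trapped_vs_genericity}) normalizes so that the flow direction is always vertical and the slit is always horizontal of length $2R$, and then rotates the lattice instead, producing the curve $\theta \mapsto M(r_\theta\Lambda,R)$. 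Under the identification $\Psi:\mathcal{M}^{dc}_2 \to X_2$, this curve is $\psi(\theta)\Gamma_2$ with $\psi(\theta)=(r_\theta,(2R,0))$, whereas the circle $\theta\mapsto r_\theta M(\Lambda,R)$ would be $(r_\theta h, r_\theta(2R,0))\Gamma_2$; the affine part $(2R,0)$ does \emph{not} rotate in the correct curve. These two curves are genuinely different whenever $R\neq 0$, and the paper's own discussion at the end of \S~\ref{sec:Oseledets} emphasizes exactly this: the Chaika--Eskin theorem already handles circles $\theta\mapsto r_\theta x_0$, but that is insufficient for the Eaton application, which is why the new Theorems~\ref{thm;equi},~\ref{thm:Oseledets} and~\ref{thm:BirOs} are needed for non-circular curves.

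Consequently the step where you appeal to ``classical'' horocycle approximation of $\mathrm{SO}(2)$-orbits to invoke the new genericity theorems is misapplied: if the curve really were a circle, the whole argument would reduce to Chaika--Eskin and this paper's machinery would be superfluous. What actually happens in the paper is: one writes $\psi(\theta)=(r_\theta,(2R,0))$ and computes the Wronskian-type matrix $M_\psi(\theta)$ of \eqref{eq;M_psi}, checking $\det M_\psi(\theta)\neq 0$, which is the hypothesis of Theorem~\ref{thm:BirOs}. That theorem then delivers both Birkhoff and Oseledets genericity for a.e.~$\theta$. Your high-level picture of the rest of the argument (splitting via the deck involution, trapping $\Leftrightarrow$ boundedness of the anti-invariant component, using Oseledets genericity and the bounded-deviation phenomenon to upgrade to boundedness) does track the paper's use of Propositions~\ref{prop:existband} and~\ref{prop:genbound}, but without the correct curve the genericity input you need is simply not available, so the proof does not go through.
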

This result is proved in \S~\ref{sec:Eaton_proof}. As in
\cite{Fr-Sch},  we first reduce the system of Eaton lenses to  a
simpler model, a \emph{system of flat lenses}, which can be
unfolded and reduced to an infinite translation surface. For the
definition and for more results for this related system, we refer
the reader to \S~\ref{sec:Eaton_proof}.

\subsection{Gap distribution of fractional parts of square roots}
\label{sec:gap_intro}
Let $\N = \{1, 2, \ldots \}$.  Consider a sequence $\{ t_n\}_{n\in \mathbb{N}} \subset [0,1]$ which is \emph{equidistributed modulo one}, i.e.\  for any $0\leq a\leq b\leq 1$, $\# \{ 1\leq n \leq N  : t_n \in[a,b]\}/N$ tends to $|b-a|$ as $N \to\infty$. Given an initial block $\{t_1, \dots, t_N\}$, the lengths of  its complementary intervals, i.e.\ the connected components of $[0,1]\setminus \{ t_1, \dots, t_N\}$, are known as \emph{gaps}.
A natural question is to study  the \emph{gap distribution}, i.e.\ the  limiting behavior
 of the sequence of gaps of $\{t_1, \dots, t_N\}$, renormalized by their average length $1/N$, as $N\to \infty$  (see below for precise statements).  In a celebrated paper,   Elkies and McMullen considered  the sequence $\{\sqrt{n} \mod 1\}_{n\in \N}$ of \emph{fractional parts of square roots} and, establishing a connection with
homogeneous dynamics, showed that   the gap distribution of  $\{\sqrt{n}\mod 1\}_{n\in \mathbb{N}}$ exists and is a  non-standard distribution on $\R_{\ge 0}$ (in contrast to the Poissonian gap distribution displayed by random sequences).

More precisely, fix a real number $r\ge 1$ and let
$0 =t_1\le t _2\le   \ldots, \le t_{\lfloor r\rfloor}<1$  be
increasingly ordered fractional parts of  $\{\sqrt 1,\sqrt 2, \ldots,
\sqrt {\lfloor r\rfloor}\}$, where ${\lfloor r\rfloor}$
is the smallest integer less than or equal to $r$.  We set $t_{\lfloor r \rfloor +1}=1$
for convenience.
   The gap distribution of square roots of natural numbers  describes the limit behaviors
as $r\to \infty$  of renormalized consecutive gaps
 $\{(t_{n+1}-t_n) \lfloor r \rfloor\}_{n\le \lfloor  r \rfloor}$.
Elkies and McMullen proved in  \cite{em04} that there is a continuous probability density
$F:\R_{\ge 0}\to\R_{\ge 0}$ of the form
\begin{align}\label{eq;density F}
F(s)=\left\{
\begin{array}{ll}
{6}/{\pi^2} & t\in [0, {1}/{2}], \\
F_2(t) & t\in [{1}/{2}, 2], \\
F_3 (t)   & t\in [2, \infty),
\end{array}
\right.
\end{align}
where $F_2$ and $F_3$ are explicit real analytic
functions (we refer  the reader to \cite[Theorem 3.14]{em04} for
explicit formulas) such that for every interval $[a,b]\subset\R_{\ge 0}$
\[\frac{1}{\lfloor r \rfloor}\#\{1\leq n\leq \lfloor r\rfloor : (t_{n+1}-t_n)\lfloor r \rfloor \in[a,b]\}
\to \int_a^bF(s)\,ds\quad \text{ as } r\to \infty.\]
An effective version of this result was recently obtained by Browning
and Vinogradov \cite{bv}.

We consider the distribution  of normalized  gaps containing a fixed $s\in [0,1]$,
which was  suggested by Marklof. Let $
k(r, s)$ be the largest positive integer $k $ satisfying
$t_k \leq s$. We define
\begin{align}\label{eq;L_r(s)}
L_r(s)=
\lfloor r\rfloor(t_{k(r, s)+1}-t_{k(r, s)}).
\end{align}
For every $s\in[0,1]$ which is not a fractional part of the square
root of a natural number,   $L_r(s)$ is the normalized length of the
 gap  of the fractional parts of  $\{\sqrt 1, \ldots, \sqrt
{\lfloor r\rfloor}\}$ which contains $s$.
We are interested in the limit distribution of the sequence
$\{L_{r_n}(s)\}_{n\in \N}$ of the normalized  gaps
containing $s$ along geometric progressions $\{r_n=cq^n\}_{n\in \N}$
where $ q>1$ and $c\ge 1$. The reason we let $r_n$ goes geometrically but
not linearly is because from $r$ to $r+1$ we only add one number so at  most one  gap
can change.

The main result of this part is the following.
\begin{thm}\label{thm;g;gap}
Let $\{cq^n\}_{n\in\N}$ be any geometric progression with
$c\ge 1$ and $q>1$. Then for  Lebesgue almost every $s\in [0,1]$
the sequence  $\{L_{c q^n}(s)\}_{n\in\N}$ converges in distribution
to the probability measure $tF(t)\,dt$ on $\R_{\ge 0}$, i.e.\ for every
interval $[a,b]\subset\R_{\ge 0}$
\begin{align}\label{eq;g;gap}
\frac{1}{N}\#\{1\leq n\leq N:L_{c q^n}(s)\in[a,b]\}\to \int_a^b t F(t)\,dt\quad \text{ as } N\to \infty.
\end{align}
\end{thm}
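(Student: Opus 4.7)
The plan is to deduce Theorem~\ref{thm;g;gap} from the Birkhoff genericity result of Section~\ref{sec:Birkhoff} for curves in the space of affine lattices, by casting the problem in the homogeneous-dynamics framework introduced by Elkies and McMullen in~\cite{em04}.

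The first step is to encode $L_r(s)$ as a function evaluated along a geodesic orbit in $X = ASL_2(\R)/ASL_2(\Z)$. Let $g_t = \operatorname{diag}(e^t, e^{-t})$ denote the one-parameter diagonal subgroup acting on $X$. The Elkies--McMullen construction produces, for each $s \in [0,1]$, an affine lattice $\eta_s \in X$ depending smoothly on $s$, together with a measurable function $\Phi : X \to \R_{\geq 0}$ (which, roughly speaking, reads off the length of the unique gap in a horizontal direction straddling the origin of the affine lattice) such that
\[ L_r(s) = \Phi\bigl(g_{\log r}\, \eta_s\bigr) \qquad \text{for every } r \geq 1,\; s \in [0,1]. \]
A computation parallel to the one in~\cite{em04} shows that the push-forward of the Haar probability measure $\mu_X$ on $X$ under $\Phi$ is the size-biased gap distribution $t F(t)\,dt$; the factor $t$ reflects the fact that a uniformly chosen $s \in [0,1]$ lies in a given gap with probability proportional to its length, whereas the Elkies--McMullen statistic $F(t)\,dt$ counts each gap once.

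The second, and main, step is to verify that the curve $s \mapsto \eta_s$ satisfies the non-degeneracy hypotheses of the Birkhoff genericity theorem of Section~\ref{sec:Birkhoff}. Granting this, applying the theorem with the element $g_{\log q}$ yields, for Lebesgue almost every $s \in [0,1]$, equidistribution of the discrete orbit $\{g_{n \log q} \eta_s\}_{n \in \N}$ with respect to $\mu_X$; equivalently,
\[ \frac{1}{N} \sum_{n=1}^{N} f\bigl(g_{n\log q}\,\eta_s\bigr) \;\xrightarrow[N\to\infty]{}\; \int_X f\,d\mu_X \]
for every $f \in C_c(X)$. If the theorem of Section~\ref{sec:Birkhoff} is stated only for continuous time $\frac1T\int_0^T\cdot\,dt$, the discrete-time statement along $g_{\log q}$ is derived in a standard way, either by applying the result directly to the $\Z$-action generated by $g_{\log q}$, or by an averaging argument exploiting the mixing of $\{g_t\}$ on $X$.

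The final step is to pass from continuous test functions on $X$ to the discontinuous observable $\mathbf{1}_{[a,b]} \circ \Phi$, thereby converting equidistribution into the convergence~\eqref{eq;g;gap} with $L_{c q^n}(s) = \Phi(g_{n \log q + \log c}\, \eta_s)$. The function $\Phi$ is only measurable, with discontinuities along the countable union of subvarieties of $X$ where the identity of the "shortest horizontal vector" of the affine lattice changes, and $X$ is non-compact. I expect the main technical obstacle to lie here: I would handle it by sandwiching $\mathbf{1}_{[a,b]} \circ \Phi$ between continuous, compactly supported functions on $X$, using the integrability and explicit form $t F(t)\,dt$ of the push-forward $\Phi_\ast \mu_X$ together with quantitative non-divergence of the orbit $\{g_{n\log q} \eta_s\}$ to control the escape of mass to the cusp of $X$. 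The resulting approximation scheme is parallel to the one in~\cite{em04}, but implemented in the a.e.\ pointwise rather than the measure-theoretic setting.
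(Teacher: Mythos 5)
Your overall strategy matches the paper's: both cast the gap statistic in the Elkies--McMullen homogeneous-dynamics framework, identify the relevant curve $s\mapsto u(-2s,-s^2,s)\Gamma$ in $X=ASL_2(\R)/ASL_2(\Z)$, apply the Birkhoff genericity theorem along this curve, and then pass from continuous test functions to the indicator $\mathbf 1_{[a,b]}\circ\Phi$ using the fact that the relevant sublevel sets have boundary of $\mu_X$-measure zero. However, there are two genuine gaps in the middle of your argument.

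First, the exact identity $L_r(s)=\Phi(g_{\log r}\eta_s)$ you assert does not hold. What is true (and what Elkies--McMullen and this paper use) is the identity for a modified quantity: $L'_r(s)=f(a_{\log\sqrt r}\,u(-2s,-s^2,s)\Z^2)$, where $L'_r(s)$ is a triangle-area proxy for the normalized gap. Passing from $L_r(s)$ to $L'_r(s)$ costs an error, controlled by \cite[Lemmas 3.3 and 3.7]{em04}, which only vanishes as $r\to\infty$; moreover the triangle formula is naturally set up for $r$ a perfect square. The paper therefore sandwiches $L_{cq^n}(s)$ between $L_{\lfloor\sqrt{cq^n}\rfloor^2}(s)$ and $L_{(\lfloor\sqrt{cq^n}\rfloor+1)^2}(s)$ (equation \eqref{eq;g;app}) and uses a Borel--Cantelli argument (Lemma~\ref{lem;g;approximation}) to show $L_{t_n^2}/L'_{t_n^2}\to 1$ a.e.\ along summable sequences, together with the elementary Lemmas~\ref{lem;g;bound} and \ref{lem;g;squeeze}. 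Your proposal skips this approximation layer entirely, and without it the reduction to dynamics is incorrect.

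Second, passing from continuous-time Birkhoff genericity (which Theorem~\ref{thm;equi} / Corollary~\ref{cor:gencurve} provide) to the needed equidistribution of the \emph{discrete} orbit $\{a_{n\log\sqrt q+t_0+c_n^\pm}u(-2s,-s^2,s)\Gamma\}_n$ is not automatic, and neither of the routes you suggest works as stated: Theorem~\ref{thm;equi} is proved only for the continuous flow, so there is nothing to ``apply directly to the $\Z$-action''; and mixing of $\{a_t\}$ gives a measure-theoretic statement, not a pointwise statement for a fixed base point on the zero-measure curve. The paper instead invokes the fact that $\mu_X$ is the unique $\{a_t\}$-invariant probability measure on $X$ of maximal entropy; this pins down every weak$^*$ limit of the discrete orbit averages once the continuous-time genericity guarantees no escape of mass. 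That entropy argument is the missing ingredient in your step.

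Your final approximation step is essentially the same as the paper's (the paper cites $\mu_X(\partial E_l)=0$ from \cite[Proposition 3.9]{em04}; your sandwiching by $C_c$ functions plus portmanteau achieves the same thing), so once the limit measure is known to be $\mu_X$ the quantitative non-divergence machinery you mention is unnecessary.
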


Theorem \ref{thm;g;gap} raises a natural question, namely whether, for almost every  $s\in [0, 1]$, (\ref{eq;g;gap}) still hold
 if we replace the sequence $\{cq^n\}_{n\in\N}$ by $\{r_n(s)\}_{n\in \N} $ where $r_n(s)$
 is the natural number such that the gap containing $s$ is changing for the $n$-th time.

\subsection{Outline and structure of the paper}\label{sec:outline}
We now give a brief explanation of  how all these three applications rely on the same results (formulated in the next section \S~\ref{sec;main results}). At the end of this section we then explain the structure of the rest of the paper. It turns out that all three problems are related to \emph{genericity along curves} in the  space of affine lattices as follows.

\smallskip
In the paper  \cite{Dra-Ra} where Dragovi\'c and Radnovi\'c introduce \emph{billiards in ellipses with barriers} as a new class of pseudo-integrable billiards, a key observation is that any given billiard trajectory in an
ellipse with a barrier can be  mapped, by a suitable change of coordinates, to a trajectory of a billiard in a rectangle with a barrier (see Proposition~\ref{prop:DraRa_reduction} in \S~\ref{sec:ellipses_proof}). Billiards in \emph{rational polygonal tables} (such as the rectangle with a barrier) 
  have been successfully studied in the past decades through their connection with \emph{Teichm\"uller dynamics}.  A classical construction  allows us to \emph{unfold} the billiard to a translation surface (a surface with an almost everywhere Euclidean metric, see \S~\ref{sec:translation_surf}) so that the billiard trajectory becomes a trajectory of the linear flow (i.e. a flat geodesic) on the surface and
in virtue of a	milestone result in Teichm\"uller dynamics, Masur's criterium (see Theorem~\ref{thm:Masur} in \S~\ref{sec:translation_surf}), unique ergodicity  follows if one can show that the corresponding translation surface is \emph{Birkhoff generic} for the Teichm\"uller flow (see \S~\ref{sec;concept} for the definition).
	
	Let us remark that a celebrated result  by Kerkhoff, Masur and Smillie  \cite{Ke-Ma-Sm}  from the '$80$s  guarantees that in any rational billiard, the billiard flow in almost every direction is uniquely ergodic. Unfortunately, this result does not yield any information about trajectories in pseudo-integrable billiards. Indeed, as one changes the direction of the trajectory considered in the elliptical billiard (and hence its \emph{caustic}), the \emph{parameters} of the corresponding rectangular billiard table (such as lengths of sides and barrier) change too (while the slope of the image trajectories by the change of coordinates is fixed and equal to $\pm 1$) and describe a one-parameter family, or \emph{curve}, in the moduli space of translation surfaces. Some sufficient condition on the tangent vectors to a curve under which one can show unique ergodicity for a.e. parameter were described by Minsky and Weiss in \cite{Mi-We14}, exploiting their previous work in \cite{Mi-We02} (see also the comment after Corollary \ref{cor:unique_ergodicity} in  \S~\ref{sec:Birkhoff_tori}).
Finding suitable conditions on a curve of translation surfaces so that \emph{almost every point on the curve} is Birkhoff generic 
is currently a widely open problem.   In our setup, fortunately, the translation surfaces which are obtained by unfolding are all double covers of flat tori and thus the problem reduces to a homogeneous dynamics setup. The analogous result on genericity for almost every point on a curve in the space of affine lattices which we need is the first of our main results presented in the next section, see Theorem \ref{thm;equi} and  Corollary~\ref{cor:gencurve} in \S~\ref{sec;concept}.

	\smallskip
One can reduce also the study of \emph{systems of Eaton lenses} to the setup of translation surfaces.  For a fixed direction of light rays, by replacing each Eaton lens by a flat lens (defined in \S~\ref{sec;Eaton reduction})  and then taking a double cover, one can indeed reduce the behaviour of a light ray in the array to the study of a  linear trajectory on an \emph{infinite} translation surface (see \S~\ref{sec;Eaton reduction}). Since the planar array of Eaton lenses is $\mathbb{Z}^2$ periodic, the infinite surface obtained is a periodic surface which is a $\mathbb{Z}^2$-cover of a genus two surface.  
 The global behaviour of trajectories in the  $\mathbb{Z}^2$-cover turns out to be intimately related to  Lyapunov exponents of the Teichm\"uller flow. In particular, as shown by Fr\k{a}czek and Schmoll in \cite{Fr-Sch} (see also \S~\ref{sec:trapped_vs_genericity}), directions of bands which trap light rays are directions which correspond to negative Lyapunov exponents (in the plane spanned by the two homology classes which determine the cover). Thus, to establish that a direction is trapped one needs to prove that the corresponding genus two translation surface is \emph{Oseledets generic} (see \S~\ref{sec:Oseledets}). The main result by Fr\k{a}czek and Schmoll in  \cite{Fr-Sch} (mentioned in \S~\ref{sec:Eaton}, see also  \S~\ref{sec;Eaton reduction}), which deals with \emph{random lattices},  follows from the standard Oseledets ergodic theorem (which is recalled in \S~\ref{sec:Oseledets}). To understand the behaviour for a \emph{fixed lattice} and in particular to prove Theorem~\ref{thm:main} one needs to know that almost every point on the curve of genus two translation surfaces obtained  by the above reduction is Birkhoff and Oseledets generic. \emph{Birkhoff genericity} reduces as before to Birkhoff genericity for curves in the space of affine lattices, since the genus two surface turns out to be also in this case a double cover of a flat torus with a marked point. \emph{Oseledets genericity} along the curve is based on our second main result, Theorem \ref{thm:Oseledets} in section \ref{sec:Oseledets}. Let us remark that our two main results generalize in a special setup a recent work by Chaika and Eskin in \cite{Es-Ch}, where they prove Birkhoff and Oseledets genericity for curves of translation surfaces which are \emph{circles} (see \S~\ref{sec:Oseledets} for the precise formulation).

\smallskip
The seminal paper \cite{em04} by Elkies and McMullen on the \emph{gap distribution of fractional parts of square roots} was the first to describe and to exploit  the connection of this problem with homogeneous dynamics. In their paper it is shown that existence of the gap distribution follows from a result on equidistribution of certain curve in the space of affine lattices under the geodesic flow. Exploiting the same arguments, one can see in \S~\ref{sec:gaps_proof}   that our Theorem \ref{thm;g;gap} can be derived from our result on Birkhoff genericity under the geodesic flow for the same curve (Theorem~\ref{thm;equi}).

\begin{out}
The rest of the paper is organized as follows. In the next section \S~\ref{sec;main results} we recall background material and formulate the main results on which the applications are based.
In  \S~\ref{sec:Birkhoff},  we state
Theorem \ref{thm;equi} and Corollary~\ref{cor:gencurve} on Birkhoff genericity in the space of affine lattices, from which we derive in \S~\ref{sec:Birkhoff_tori} the Birkhoff genericity result on translation surfaces (Theorem~\ref{thm:BirOs}) used in the applications.
We recall background material on translation surfaces and Teichm\"uller dynamics in
\S~\ref{sec:translation_surf} and \S~\ref{sec:Oseledets}.
Theorem \ref{thm:Oseledets} on Oseledets genericity is stated in \S~\ref{sec:Oseledets}.
Using the results in \S~\ref{sec;main results}
 we
 prove the three main applications in   \S~\ref{sec:ellipses_proof} (Theorem~\ref{thm:ellbill}),   \S~\ref{sec:Eaton_proof} (Theorem~\ref{thm:main})
and   \S~\ref{sec:gaps_proof} (Theorem \ref{thm;g;gap}). Section \S~\ref{sec:Birkhoff_proof} is devoted to the proof of Theorem \ref{thm;equi} on Birkhoff genericity, while  section \S~\ref{sec:Oseledets_proof} contains the proof of   Theorem \ref{thm:Oseledets} on Oseledets genericity.   We refer the reader to the beginnings of sections \S~\ref{sec;main results}, \S~\ref{sec:Birkhoff_proof} and
\S~\ref{sec:Oseledets_proof} for a more detailed outline of the content for  each  of these sections.
\end{out}

\section{Results on genericity along curves}\label{sec;main results}
In this section we formulate the results on which the applications described in the introduction are based.  Two of the main theorems in ergodic theory, the Birkhoff ergodic theorem (recalled at the beginning of \S~\ref{sec:Birkhoff}) and the Oseledets multiplicative ergodic theorem (see  \S~\ref{sec:Oseledets}), guarantee that given an ergodic  measure preserving dynamical system  on a probability  space $(Y, \mu)$,  almost every point in $Y$ with respect to the measure $\mu$ is generic, in the sense that either the conclusion of the Birkhoff theorem
holds for $f\in L^1(Y)$ or  the Oseledets theorem holds for a cocycle under suitable assumptions. If one considers a \emph{curve} in the space $Y$, which has zero measure, a priori the conclusion of both Birkhoff and Oseledets theorems could fail for points in the curve. The main results in this paper concern two situations in which one can prove that almost every point along a certain curve is generic.  The first result concerns curves in the space of affine lattices satisfying a non-degeneracy condition and is stated in \S~\ref{sec:Birkhoff} after recalling Birkhoff ergodic theorem and the definition of the space of affine lattices. To state the second result, which concerns Oseledets genericity, we first recall background material on translation surfaces in \S~\ref{sec:translation_surf}, and recall the definition of the Kontsevich-Zorich cocycle, a linear cocycle over the Teichm\"uller geodesic flow on the space of translation surfaces, which plays a fundamental role in Teichm\"uller dynamics. The main result stated in \S~\ref{sec:Oseledets}  states that almost every point on certain curves in the moduli space of translation surfaces is Oseledets generic for the Kontsevich-Zorich cocycle. Finally, in \S~\ref{sec:Birkhoff_tori} we first introduce the space of translation surfaces of a special form, i.e.\ double covers of flat tori, which plays a key role in applications and describe its strict connection with the space of affine lattices. We then deduce from the Birkhoff genericity result in  the space of affine lattices a result on unique ergodicity for curves of translation surfaces which are double covers of tori, which is then directly used in the applications.

\smallskip
\begin{nota}
We define here some notation which is used throughout the paper. The ring of $d\times d$ matrices $M_{d}(\R)$ acts on $\R^d$ via linear transformations and
this action will appear in the paper as $h\xi$ for any $h\in M_{d}(\R)$ and $\xi\in\R^d$.
Let $\|\cdot\|$ denote the Euclidean norm on $\mathbb R^d$ as well
as operator norm of matrices defined  as
\begin{align} \label{eq;changing back}
\|h\|=\sup_{v\in \R^d\setminus \{0\}} \frac{\|hv\|}{\|v\|}.
\end{align}
The identity matrix in $M_{d}(\R)$ will be denoted by $Id$.
We use $|\cdot|$ to denote the absolute value of real numbers
or the Lebesgue measure of subsets of $\mathbb R$ according to the context.
$\# S$ will denote the  cardinality of a set $S$.
\end{nota}


\subsection{Birkhoff genericity along curves in $ASL_2( \mathbb{R})/ASL_2( \mathbb{Z})$}\label{sec:Birkhoff}\label{sec;concept}
We begin this section by first recalling the statement of Birkhoff ergodic theorem  and defining the concept of Bikhoff genericity.  Let $Y$ be a locally compact, Hausdorff  and second countable topological space.
Let $\{\psi_t\}_{ t \in \R}$ be a one-parameter topological flow on $Y$, i.e.~$t\mapsto \psi_t$ is
a homomorphism from $\R$ to the group of homeomorphisms of $Y$ and
$\R\times Y\ni(t,y)\mapsto \psi_t(y)\in Y$ is a continuous map.

\begin{sloppypar}
Let $\mu$ be a $\{\psi_t\}_{ t\in\R}$ invariant and ergodic
probability measure on $Y$.  The Birkhoff ergodic theorem says that,
given a real valued measurable  function $f$ on $Y$ with $\int_Y |f|\dd\mu<\infty$, one has
\end{sloppypar}
\begin{align}\label{eq;reorganize}
\lim_{T\to \infty}\frac{1}{T} \int_0^T f(\psi_t(y)) \,d  t= \int_Y f\dd \mu
\end{align}
for $\mu$ almost every $y\in Y$.
We say $y\in Y$ is \emph{Birkhoff generic}
with respect to  $(Y,\mu,  \psi_t)$
 if (\ref{eq;reorganize}) holds for
every $f\in C_c(Y)$ where
$C_c(Y)$  is the set of
 continuous compactly supported    functions on $Y$.

\smallskip
\begin{sloppypar}
We will be interested in Birkhoff genericity along curves in the space of affine lattices, that we now define.
Let $SL_2(\mathbb R)$ ($SL_2(\mathbb Z)$) be the group of $2$ by
$2$ matrices with real (integer) entries and determinant one. The
quotient space $SL_2(\mathbb R)/SL_2(\mathbb Z)$ parameterizes the
moduli space of two dimensional unimodular lattices. An
(unimodular) \emph{affine lattice} $\Lambda + \xi$ is determined
by a lattice $\Lambda$ in $SL_2(\mathbb R)/SL_2(\mathbb Z)$ and a
point $ \xi \in \mathbb{R}^2/\Lambda$ which describes  the shift
of new origin.
\end{sloppypar}

Let $G=ASL_2(\mathbb R):=SL_2(\mathbb R)\ltimes \mathbb R^2$,
$\Gamma =ASL_2(\mathbb Z):=SL_2(\mathbb Z)\ltimes \mathbb Z^2$ and
$X=G/\Gamma$. We represent elements of $G$ by
\[
(h, \xi):= \left(
\begin{array}{cc}
h & \xi \\
0 & 1
\end{array}
\right) \quad \text{where} \quad h \in SL_2(\mathbb R), \xi\in
\mathbb R^2.
\]
The multiplication for elements of $G$ is given by
\[
(h_1, \xi_1)\cdot(h_2, \xi_2)=(h_1h_2, h_1\xi _2+\xi_1).
\]
\begin{sloppypar}
One can check that the space $X$ parameterizes affine lattices:
if the coset $h (SL_2(\mathbb Z))$ in $SL_2(\mathbb R)/ SL_2(\mathbb
Z)$ represents the unimodular lattice $\Lambda_h=
h\mathbb{Z}^2$, then the coset $(h, \xi)\Gamma$ in $G/\Gamma$
represents the unimodular affine lattice $\Lambda_h+ \xi$.
\end{sloppypar}

Let us consider the following elements
\begin{equation}\label{subgroups}
a_t
=\left (
\begin{array}{ccc}
e^t & 0 & 0 \\
0 & e^{-t} & 0 \\
0 & 0 & 1
\end{array}
\right)
\quad \mbox{and}\quad
u(s_1,s_2 , s_3)=\left (
\begin{array}{ccc}
1 & s_1 & s_2 \\
0 & 1 & s_3 \\
0 & 0 & 1
\end{array}
\right).
\end{equation}
The action of the diagonal group $\{a_t:{t\in\R}\}$ on $X$ by left
multiplication  is known as \emph{geodesic flow} on the space of
affine lattices. It is well known that this action is ergodic with
respect to the probability  Haar measure $\mu_X$ on $X$.

\smallskip
We will consider curves in $X$ of the  form $u_\varphi(s)\Gamma$ where
 $\varphi: [0,1]\to \mathbb R$ is a $C^1$-function and  $u_\varphi(s):=u(s,
\varphi(s), 0)$. Let us remark that these curves have $\mu_X$ measure zero, hence the Birkhoff theorem does not yield information about genericity for points along these curves.
\begin{thm}\label{thm;equi}
Let $\varphi:[0,1]\to \mathbb R$ be a $C^1$-function such that
for any rational line $\mathcal L$ in $\mathbb R^2$ the Lebesgue measure
of  $\{s\in [0,1]: (s, \varphi(s))\in \mathcal L\}$
is zero. Then for almost every $s\in [0,1]$
the coset $u_{\varphi}(s)\Gamma \in X$ is Birkhoff generic with respect to  $(X, \mu_X, a_t)$.
\end{thm}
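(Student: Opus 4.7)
The plan is to use the \emph{variance method} combined with a Borel-Cantelli argument along a slowly growing subsequence, in the spirit of the classical approach to deriving almost-everywhere Birkhoff genericity from equidistribution of pushed curves (cf.\ the recent work of Chaika-Eskin alluded to in the introduction). By a standard density argument in $C_c(X)$, it suffices to fix $f \in C_c(X)$ with $\mu_X(f) = 0$ and show that $S_Tf(s) := \frac{1}{T}\int_0^T f(a_t u_\varphi(s)\Gamma)\,dt$ tends to zero for almost every $s \in [0,1]$. Writing $V_T(f) = \int_0^1 |S_Tf(s)|^2\,ds$ and expanding by Fubini gives
\[
V_T(f) = \frac{1}{T^2}\int_0^T\!\!\int_0^T I_{t_1,t_2}(f)\,dt_1\,dt_2, \quad I_{t_1,t_2}(f) = \int_0^1 f(a_{t_1}u_\varphi(s)\Gamma)\,\overline{f(a_{t_2}u_\varphi(s)\Gamma)}\,ds,
\]
so the key ingredient is a polynomial decay estimate of the form $|I_{t_1,t_2}(f)| \ll |t_1-t_2|^{-\alpha}$ for $|t_1-t_2|$ large; this would yield $V_T(f) \ll T^{-\beta}$ for some $\beta>0$ and, by Chebyshev and Borel-Cantelli along a suitable subsequence $T_n$ together with a standard interpolation argument (using slow enough growth of $T_n$ so that $T_{n+1}/T_n\to 1$), would give $S_Tf(s)\to 0$ for almost every $s$.

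The central task is therefore to establish effective equidistribution of the pair-translated curve $s\mapsto (a_{t_1}u_\varphi(s)\Gamma, a_{t_2}u_\varphi(s)\Gamma)$ in $X\times X$ towards $\mu_X\otimes\mu_X$ as $|t_1-t_2|\to\infty$. A direct computation shows that $a_tu(s_1,s_2,0)a_{-t}=u(e^{2t}s_1,e^ts_2,0)$, so $u_\varphi$ lies in the expanding horospherical subgroup $U^+=\{u(s_1,s_2,0):s_1,s_2\in\R\}$ of $\{a_t\}$ and, after pushing by $a_T$, becomes a long arc inside a $U^+$-leaf. I would attack this via Fourier decomposition along the torus fibers of the natural projection $\pi\colon X\to SL_2(\R)/SL_2(\Z)$. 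The zero Fourier mode reduces the problem to effective equidistribution of the pushed horocycle arc $\{a_tu(s)SL_2(\Z):s\in[0,1]\}$ on the modular surface, for which quantitative rates are classical (Sarnak, Burger, Flaminio-Forni). The higher Fourier modes produce oscillatory integrals roughly of the form $\int_0^1 \widehat{f}_k(a_tu(s))\,e^{2\pi i k_1\varphi(s)}\,ds$, whose decay in $t$ and in $|k|$ should follow from Van der Corput-type estimates combined with the non-degeneracy of $\varphi$.

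The hypothesis that $\{s:(s,\varphi(s))\in\mathcal{L}\}$ has measure zero for every rational line $\mathcal{L}\subset\R^2$ is exactly the non-degeneracy required to rule out obstructions: if $(s,\varphi(s))$ did lie on a rational line on a set of positive measure, then $u_\varphi(s)\Gamma$ would be confined to an orbit of a proper closed $a_t$-invariant subgroup and equidistribution would fail on this invariant subvariety. The main difficulty I anticipate is obtaining the correlation estimate on $I_{t_1,t_2}(f)$ with a \emph{quantitative} decay in $|t_1-t_2|$, and in particular controlling the higher Fourier modes uniformly in $k$. The likely strategy is a frequency decomposition: handle the finitely many small $|k|$ via effective horocycle equidistribution on an appropriate congruence cover of the modular surface, and use a Van der Corput bound (with the rational-line hypothesis on $\varphi$ playing the role of a non-stationary phase condition) to control the contribution from large $|k|$ with summable tail.
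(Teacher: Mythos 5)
Your proposal takes a genuinely different route from the paper, and one can compare them cleanly: you aim to prove convergence directly via an $L^2$ variance bound, which requires a \emph{quantitative} two-point correlation estimate $|I_{t_1,t_2}(f)|\ll|t_1-t_2|^{-\alpha}$, i.e., effective equidistribution of the pushed curve $s\mapsto a_tu_\varphi(s)\Gamma$. The paper instead uses a variance-type double integral only to establish the much weaker statement that a.e.\ weak$^*$ limit of $\mu_{s,T}$ is $U$-invariant (Proposition~\ref{prop;inva}). That step needs no equidistribution input at all: it exploits the soft fact that $u(r)a_lu_\varphi(s)\approx a_lu_\varphi(s+re^{-2l})$, so $f_r(l,s)$ integrated over a short window is nearly a telescoping sum. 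From $U$-invariance the paper passes to $DU$- and then $H$-invariance via Mozes's theorem, then invokes Ratner's classification, leaving only the problem of ruling out escape of mass to the cusp and to the closed $H$-orbits $X[n]$, which is done with the mixed height function $\beta_n$ of \S\ref{sec;height}--\S\ref{sec;singular}. In short, the paper trades effectivity for Ratner theory plus a custom Margulis-style height function; your route would yield an effective statement but requires proving the effective input from scratch.

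The gap in your proposal is that the required effective estimate is not available in the generality you need, and the specific mechanism you sketch has several concrete problems. First, the hypothesis is only $C^1$, so no second-derivative van der Corput bound is available, and even a first-derivative bound for $\int_0^1\widehat f_k(a_tu(s))\,e^{2\pi i k_1\varphi(s)}\,ds$ requires $k_1\varphi'(s)-(\text{base oscillation})$ to be bounded away from $0$ on the whole interval and to have controlled variation. Second, the rational-line hypothesis is a measure-zero avoidance condition, not a pointwise lower bound on $|\varphi'(s)-p/q|$: one would first have to perform a decomposition into countably many subintervals on which a quantitative non-resonance holds (this is exactly what the paper does in Remark~\ref{rem:redint} and Proposition~\ref{prop;main}, but in order to run the height-function argument there, not to set up a stationary-phase estimate). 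Third, the Fourier modes $k=(0,k_2)$ have phase $e^{2\pi i k_1\varphi(s)}\equiv 1$, so $\varphi$ and the rational-line hypothesis contribute no oscillation at all there; moreover $\widehat f_{(0,k_2)}$ is a function on $SL_2(\R)/\Gamma_\infty$, not on a finite-volume congruence cover, so ``effective horocycle equidistribution on a congruence cover'' does not apply directly, and a separate unfolding argument would be needed. Finally, the correlation estimate you need is a strengthening of effective equidistribution of a one-dimensional $C^1$ curve inside the two-dimensional expanding horosphere of $ASL_2(\R)/ASL_2(\Z)$; what is available in the literature (e.g.\ Browning--Vinogradov \cite{bv}, building on Str\"ombergsson) treats specific analytic curves such as the Elkies--McMullen curve $s\mapsto u(-2s,-s^2,s)$, and establishing it for arbitrary $C^1$ curves under only the a.e.\ rational-line hypothesis would be a substantial new theorem rather than a reduction to existing results.
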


We can derive from Theorem \ref{thm;equi} similar results for an arbitrary base point $x\in G/\Gamma'$,
where $\Gamma'$ is a lattice in $G$ commensurable with $\Gamma$, i.e.~$\Gamma\cap \Gamma'$
has finite index in both $\Gamma$ and $\Gamma'$. We refer the reader to
 Corollary \ref{cor;general base} for more details.
For the applications, we will need a more general class of curves,
for  which equidistribution still holds and can be deduced from
Theorem~\ref{thm;equi}.

\begin{cor}\label{cor:gencurve}
Suppose that $\psi:[0,1]\to G$ is a $C^2$-curve of the form
\begin{align}\label{eq;psi}
{\psi}(s)=\left(\begin{pmatrix}
                          h_{11}(s) & h_{12}(s) \\
                          h_{21}(s) & h_{22}(s)
                        \end{pmatrix},
\begin{pmatrix}
                          v_1(s) \\
                          v_2(s)
                        \end{pmatrix}
                        \right),
\end{align}
so that the determinant of  the Wronskian matrix
\begin{align}\label{eq;M_psi}
M_{\psi}(s)=M_{h_{11},h_{12},v_1}(s)=\begin{pmatrix}
                          h_{11}(s) & h_{12}(s) &  v_1(s) \\
                           h_{11}'(s) & h_{12}'(s) &  v_1'(s) \\
                           h_{11}''(s) & h_{12}''(s) &  v_1''(s)
                        \end{pmatrix}
\end{align}
is  non-zero for a.e.~$s\in [0,1]$. Then given $(h,\xi)\in G$ and a lattice
$\Gamma'$ of $G$ commensurable with $\Gamma$ one has
for a.e.\ $s\in [0,1]$ the coset $\psi(s)(h,\xi)\Gamma'$ is Birkhoff
generic with respect to  $(G/\Gamma', \mu_{G/\Gamma'},a_t)$
where $\mu_{G/\Gamma'}$ is
the unique $G$-invariant probability measure  on $G/\Gamma'$.
\end{cor}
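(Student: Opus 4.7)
The plan is to reduce Corollary~\ref{cor:gencurve} to Theorem~\ref{thm;equi} by a local Bruhat-type decomposition near each point where the Wronskian is nonzero, combined with the stable-leaf equivalence of $a_t$-orbits and the left $G$-invariance of $\mu_X$. First I would perform two reductions. Corollary~\ref{cor;general base} reduces to $\Gamma'=\Gamma$. Moreover, right multiplication of $\psi$ by the fixed element $(h,\xi)$ acts on the triple $(h_{11},h_{12},v_1)$ by a fixed invertible $3\times 3$ matrix, so $\det M_\psi$ changes by a nonzero constant and the Wronskian hypothesis is preserved. Hence we may assume $\Gamma'=\Gamma$, $(h,\xi)=e$, and must show that $\psi(s)\Gamma$ is Birkhoff generic for a.e.~$s$.

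Next I would work locally at each $s_0$ with $\det M_\psi(s_0)\neq 0$. The $\mathrm{ad}(a_1)$-eigenspace decomposition of $\mathfrak{g}$ gives a local product decomposition $G=U^-\cdot A\cdot U^+$ near the identity, with $U^+=\{u(s_1,s_2,0)\}$ the $2$-dimensional expanding subgroup (weights $2,1$), $A=\{a_\tau\}$ the central direction, and $U^-$ the $2$-dimensional contracting subgroup generated by the lower unipotent of $SL_2$ and the translation in the $(0,s_3)$-direction. Writing uniquely, for $s$ in a small neighborhood $I_{s_0}$ of $s_0$,
\[
\psi(s)=u^-(s)\,a_{\tau(s)}\,u^+(s)\,\psi(s_0),\qquad u^+(s)=u(\alpha(s),\beta(s),0),
\]
with $u^\pm(s_0)=e$, $\tau(s_0)=0$, $\alpha(s_0)=\beta(s_0)=0$, and all factors $C^2$ in $s$. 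A direct computation of the first row shows that $M_\psi(s_0)$ factors as a Wronskian-type matrix of $(1,\alpha,\beta)$ times $\psi(s_0)$ (which has unit determinant), so $\det M_\psi(s_0)=\alpha'(s_0)\beta''(s_0)-\alpha''(s_0)\beta'(s_0)$. Thus the hypothesis gives nonzero planar curvature of $(\alpha,\beta)$ at $s_0$. After a $C^2$ reparametrization $t=\alpha(s)$ (valid when $\alpha'(s_0)\neq 0$; the case $\beta'(s_0)\neq 0$ is symmetric), $(\alpha,\beta)$ becomes a graph $(t,\varphi(t))$ with $\varphi''\neq 0$, so $(t,\varphi(t))$ meets every line in at most two points and $\varphi$ satisfies the hypothesis of Theorem~\ref{thm;equi}.

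Setting $\epsilon_t(s):=a_tu^-(s)a_{-t}$, which tends to $e$ exponentially and uniformly in $s\in I_{s_0}$ as $t\to\infty$, we obtain
\[
a_t\psi(s)\Gamma=\epsilon_t(s)\,a_{\tau(s)}\cdot a_tu^+(s)\psi(s_0)\Gamma,
\]
so the orbit of $\psi(s)\Gamma$ is the left translate of the orbit of $u^+(s)\psi(s_0)\Gamma$ by an element $\epsilon_t(s)a_{\tau(s)}$ converging to the fixed $a_{\tau(s)}$. By left $G$-invariance of $\mu_X$, it suffices to prove Birkhoff genericity of $u^+(s)\psi(s_0)\Gamma$; the comparison of Birkhoff averages is then made rigorous via uniform continuity of $f\in C_c(X)$ and standard non-escape-of-mass control on the cusp excursions of $a_tu^+(s)\psi(s_0)\Gamma$. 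After the reparametrization $t=\alpha(s)$, the curve $u^+(s)\psi(s_0)\Gamma$ becomes $t\mapsto u_\varphi(t)\psi(s_0)\Gamma$, so Theorem~\ref{thm;equi} at the base point $\psi(s_0)\Gamma$ (via Corollary~\ref{cor;general base}) yields Birkhoff genericity for a.e.~$t$, hence for a.e.~$s\in I_{s_0}$. Covering $\{s\in[0,1]:\det M_\psi(s)\neq 0\}$ by countably many such intervals and intersecting the resulting full-measure sets concludes the proof.

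The main obstacle I expect is making the stable-leaf comparison of Birkhoff averages quantitatively rigorous: one must control the contribution of times at which $a_tu^+(s)\psi(s_0)\Gamma$ ventures deep into the cusp of $X$, where the exponential decay of $\epsilon_t(s)$ can be outpaced by the shrinking injectivity radius. This requires a non-escape-of-mass estimate along unstable arcs, which typically comes as a byproduct of the machinery used to prove Theorem~\ref{thm;equi}.
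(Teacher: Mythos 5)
Your argument is, in substance, the paper's own proof written in local coordinates. The paper factors $\psi(s)=u^-(s)\,a_{\tau(s)}\,u\big(h_{12}(s)/h_{11}(s),\,v_1(s)/h_{11}(s),\,0\big)$ globally; the $U^-$- and $A$-factors only translate the forward $a_t$-orbit by an asymptotically constant left factor, so genericity is unaffected by uniform continuity of $f\in C_c(X)$ with respect to a right-invariant metric (this is Remark~\ref{rem;homepage}), and in particular the ``main obstacle'' you anticipate in your last paragraph does not arise: no non-escape-of-mass input is needed to compare the two averages. The paper then reparametrizes by $h_{12}/h_{11}$ and feeds the resulting graph into Corollary~\ref{cor;general base}, exactly as you do. Your local decomposition at each $s_0$ produces the same objects: a short computation gives $\alpha'(s_0)=(h_{11}h_{12}'-h_{12}h_{11}')(s_0)$, so your nondegeneracy condition for the reparametrization $t=\alpha(s)$ coincides with the paper's sets $I_1, I_2$, and your identity $\det M_\psi(s_0)=\alpha'\beta''-\alpha''\beta'$ is the paper's Remark~\ref{rem:Ml}. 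Your reduction to $(h,\xi)=e$ by right multiplication is correct, since $M_{\psi\cdot(h,\xi)}$ equals $M_\psi$ times a constant matrix of determinant one.

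The one genuine flaw is the parenthetical ``the case $\beta'(s_0)\neq 0$ is symmetric.'' It is not: the two coordinates of $U^+=\{u(s_1,s_2,0)\}$ carry different weights ($2$ and $1$) under conjugation by $a_t$, and Theorem~\ref{thm;equi} only treats curves that are graphs over the weight-two coordinate $s_1$; a curve tangent to the $s_2$-direction at $s_0$ (i.e.\ with $\alpha'(s_0)=0$) is not covered by any ``symmetric'' version of that theorem. What saves the argument is that the exceptional set $\{s_0:\alpha'(s_0)=0\}=\{s_0:(h_{11}h_{12}'-h_{12}h_{11}')(s_0)=0\}$ is Lebesgue-null under the Wronskian hypothesis: at a Lebesgue density point of this closed set both $h_{11}h_{12}'-h_{12}h_{11}'$ and its derivative $h_{11}h_{12}''-h_{12}h_{11}''$ vanish, which (treating $\{h_{11}=0\}$ separately) forces the first two columns of $M_\psi$ to be linearly dependent, contradicting $\det M_\psi\neq 0$ a.e. This is precisely the paper's density-point treatment of $I_1$ and $I_2$, and you need to supply it (or the equivalent observation that such $s_0$ are isolated zeros of $h_{11}h_{12}'-h_{12}h_{11}'$) in place of the symmetry claim.
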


Elkies and McMullen proved in \cite{em04} that  the curve $\{u_\varphi(s)\Gamma: s\in [0,1] \}$, pushed under diagonal flow $a_t$,   equidistributes
as $t$ tends to infinity (i.e. the uniform measure on the curves, renormalized to be a probability measure, tends to the Haar measure).  Let us stress that the problem we are considering, namely showing that a.e. point on the curve is Birkhoff-generic under $a_t$, is independent and more subtle (heuristically, the two results are in a similar relation to proving $L^1$ convergence versus pointwise convergence of a family of functions).  Striking generalizations of the result by Elkies and McMullen have been proved by Shah, which proved equidistribution of curves under diagonal flows in much greater generality: in particular, in \cite{Sha2009-a} and \cite{Sha2009-b}, Shah proved asymptotic equidistribution 
 of certain analytic
curves in different homogeneous spaces.  On the other hand, it is an open question whether Birkhoff genericity  is true or
not in these settings.

The  proof of   Theorem~\ref{thm;equi} will take almost all of \S~\ref{sec:Birkhoff_proof} and is based on
  quantitative estimates on  the measures supported on  the curve translates obtained by using a \emph{height function}. While height functions are a classical tool since the seminal work \cite{emm98} by Eskin, Margulis and Mozes, the height function for our problem has to be carefully and ingeniously constructed.
  The techniques used in this paper are similar in spirit to those used to prove Birkhoff genericity   by Chaika and Eskin in \cite{Es-Ch} and by the second author in \cite{s}.
Different methods  are used instead in \cite{ksw} and \cite{snew}  to prove Birkhoff genericity  for one parameter  diagonalizable subgroup actions on homogeneous spaces: there the key step consists of proving effective double equidistribution of translates of volume  measures on horospherical orbits.



The proof of
Corollary~\ref{cor:gencurve}
will be given
at the end of \S~\ref{sec:outline_Birkhoff}.  It will  be clear  from  \S~\ref{sec:outline_Birkhoff}  that the assumptions on  $\varphi$ in Theorem~\ref{thm;equi} are necessary.
 The Birkhoff genericity result in the space of affine lattices has an immediate application to Birkhoff genericity for certain curves in a space of flat surfaces, which are branched covers of flat tori. In order to state this result (see \S~\ref{sec:Birkhoff_tori}), let us first recall some basic background material on translation surfaces.

\subsection{Background material on translation surfaces}\label{sec:translation_surf}
A \emph{translation surface} is a pair $(M,\omega)$ where $M$ is
an orientable compact surface and  $\omega$ is a translation
structure on $M$, that is a non-zero  holomorphic $1$-form also
called an \emph{Abelian differential}. Let us underline that the
translation structure $\omega$ determines both a complex structure
and an Abelian differential on $M$. Let $\Sigma_\omega\subset M$
denote the set of zeros of $\omega$ which are also the
\emph{singular points} of the translation structure. Let us
consider the volume form
$\nu_{\omega}=\frac{i}{2}\omega\wedge\overline{\omega}=\Re(\omega)\wedge\Im(\omega)$
which also will be treated as a volume measure. Since $M$ is
compact, $\nu_\omega (M)$ is finite and called the area
of $(M,\omega)$.

\begin{sloppypar}
Let  $M$ be a compact connected orientable surface and let $\Sigma\subset M$ be finite.
Denote by $\operatorname{Diff}^+(M,\Sigma)$ the group of
orientation-preserving diffeomorphisms of $M$ that fix elements of
$\Sigma$. Denote by $\operatorname{Diff}_0^+(M,\Sigma)$ the
subgroup of elements $\operatorname{Diff}^+(M,\Sigma)$ which are
isotopic to the identity. Let us denote by
$\Gamma(M,\Sigma):=\operatorname{Diff}^+(M,\Sigma)/\operatorname{Diff}_0^+(M,\Sigma)$
the {\em mapping-class} group. We will denote by
$\mathcal{T}(M,\Sigma)$ (respectively $\mathcal{T}_1(M,\Sigma)$ )
the {\em Teichm\"uller space of Abelian differentials}
(respectively of unit area Abelian differentials), that is the
space of orbits of the natural action of
$\operatorname{Diff}_0^+(M,\Sigma)$ on the space of all Abelian
differentials $\omega$ on $M$ with $\Sigma_\omega=\Sigma$
(respectively, the ones with total area $1$). We will denote by
$\mathcal{M}(M,\Sigma)$ ($\mathcal{M}_1(M,\Sigma)$) the {\em
moduli space of (unit area) Abelian differentials}, that is the
space of orbits of the natural action of
$\operatorname{Diff}^+(M,\Sigma)$ on the same space of (unit area)
Abelian differentials. Thus
$\mathcal{M}(M,\Sigma)=\mathcal{T}(M,\Sigma)/\Gamma(M,\Sigma)$ and
$\mathcal{M}_1(M,\Sigma)=\mathcal{T}_1(M,\Sigma)/\Gamma(M,\Sigma)$.
\end{sloppypar}

The group $SL_2(\R)$ acts naturally on  the space of Abelian differentials
as follows.
  Given a translation
structure $\omega$, consider the charts given by  local primitives
of the holomorphic $1$-form. The new  charts defined by
postcomposition of this charts with an element of $SL_2(\R)$ yield
a new complex structure and a new differential which is Abelian
with respect to this new complex structure, thus a new translation
structure. We denote by $g \omega$ the translation structure on
$M$ obtained by acting $g \in SL_2(\R)$ on a translation structure
$\omega$ on $M$.  Since the $SL_2(\R)$ action commutes with that
of $\operatorname{Diff}^+(M,\Sigma)$, it descends to action on
$\mathcal T _1(M, \Sigma)$ and $\mathcal M _1(M, \Sigma)$.
The {\em Teichm\"uller flow} is
the restriction of this action to the diagonal subgroup
$\{a_t=\operatorname{diag}(e^t,e^{-t}): t\in\R\}$ of $SL_2(\R)$ on
$\mathcal{T}_1(M,\Sigma)$ and $\mathcal{M}_1(M,\Sigma)$.
Here we slightly abuse the notation of $a_t$ which
 has different meaning in (\ref{subgroups}).
 For $\theta\in \R$ we let
\[
r_{\theta}=\left (
\begin{array}{cc}
\cos \theta & \sin \theta \\
-\sin \theta & \cos \theta
\end{array}
\right)\in SL_2(\R).
\]
We will
deal also with the rotations $\{r_{\theta}: {\theta\in \R}\}$ that
acts on a translation structure $\omega$  by
$r_\theta\omega=e^{i\theta}\omega$.

Let  $x_0\in\mathcal{M}_1(M,\Sigma)$ and denote by
$\mathcal{M}=\overline{SL_2(\R)x_0}$ the closure of the $SL_2(\R)$-orbit of $x_0$ in $\mathcal{M}_1(M,\Sigma)$. The celebrated result of Eskin, Mirzakhani and Mohammadi
(see \cite{EM} and \cite{EMM}) says that $\mathcal{M}\subset
\mathcal{M}_1(M,\Sigma)$ is an affine $SL_2(\R)$-invariant submanifold. 
Denote by $\mu_{\mathcal{M}}$ the corresponding affine
$SL_2(\R)$-invariant probability measure supported on
$\mathcal{M}$. Recall that
 $\mu_{\mathcal{M}}$ is ergodic for the Teichm\"uller flow.

 Recently Chaika and Eskin in \cite{Es-Ch} proved a  finer result saying for a.e.\ $\theta\in [0, 2\pi]$ the surface $r_\theta x_0$
is Birkhoff generic with respect to  $(\mathcal{M},\mu_{\mathcal{M}},a_t)$.
These results have applications to the  dynamics on a translation
surface, in virtue of  Masur's ergodicity criterion, which
constitutes one of the first and central results in Teichm\"uller
dynamics.

On a given translation surface $(M, \omega)$, for every
$\theta\in [0, 2\pi]  $ denote by
$F_\theta=F^{\omega}_\theta$ the vector field in direction
$\theta$ on $M\setminus\Sigma$, i.e.\
$\omega(F^{\omega}_\theta)=e^{i\theta}$. The corresponding
directional flow $\{\psi^{\theta}_t\}_{t\in\R}$, also called a
\emph{translation flow}, on $M\setminus\Sigma$ preserves the
volume measure $\nu_{\omega}$. We will use the notation
$\{\psi^{v}_t\}_{t\in\R}$  for the \emph{vertical flow}
(corresponding to $\theta = \frac{\pi}{2}$).  The flow
$\{\psi^{\theta}_t\}_{t\in\R}$ is uniquely ergodic if the area is
the unique invariant probability  measure.  Masur's ergodicity criterion
relates unique ergodicity of the vertical flow on $(M, \omega)$
with the behaviour of the Teichm\"uller geodesic through  $(M,
\omega)$.

\begin{thm}[Masur's criterion, see \cite{Ma92}]\label{thm:Masur}
If $\{a_t \omega\}_{t \in \mathbb{R}}$ returns infinitely often to a
compact set of $\mathcal{M}_1(M,\Sigma)$,  then the vertical flow
on $(M, \omega)$ is uniquely ergodic.
\end{thm}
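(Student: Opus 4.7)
The plan is to argue by contradiction. Assume that the vertical flow $\{\psi^{v}_t\}_{t\in\R}$ on $(M,\omega)$ is not uniquely ergodic, so that there exists an ergodic invariant probability measure $\mu$ for the flow distinct from the normalized area measure $\nu_\omega/\nu_\omega(M)$. I would derive from this the existence of a sequence of simple closed curves $\gamma_n\subset M$ and of times $t_n\to\infty$ such that the flat length of $\gamma_n$ in the metric $a_{t_n}\omega$ tends to $0$. This contradicts the hypothesis, since compact subsets of $\mathcal{M}_1(M,\Sigma)$ are characterized by a uniform positive lower bound on the length of the shortest saddle connection (the translation-surface analogue of Mumford's compactness criterion), and hence $a_{t_n}\omega$ cannot stay in any compact set.

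First, I would translate the problem into the language of interval exchange transformations (IETs). Choose a horizontal transversal $I$ to the vertical flow; the first-return map $T\colon I\to I$ is an IET whose invariant probability measures are in bijection with those of the vertical flow via the Ambrose--Kakutani construction. Non-unique ergodicity of $\{\psi^{v}_t\}_{t\in\R}$ therefore produces two distinct ergodic invariant measures for $T$.

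Second, and this is the heart of the argument, I would extract a sequence of simple closed curves $\gamma_n$ on $M$ whose holonomy vectors $\int_{\gamma_n}\omega=(h_n,v_n)\in\R^2$, upon identifying $\omega$ with $dx+i\,dy$, satisfy $v_n\to\infty$ and $h_n v_n\to 0$. The construction uses Rauzy--Veech renormalization: non-unique ergodicity forces the cone of invariant measures of the renormalized IETs to have dimension at least two, so one can find renormalization steps at which a specific Rokhlin tower over $T$ is both very tall and very thin. Geometrically, the base of such a tower is a short horizontal segment whose orbit under the vertical flow returns close to itself, producing a nearly-closed vertical trajectory which can be closed up by the short horizontal base to give a simple closed curve $\gamma_n$ with the required quantitative balance between $h_n$ and $v_n$.

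Third, I would apply the Teichm\"uller flow to optimize these curves. Since $a_t$ acts on holonomy by $(h,v)\mapsto(e^t h,e^{-t}v)$, the flat length of $\gamma_n$ in $a_t\omega$ is bounded above by $e^t h_n+e^{-t}v_n$, which is minimized at $t_n=\tfrac{1}{2}\log(v_n/h_n)\to\infty$ with minimum of order $\sqrt{h_n v_n}$. Because $h_n v_n\to 0$, the systole of $a_{t_n}\omega$ tends to $0$, so $\{a_{t_n}\omega\}$ leaves every compact subset of $\mathcal{M}_1(M,\Sigma)$, giving the desired contradiction. The main obstacle is the second step: extracting from abstract non-unique ergodicity a concrete geometric sequence of curves with the sharp quantitative balance $h_n v_n\to 0$. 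This requires the structural theory of Rauzy--Veech induction and in particular the linear-algebraic consequences of having a multi-dimensional cone of invariant measures for the renormalized IETs. The remaining steps---the reduction to IETs, the optimization via $a_t$, and the appeal to compactness---are comparatively routine.
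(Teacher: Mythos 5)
This theorem is not proved in the paper; it is simply quoted from Masur's 1992 paper \cite{Ma92} and used as a black box, so there is no internal argument against which to compare your sketch. Judged on its own terms, your outline follows one of the standard modern routes to Masur's criterion (via interval exchange transformations and Rauzy--Veech renormalization), and you correctly flag that the heart of the matter is manufacturing essential simple closed curves $\gamma_n$ whose holonomy satisfies $v_n\to\infty$ and $h_n v_n\to 0$. As you acknowledge, this step is left as a plan rather than a proof.

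There is, however, a genuine logical gap beyond the unfilled construction. The hypothesis of the theorem is that the forward orbit $\{a_t\omega\}_{t\ge 0}$ \emph{returns infinitely often} to some compact set $K$, i.e.\ there are arbitrarily large times at which the surface lies in $K$. The contrapositive you need to prove is therefore that non-unique ergodicity forces $\{a_t\omega\}$ to \emph{eventually leave every compact set permanently}, i.e.\ for each $K$ there is $T$ with $a_t\omega\notin K$ for all $t\ge T$ (this is Masur's divergence theorem). Your argument produces only a sequence of balanced times $t_n\to\infty$ at which the systole of $a_{t_n}\omega$ is small. That gives a subsequence escaping compact sets, which is perfectly compatible with the orbit still returning to $K$ infinitely often at \emph{other} times; indeed the short-curve bound $e^t h_n+e^{-t}v_n$ is convex in $t$ and is small only in a window around $t_n$. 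To close the argument you must additionally show that these windows of time in which some $\gamma_n$ is short can be chosen to cover all of $[T,\infty)$ --- for instance by controlling the spacing $t_{n+1}-t_n$ relative to the decay rate of $\sqrt{h_n v_n}$ --- and this requires finer quantitative control from the renormalization scheme than your sketch records. Your closing phrase ``cannot stay in any compact set'' suggests you may have been implicitly negating the stronger (and incorrect) hypothesis ``stays in a compact set''; the actual hypothesis is weaker, and so its negation is the harder divergence statement.
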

Clearly, if $(M, \omega)$ is Birkhoff generic   with respect to  $(\mathcal M,\mu_{\mathcal M} , a_t)$,
then the assumption of the theorem holds.

\subsection{Oseledets genericity for Kontsevich-Zorich cocycle}\label{sec:Oseledets}
Let us recall one of the definitions of the Kontsevich-Zorich cocycle, which is a fundamental tool in the study of translation surfaces and Teichm\"uller dynamics.
Consider the projection $\pi:\mathcal{T}_1(M,\Sigma)\to
\mathcal{M}_1(M,\Sigma)$ where $\mathcal{T}_1(M,\Sigma)$ and $\mathcal{M}_1(M,\Sigma)$ are
respectively the Teichm\"uller and moduli space of translation
surfaces introduced in the previous section. 
Let $\Delta\subset\mathcal{T}_1(M,\Sigma)$
be a Borel fundamental domain for the action of
$\Gamma(M,\Sigma)$. For every $h\in SL_2(\R)$ and
$\tilde{x}\in\mathcal{T}_1(M,\Sigma)$ denote by
$\psi_{h,\tilde{x}}$ the only element of $\Gamma(M,\Sigma)$ such
that $\psi_{h,\tilde{x}}(h\cdot \tilde{x})\in\Delta$.

Let us consider the cocycle $A:SL_2(\R)\times
\mathcal{M}_1(M,\Sigma)\to GL(H_1(M,\R))$ given by
\begin{equation}\label{def:KZsl2r}
A(h,x)\zeta=(\psi_{h,\tilde{x}})_*\zeta\text{ for
$\tilde{x}\in\Delta$ such that
 $\pi(\tilde{x})=x$.}
\end{equation}
Note that the cocycle $A$ preserves $H_1(M,\Z)$ and the
non-degenerated symplectic structure on $H_1(M,\R)$ given by the
algebraic intersection form $\langle\,\cdot\, ,\,\cdot\,\rangle$.
Therefore, $A$ can be considered as a cocycle taking values in
$Sp(2g,\Z)$. By the {\em Kontsevich-Zorich (KZ) cocycle} we mean
the restriction of $A$ to the diagonal subgroup $\{a_t:{t\in\R}\}$
of $SL_2(\R)$, i.e.\
\[A^{KZ}:\R\times \mathcal{M}_1(M,\Sigma)\to GL(H_1(M,\R)),\qquad A^{KZ}(t,x)=A(a_t,x).\]

Let $\mu:=\mu_{\mathcal M}$ be the affine $SL_2(\R)$-invariant and ergodic probability
measure on $\mathcal{M}=\overline{SL_2(\R)x_0}$. By Moore's ergodicity
theorem $\mu$ is ergodic for the subgroup $\{a_t\}$ action.

Suppose that $W\subset H_1(M,\R)$ is a symplectic subspace (the
symplectic form restricted to $W$ is non-degenerated) of dimension
$2d$. Moreover, assume that $W$ is invariant for $SL_2(\R)$ action
on $\mathcal{M}$, i.e.\
\[h\in SL_2(\R),\ x\in \mathcal{M} \Rightarrow A(h,x)W=W.\]
Then we can pass to the restricted cocycle
$A_W^{KZ}:\R\times \mathcal{M}\to GL(W)$.
Let
 \[
e^{2\lambda_1(t, x)}\ge \ldots \ge e^{2\lambda_d(t, x)}\ge e^{-2\lambda_d(t, x)}\ge \ldots \ge e^{-2\lambda_1(t, x)}
\]
 be  the eigenvalues
of $\big(A_W^{KZ}(t, x)\big)^{\mathrm{tr}}A_W^{KZ}(t, x)$.
 Oseledets multiplicative ergodic theorem says that  there exist
 \[\lambda_1\geq\lambda_2\geq\ldots\geq\lambda_d\geq-
\lambda_d\geq\ldots\geq-\lambda_2\geq-\lambda_1,\]
 which are called \emph{Lyapunov exponents},
 such that for $\mu$ almost every
$x\in \mathcal M$  and any $1\leq i\leq d$
\begin{align}\label{eq;weak oseledec}
\lim_{t\to \infty}\frac{1}{t} \lambda_i(t, x) = \lambda_i.
\end{align}
If (\ref{eq;weak oseledec}) holds for every  $1\leq i\leq n$ we say $x$ is \emph{Oseledets generic} with
respect to
$(\mathcal M,  \mu, a_t, A^{KZ}_W)$, or simply Oseledets generic if the dynamical system and cocycle
is understood.

\smallskip
 Recently Chaika and Eskin in \cite{Es-Ch} considered \emph{circles} in the space of translation surfaces, i.e.\
 curves of the form $\{ r_\theta x_0, \theta \in [0,2\pi]\}$ where $x_0$ is any given translation surface and proved that for a.e.\ $\theta\in [0, 2\pi]$ the surface $r_\theta x_0$ is Oseledets generic.  Their result turns out to be a fundamental tool for
proving dynamical properties of directional flows on non-compact
periodic translation surfaces, cf.~for example \cite{AH}, \cite{DHL},
\cite{Fr-Ulc:nonerg} and \cite{Fr-Hu} for the Ehrenfest wind tree
model.

As we will see in \S~\ref{sec:Eaton_proof}, the Chaika-Eskin
theorem  is however not sufficient to examine the behavior of light rays
in lattice Eaton models as the angle of the rays changes. In such
models we need to show that a.e.\ point is Birkhoff and Oseledets
generic for curves different from those of the form $\theta\mapsto
r_\theta x_0$.

\smallskip
We will prove Oseledets genericity for a class of curves which we
will call \emph{well approximated by horocycles}.
Let
 $\mathcal T=\pi^{-1}(\mathcal M)$, where $\mathcal M$ is, as before, an $SL_2(\R)$ orbit closure. Consider a metric $d:\mathcal{T}\times\mathcal{T}\to\R_{\ge 0}$ satisfying the following $\Gamma$-\emph{invariance} and \emph{growth} conditions:
\begin{align}
& d(\gamma(\tilde{x}),\gamma(\tilde{y}))=d(\tilde{x},\tilde{y})\text{ for all }\gamma\in\Gamma(M,\Sigma); \label{assum:b2}\\
&d(\tilde{y},u(t)\cdot\tilde{y})\leq
|t|\text{ for every
}t\in\R
\quad \mbox{where } u(t)=\left(
\begin{array}{cc}
1 & t \\
0 & 1
\end{array}
\right).\label{assum:b4}
\end{align}
Note that the distance on $\mathcal{T}(M,\Sigma)$ defined in
\cite{AGY}  and derived from a Finsler $\Gamma$-invariant metric on
$\mathcal{T}(M,\Sigma)$  satisfies the above conditions.

The technical assumption required on the curves for which we will prove Oseledets-type results is the following.
\begin{de}\label{def:well_approx}
We say that $\varphi:I\to\mathcal{M}$ is \emph{well approximated by horocycles} if it is  of the form
$\varphi=\pi\circ\widetilde{\varphi}$ with
$\widetilde{\varphi}:I\to\mathcal{T}$ of class $C^1$ and
there exist $C(\varphi)>0$, $\rho\in\N$ and a metric $d$ satisfying properties
\eqref{assum:b2}, \eqref{assum:b4} such that
\begin{equation}\label{assum:phi}
d\big(a_t\cdot\widetilde{\varphi}(s+re^{-2t}),u(-r)\cdot
a_t\cdot\widetilde{\varphi}(s)\big)\leq C(\varphi)
|r|(1+|r|^\rho)e^{-t}
\end{equation}
for all $s\in I,r\in [-1, 1],t\ge 0$ with $s+re^{-2t}\in I$.
\end{de}

We can now state the first of two Oseledet genericity results along curves well approximated by horocycles (the other is Theorem~\ref{thm:Oseledetsgen} below).
\begin{thm}\label{thm:Oseledets}
Let $\mu$ be the probability affine measure on an
$SL_2(\mathbb{R})$-orbit closure $\mathcal{M}$.
Let $\varphi:I\to\mathcal{M}$ be a curve which is well
approximated by horocycles in the sense of
Definition~\ref{def:well_approx} and  such that for a.e.\ $s\in I$ the
point $\varphi(s)$ is Birkhoff generic with respect to  $(\mathcal M,\mu, a_t)$. Suppose that the sum
of positive Lyapunov exponents of the restricted KZ-cocycle
$A^{KZ}_W:\R\times \mathcal{M}\to GL(W)$ is less than one. Then for
a.e.\ $s\in I$ one has
$\varphi(s)$ is Oseledets generic with respect to  $(\mathcal M, \mu,  a_t, A_W^{KZ})$.
\end{thm}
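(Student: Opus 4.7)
The plan is to extend the strategy of Chaika--Eskin \cite{Es-Ch} (where the corresponding approximation is exact for circle curves $\{r_\theta x_0\}$) to the more general class of curves satisfying (\ref{assum:phi}). Concretely, I want to show that for Lebesgue-a.e.\ $s\in I$ and each $k\in\{1,\dots,d\}$,
\[
\lim_{t\to\infty}\frac{1}{t}\log\|\wedge^k A^{KZ}_W(t,\varphi(s))\|=\lambda_1+\cdots+\lambda_k,
\]
from which Oseledets genericity of $\varphi(s)$ for $A^{KZ}_W$ follows by the usual flag/subadditivity argument. The upper bound is a relatively soft consequence of the Birkhoff genericity hypothesis: for each $T>0$ the function $\Phi_T^k(x):=\tfrac{1}{T}\log\|\wedge^k A^{KZ}_W(T,x)\|$ is $\mu$-integrable, submultiplicativity gives $\log\|\wedge^k A^{KZ}_W(nT,\varphi(s))\|\le \sum_{j=0}^{n-1}T\,\Phi_T^k(a_{jT}\varphi(s))$, and Birkhoff genericity of $\varphi(s)$ applied to $\Phi_T^k$ produces $\limsup_n\frac{1}{nT}\log\|\cdot\|\le\int \Phi_T^k\dd\mu$; letting $T\to\infty$ and invoking Kingman (so that $\int\Phi_T^k\dd\mu\downarrow\lambda_1+\cdots+\lambda_k$) yields the upper bound.

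For the \emph{lower bound} I would proceed via a Borel--Cantelli argument using the horocycle approximation. For $\epsilon>0$ and $n\in\N$ set
\[
E_n^{k,\epsilon}:=\Big\{s\in I:\tfrac{1}{n}\log\|\wedge^k A^{KZ}_W(n,\varphi(s))\|<\lambda_1+\cdots+\lambda_k-\epsilon\Big\};
\]
the goal is $\sum_n|E_n^{k,\epsilon}|<\infty$, which yields the lower bound by Borel--Cantelli and an interpolation between integer times. Partition $I$ into subintervals of length $\delta e^{-2n}$ for a small fixed $\delta$; on each subinterval the assumption (\ref{assum:phi}) implies that $a_n\circ\varphi$ lies within $d$-distance $C\delta e^{-n}$ of an unstable-horocycle arc $\{u(-r)a_n\varphi(s_j):r\in[0,\delta]\}$ in $\mathcal{M}$. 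By continuity of the cocycle on $d$-balls of a fixed size and the cocycle identity $A(u(-r)a_n,x)=A(u(-r),a_nx)\,A(a_n,x)$ (with $A(u(-r),\cdot)$ of bounded operator norm for $|r|\le\delta$), the condition $s\in E_n^{k,\epsilon}$ is transferred to the condition that the base point $a_n\varphi(s_j)$ lies in the bad set $B_n^{k,\epsilon}:=\{x\in\mathcal{M}:\tfrac1n\log\|\wedge^k A^{KZ}_W(n,x)\|<\lambda_1+\cdots+\lambda_k-\epsilon\}$ when pushed around a horocycle arc of length $\delta$. So $|E_n^{k,\epsilon}|$ is bounded by $\delta e^{-2n}$ times the number of subintervals hitting $B_n^{k,\epsilon}$ along horocycle arcs, which in turn is estimated by a quantitative (effective) horocycle equidistribution statement on $\mathcal M$ against $\mu$-measure bounds on $B_n^{k,\epsilon}$.

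The hypothesis $\sum_{i=1}^d\lambda_i^+<1$ is used to close the estimate. First, by Oseledets on $(\mathcal M,\mu,a_t)$ we have $\mu(B_n^{k,\epsilon})\to 0$; quantitative decay is obtained from a Chebyshev--Markov inequality applied to $\|\wedge^k A^{KZ}_W(n,x)\|^{-p}$ for appropriate $p>0$, combined with moment bounds for the cocycle (via Forni's integrability of $\log\|A^{KZ}\|$ on $\mathcal M$). The exponent condition $\sum\lambda_i^+<1$ ensures that the resulting moments can be controlled against the horocycle expansion factor $e^{2n}$ coming from the change of variable $s=s_j+re^{-2n}$, producing an exponential bound $|E_n^{k,\epsilon}|\le C e^{-\beta(\epsilon) n}$ that is summable. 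The lower bounds for general $k<d$ can then be obtained from the case $k=d$ via the symplectic duality $\wedge^{2d-k}A\simeq(\wedge^k A)^*$ for the restricted cocycle. The main obstacle will be making the reduction in the second paragraph effective: converting the pointwise $O(e^{-t})$ horocycle approximation into a quantitative weak-$*$ statement for the pushforward measure $(a_t\circ\varphi)_*\operatorname{Leb}|_I$ on $\mathcal M$, while keeping track of how the cocycle transports under horocycle perturbations and how errors accumulate. The techniques developed in \cite{Es-Ch} should adapt, with the approximation error $Ce^{-t}$ replacing the vanishing error in the circle case, and the exponent hypothesis ensuring that these error terms remain controlled relative to the main Lyapunov growth.
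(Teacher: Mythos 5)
Your upper bound via subadditivity and Birkhoff averaging is the right soft ingredient, but there are two issues: (a) Birkhoff genericity in the sense used here applies to $f\in C_c(\mathcal M)$, whereas $\Phi_T^k$ is neither bounded nor compactly supported, so some truncation/approximation must be justified; and (b) the norm $\|A^{KZ}_W(T,\cdot)\|$ is not continuous on $\mathcal M$ because the KZ cocycle is only piecewise constant in $t$ and jumps across fundamental-domain walls. The paper handles this with fundamental-domain considerations (cf.\ Lemma~\ref{lem:rownosckocykli}) rather than directly applying Birkhoff.

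The lower bound is where your plan genuinely diverges and, as written, has a real gap. You want to sum $|E_n^{k,\epsilon}|$ by reducing each piece to a quantitative horocycle-equidistribution estimate on $\mathcal M$ against the $\mu$-measure of the bad set $B_n^{k,\epsilon}$. But no effective horocycle equidistribution is available on a general affine $SL_2(\R)$-orbit closure $\mathcal M$ (this is a well-known difficulty in Teichm\"uller dynamics), and the hypotheses of the theorem supply only \emph{qualitative} Birkhoff genericity for a.e.\ $s$ along the curve. So the bound $|E_n^{k,\epsilon}|\le Ce^{-\beta(\epsilon)n}$ cannot be produced with the tools at hand, and Borel--Cantelli is not the right closing move. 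The paper's proof avoids this by using only Birkhoff genericity of $\varphi(s)$ together with a \emph{frequency} argument: Chaika--Eskin's ``good sets for random walks'' (Proposition~\ref{prop:EC2}) and ``sublinear tracking'' (Proposition~\ref{prop:EC1}) are upgraded in Lemma~\ref{lem:goodset} to good sets $E(\vep,\sigma,L)$ of large $\mu$-measure on which the cocycle growth along \emph{short unstable-horocycle arcs} is controlled; then a strong law of large numbers for weakly correlated indicator variables (\S~\ref{rem:filtr}) shows that a.e.\ $\varphi(s)$ spends a $(1-\vep)$-fraction of its $a_t$-time steps in these good sets. This is exactly the mechanism that converts qualitative genericity into Lyapunov control, without any quantitative equidistribution.

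You also miss two structural ingredients that the paper needs. First, the proof requires Filip's semisimplicity/rigidity and the resulting smooth invariant splitting $\bigoplus_i\mathcal V_i$ of $\mathcal M\times\bigwedge^p W$ into \emph{strongly irreducible} subbundles (via Chaika--Eskin); the good-set lemma operates on each $\mathcal V_i$ separately. Second, your reading of where the hypothesis $\sum\lambda_i^+<1$ enters is off. It does not enter as a moment bound competing with an $e^{2n}$ expansion factor; rather, it is used in the estimate~\eqref{eq:lap}: when comparing $A_{\mathcal V_i}(a_{L\lambda m},\varphi(s))v_i(s)$ with the same expression at the nearby interval anchor $s_{0,n}$, one needs $\|A^p(a_{L\lambda m},\varphi(s))\|\cdot\|A^p(a_{L\lambda m},\varphi(s))^{-1}\|\cdot|s-s_{0,n}|$ to stay bounded, and with $|s-s_{0,n}|\approx e^{-2L\lambda n}$ this forces $\lambda_{top}<1$. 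That is the precise role of the exponent hypothesis, and it is a Lipschitz-transport error, not a tail-probability bound.
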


\begin{rem}\label{rem:appl}
We remark that in our applications (in Section~\ref{sec:Eaton_proof}) the assumption that the sum
of positive exponents is less than one in Theorem~\ref{thm:Oseledets}  is automatically satisfied.
Indeed,  we apply Theorem~\ref{thm:Oseledets} to the KZ-cocycle
$A^{KZ}_W$ restricted to a two dimensional symplectic subspace $W$, which is symplectic orthogonal to the tautological bundle
(the two dimensional $SL_2(\R)$-invariant subbundle corresponding to extremal Lyapunov exponents $1$ and $-1$). Thus,
$A^{KZ}_W$ has at most one positive Lyapunov exponent which is less than one.
\end{rem}
In virtue of the above remark, Theorem~\ref{thm:Oseledets}  is sufficient for the purpose of the applications in this paper. Nevertheless, we state now a second Oseledets genericity result, for which the  assumption on Lyapunov exponents is \emph{not} required, but is replaced by a natural and verifiable condition on the subspace $W$ on which the action of the KZ cocycle is restricted.
This alternative statement  is based on the work \cite{EFW} by Eskin-Filip-Wright, which was not available when this paper was first written.  Using their work,  one can now prove that small variations on the proof of Theorem~\ref{thm:Oseledets} (explained in \S~\ref{sec:no_assumption}) allow to prove the following.

\smallskip
\begin{sloppypar}
Let $p:H^1(M,\Sigma,\R)\to H^1(M,\R)$ denote the forgetful map and let $T_\R M\subset H^1(M,\Sigma,\R)$ be the real part of the tangent space of $\mathcal{M}$ (see \S~\ref{sec:no_assumption} for more details).
\end{sloppypar}
\begin{thm}\label{thm:Oseledetsgen}
Assume that $\varphi:I\to\mathcal{M}$ is a curve well
approximated by horocycles  such that for a.e.\ $s\in I$ the
point $\varphi(s)\in\mathcal{M}$ is Birkhoff generic with respect to $(\mathcal M,\mu, a_t)$.
Suppose that $W\subset H_1(M,\R)$ is an $SL_2(\R)$-invariant symplectic subspace which is symplectic orthogonal to $p(T_\R M)\subset H_1(M,\R)$.  Then for
a.e.\ $s\in I$ one has
$\varphi(s)$ is Oseledets generic with respect to  $(\mathcal M, \mu,  a_t, A_W^{KZ})$.
\end{thm}
We believe that both Theorems~\ref{thm:Oseledets} and \ref{thm:Oseledetsgen} have further applications to billiards and systems of lenses. In particular, the assumption on $W$ in Theorem~\ref{thm:Oseledetsgen} is very natural since it is automatically satisfied in loci of translation surfaces which are branched covers (see \S~\ref{sec:no_assumption}  and in particular Lemma~\ref{lem:orthogonality} within), and these type of loci appear naturally when considering  systems with  inner symmetries.

\subsection{Genericity along curves in branched covers of a flat torus}\label{sec:Birkhoff_tori}
Let $M$ be a compact connected orientable surface of genus $2$ and
let $\Sigma\subset M$ be a two-point subset. Denote by
$\mathcal{M}^{dc}\subset\mathcal{M}(M,\Sigma)$ the (\emph{moduli})
\emph{space of double covers of flat tori} ramified over two
distinguished points, i.e.\ elements of $\mathcal{M}^{dc}$ are
represented by
translation surfaces of the form $(M,   q^*\omega_0)$, where
$(M_0, \omega_0)$ is a flat torus, $q:M\to M_0$ is a double
cover ramified over $q(\Sigma)$ and $q^*\omega_0$ is the
pullback of the Abelian differential $\omega_0$. If the area of
$\omega_0$ is $1$ then the area of $q^*\omega_0$ is $2$. Denote
by $\mathcal{M}_2^{dc}\subset \mathcal{M}^{dc}$ the subspace of
area $2$ translation surfaces.

Consider a translation surface $(M, \omega) $   given in
Figure~\ref{branchedcover}: opposite sides of the two
parallelograms  are identified by parallel translations, while the
sides of the slits are identified by parallel translations as
indicated in Figure~\ref{branchedcover}, i.e.\ between different
copies. The surface  $(M, \omega) $  has genus two and two conical
singularities of cone angle $4\pi$, which correspond to the
endpoints of the slit. Moreover, $(M,\omega)
\in\mathcal{M}^{dc}$.
\begin{figure}[h]
\includegraphics[width=0.6\textwidth]{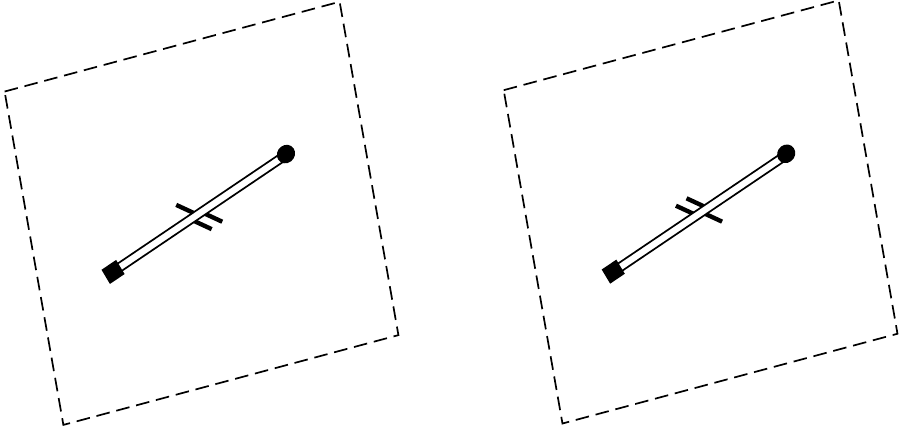}
\caption{A translation surface $(M, \omega)$ in the space $\mathcal{M}^{dc}$ of double covers of tori.}\label{branchedcover}
\end{figure}

The space $\mathcal{M}_2^{dc}\subset \mathcal{M}(M,\Sigma)$ is
closed and $SL_2(\R)$-invariant and under an assumption on
irrationality of $(M,\omega)\in \mathcal{M}_2^{dc}$ its   ${SL_2(\R)}$-orbit closure is  $\mathcal{M}_2^{dc}$.
In virtue of Lemma~\ref{lemma:correspondence}
below the  space $\mathcal{M}_2^{dc}$ and the dynamics  on
$\mathcal{M}_2^{dc}$ can be fully described in the language of
homogeneous spaces studied in \S~\ref{sec:Birkhoff}. Let $\mu_{dc}$ denote the affine $SL_2(\R)$-invariant
probability measure supported on $\mathcal{M}_2^{dc}$ (see
previous section~\ref{sec:translation_surf}).
Let
$G$, $\Gamma$ and $X$ be as in \S~\ref{sec:Birkhoff} and consider the lattice 
$\Gamma_2:=SL_2(\mathbb Z)\ltimes 2\mathbb Z^2$
which has  index $4$
 in $\Gamma$. Denote by $\mu_2$ the unique probability measure invariant under the left $G$-action on $G/\Gamma_2$. Let
\begin{align}\label{eq;mathcal x 2}
G':=G\setminus\{(h,h\Z^2):h\in SL_2(\R)\}\quad \text{and}\quad {X}_2:=G'\Gamma_2\subset G/\Gamma_2.
\end{align}
We remark here that $G'$ is an open subset of $G$ and  $\mu_2({X}_2)=1$.

\begin{lem}\label{lemma:correspondence}
There is a diffeomorphism $\Psi: \mathcal{M}_2^{dc}\to X_2$ such that:
\begin{enumerate}
\item[(i)] $\Psi$ commutes with $SL_2(\R)$-action;
\item[(ii)] $\Psi$ maps the natural affine measure $\mu_{dc}$ to $\mu_2$.
\end{enumerate}
\end{lem}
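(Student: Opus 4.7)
The plan is to construct the inverse $\Psi^{-1}:X_2\to\mathcal{M}_2^{dc}$ explicitly via a cut-and-paste, verify invariance under the right $\Gamma_2$-action, and then recover $\Psi$ together with the equivariance and measure statements using period coordinates. The key conceptual point is that the index-four sublattice $\Gamma_2\subset\Gamma$ (rather than $\Gamma$ itself) appears because branched double covers of a pointed elliptic curve form a four-element torsor for $(\Z/2)^2\cong\Lambda_h/2\Lambda_h$. Concretely, given a representative $(h,\xi)\in G'$, set $\Lambda_h=h\Z^2$ and let $(M_0,\omega_0)=(\R^2/\Lambda_h,dz)$ be the corresponding unit-area flat torus; by the condition defining $G'$ the vector $\xi\in\R^2$ descends to a non-zero point $w$ on $M_0$. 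Take two copies of a fundamental parallelogram for $\Lambda_h$, cut each along the segment from $0$ to $\xi$, identify parallel sides of each parallelogram standardly, and glue the two resulting slit tori cross-wise along the slits. This yields a genus-two, area-two translation surface $(M,q^*\omega_0)$ with two cone points of angle $4\pi$ at the endpoints of the slits, admitting an obvious double cover $q:M\to M_0$ ramified at $\{0,w\}$, so $(M,q^*\omega_0)\in\mathcal{M}_2^{dc}$.

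Next I verify that this construction descends to cosets. Right-multiplying $(h,\xi)$ by $(A,2n)\in\Gamma_2$ gives $(hA,\xi+2hn)$; the factor $A\in SL_2(\Z)$ merely re-bases $\Lambda_h$ (same torus), and the shift $\xi\mapsto\xi+2hn$ is realized by isotoping the slit so that it wraps twice further around $\Lambda_h$, producing a marked-isomorphic translation surface because the $\Z/2$ deck involution acts trivially on doubled classes in the relevant relative homology. By contrast, a shift $\xi\mapsto\xi+hn$ with $n\notin 2\Z^2$ changes the monodromy character $H_1(M_0,\Z)\to\Z/2$ of the cover, producing a non-isomorphic element of $\mathcal{M}_2^{dc}$ over the same branched torus; this matches the four-element torsor above and pins down $\Gamma_2$ as the correct group.

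With $\Psi^{-1}$ in hand, I define $\Psi$ via period coordinates: for $(M,q^*\omega_0)\in\mathcal{M}_2^{dc}$, choose a symplectic basis $(\alpha,\beta)$ of $H_1(M_0,\Z)$ and a saddle connection $\sigma\subset M$ joining the two points of $\Sigma$; take $h\in SL_2(\R)$ to be the period matrix of $\omega_0$ in the basis $(\alpha,\beta)$ (unimodular because $\omega_0$ has area one) and $\xi\in\R^2$ to be the relative period $\int_\sigma q^*\omega_0$ (under the identification $\C\cong\R^2$). The basis ambiguity acts on $h$ by right multiplication by $SL_2(\Z)$, while the ambiguity in the choice of $\sigma$ shifts $\xi$ by an element of $2\Lambda_h$ (by the argument of the preceding paragraph), so $\Psi(M,q^*\omega_0):=(h,\xi)\Gamma_2\in X_2$ is well-defined and is the two-sided inverse of $\Psi^{-1}$. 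Smoothness in both directions follows from smoothness of period coordinates on $\mathcal{M}(M,\Sigma)$. Claim (i) is then immediate: the $SL_2(\R)$-action on translation surfaces by postcomposition with flat charts multiplies every period on the left by $g\in SL_2(\R)$, so $\Psi(g\cdot(M,q^*\omega_0))=(gh,g\xi)\Gamma_2=(g,0)\cdot\Psi(M,q^*\omega_0)$.

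For claim (ii), both $\Psi_*\mu_{dc}$ and $\mu_2$ are $SL_2(\R)$-invariant Borel probability measures on $X_2$ absolutely continuous with respect to the Haar measure on $G/\Gamma_2$: for $\mu_2$ this is by definition, and for $\Psi_*\mu_{dc}$ because in period coordinates the affine measure $\mu_{dc}$ is the restriction of Lebesgue measure on $H^1(M,\Sigma;\R)$ to the area-two slice, which transports under $\Psi$ to Haar measure on $G$ (the Haar density of $ASL_2(\R)$ is also Lebesgue in the coordinates $(h,\xi)$). By ergodicity of the $SL_2(\R)$-action on $(G/\Gamma_2,\mu_2)$, the $SL_2(\R)$-invariant probability measure absolutely continuous with respect to Haar is unique, forcing $\Psi_*\mu_{dc}=\mu_2$. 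The main obstacle I anticipate is the $\Gamma_2$-verification of the second paragraph: identifying the four cosets in $\Lambda_h/2\Lambda_h$ with the four branched double covers over a fixed pointed torus, and justifying that double wrapping of the slit (but not single wrapping) preserves the marked isomorphism class in $\mathcal{M}_2^{dc}$.
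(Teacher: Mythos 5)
Your overall architecture matches the paper's: the four branched double covers over a fixed two-pointed torus form a torsor for $\Lambda_h/2\Lambda_h$, which pins down $\Gamma_2$, and the measure claim follows from uniqueness of the invariant probability measure. Your measure argument (absolute continuity with respect to Haar plus ergodicity of $\mu_2$ under $SL_2(\R)$ forces a constant Radon--Nikodym derivative) is a valid and slightly more elementary alternative to the paper's appeal to Ratner's measure classification. However, there is a genuine gap in your definition of $\Psi$ in the third paragraph. The relative period $\int_\sigma q^*\omega_0$ of an \emph{arbitrary} saddle connection $\sigma$ joining the two points of $\Sigma$ is well-defined only modulo $\Lambda_h$, not modulo $2\Lambda_h$. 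Indeed, each branch point has a \emph{unique} preimage in $M$ (the cover is ramified there), so every path in $M_0$ from $0$ to $w$ lifts to a path in $M$ from one cone point to the other; in particular, for every $\lambda\in\Lambda_h$ the straight segment in $M_0$ with displacement $\xi+\lambda$ (for generic $\lambda$) lifts to a saddle connection of $M$ with period $\xi+\lambda$. Thus the set of relative periods of saddle connections between the two cone points is essentially the full coset $\xi+\Lambda_h$, and your $\Psi$ as written only determines a point of $G/\Gamma$, losing exactly the $(\Z/2)^2$ worth of information that distinguishes the four covers. The justification ``by the argument of the preceding paragraph'' does not apply: that argument shows that two \emph{slits defining the same cover} have periods differing by an element of $2\Lambda_h$, which is a genuine constraint on $\sigma$, not a property shared by all saddle connections.

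The fix is to require $\sigma$ to be a saddle connection whose projection $q(\sigma)$ represents, modulo $2$, the monodromy class $\gamma\in H_1(\T^2_{\Lambda_h},\Sigma;\Z/2\Z)$ of the cover --- equivalently, a slit along which the cross-gluing construction of your first paragraph reproduces $(M,q^*\omega_0)$. For two such $\sigma,\sigma'$ the difference $[q(\sigma)]-[q(\sigma')]$ is an absolute class lying in $2H_1(\T^2_{\Lambda_h},\Z)$, so the periods differ by an element of $2\Lambda_h$ and $\Psi$ is then well-defined in $G/\Gamma_2$. This is precisely what the paper's formula $\Psi(M,\omega)=(h,\xi+h(n_1,n_2)^{\mathrm{tr}})$ accomplishes by decomposing $\gamma=\zeta_0+n_1\zeta_1+n_2\zeta_2$ against the class $\zeta_0$ of the chosen segment from $\mathbf 0$ to $\xi$. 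With this correction the remainder of your argument (equivariance from the left action on periods, and the measure identification) goes through.
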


\begin{proof}
Every translation surface $(M,\omega)$ in $\mathcal{M}^{dc}_2$ is
uniquely determined by:
\begin{itemize}
\item the translation structure of the torus, i.e.\ $\T^2_\Lambda=\R^2/\Lambda$, where $\Lambda=h(\Z^2)$ for $h \in SL_2(\R)$;
\item two distinct marked points $\sigma_0,\sigma_1\in\T^2_\Lambda$ so that $\sigma_0=\mathbf {0}+\Lambda$;
\item and finally the double cover ramified over $\Sigma:=\{\sigma_0,\sigma_1\}$ is given
by a relative homology class $\gamma\in
H_1(\T^2_\Lambda,\Sigma,\Z/2\Z)\setminus
H_1(\T^2_\Lambda,\Z/2\Z)$.
\end{itemize}
Let $\xi _1=h(1,0)^{\mathrm{tr}}$, $\xi _2=h(0,1)^{\mathrm{tr}}$ be the
vectors in $\R^2$ and choose ${\xi}\in \sigma_1\subset\R^2$ (the
coset $\sigma_1$ is treated as a subset in $\R^2$) in the
parallelogram generated by $\xi _1,\xi _2$.
Denote by $\zeta_0,\zeta_{1},\zeta_2\in
H_1(\T^2_\Lambda,\Sigma,\Z/2\Z)$ the homology classes given by the
projections on $\R^2/\Lambda$ the vectors
$\overrightarrow{{\mathbf 0}{\xi}}$, $\xi _1$ and
$\xi _2$ respectively. Since $\zeta_0,\zeta_{1},\zeta_2$
establish a basis of $H_1(\T^2_\Lambda,\Sigma,\Z/2\Z)$ and
$\gamma$ is not absolute we can decompose
\[\gamma=\zeta_0+n_1\zeta_1+n_2\zeta_2\text{ with }n_1,n_2\in \Z/2\Z.\]
Finally define the map $\Psi:\mathcal{M}^{dc}_2\to
G/\Gamma_2$ by
\[ \Psi(M,\omega) = (h,{\xi}+h(n_1,n_2)^{\mathrm{tr}}) \mod \Gamma_2.\]
This map is one-to-one and onto ${X}_2$ in (\ref{eq;mathcal x 2}),
so we will identify $\mathcal{M}^{dc}_2$ with ${X}_2$. Let us consider the subgroup
$SL_2(\R)\ltimes\{{\mathbf 0}\}\subset G$
which we identify with $SL_2(\R)$. This group defines a natural
left action of $SL_2(\R)$ on $G/\Gamma_2$ and ${X}_2$.
Moreover, this action coincides with the standard
$SL_2(\R)$-action on $\mathcal{M}^{dc}_2$  via the correspondence
between $\mathcal{M}^{dc}_2$ and ${X}_2$.

Since $\mu_{dc}$ is a smooth $SL_2(\R)$-invariant and ergodic probability measure, the measure
$\Psi_* \mu_{dc}$ is also $SL_2(\R)$-invariant and ergodic.
Moreover, $\Psi: \mathcal{M}_2^{dc}\to X_2$ is  diffeomorphism and the subset ${X}_2\subset G/\Gamma_2$
is open and has full $\mu_2$ measure. According to Ratner's
measure classification theorem every $SL_2(\R)$-invariant and ergodic probability
measure on $G/\Gamma_2$ is either $\mu_2$ or supported on a
proper closed sub-manifold. Therefore we have $\Psi_*\mu_{dc}=\mu_2$.
\end{proof}

\begin{rem}\label{rem:corresp}
By  Lemma \ref{lemma:correspondence},  Birkhoff genericity of
$(M,\omega)$ in  $(\mathcal{M}_2^{dc},\mu_{dc},a_t)$ is equivalent to the Birkhoff
genericity of $\Psi(M,\omega)$ in $(G/\Gamma_2,\mu_2, a_t)$.
\end{rem}

\begin{rem}
Let $\mathcal T_2(M, \Sigma)$ be the Teichm\"uller space of Abelian differentials with area $2$. Denote by $\pi: \mathcal T_2(M, \Sigma)\to \mathcal M_2(M,
\Sigma)$ the quotient map and let $\mathcal T_2^{dc}:=\pi^{-1}(\mathcal M_2^{dc})$. Then we can lift the diffeomorphism $\Psi: \mathcal{M}_2^{dc}\to X_2$
to $\widetilde{\Psi}: \mathcal{T}_2^{dc}\to G'$ which also commutes with $SL_2(\R)$ action. This gives a natural identification of the Teichm\"uller space
$\mathcal{T}_2^{dc}$ of the double covers of flat tori with the subset $G'$ of the group $G$. Furthermore, $\widetilde{\Psi}$ conjugates the action of
$\Gamma(M,\Sigma)$ on $\mathcal{T}_2^{dc}$  with the right $\Gamma_2$-action on $G'$.
\end{rem}


Let
\[W:=\{\zeta\in H_1(M,\R):\tau_*\zeta=-\zeta\},\]
where $\tau:M\to M$ is the only nontrivial element of the deck transformation group of any ramified double cover.
The subspace $W$ does not depend on the choice of ramified cover. Moreover, $W$ is two dimensional symplectic and
$SL_2(\R)$-invariant. Therefore we can consider the restricted Kontsevich-Zorich cocycle $A^{KZ}_W:\R\times\mathcal{M}^{dc}_2\to GL(W)$.
The following result, proved at the end of this section, gives an effective criterion for
curves in $\mathcal{M}^{dc}_2$ to be almost everywhere Birkhoff and Oseledets (for $A^{KZ}_W$) generic.
\begin{thm}\label{thm:BirOs}
Let $\varphi:I\to\mathcal{M}^{dc}_2$ be a $C^2$-curve such that
$\Psi\circ\varphi(s)=\psi(s)g\Gamma_2$, where $g\in G$  and
$\psi:I\to G$ is a $C^2$-curve such that $\det M_{\psi}(s)\neq 0$  for a.e.~$s\in I$.
Then $\varphi(s)\in \mathcal{M}^{dc}_2$ is Birkhoff
and  Oseledets
generic
for
a.e.~$s\in I$.
\end{thm}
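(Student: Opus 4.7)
The plan is to derive Theorem~\ref{thm:BirOs} by combining Corollary~\ref{cor:gencurve} (for Birkhoff) with Theorem~\ref{thm:Oseledets} (for Oseledets), using the dictionary $\Psi\colon \mathcal{M}^{dc}_2\to X_2$ from Lemma~\ref{lemma:correspondence}. For Birkhoff genericity, since $\Psi$ carries $\mu_{dc}$ to $\mu_2$ and intertwines the $a_t$-flows, Remark~\ref{rem:corresp} reduces the statement to Birkhoff genericity of $\psi(s)g\Gamma_2$ in $(G/\Gamma_2,\mu_2,a_t)$ for a.e.\ $s\in I$. The lattice $\Gamma_2$ has index $4$ in $\Gamma$, hence is commensurable with it, and the hypothesis $\det M_{\psi}(s)\neq 0$ a.e.\ is precisely the non-degeneracy assumed in Corollary~\ref{cor:gencurve}; applying that corollary with base point $g$ and lattice $\Gamma_2$ concludes the Birkhoff part.

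For Oseledets genericity I would apply Theorem~\ref{thm:Oseledets} to the restricted cocycle $A^{KZ}_W$. Two of its three hypotheses are automatic: the sum-of-positive-Lyapunov-exponents bound holds by Remark~\ref{rem:appl} (since $W$ is two-dimensional symplectic orthogonal to the tautological bundle, $A^{KZ}_W$ has at most one positive exponent, strictly less than $1$), and a.e.\ Birkhoff genericity along $\varphi$ was just established. The remaining and main task is to verify that $\varphi$ is well approximated by horocycles in the sense of Definition~\ref{def:well_approx}.

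I would lift $\varphi$ to $\widetilde\varphi(s):=\widetilde\Psi^{-1}(\psi(s)g)\in\mathcal{T}^{dc}_2$ on the full-measure open subset $\{s\in I : \psi(s)g\in G'\}$. Under $\widetilde\Psi$ the $SL_2(\R)$-action on $\mathcal{T}$ corresponds to left multiplication in $G$, and the horocycle element $u(-r)\in SL_2(\R)$ becomes $u(-r,0,0)\in G$. The conjugation identity $a_t^{-1}u(-r,0,0)\,a_t=u(-re^{-2t},0,0)$ reduces \eqref{assum:phi} to bounding the $G$-distance between $\psi(s+\epsilon)g$ and $u(-\epsilon,0,0)\psi(s)g$, with $\epsilon:=re^{-2t}$, after applying $a_t$. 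A Taylor expansion of the $C^2$-map $\psi$ in the matrix representation yields
\[
\psi(s+\epsilon)-u(-\epsilon,0,0)\psi(s)=\epsilon\bigl[\psi'(s)+U\psi(s)\bigr]+O(\epsilon^2),
\]
where $U$ denotes the Lie algebra generator of $\{u(\cdot,0,0)\}$. Since $a_t$ has operator norm $e^t$, the matrix-norm discrepancy between $a_t\psi(s+\epsilon)g$ and $u(-r,0,0)a_t\psi(s)g$ is of order $O(\epsilon e^t+\epsilon^2 e^t)=O(|r|e^{-t})$.

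The main obstacle is transferring this matrix-norm bound to the Teichm\"uller distance $d$ on $\mathcal{T}$: the $\Gamma_2$-invariance \eqref{assum:b2} lets one push the comparison into a fundamental domain of $G/\Gamma_2$ (absorbing $g$ up to a bounded factor), and the horocycle bound \eqref{assum:b4} together with short interpolating paths in $G$ between $a_t\psi(s+\epsilon)g$ and $u(-r)a_t\psi(s)g$ yields
\[
d\bigl(a_t\widetilde\varphi(s+re^{-2t}),\,u(-r)a_t\widetilde\varphi(s)\bigr)\leq C(\psi,g)|r|(1+|r|)e^{-t},
\]
which is \eqref{assum:phi} with $\rho=1$. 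The subtle point is controlling excursions of $a_t\widetilde\varphi(s)$ into the cusp of $G/\Gamma_2$, where the equivalence between matrix norms and $d$ may degenerate; this should be handled working only from the axioms \eqref{assum:b2}, \eqref{assum:b4} for $d$. Once the horocycle approximation is verified, all hypotheses of Theorem~\ref{thm:Oseledets} hold and Oseledets genericity of $\varphi(s)$ follows for a.e.\ $s\in I$, completing the proof.
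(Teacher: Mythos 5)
Your outline is correct — Birkhoff genericity via Corollary~\ref{cor:gencurve} and Remark~\ref{rem:corresp}, and Oseledets genericity via Theorem~\ref{thm:Oseledets} with Remark~\ref{rem:appl} handling the exponent bound — but your verification of the horocycle-approximation hypothesis \eqref{assum:phi} has a genuine gap in the norm estimate.

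The relevant quantity, using the right-invariance of $d$, is $\|a_t\cdot\bigl(Id-u(-\epsilon)\psi(s)\psi(s+\epsilon)^{-1}\bigr)\cdot a_{-t}\|$ with $\epsilon=re^{-2t}$ — this is a \emph{conjugation} by $a_t$, not a left-multiplication. Conjugation by $a_t$ scales the $(1,2)$ matrix entry by $e^{2t}$, the $(1,3)$ entry by $e^{t}$, the $(2,1)$ entry by $e^{-2t}$, etc.; it does not uniformly contribute a factor of $e^t$. Your Taylor expansion gives each entry of $\upsilon(\epsilon,s):=Id-u(-\epsilon)\psi(s)\psi(s+\epsilon)^{-1}$ as $O(\epsilon)$. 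After conjugation the $(1,2)$ entry then becomes $O(\epsilon e^{2t})=O(|r|)$, which does \emph{not} decay in $t$, so \eqref{assum:phi} would fail for a generic $C^2$-curve $\psi$ with $\det M_\psi\neq 0$. Your claimed bound $O(\epsilon e^t+\epsilon^2 e^t)=O(|r|e^{-t})$ is thus incorrect.

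The paper closes this gap with Lemma~\ref{lem;countable sub}: after passing to closed subintervals where $h_{11}h'_{12}-h_{12}h'_{11}\neq 0$, one reparametrizes by $\kappa$ solving $\kappa'(s)=1/(h_{11}h'_{12}-h_{12}h'_{11})(\kappa(s))$, so that in the new parameter $h_{11}h'_{12}-h_{12}h'_{11}\equiv 1$. A direct computation then shows $\tfrac{\partial}{\partial\epsilon}\upsilon_{12}(0,s)=1-h'_{11}h_{12}+h_{11}h'_{12}=0$, hence $\upsilon_{12}(\epsilon,s)=O(\epsilon^2)$ by Taylor, and $e^{2t}\upsilon_{12}=O(\epsilon^2 e^{2t})=O(r^2e^{-2t})$. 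The dominant surviving term is then the $(1,3)$ entry, $e^t\upsilon_{13}=O(|r|e^{-t})$, which gives exactly \eqref{assum:phi} with $\rho=1$. This reparametrization step — and the cancellation in the $(1,2)$ entry it produces — is indispensable; without it the well-approximation property does not hold.
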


In virtue of Masur's criterion for unique ergodicity (stated in
\S~\ref{sec:translation_surf}), we immediately
get the following.

\begin{cor}\label{cor:unique_ergodicity}
Let $\{(M_s, \omega_s), s \in I\}$ be a curve in the space of translation structures so that its
image in   $ \mathcal{M}_2^{dc}$  satisfies the assumption of $\varphi$ in Theorem~\ref{thm:BirOs}.
Then for a.e.~$s \in I$ the vertical flow on $(M_s, \omega_s)$ is uniquely ergodic.
\end{cor}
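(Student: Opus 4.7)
The plan is straightforward: this is essentially a direct application of Theorem~\ref{thm:BirOs} combined with Masur's criterion (Theorem~\ref{thm:Masur}). First I would apply Theorem~\ref{thm:BirOs} to the curve $\varphi:I\to\mathcal{M}_2^{dc}$ obtained by projecting $\{(M_s,\omega_s)\}_{s\in I}$ to the moduli space; the hypothesis of the corollary says exactly that $\varphi$ meets the nondegeneracy condition $\det M_\psi(s)\neq 0$ almost everywhere, so the theorem yields a full-measure subset $I_0\subset I$ such that $\varphi(s)$ is Birkhoff generic with respect to $(\mathcal{M}_2^{dc},\mu_{dc},a_t)$ for every $s\in I_0$.

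Fix $s\in I_0$. To invoke Masur's criterion I need to show that the forward Teichmüller orbit $\{a_t\cdot \varphi(s)\}_{t\geq 0}$ returns infinitely often to some compact subset of $\mathcal{M}_1(M,\Sigma)$. Choose a compact set $K\subset\mathcal{M}_2^{dc}$ with $\mu_{dc}(K)>0$ and a continuous compactly supported function $f:\mathcal{M}_2^{dc}\to[0,1]$ with $\supp f\supset K$ and $\int f\,d\mu_{dc}>0$; such an $f$ exists since $\mu_{dc}$ is a Radon probability measure. Birkhoff genericity of $\varphi(s)$ applied to $f$ gives
\[
\lim_{T\to\infty}\frac{1}{T}\int_0^T f(a_t\cdot\varphi(s))\,dt=\int f\,d\mu_{dc}>0,
\]
which forces the orbit $\{a_t\cdot\varphi(s)\}_{t\geq 0}$ to visit $\supp f$ (a compact set in $\mathcal{M}_1(M,\Sigma)$) for arbitrarily large $t$, hence infinitely often.

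Now apply Masur's criterion (Theorem~\ref{thm:Masur}) to the translation surface $(M_s,\omega_s)$, which represents $\varphi(s)$: the vertical flow on $(M_s,\omega_s)$ is uniquely ergodic. Since this holds for every $s\in I_0$ and $I_0$ has full measure in $I$, the corollary follows. There is no serious obstacle here; the only point worth checking is that Birkhoff genericity in $\mathcal{M}_2^{dc}$ (where the space of interest sits via the identification in Lemma~\ref{lemma:correspondence}) indeed implies recurrence to a compact set of the ambient stratum $\mathcal{M}_1(M,\Sigma)$ as required by Masur's criterion, but this is immediate because $\supp f$ is itself compact in the stratum.
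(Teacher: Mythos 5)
Your proof is correct and follows exactly the route the paper intends: apply Theorem~\ref{thm:BirOs} to get Birkhoff genericity for a.e.\ $s$, deduce recurrence to a compact subset of the stratum via a nonnegative compactly supported test function with positive integral, and conclude by Masur's criterion. The paper states this corollary as immediate from Masur's criterion, and your closing observation---that the recurrence must be to a compact set of the moduli space of translation surfaces rather than merely of the homogeneous space---is precisely the subtlety the authors flag in Remark~\ref{rk:Birkhoff_vs_recurrence}, and you handle it correctly.
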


This Corollary is used in \S~\ref{sec:ellipses_proof}  to prove the result on pseudo-integrable billiards (Theorem \ref{thm:ellbill}).  The already mentioned work   \cite{Mi-We14} by Minsky and Weiss gives a condition on curves in the space of translation surfaces (more precisely,  the result is stated for curves in the space of interval exchange transformations) which guarantees that for a.e. surface in the curve the vertical flow is uniquely ergodic. It might be possible to check the assumptions of their theorem in at least some special cases of curves coming from pseudo-integrable billiards. On the other hand, for the applications to Eaton lenses we need both  the Birkhoff genericity and Oseledets genericity result given by Theorem \ref{thm:main} in its full strength (in particular, Oseledets genericity requires Birkhoff genericity and not only recurrence as for Masur's unique ergodicity criterium).

\smallskip
The rest of the section is now devoted to proving
 Theorem \ref{thm:BirOs} using Theorem \ref{thm;equi} and Theorem \ref{thm:Oseledets}.

We fix a right invariant Riemannian metric on $G$ and use $d: G\times G\to \R_{\ge 0}$ to denote the corresponding metric. After rescaling we can assume
that $d(Id,u(1)\cdot Id)=1$. It is easy to see that the restriction of $d$ to $G'\times G'$ meets \eqref{assum:b2}-\eqref{assum:b4}. Since $G'$ is identified
with $\mathcal{T}_2^{dc}$ the transport of the metric $d$ to $\mathcal{T}_2^{dc}$, which will be also denoted by $d$, meets
\eqref{assum:b2}-\eqref{assum:b4} as well.

In the following lemma  we verify that the curve given in Theorem \ref{thm:BirOs} satisfies the
nondegeneracy condition.

\begin{lem}\label{lem;countable sub}
Let $\psi:I\to G$ ($I$ is a compact interval) be a $C^2$-curve of
the form $\psi(s)=( h(s),\xi(s))\cdot ( h_0,\xi_0)$. If
\begin{equation}\label{cond:det1}
 h_{11}(s) h'_{12}(s)- h_{12}(s) h'_{11}(s)\neq 0\quad\text{ for every }\quad s\in I
\end{equation} then there is a $C^2$-diffeomorphism  $\kappa: I_0\to I$ from some closed
interval $I_0$ such that  the curve $\psi\circ \kappa:I_0\to G$ satisfies the condition
\eqref{assum:phi} with respect to   metric  $d$ given above.
\end{lem}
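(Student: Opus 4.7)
The plan is to choose $\kappa$ so that the reparametrized curve $\tilde\psi:=\psi\circ\kappa$ has right-logarithmic derivative $\tilde X(s):=\tilde\psi'(s)\tilde\psi(s)^{-1}\in\mathfrak g$ whose component along the horocyclic direction $\mathbf{u}=\left(\begin{smallmatrix}0&1\\0&0\end{smallmatrix}\right)\in\mathfrak{sl}_2\subset\mathfrak g$ is identically $-1$. The motivation is the following Lie-algebra bookkeeping: $\operatorname{Ad}(a_t)$ expands $\mathbf u$ by $e^{2t}$, is trivial on the Cartan direction, contracts $\mathbf{u}^-$ by $e^{-2t}$, and scales the two affine coordinates of $\R^2\subset\mathfrak g$ by $e^{t}$ and $e^{-t}$ respectively. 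Hence $\mathbf u$ is the \emph{unique} direction in $\mathfrak g$ whose $\operatorname{Ad}(a_t)$-expansion rate exceeds $e^{t}$; once the $\mathbf u$-component of the first-order approximation of $\tilde\psi$ is arranged to exactly cancel that of the nearby horocycle $u(-\eta)$, the remaining first-order directions will only produce an $O(|r|e^{-t})$ error after conjugation by $a_t$, which is what \eqref{assum:phi} requires.

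Writing $\psi(s)=(h(s)h_0,\,h(s)\xi_0+\xi(s))$ and computing directly, the $\mathfrak{sl}_2$-part of $\psi'(s)\psi(s)^{-1}$ equals $h'(s)h(s)^{-1}$, whose top-right entry is $f(s):=h_{11}(s)h_{12}'(s)-h_{12}(s)h_{11}'(s)$, \emph{independently of $h_0$ and $\xi_0$}. By \eqref{cond:det1} and continuity, $f$ has constant sign on the compact interval $I$ and is bounded away from zero. I then let $\kappa$ be the maximal solution of the ODE $\kappa'(t)=-1/f(\kappa(t))$ starting at an endpoint of $I$. Since $f$ is $C^1$ (because $\psi$ is $C^2$) and bounded away from zero, this $\kappa$ is a $C^2$-diffeomorphism from a compact interval $I_0$ onto $I$. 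By construction $\tilde X(s)=\kappa'(s)\psi'(\kappa(s))\psi(\kappa(s))^{-1}$ has $\mathbf u$-component $\kappa'(s)f(\kappa(s))=-1$, so $Y(s):=\tilde X(s)+\mathbf u$ lies in the complementary $\operatorname{Ad}(a_t)$-invariant subspace where the norm grows at rate at most $e^{t}$.

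To verify \eqref{assum:phi}, set $\eta=re^{-2t}$. Combining the $C^2$-Taylor expansion in $G$ with the Baker--Campbell--Hausdorff formula gives, uniformly in $s\in I_0$ and in $r\in[-1,1]$,
\[\tilde\psi(s+\eta)\tilde\psi(s)^{-1}\cdot u(\eta)=\exp\bigl(\eta\tilde X(s)+\eta\mathbf u+O(\eta^2)\bigr)=\exp\bigl(\eta Y(s)+O(\eta^2)\bigr),\]
equivalently $\tilde\psi(s+\eta)\tilde\psi(s)^{-1}=\exp(\eta Y(s)+O(\eta^2))\cdot u(-\eta)$. Using the commutation $u(-\eta)a_t^{-1}=a_t^{-1}u(-r)$ together with the right-invariance of $d$, one obtains
\[d\bigl(a_t\tilde\psi(s+\eta),\,u(-r)a_t\tilde\psi(s)\bigr)=d\bigl(\exp\bigl(\operatorname{Ad}(a_t)[\eta Y(s)+O(\eta^2)]\bigr),\,e\bigr).\]
Since $Y(s)$ contains no $\mathbf u$-component, $\|\operatorname{Ad}(a_t)(\eta Y(s))\|\le C\eta e^{t}=C|r|e^{-t}$; the $O(\eta^2)$ Taylor remainder does contain a $\mathbf u$-component, but its contribution is at most $C\eta^2 e^{2t}=Cr^2 e^{-2t}$. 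Comparing $d(\exp(Z),e)$ with $\|Z\|$ for small $Z$ and summing these, one arrives at a bound of the form $C|r|(1+|r|)e^{-t}$ on the range $r\in[-1,1]$, establishing \eqref{assum:phi} with $\rho=1$.

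The main technical obstacle is precisely the second-order bookkeeping described above: the reparametrization removes only \emph{one} scalar degree of freedom (the $\mathbf u$-component of the first-order term), and one must convince oneself that the unavoidable $\mathbf u$-contamination in the Taylor/BCH remainder is already of order $\eta^2$, so that after the $e^{2t}$-expansion under $\operatorname{Ad}(a_t)$ it contributes only $r^2e^{-2t}$, which fits inside the $|r|(1+|r|^\rho)e^{-t}$ envelope. Everything else — the existence and $C^2$-regularity of $\kappa$, the specific form of $f$, and the standard BCH and right-invariance manipulations — is routine.
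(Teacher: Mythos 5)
Your proposal is correct and follows essentially the same route as the paper: reparametrize by solving the ODE that normalizes the Wronskian $h_{11}h_{12}'-h_{12}h_{11}'$ to a constant, observe that this kills the first-order term of $\psi(s+re^{-2t})\psi(s)^{-1}u(re^{-2t})$ in the $E_{12}$-direction (the only direction expanded by $\operatorname{Ad}(a_t)$ faster than $e^{t}$), and then check that the $O(\eta^2)$ remainder contributes only $r^2e^{-2t}$ after conjugation while the remaining first-order directions contribute $|r|e^{-t}$. The paper carries out the same cancellation via an explicit entry-by-entry Taylor estimate of $\upsilon(l,s)=Id-u(-l)\psi(s)\psi(s+l)^{-1}$ rather than your $\exp$/BCH bookkeeping, but the decomposition and the key step are identical.
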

\begin{proof}
Let us consider the ordinary differential equation
\[\kappa'(s)=1/( h_{11}(\kappa(s)) h'_{12}(\kappa(s))- h_{12}(\kappa(s)) h'_{11}(\kappa(s)))\]
and let $\kappa: I_0\to I$ be its solution.
After changing the parameter we can pass to the case where
$ h_{11}(s) h'_{12}(s)- h_{12}(s) h_{11}'(s)=1$ for
every $s$ and then we need to show that \eqref{assum:phi} holds.

Write  $l:=re^{-2t}$ and let us first estimate form above the norm
\begin{align*}
\|Id-&u(-r)\cdot a_t\cdot \psi(s)\cdot\psi(s+ l  )^{-1}\cdot a_{-t}\|\\
&=\|a_t\cdot(Id-u(- l  )\cdot
 \psi(s)\cdot\psi(s+ l  )^{-1})\cdot a_{-t}\|.
\end{align*}
Let
\[\upsilon( l  ,s):=Id-u(- l  )\cdot \psi(s)\cdot\psi(s+ l  )^{-1}\text{ and }\underline{\upsilon}( l  ,s):=Id-u(- l  )\cdot  h(s)\cdot h(s+ l  )^{-1}.\]
Since $\upsilon$ is of class $C^2$, $\upsilon(0,s)=0$ and
$\|\frac{\partial}{\partial  l  }\upsilon( l  ,s)\|\leq
C\|\psi\|^3_{C^1}$ for some $C>0$, by the mean value theorem,
\[\|\upsilon( l  ,s)\|\leq C\|\psi\|^3_{C^1}| l  |.\]
Moreover,
\[\frac{\partial}{\partial  l  }\underline{\upsilon}(0,s)=\begin{pmatrix}
  0 & 1 \\
  0 & 0
\end{pmatrix}- h(s)\cdot( h(s)^{-1})'\]
and $\|\frac{\partial^2}{\partial  l  ^2}\upsilon( l  ,s)\|\leq
C'\|\psi\|^2_{C^2}$ for some $C'>0$. It follows that
\[\frac{\partial}{\partial  l  }{\upsilon}_{12}(0,s)=\frac{\partial}{\partial  l  }\underline{\upsilon}_{12}(0,s)=
1- h'_{11}(s) h_{12}(s)+ h_{11}(s) h'_{12}(s)=0.\] Then, by
Taylor's formula,
\[|\upsilon_{12}( l  ,s)|\leq C'\|\psi\|^2_{C^2}| l  |^2.\]
Therefore,
\[a_t\cdot \upsilon( l  ,s)\cdot a_{-t}=\begin{bmatrix}
  \upsilon_{11}( l  ,s) & e^{2t}\upsilon_{12}( l  ,s) & e^t\upsilon_{13}( l  ,s) \\
  e^{-2t}\upsilon_{21}( l  ,s) & \upsilon_{22}( l  ,s) & e^{-t}\upsilon_{23}( l  ,s) \\
  0 & 0 & 0
\end{bmatrix}\]
with
\begin{align*}
|\upsilon_{11}( l  ,s)|\leq C\|\psi\|^3_{C^1}|r|e^{-2t},\ |e^{-2t}\upsilon_{21}( l  ,s)|\leq C\|\psi\|^3_{C^1}|r|e^{-4t}\\
 |e^{2t}\upsilon_{12}( l  ,s)|\leq C'\|\psi\|^2_{C^2}|r|^2e^{-2t},\ |\upsilon_{22}( l  ,s)|\leq C\|\psi\|^3_{C^1}|r|e^{-2t}\\
 |e^t\upsilon_{13}( l  ,s)|\leq
C\|\psi\|^3_{C^1}|r|e^{-t}, \ |e^{-t}\upsilon_{23}( l  ,s)|\leq
C\|\psi\|^3_{C^1}|r|e^{-3t}.
\end{align*}
It follows that there exists $C(\psi)>0$ such that
\[\|a_t\cdot \upsilon( l  ,s)\cdot a_{-t}\|\leq C(\psi)|r|(1+|r|)e^{-t}\leq 2C(\psi).\]
Moreover, there exists $C''>0$ such that for every $g\in G$ with $\|Id -g\|\leq 2C(\psi)$
we have $d(Id,g)\leq C''\|Id-g\|$. It follows that
\begin{align*}
d\big(&a_t\cdot\psi(s+re^{-2t}),u(-r)\cdot a_t\cdot\psi(s)\big)\\
&= d\big(Id,u(-r)\cdot a_t\cdot\psi(s)\cdot\psi(s+re^{-2t})^{-1}\cdot a_{-t}\big)\\
&\leq C''\|Id-u(-r)\cdot a_t\cdot\psi(s)\cdot\psi(s+re^{-2t})^{-1}\cdot a_{-t}\|\\
&=C''\|a_t\cdot \upsilon( l  ,s)\cdot a_{-t}\|\leq C'' C(\psi)|r|(1+|r|)e^{-t}.
\end{align*}
\end{proof}

\begin{proof}[Proof of Theorem~\ref{thm:BirOs}]
We choose  a countable collection $\{I_k\}_{k\in \N}$
of closed subinvervals of $I$ so that
$\bigcup_{k\in \N} I_k$ has full measure in $I$ and
(\ref{cond:det1}) holds for every $s\in I_k$.
We fix an interval $I_k$, it suffices to show that Theorem \ref{thm:BirOs} holds
for a.e.~$s\in I_k$. The Birkhoff genericity of $\varphi(s)$ for a.e.~$s\in I_k$ follows
from Corollary \ref{cor:gencurve} and Remark~\ref{rem:corresp}.
By Lemma \ref{lem;countable sub} and the correspondence between $X_2$ and $\mathcal{M}^{dc}_{2}$ the curve $\varphi|_{I_k}$
has a parameterization such that $ \varphi |_{I_k}\circ \kappa$ is well approximated by horocycles and $\kappa$ is a $C^2$-diffeomorphism.
Since $W$ is two dimensional and is the symplectic orthocomplement to the tautological bundle, using Theorem \ref{thm:Oseledets} together
with Remark~\ref{rem:appl} we conclude that for a.e.~$s\in I_k$, the element
$\varphi(s)\in \mathcal{M}^{dc}_{2}$ is Oseledets generic which completes the proof.
\end{proof}

\section{Ergodicity in elliptical billiards with barriers}\label{sec:ellipses_proof}
In this section we prove Theorem~\ref{thm:ellbill}  on unique
ergodicity in elliptical billiards with barriers. We exploit the reduction of these
family of billiards to polygonal billiards discovered by
Dragovi\'c and Radnovi\'c in \cite{Dra-Ra} and stated in
\S~\ref{sec:reduction_ellipses}. The billiard flow on each
$\mathcal{S}_\lambda$ is isomorphic to the vertical flow on a
surface from $\mathcal{M}^{dc}_2$, which gives a smooth curve in
$\mathcal{M}^{dc}_2$. In view of
Corollary~\ref{cor:unique_ergodicity}, it suffices to show that
the curve satisfies a non-degeneracy property.
In \S~\ref{sec:ellbillproof}, we prove the non-degeneracy
property for the aforementioned curve which yields
Theorem~\ref{thm:ellbill}.

\subsection{Reduction to a family of polygonal billiards}\label{sec:reduction_ellipses}
Let $\mathcal{D}_{\lambda_0}$ be the elliptic billiard table with
a barrier described in \S~\ref{sec:pseudo_integrable} and let
$\mathcal{S}_{\lambda}$ for $0<\lambda< a$ be one of its invariant
regions. The fundamental observation, made by Dragovi\'c and
Radnovi\'c in \cite{Dra-Ra}, is that the billiard flow on
$\mathcal{S}_{\lambda}$ can be reduced to a polygonal billiard
flow   by a suitable change of coordinates. The polygonal billiard
tables $\mathcal{P}_\lambda$ which are obtained after change of
coordinates (given by elliptic integrals) are either
a
non-planar billiard table $\mathcal{P}_\lambda$ which is a
cylinder with a vertical slit (for $0<\lambda< b$),
shown in Figure~\ref{EllipticBilliard},
or for  $b<\lambda<a$,
a rectangular
billiard table with a vertical slit, shown in
Figure~\ref{EllipticBilliardHyp}. Billiard trajectories in
$\mathcal{S}_{\lambda}$, which are tangent to the caustic
$\mathcal{C}_{\lambda}$, are mapped to billiard trajectories in
$\mathcal{P}_\lambda$ all in the \emph{same} family of directions
$\pm\pi/4$, $\pm3\pi/4$. More precisely, we have the following.

\begin{figure}[h]
\includegraphics[width=0.8\textwidth]{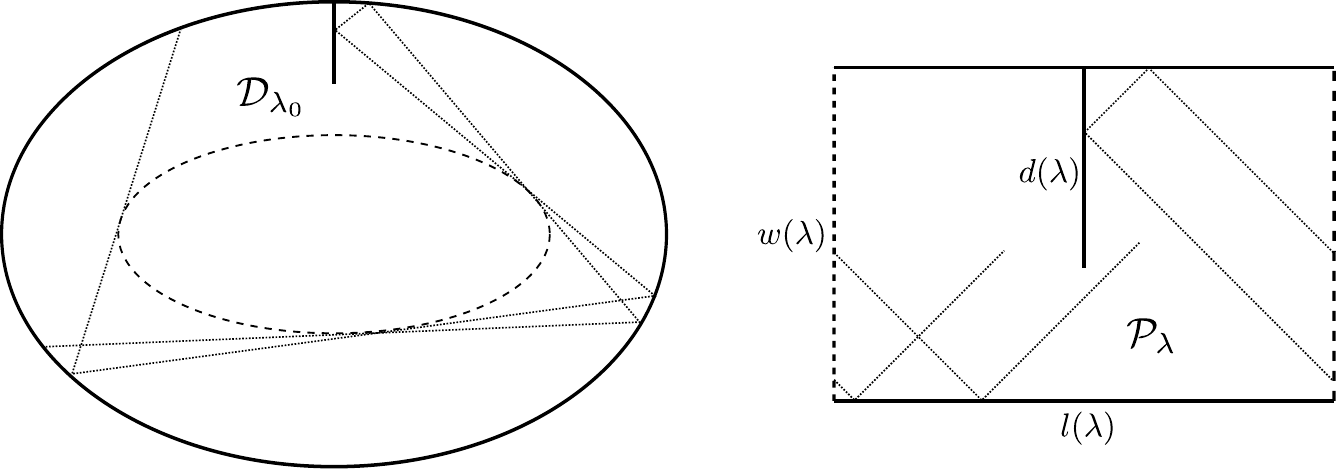}
\caption{}\label{EllipticBilliard}
\end{figure}
\begin{figure}[h]
\includegraphics[width=0.8\textwidth]{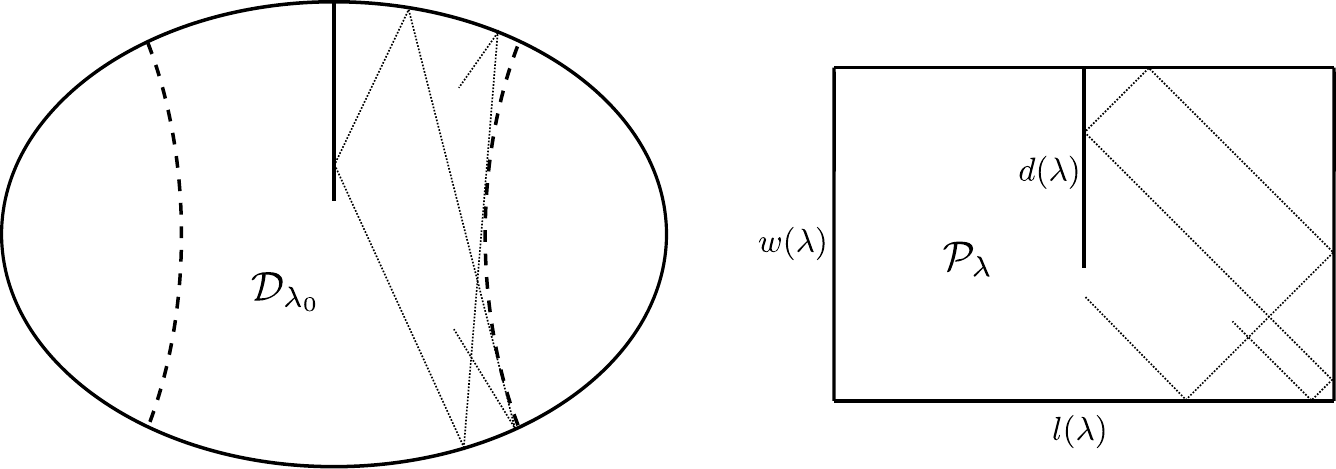}
\caption{}\label{EllipticBilliardHyp}
\end{figure}

\begin{prop}[c.f.\ \cite{Dra-Ra}]\label{prop:DraRa_reduction}
The billiard flow in $\mathcal{D}_{\lambda_0}$ restricted to the invariant region  $\mathcal{S}_\lambda$  is  isomorphic to the  a billiard flow in
directions $\pm\pi/4$, $\pm3\pi/4$ inside a polygonal billiard $\mathcal{P}_\lambda$ as follows:

\begin{itemize}
\item[{\bf (E)}] If $\lambda_0<\lambda< b$ then $\mathcal{P}_\lambda$ is the cylinder shown in Figure~\ref{EllipticBilliard} with length
\[l(\lambda)=4\int_b^a\frac{ds}{\sqrt{(a-s)(b-s)(\lambda-s)}}=4\int_{-\infty}^\lambda\frac{ds}{\sqrt{(a-s)(b-s)(\lambda-s)}}\]
and width
\begin{align*}
w(\lambda)&=\int_{0}^{\lambda}\frac{ds}{\sqrt{(a-s)(b-s)(\lambda-s)}}\\
&=\int_b^a\frac{ds}{\sqrt{(a-s)(b-s)(\lambda-s)}}-\int_{-\infty}^0\frac{ds}{\sqrt{(a-s)(b-s)(\lambda-s)}}
\end{align*}
and a linear vertical obstacle of length
\[d(\lambda)=\int_{0}^{\lambda_0}\frac{ds}{\sqrt{(a-s)(b-s)(\lambda-s)}}.\]

\item[{\bf (E')}] If $0<\lambda\leq\lambda_0$ then $\mathcal{P}_\lambda$ is a rectangle such that its length $l(\lambda)$ and width $w(\lambda)$ are the same as in the previous case.

\item[{\bf (H)}] If $b<\lambda<a$ then $\mathcal{P}_\lambda$ is the rectangle in Figure~\ref{EllipticBilliardHyp}  with length
\[l(\lambda)=2\int_\lambda^a\frac{ds}{\sqrt{(a-s)(b-s)(\lambda-s)}}=2\int_{-\infty}^b\frac{ds}{\sqrt{(a-s)(b-s)(\lambda-s)}}\]
and width
\[w(\lambda)=2\int_{0}^{b}\frac{ds}{\sqrt{(a-s)(b-s)(\lambda-s)}}\]
and a  linear vertical obstacle of length
\[d(\lambda)=\int_{0}^{\lambda_0}\frac{ds}{\sqrt{(a-s)(b-s)(\lambda-s)}}.\]
\end{itemize}
\end{prop}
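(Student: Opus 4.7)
The strategy is to follow the approach of Dragovi\'c--Radnovi\'c in \cite{Dra-Ra} and perform the classical rectification of the integrable ellipse billiard via confocal elliptic coordinates, keeping careful track of the vertical barrier. First I would introduce elliptic coordinates $(\mu_1,\mu_2)$ with $0\le \mu_1\le b\le \mu_2\le a$ on $\mathcal{C}_0$, defined by $x^2=(a-\mu_1)(a-\mu_2)/(a-b)$ and $y^2=(b-\mu_1)(\mu_2-b)/(a-b)$. The standard computation shows that a billiard trajectory in $\mathcal{D}_{\lambda_0}$ tangent to the caustic $\mathcal{C}_\lambda$ has one elliptic coordinate constant along the motion: $\mu_1\equiv\lambda$ when $0<\lambda\le b$ (ellipse caustic) and $\mu_2\equiv\lambda$ when $b<\lambda<a$ (hyperbola caustic). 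Hence each invariant region $\mathcal{S}_\lambda$ is a one-parameter family of orbits fibred over the range of the remaining free coordinate.

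Next I would rectify the free coordinate $\mu$ by setting $du=d\mu/\sqrt{|(a-\mu)(b-\mu)(\lambda-\mu)|}$ and introducing an analogous arc-length parameter $v$ along the caustic level set. Separation of variables in the Liouville--Arnold integrable structure shows that in the $(u,v)$-coordinates the flow becomes linear with slope $\pm 1$, and reflections against $\mathcal{C}_0$ become reflections against coordinate segments; this produces the four directions $\pm\pi/4,\pm 3\pi/4$ after normalising both Hamilton equations to unit speed. Then I would compute the image domain in each case: for $b<\lambda<a$ the free coordinate $\mu=\mu_1$ varies over $[0,b]$ and the trajectory reflects between the two hyperbola arcs of $\mathcal{C}_\lambda\cap\mathcal{C}_0$, yielding the rectangle of case (H); for $0<\lambda\le b$ the free coordinate $\mu=\mu_2$ varies over $[b,a]$ and the trajectory can wind around the ellipse, giving the cylinder structure of case (E). The two equivalent integral expressions for $l(\lambda)$ and $w(\lambda)$ come from the standard period relations on the elliptic curve $\eta^2=(a-s)(b-s)(\lambda-s)$, which allow one to replace $\int_b^a$ by $\int_{-\infty}^\lambda$ up to contour deformation.

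Finally I would locate the vertical barrier. The obstacle $\{x=0,\;0\le y\le\sqrt{b}-\sqrt{b-\lambda_0}\}$ sits on the segment $\mu_2=b$ and corresponds precisely to $\mu_1\in[0,\lambda_0]$; integrating $du$ over this interval gives the barrier length $d(\lambda)=\int_0^{\lambda_0}d\mu/\sqrt{(a-\mu)(b-\mu)(\lambda-\mu)}$. The dichotomy between (E) and (E') is then forced by whether a trajectory can wind past the barrier: if $\lambda_0<\lambda<b$, the barrier is strictly shorter than the accessible vertical range and the trajectory can pass over it, so $\mathcal{P}_\lambda$ is the slit cylinder; if $0<\lambda\le\lambda_0$, the barrier spans the full height of the fibred region, the winding is forbidden, and the cylinder collapses to a plain rectangle with the same side lengths. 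The main obstacle I expect is the combinatorial bookkeeping in the rectification, especially verifying the equivalent integral expressions for $l(\lambda)$ and $w(\lambda)$ and handling the change of which elliptic coordinate is conserved as $\lambda$ crosses $b$; since the qualitative reduction is established in \cite{Dra-Ra}, the task reduces to checking that the limits of integration coincide with the geometric ranges described above.
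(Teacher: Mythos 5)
The paper does not actually prove Proposition~\ref{prop:DraRa_reduction}: it is imported, formulas included, from Dragovi\'c--Radnovi\'c \cite{Dra-Ra} (note the ``c.f.''), so there is no in-paper argument to compare yours against. Your sketch follows the route that does underlie \cite{Dra-Ra} --- separation of variables in confocal elliptic coordinates and rectification by the differential $ds/\sqrt{(a-s)(b-s)(\lambda-s)}$ --- and the identifications of $w$, $l$, $d$ with rectified coordinate ranges, the period identities $\int_{-\infty}^{\lambda}=\int_b^a$ and $\int_{-\infty}^{b}=\int_{\lambda}^{a}$, and the (E)/(E') dichotomy via whether the barrier spans the full width $[0,\lambda]$ of the annulus are all correct in substance.

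There are, however, two concrete misstatements you would need to repair. First, it is false that ``one elliptic coordinate is constant along the motion'': read literally, $\mu_1\equiv\lambda$ would place the entire orbit on the caustic. The conserved quantity is the caustic parameter $\lambda$ as a first integral on \emph{phase space}; in position both $\mu_1$ and $\mu_2$ oscillate, and tangency to $\mathcal{C}_\lambda$ confines one of them to a $\lambda$-dependent interval ($\mu_1\in[0,\lambda]$ for an ellipse caustic, $\mu_2\in[\lambda,a]$ for a hyperbola caustic) while the other runs over its full range; it is precisely the rectified lengths of these intervals that produce $w(\lambda)$ and $l(\lambda)$. Your later sentences describe this correct picture, contradicting the earlier claim. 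Second, in your own parametrization $x^2=(a-\mu_1)(a-\mu_2)/(a-b)$ one has $x=0$ iff $\mu_2=a$, whereas $\mu_2=b$ is the portion of the major axis beyond the foci; the barrier therefore lies on $\{\mu_2=a,\ \mu_1\in[0,\lambda_0]\}$, which is what places the slit at the minor-axis end of the $\mu_2$-direction as in Figures~\ref{EllipticBilliard} and \ref{EllipticBilliardHyp}. Finally, the factors $4$ and $2$ in $l(\lambda)$, and the cylinder-versus-rectangle distinction, come from the reflection symmetries $x\mapsto -x$, $y\mapsto -y$ together with the annular versus simply connected topology of the trapping region; this is exactly the ``combinatorial bookkeeping'' you defer, and since this paper itself defers to \cite{Dra-Ra}, that is where it has to be checked.
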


The billiards on $\mathcal{P}_\lambda$, $\lambda_0<\lambda< b$,
are \emph{rational polygonal billiards}, i.e.\ the order of the
group generated by reflections at the polygon sides is finite (in
this case it is four).  A standard procedure, known as
\emph{unfolding} or \emph{Katok-Zemlyakov construction} (described
in~\cite{Fox-Ker} and \cite{Ka-Ze}), allows one to reduce a billiard in
a \emph{rational} polygon to a linear flow on a translation
surface.  In the case of the billiards $\mathcal{P}_\lambda$  in
Proposition~\ref{prop:DraRa_reduction}, the unfolding procedure
consists in taking four copies of the billiard, one for each
element of the group of reflections and gluing them. The resulting
surface are shown in Figure~\ref{SurfEandH}.

\begin{figure}[h]
\includegraphics[width=0.8\textwidth]{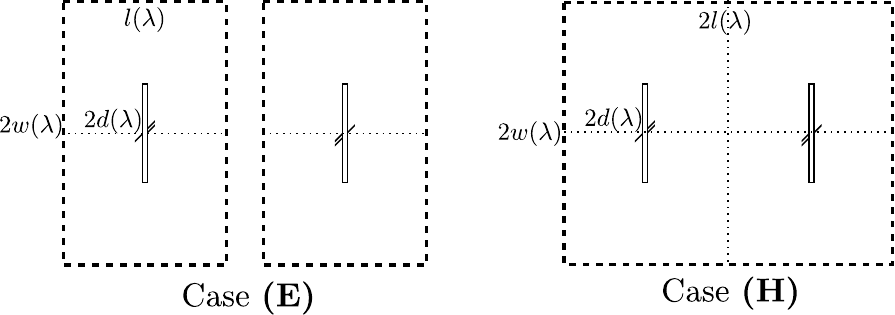}
\caption{The surface $({M}_\lambda,
\omega_\lambda)$}\label{SurfEandH}
\end{figure}

Thus, the unfolding procedure reduces the family of billiard flows in Proposition~\ref{prop:DraRa_reduction} to a family of
directional flows in a \emph{fixed} direction $\pi/4$ on   the
translation surfaces $({M}_\lambda, \omega_\lambda)\in
\mathcal{M}^{dc}$, in Figure~\ref{SurfEandH}, parameterized by
$\lambda_0<\lambda< b$. The case {\bf(E')}, where
$0<\lambda<\lambda_0$, should be treated separately, then
$(M_\lambda,\omega_\lambda)$ is a torus. For $\lambda_0<\lambda<
b$ we will assume that $(M_\lambda,\omega_\lambda)$ is rescaled so
that its area is two, and then rotated by $\pi/4$ so that the
linear flow in direction $\pi/4$ is mapped to the vertical linear
flow and still use the same notation $({M}_\lambda,
\omega_\lambda)$. Then these surfaces all belong to the moduli
space $\mathcal{M}^{dc}_2$ of double covers of flat tori,
described in~\ref{sec:Birkhoff_tori}.


Let us remark that given a \emph{fixed} translation surface  $(M,
\omega)$, for almost every direction $\theta \in [0, 2\pi]$ the linear
flow in direction $\theta$ is known to be uniquely ergodic by a
celebrated result by  Kerckhoff, Masur and Smillie
\cite{Ke-Ma-Sm} (or, in virtue of the recent result by Chaika and
Eskin, by Birkhoff genericity for a.e. point of the curve
$r_\theta (M, \omega)$ and Masur's ergodicity criterion, see
\S~\ref{sec:translation_surf}). Proving unique ergodicity of the
elliptical billiard flow on the invariant region
$\mathcal{S}_{\lambda}$ for almost every $\lambda_0<\lambda< b$,
on the other hand, is equivalent to proving unique ergodicity of the vertical flow on $({M}_\lambda, \omega_\lambda)$. By
 Masur's criterion it suffices to prove  Birkhoff genericity for  almost every point
on the curve  $\gamma:= \{({M}_\lambda, \omega_\lambda), \lambda \in
(\lambda_0, b) \}$ (see also Remark~\ref{rk:Birkhoff_vs_recurrence}). This curve $\gamma$ is explicitly described  in terms
of the coordinates given by the identification of
$\mathcal{M}^{cd}_2$ with the subset ${X}_2$ of the
homogeneous space $G/\Gamma_2$ in Lemma \ref{lemma:correspondence} (see \S \ref{sec:ellbillproof}). In \S~\ref{sec:ellbillproof} we verify that $\gamma$ satisfies the non-degeneracy assumptions of Corollary~\ref{cor:unique_ergodicity}, to conclude that almost every point is Birkhoff generic.

\begin{rem}\label{rk:Birkhoff_vs_recurrence}
In order to apply Masur's criterion, it is in principle enough to show that almost every point on the curve $\gamma$ is \emph{recurrent} under the Teichm\"uller flow (see Theorem~\ref{thm:Masur}). Let us remark though that recurrence under the diagonal flow in the homogeneous space $G/\Gamma_2$ (which is well-known) is \emph{not sufficient}. Indeed,  from Lemma \ref{lemma:correspondence} we know that the space $\mathcal{M}^{dc}_2$ is  isomorphic to a \emph{subset} $X_2\subset G/\Gamma_2$ obtained by \emph{removing} $2$  closed $SL_2(\R)$-{orbits}
which correspond to the locus of flat tori where the marked point coincide with the origin of the lattice. One can hence construct a recurrent diagonal orbit in $ G/\Gamma_2$  which approaches the removed set $H$ and lift it to a Teichm\"uller geodesics in $\mathcal{M}^{dc}_2$ which is \emph{divergent} (since the separating slit endpoints collide towards a pinched surface).

On the other hand, to get recurrence in $\mathcal{M}^{dc}_2$  it is sufficient to prove \emph{density}  in the homogeneous setup (i.e.\ under the diagonal flow $a_t$ for almost every point on the curve $\Psi \gamma$ in $ G/\Gamma_2$, the image of  $\gamma$ by the isomorphism $\Psi$ coming from Lemma~\ref{lemma:correspondence}). Density could in principle be deduced by equidistribution of the normalized volume measure on  $a_t\Psi \gamma$ as $t\to \infty$ using similar arguments as in  Elkies-McMullen \cite{em04},
thus yielding a different strategy of proof.
 We here prove and exploit the stronger conclusion of almost everywhere Birkhoff genericity on $\gamma$, since the criterium used to prove it  (Theorem~\ref{thm:BirOs}) is also needed  for the Eaton application in \S \ref{sec:Eaton_proof}, for which Birkhoff genericity is needed in its full strength.

\end{rem}


\subsection{Non-degeneracy of the curves in the space of double torus
covers.}\label{sec:ellbillproof} In this section we conclude the
proof of Theorem~\ref{thm:ellbill}. Let us consider the curve
\[
(\lambda_0,b)\cup (b,a)\ni
\lambda \mapsto (M_\lambda, \omega_\lambda)\in \mathcal{M}^{dc}_2
\]
\begin{sloppypar}
described in \S~\ref{sec:reduction_ellipses}. In view of
Corollary~\ref{cor:unique_ergodicity}, it suffices to check the
non-degeneracy assumptions for the curve
$\lambda\mapsto \Psi({M}_\lambda,
\omega_\lambda)$, where $\Psi:\mathcal{M}^{dc}_2\to X_2\subset G/\Gamma_2$
is the correspondence described in Lemma \ref{lemma:correspondence}.
Since
$(M_\lambda,\omega_\lambda)$ is a the surface after rotation by
$\pi/4$ one of the surfaces shown in Figure~\ref{SurfEandH}, its
image
is of the form $\psi(\lambda)\Gamma_2$, where
\end{sloppypar}
\begin{align*}
\psi(\lambda)=\left(\begin{pmatrix}
  \frac{l(\lambda)}{r(\lambda)} & -2\frac{w(\lambda)}{r(\lambda)} \\
  \frac{l(\lambda)}{r(\lambda)} & 2\frac{w(\lambda)}{r(\lambda)}
\end{pmatrix},
\begin{pmatrix}
  -2\frac{d(\lambda)}{r(\lambda)} \\
  2\frac{d(\lambda)}{r(\lambda)}
\end{pmatrix}
\right)
\end{align*}
 with
$r(\lambda)=2\sqrt{l(\lambda)w(\lambda)}$.

We will  show that for
 $\lambda\in(\lambda_0,b)\cup (b,a)$ the  determinant of  $M_{\psi}(\lambda)$ (see (\ref{eq;M_psi})) is  non-zero. The
following result will help us to verify this requirement.

\begin{lem}\label{lem:nonzerodet}
Let $f:(c_1,c_2)\cup(c_3,c_4)\to\R$ ($-\infty\le c_1<c_2<c_3<c_4\le \infty$) be a
positive  continuous function such that
$
\int_{c_1}^{c_2}f(s)\dd s$
and
 $ \int_{c_3}^{c_4}f(s)\dd s
$ are finite.
  Suppose that $\{A_i:1\leq i\leq
k\}$ is a family of pairwise disjoint subintervals of
$(c_1,c_2)\cup(c_3,c_4)$. Then for every $\lambda\in(c_2,c_3)$ we
have
\begin{equation}\label{ineq:int}
\det\left[\int_{A_{i}}\frac{f(s)\,ds}{(\lambda-s)^{j-1}}\right]_{i,j=1,\ldots,k}\neq 0.
\end{equation}
\end{lem}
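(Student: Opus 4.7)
The plan is to argue by contradiction, reducing the non-vanishing of the determinant to a counting statement about zeros of a polynomial. Suppose the determinant vanishes for some $\lambda \in (c_2,c_3)$; then the columns of the matrix are linearly dependent, so there exist real numbers $\alpha_1,\ldots,\alpha_k$, not all zero, with
\begin{equation*}
\int_{A_i} f(s)\, P\!\left(\tfrac{1}{\lambda-s}\right) ds \;=\; \sum_{j=1}^k \alpha_j \int_{A_i} \frac{f(s)\,ds}{(\lambda-s)^{j-1}} \;=\; 0 \qquad (i=1,\ldots,k),
\end{equation*}
where $P(x) := \alpha_1 + \alpha_2 x + \cdots + \alpha_k x^{k-1}$ is a nonzero polynomial of degree at most $k-1$.

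The key observation is that, since $\lambda \in (c_2,c_3)$ lies outside $\bigcup_i A_i$, the function $s \mapsto P(1/(\lambda-s))$ is continuous on each $A_i$, and since $f$ is strictly positive, the vanishing of the integral over $A_i$ forces $P(1/(\lambda-s))$ either to change sign on $A_i$, or to vanish identically there. The latter would give infinitely many roots of $P$ (because $s \mapsto 1/(\lambda-s)$ is injective on $A_i$), hence $P \equiv 0$, a contradiction. So by the intermediate value theorem $s \mapsto P(1/(\lambda-s))$ has at least one zero in each of the $k$ pairwise disjoint intervals $A_i$.

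To finish, I would note that $P(1/(\lambda-s))=0$ if and only if $1/(\lambda-s)$ is one of the roots of $P$; since $P$ is a nonzero polynomial of degree at most $k-1$ it has at most $k-1$ roots, so the equation has at most $k-1$ solutions in $s$. This contradicts the existence of $k$ such zeros. The only step that requires any thought is the translation from linear dependence of the matrix columns to the polynomial identity involving $1/(\lambda-s)$; once that substitution is in place the rest is a routine combination of the intermediate value theorem with the fundamental theorem of algebra, and I do not anticipate any genuine obstacle.
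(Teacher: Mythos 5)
Your proof is correct, but it takes a genuinely different route from the paper's. The paper expands the determinant via an Andr\'eief-type identity combined with the Vandermonde formula,
\[
\det\left[\int_{A_i}\frac{f(s)\,ds}{(\lambda-s)^{j-1}}\right]_{i,j}
=\int_{\prod_i A_i}\prod_{i=1}^k f(s_i)\prod_{1\le j<i\le k}\Big(\frac{1}{\lambda-s_i}-\frac{1}{\lambda-s_j}\Big)\,ds_1\cdots ds_k,
\]
and then observes that, because the $A_i$ are pairwise disjoint and avoid $\lambda$, each factor $\frac{1}{\lambda-s_i}-\frac{1}{\lambda-s_j}$ has constant sign on $A_i\times A_j$, so the integrand has constant sign and the integral cannot vanish. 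Your argument is the dual, Chebyshev-system one: a vanishing determinant produces a non-trivial relation among the columns, i.e.\ a non-zero polynomial $P$ of degree at most $k-1$ with $\int_{A_i}f(s)\,P(1/(\lambda-s))\,ds=0$ for each $i$; positivity of $f$, continuity, and the intermediate value theorem force a zero of $s\mapsto P(1/(\lambda-s))$ in each of the $k$ disjoint intervals (you correctly dispose of the degenerate case where it vanishes identically on some $A_i$, which would give $P$ infinitely many roots); injectivity of $s\mapsto 1/(\lambda-s)$ on $\R\setminus\{\lambda\}$ then yields $k$ distinct roots of $P$, contradicting $\deg P\le k-1$. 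Your approach is more elementary in that it needs no determinant identities, and it establishes that the functions $(\lambda-s)^{-(j-1)}$ form a Chebyshev system on $(c_1,c_2)\cup(c_3,c_4)$; the paper's computation buys slightly more, namely an explicit sign for the determinant once the intervals are ordered, though that extra information is not used in the applications.
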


\begin{proof}
By the Vandermonde determinant formula, we have
\begin{align*}
\det&\left[\int_{A_{i}}\frac{f(s)\dd s}{(\lambda-s)^{j-1}}\right]_{1\leq i,j\leq k}=
\int_{\prod_{i=1}^kA_{i}}\det\left[\frac{g(s_i)}{(\lambda-s_i)^{j-1}}\right]_{1\leq i,j\leq k}
\dd s_1\ldots \dd s_k\\
&=\int_{\prod_{i=1}^kA_{i}}\prod_{i=1}^k g(s_i)\prod_{1\leq j<i\leq k}\Big(\frac{1}{\lambda-s_i}-\frac{1}{\lambda-s_{j}}\Big)\dd s_1\ldots \dd s_k.
\end{align*}
Since  for $j<i$ the intervals $A_{i}$, $A_{i'}$ are disjoint and do not contain $\lambda$,  the map
\[
A_{i}\times A_{j}\to \R \qquad
(s_i,s_{j})\mapsto\frac{1}{(\lambda-s_i)}-\frac{1}{(\lambda-s_{j})}
\]
is of constant sign. Therefore, the integrated function is of constant sign as well (as the product of such functions).
This gives the non-vanishing of the
integral.
\end{proof}

\begin{rem}\label{rem:Ml}
\begin{sloppypar}
Suppose that $a,b,c,r:I\to\R$ are  $C^2$-functions such that $r$ takes only non-zero values. Then $M_{ra,rb,rc}(s)=r(s)U(s)M_{a,b,c}(s)$, where $U(s)$ is a
lower unitriangular matrix. It follows that non-vanishing of the determinant of $M_{ra,rb,rc}(s)$ is inherited by the matrix $M_{a,b,c}(s)$. This
observation  will be used in the following lemma.
\end{sloppypar}
\end{rem}

\begin{lem}\label{lem:concave}
The determinant of  $M_{\psi}(\lambda)$  is
non-zero for every  $\lambda\in(\lambda_0,b)\cup(b,a)$.
\end{lem}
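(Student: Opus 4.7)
The plan is to reduce the Wronskian $\det M_\psi(\lambda)$ to a Vandermonde-type determinant in the spirit of Lemma~\ref{lem:nonzerodet}. First, by Remark~\ref{rem:Ml} we can factor $1/r(\lambda)$ out of the three relevant entries of $\psi$, and then pull the constants $-2$ out of columns 2 and 3, so the claim reduces to showing $W(l,w,d)(\lambda)\neq 0$ for every $\lambda\in(\lambda_0,b)\cup(b,a)$.

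Next, using the alternative integral representations given in Proposition~\ref{prop:DraRa_reduction}, I would rewrite $l, w, d$ as integer linear combinations of integrals of $f_\lambda(s)=1/\sqrt{(a-s)(b-s)(\lambda-s)}$ over three pairwise disjoint intervals none of which contains $\lambda$. For case {\bf (E)} with $\lambda\in(\lambda_0,b)$, the identity $w=\int_b^a f_\lambda\,ds-\int_{-\infty}^0 f_\lambda\,ds$ gives $l=4I_1$, $w=I_1-I_2$, $d=I_3$ with $I_1=\int_b^a f_\lambda\,ds$, $I_2=\int_{-\infty}^0 f_\lambda\,ds$, $I_3=\int_0^{\lambda_0}f_\lambda\,ds$, and column operations then give $W(l,w,d)=-4\,W(I_1,I_2,I_3)$. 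For case {\bf (H)} with $\lambda\in(b,a)$, the decomposition $\int_{-\infty}^b=\int_{-\infty}^0+\int_0^{\lambda_0}+\int_{\lambda_0}^b$ combined with the stated alternative for $l$ yields $l=2(K_1+K_2+K_3)$, $w=2(K_2+K_3)$, $d=K_3$ where $K_1, K_2, K_3$ are the three corresponding integrals, and column operations give $W(l,w,d)=4\,W(K_1,K_2,K_3)$.

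Finally, I would prove that $W(I_1,I_2,I_3)$ and $W(K_1,K_2,K_3)$ are non-zero by adapting the proof of Lemma~\ref{lem:nonzerodet}. Differentiating under the integral sign, each $F_i^{(k-1)}(\lambda)$ equals a non-zero constant times $\int_{A_i} g_i(s) (\lambda-s)^{-(2k-1)/2}\,ds$ where $g_i>0$ is continuous on $A_i$ (with the half-integer power interpreted so that the integrand is real and positive on each interval). Expanding the $3\times 3$ determinant by multilinearity, factoring $(\lambda-s_i)^{-1/2}$ out of the $i$-th column and the numerical constants out of rows $2$ and $3$, the integrand reduces to a Vandermonde determinant in $x_i=1/(\lambda-s_i)$, which equals $\prod_{i<j}(s_j-s_i)/[(\lambda-s_i)(\lambda-s_j)]$. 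Since the three intervals are pairwise disjoint and $\lambda$ lies outside each, the signs of $s_j-s_i$ and of $\lambda-s_i$ are constant on the domain of integration, hence the integrand has constant sign and the triple integral is non-zero.

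The main obstacle is the sign bookkeeping in case {\bf (E)}, where $A_1=(b,a)$ lies above $\lambda$ while $A_2,A_3$ lie below: the differentiation constants in front of $(\lambda-s)^{-(2k-1)/2}$ differ between the two regimes, and the half-integer powers must be consistently interpreted. A direct check however shows that the Vandermonde product and the row constants conspire so that the integrand still has constant sign on $A_1\times A_2\times A_3$, which is all that is needed.
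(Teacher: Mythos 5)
Your proposal is correct and follows essentially the same route as the paper: reduce $\det M_\psi$ via Remark~\ref{rem:Ml} and column operations to a Wronskian of integrals of $e(s,\lambda)=((a-s)(b-s)(\lambda-s))^{-1/2}$ over three pairwise disjoint intervals avoiding $\lambda$, then invoke the Vandermonde argument of Lemma~\ref{lem:nonzerodet}. The only difference is bookkeeping: the sign/reality worry you raise about half-integer powers on $(b,a)$ disappears if, as the paper does, you keep the full positive weight $e(s,\lambda)$ fixed and observe that each $\lambda$-differentiation only produces a constant times the integer power $1/(\lambda-s)$, so Lemma~\ref{lem:nonzerodet} applies verbatim.
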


\begin{proof}

{\bf Case (E):} $\lambda\in(\lambda_0,b)$. Let us consider the
$C^{\infty}$-function
\[e:((-\infty,\lambda_0)\cup(b,a))\times(\lambda_0,b)\to\R_+,\quad e(s,\lambda)=\frac{1}{\sqrt{(a-s)(b-s)(\lambda-s)}}.\]
Then
\begin{align*}
l(\lambda)&=4\int_b^ae(s,\lambda)\,ds,\quad \quad d(\lambda)=\int_0^{\lambda_0}e(s,\lambda)\,ds,\\
w(\lambda)&=\frac{l(\lambda)}{4}-\widetilde{w}(\lambda)\quad\text{with}\quad \widetilde{w}(\lambda):=\int_{-\infty}^0e(s,\lambda)\,ds
\end{align*}
and
\[\frac{\partial e}{\partial \lambda}(s,\lambda)=-\frac{1}{2}\frac{e(s,\lambda)}{\lambda-s}, \quad \frac{\partial^2 e}{\partial \lambda^2}(s,\lambda)=\frac{3}{4}\frac{e(s,\lambda)}{(\lambda-s)^2}.\]
Hence
\begin{align*}
l'(\lambda)=-2\int_b^a\frac{e(s,\lambda)\,ds}{\lambda-s}, & \qquad l''(\lambda)=3\int_b^a\frac{e(s,\lambda)\,ds}{(\lambda-s)^2},\\
\widetilde{w}'(\lambda)=-\frac{1}{2}\int_{-\infty}^0\frac{e(s,\lambda)\,ds}{\lambda-s}, & \qquad
\widetilde{w}''(\lambda)=\frac{3}{4}\int_{-\infty}^0\frac{e(s,\lambda)\,ds}{(\lambda-s)^2},\\
d'(\lambda)=-\frac{1}{2}\int_0^{\lambda_0}\frac{e(s,\lambda)\,ds}{\lambda-s}, & \qquad
d''(\lambda)=\frac{3}{4}\int_0^{\lambda_0}\frac{e(s,\lambda)\,ds}{(\lambda-s)^2}.
\end{align*}
In view of Remark~\ref{rem:Ml}, we can consider the matrix
$M_{l,\widetilde{w},d}(\lambda)$ instead of $M_{\psi}(\lambda)$.
Let $A_1=(a,b)$, $A_2=(-\infty,0)$, $A_3=(0,\lambda_0)$. Since
they are pairwise disjoint, by Lemma~\ref{lem:nonzerodet}, we have
\begin{align*}
\det M_{l,\widetilde{w},d}(\lambda)&=-\frac{3}{2}\det\left[\int_{A_i}\frac{e(s,\lambda)\,ds}{(\lambda-s)^{j-1}}\right]_{i,j=1,2,3}\neq 0.
\end{align*}
Since $r(\lambda)>0$, this completes the proof of the part {\bf(E)}.

{\bf Case (H):} $\lambda\in(b,a)$. Here we deal with
$e:(-\infty,b)\times(b,a)\to\R$. Then
\[l(\lambda)=2\int_{-\infty}^be(s,\lambda)\,ds,\quad w(\lambda)=2\int_0^be(s,\lambda)\,ds,\quad d(\lambda)=\int_0^{\lambda_0}e(s,\lambda)\,ds\]
and
\begin{align*}
l'(\lambda)=-\int_{-\infty}^b\frac{e(s,\lambda)\,ds}{\lambda-s}, & \qquad l''(\lambda)=\frac{3}{2}\int_{-\infty}^b\frac{e(s,\lambda)\,ds}{(\lambda-s)^2}\\
w'(\lambda)=-\int_0^b\frac{e(s,\lambda)\,ds}{\lambda-s}, & \qquad
w''(\lambda)=\frac{3}{2}\int_0^b\frac{e(s,\lambda)\,ds}{(\lambda-s)^2}\\
d'(\lambda)=-\frac{1}{2}\int_0^{\lambda_0}\frac{e(s,\lambda)\,ds}{\lambda-s}, & \qquad
d''(\lambda)=\frac{3}{4}\int_0^{\lambda_0}\frac{e(s,\lambda)\,ds}{(\lambda-s)^2}.
\end{align*}
Again, in view of Remark~\ref{rem:Ml}, we deal with the matrix
$M_{l,w,d}(\lambda)$  instead of $M_{\psi}(\lambda)$.
Let
\[A_1:=(-\infty,0),\ A_2:=(\lambda_0,b),\ A_3:=(0,\lambda_0).\]
Then
\[B_1:=(-\infty,b)=A_1\cup A_2\cup A_3,\ B_2:=(0,b)=A_2\cup A_3,\ B_3:=(0,\lambda_0)=A_3.\]
Since $A_1,A_2,A_3$ are pairwise disjoint, by Lemma~\ref{lem:nonzerodet}, we have
\begin{align*}
\det M_{l,w,d}(\lambda)&=-\frac{3}{2}\det\left[\int_{B_{i}}\frac{e(s,\lambda)\,ds}{(\lambda-s)^{j-1}}\right]_{i,j=1,2,3}\\
&=-\frac{3}{2}\det\left[\int_{A_{i}}\frac{e(s,\lambda)\,ds}{(\lambda-s)^{j-1}}\right]_{i,j=1,2,3}\neq 0.
\end{align*}
Since $r(\lambda)>0$, this completes the proof.
\end{proof}

\begin{proof}[Proof of Theorem~\ref{thm:ellbill}]
As $\Psi(M_\lambda,\omega_\lambda)=\psi(\lambda)\Gamma_2$ for
$\lambda\in(\lambda_0,b)\cup(b,a)$, in view of
Corollary~\ref{cor:unique_ergodicity}, for a.e.\
$\lambda\in(\lambda_0,a)$ the vertical flow on
$(M_\lambda,\omega_\lambda)\in\mathcal{M}^{dc}_2$ is uniquely
ergodic. Moreover, the billiard flow on $\mathcal{S}_\lambda$ is
isomorphic (up to a linear rescaling of time) to the vertical flow
on $(M_\lambda,\omega_\lambda)$ for every
$\lambda\in(\lambda_0,b)\cup(b,a)$. This completes the proof for
$\lambda\in (\lambda_0,a)$.

If $\lambda\in(0,\lambda_0)$ then, by
Proposition~\ref{prop:DraRa_reduction},  the billiard flow on
$\mathcal{S}_\lambda$ is isomorphic to the directional flow in
direction $\pi/4$ on the torus $\R^2/((2l(\lambda)\Z)\times
(2w(\lambda)\Z))$. This flow is uniquely ergodic if and only if
$w(\lambda)/l(\lambda)$ is irrational. The same argument as in the
proof of the case {\bf(E)} in Lemma~\ref{lem:concave} shows that
$w'(\lambda)l(\lambda)-w(\lambda)l'(\lambda)\neq 0$ also for all
$\lambda\in(0,\lambda_0)$. Therefore, the map
$\lambda\mapsto\frac{w(\lambda)}{l(\lambda)}$ is strictly
monotonic. It follows that $w(\lambda)/l(\lambda)$ is irrational
for all but countably many parameters $\lambda\in(0,\lambda_0)$,
which completes the proof.
\end{proof}

\section{The beaviour of light rays in Eaton lenses systems}\label{sec:Eaton_proof}
The application to the behaviour of light rays in periodic arrays
of Eaton lenses exploits both the result on Birkhoff and Oseledets
genericity (i.e.
Theorem~\ref{thm:BirOs}). The proof follows the arguments
developed in the work by Fr\k{a}czek and Schmoll \cite{Fr-Sch},
which reduces the behaviour of rays (and specifically being
trapped in a band) to a result on existence of Lyapunov exponents.
Since Fr\k{a}czek and Schmoll  in \cite{Fr-Sch} were relying on
the standard  formulation of Oseledets genericity, they could only
obtained a weaker result on random lattice configuration. The
genericity results along curves that we prove in this paper, on
the other hand, allows us to analyse the behaviour of any given
admissible lattice configuration of lenses in almost every
direction.

\subsection{Reduction to systems of flat lenses and periodic translation surfaces}
\label{sec;Eaton reduction}
As in \cite{Fr-Sch} we pass to a simpler model in which round
lenses are replaced by their flat counterparts. By a flat lens of
radius $R>0$ perpendicular to the direction $\theta$ we mean
simply any interval in $\R^2$ of length $2R$ perpendicular to
vectors in direction $\theta$. The light rays pass through the lens as follows: any
light ray in direction $\theta$ or $\theta+\pi$ runs until hitting
the flat lens and then is rotated by $\pi$ around the center of
the flat lens and runs  in the opposite direction, see
Figures~\ref{flat_lens}.
\begin{figure}[h]
\includegraphics[width=0.9\textwidth]{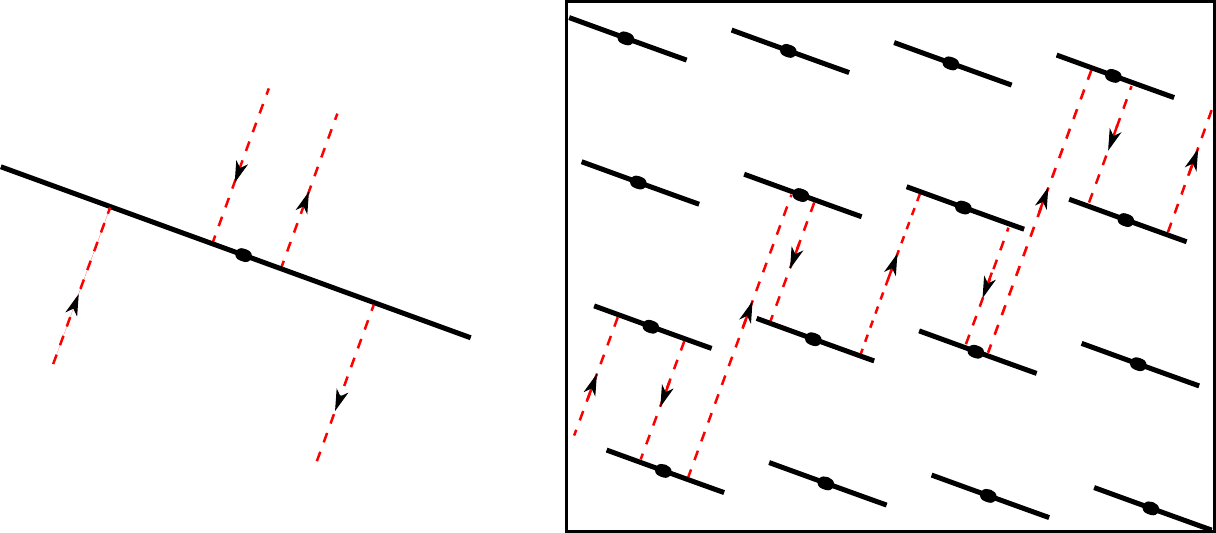}
\caption{The system of lenses
$F(\Lambda,R,\theta)$}
\label{flat_lens}
\end{figure}
The system of flat lenses of radius $R$ perpendicular to the direction  $\theta$
whose centers are arranged at the points of the lattice $\Lambda$
will be denoted by $F(\Lambda,R,\theta)$, see
Figures~\ref{flat_lens}. A simple observation (see \cite{Fr-Sch}
for details) shows that if a direction $\theta$ is trapped on
$F(\Lambda,R,\theta)$ then it is also trapped on $L(\Lambda,R)$.
Moreover, after rotation by $\pi/2-\theta$ the system of flat
lenses we can pass to vertically directed light rays.
We take $v\in S^1$ to be the  vertical unit
vector. Denote by
$r_\theta:\R^2\to\R^2$ the rotation (also the matrix of the
rotation $r_\theta\in SL_2(\R)$) by $\theta$ around the center of
$\R^2$. Then
$r_{\pi/2-\theta}F(\Lambda,R,\theta)=F(r_{\pi/2-\theta}\Lambda,R,v)$.
In summary, Theorem~\ref{thm:main} reduces to the following
result.
\begin{thm}\label{thm:maux}
Let $(\Lambda,R)$ be an admissible pair. Then for a.e.\ $\theta\in [0,2\pi]$ the vertical direction is trapped on $F(r_\theta\Lambda,R,v)$.
\end{thm}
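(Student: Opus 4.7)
The plan is to reduce Theorem~\ref{thm:maux} to Theorem~\ref{thm:BirOs} by unfolding each flat-lens system to a translation surface in $\mathcal{M}_2^{dc}$, thus realising a smooth curve $\theta\mapsto (M_\theta,\omega_\theta)$, and then to invoke the Fr\k{a}czek--Schmoll mechanism relating trapping of light rays to Oseledets genericity of the restricted KZ-cocycle $A^{KZ}_W$.

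\textbf{Step 1 (the curve).} Fix $h_0\in SL_2(\R)$ with $\Lambda=h_0\Z^2$. For each $\theta$, the system $F(r_\theta\Lambda,R,v)$ unfolds (by taking the double cover branched over the two endpoints of the horizontal slit of length $2R$) to a translation surface $(M_\theta,\omega_\theta)\in\mathcal{M}^{dc}_2$. The torus part is $\R^2/r_\theta\Lambda$ and the two branch points differ by the vector $(2R,0)^{\mathrm{tr}}$, which does not depend on $\theta$. Under the identification $\Psi:\mathcal{M}_2^{dc}\to X_2$ of Lemma~\ref{lemma:correspondence}, we obtain
\[
\Psi(M_\theta,\omega_\theta)=\psi(\theta)\Gamma_2,\qquad \psi(\theta)=\bigl(r_\theta h_0,\,(2R,0)^{\mathrm{tr}}\bigr).
\]
Since rotating $v=(2R,0)^{\mathrm{tr}}$ into $r_\theta h_0\Z^2$ can happen for at most countably many $\theta$, one has $\psi(\theta)\in G'$ for a.e.\ $\theta$.

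\textbf{Step 2 (non-degeneracy of the Wronskian).} The curve $\psi$ is real-analytic, and in the notation of \eqref{eq;M_psi} the last column is the constant vector $(2R,0,0)^{\mathrm{tr}}$. Expanding along it gives
\[
\det M_{\psi}(\theta)=2R\bigl(h_{11}'(\theta)h_{12}''(\theta)-h_{11}''(\theta)h_{12}'(\theta)\bigr).
\]
Since $r_\theta h_0$ satisfies $h_{ij}''=-h_{ij}$, a direct computation gives
$h_{11}'h_{12}''-h_{11}''h_{12}'=h_{11}h_{12}'-h_{12}h_{11}'=\det h_0=1$.
Hence $\det M_\psi(\theta)\equiv 2R\neq 0$, so the hypothesis of Theorem~\ref{thm:BirOs} is met.

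\textbf{Step 3 (apply Theorem~\ref{thm:BirOs}).} By Theorem~\ref{thm:BirOs} applied to the curve $\varphi=\Psi^{-1}\circ(\psi\Gamma_2)$, for a.e.\ $\theta\in[0,2\pi]$ the surface $(M_\theta,\omega_\theta)\in\mathcal{M}_2^{dc}$ is both Birkhoff generic for the Teichm\"uller flow on $(\mathcal{M}_2^{dc},\mu_{dc},a_t)$ and Oseledets generic for the restricted KZ-cocycle $A^{KZ}_W$ on the $\tau$-anti-invariant symplectic plane $W\subset H_1(M_\theta,\R)$.

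\textbf{Step 4 (from Oseledets genericity to trapping).} The vertical light flow on $F(r_\theta\Lambda,R,v)$ lifts to the vertical translation flow on the $\Z^2$-cover $(\widetilde M_\theta,\widetilde\omega_\theta)$ of $(M_\theta,\omega_\theta)$ determined by two classes $c_1,c_2\in H^1(M_\theta,\Z)$ lying in $W^*$. As in \cite{Fr-Sch}, horizontal displacements of a vertical trajectory on the cover are governed by Birkhoff integrals of $c_1,c_2$ along the flow, and Masur's criterion (Theorem~\ref{thm:Masur}), guaranteed by Birkhoff genericity, ensures unique ergodicity of the vertical flow on the base. Oseledets genericity then implies that these Birkhoff integrals grow at the top Lyapunov exponent $\lambda_+$ of $A^{KZ}_W$, which is known to be strictly positive in this stratum. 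The symplectic structure on $W$ together with the results of \cite{Fr-Sch} then yields: the vertical flow on the $\Z^2$-cover remains confined to a band of finite width in the direction of the stable line of $A^{KZ}_W$. Translating back through the correspondence between flat-lens and Eaton-lens systems yields that the vertical direction is trapped on $F(r_\theta\Lambda,R,v)$.

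\textbf{Main obstacle.} Steps 1--3 are essentially formal given the general machinery. The delicate point is Step 4: one needs to verify carefully that the conclusions of \cite{Fr-Sch} go through with Oseledets genericity at a single point in place of almost-everywhere Oseledets genericity over the random-lattice parameter. This requires checking that the classes $c_1,c_2$ project non-trivially to the stable line of $A^{KZ}_W$ at $(M_\theta,\omega_\theta)$ — i.e.\ that they avoid the (measure-zero) unstable line — and one must confirm this holds for a.e.\ $\theta$. This is where the detailed structure of the $\Z^2$-cover (rather than abstract Oseledets theory) plays the decisive role.
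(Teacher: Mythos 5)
Your proposal follows essentially the same route as the paper: unfold $F(r_\theta\Lambda,R,v)$ to the curve $\theta\mapsto M(r_\theta\Lambda,R)$ in $\mathcal{M}^{dc}_2$ with $\Psi(M(r_\theta\Lambda,R))=(r_\theta h,(2R,0))\Gamma_2$, check that the Wronskian determinant is the nonzero constant $2R$, apply Theorem~\ref{thm:BirOs}, and then feed Birkhoff and Oseledets genericity into the Fr\k{a}czek--Schmoll trapping mechanism. (The paper writes the curve as $\psi(\theta)=(r_\theta,(2R,0))$ acting on the base point $(h,0)$ rather than absorbing $h$ into $\psi$, but this is immaterial.)

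One clarification on your Step 4 and the ``main obstacle'' you flag: the paper does not need the covering classes to avoid the unstable line. It quotes Proposition~\ref{prop:genbound} (from \cite{Fr-Hu}), which says that at any Birkhoff- and Oseledets-generic point, \emph{every} $\zeta$ in the stable space $E^-_\omega\subset W$ has uniformly bounded pairings $\langle\sigma_t(x),\zeta\rangle$; since the top exponent of $A^{KZ}_W$ is positive (equal to $1/2$ in genus two, by \cite{Bain}), $E^-_\omega$ is a nonzero line, and the existence of a single nonzero bounded class is exactly the hypothesis of Proposition~\ref{prop:existband}, which then produces the trapping band. The remaining hypothesis of that proposition --- absence of vertical saddle connections --- is deduced in the paper from Birkhoff genericity (recurrence of the Teichm\"uller orbit), not from unique ergodicity via Masur's criterion as your sketch suggests; a vertical saddle connection would force the $a_t$-orbit to diverge, contradicting genericity.
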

In \cite{Fr-Sch} (see Theorem 1.2), a weaker version of this result was proved
saying that for every $R>0$ and a.e.\ unimodular lattice $\Lambda$ such that
$(\Lambda,R)$ is admissible the vertical direction is trapped on $F(\Lambda,R,v)$.
Using a simple Fubini argument
we also have the following seemingly stronger result which closely related to Theorem~\ref{thm:main}.
\begin{cor}\label{cor:eatonprel}
For every unimodular lattice $\Lambda$, a.e.\ $0<R<s(\Lambda)/2$
and a.e.\ $\theta\in [0, 2\pi]$ the vertical direction on $F(r_\theta\Lambda,R,v)$ is trapped.
In particular, for every unimodular lattice $\Lambda$ and a.e.\ $0<R<s(\Lambda)/2$
almost every direction $\theta\in [0, 2\pi]$ on $L(\Lambda,R)$ is trapped.
\end{cor}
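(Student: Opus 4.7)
The plan is to derive the corollary from Theorem~\ref{thm:maux} by a direct Tonelli argument, combined with the rotation identity from \S~\ref{sec;Eaton reduction} and the reduction of Eaton trapping to flat-lens trapping.

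For the first assertion, I fix a unimodular lattice $\Lambda$ and consider the set
\[
E = \{(R,\theta) \in (0, s(\Lambda)/2) \times [0,2\pi] : \text{the vertical direction is trapped on } F(r_\theta\Lambda, R, v)\}.
\]
After a quick check of measurability of $E$ (the trapping property is expressible via a countable Boolean combination of Borel conditions on a rational band width and a rational choice of band direction), Theorem~\ref{thm:maux} applied to each admissible pair $(\Lambda, R)$ guarantees that every $R$-slice $E_R=\{\theta:(R,\theta)\in E\}$ has full Lebesgue measure in $[0,2\pi]$. Tonelli then yields that $E^c$ has product measure zero, which is precisely the first assertion.

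For the second assertion I invoke the rotation identity $r_{\pi/2-\theta} F(\Lambda, R, \theta) = F(r_{\pi/2-\theta}\Lambda, R, v)$ recalled in \S~\ref{sec;Eaton reduction}, so that vertical trapping on $F(r_{\pi/2-\theta}\Lambda, R, v)$ is equivalent, after rotating back, to direction $\theta$ being trapped on $F(\Lambda, R, \theta)$. Applying the measure-preserving change of variable $\theta \mapsto \pi/2 - \theta$ to the first assertion, I obtain that for a.e.~$R\in(0,s(\Lambda)/2)$ and a.e.~$\theta\in[0,2\pi]$ the direction $\theta$ is trapped on $F(\Lambda, R, \theta)$. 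The observation from \cite{Fr-Sch} recalled at the start of \S~\ref{sec;Eaton reduction}, that trapping of $\theta$ on $F(\Lambda, R, \theta)$ implies trapping of $\theta$ on $L(\Lambda, R)$, then completes the argument.

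The only genuine point of attention is the measurability of $E$; this is a bookkeeping step rather than a substantial difficulty, and no new ideas beyond those already isolated in \S~\ref{sec;Eaton reduction} are needed.
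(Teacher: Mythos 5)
Your proof is logically correct and not circular (the later proof of Theorem~\ref{thm:maux} does not rely on Corollary~\ref{cor:eatonprel}), but it takes a genuinely different route from the paper, and in doing so it misses the point of why the corollary is stated at all. In the paper the corollary is explicitly positioned as a ``seemingly stronger'' consequence that can already be extracted from the \emph{pre-existing} Theorem~1.2 of~\cite{Fr-Sch} (for every $R$ and a.e.~lattice $\Lambda$, the vertical direction is trapped on $F(\Lambda,R,v)$), \emph{without} invoking the new genericity machinery. Its proof proceeds by contradiction: if a positive-measure set $A$ of pairs $(R,\theta)$ failed to be trapped for $F(r_\theta\Lambda_0,R,v)$, then using the invariance of trapping under the geodesic flow $a_s$ (which rescales $R$) and under a small unipotent perturbation $h(t)$, together with the fact that $(t,s,\theta)\mapsto h(t)a_{\log(R_0/s)}r_\theta\Lambda_0$ is locally a diffeomorphism onto $SL_2(\R)/SL_2(\Z)$, one spreads $A$ to a positive-measure set of lattices $\Lambda$ on which $F(\Lambda,R_0,v)$ fails to trap, contradicting~\cite[Theorem~1.2]{Fr-Sch}. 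Your argument instead invokes Theorem~\ref{thm:maux}, the paper's main new result, which already gives the conclusion for \emph{every} admissible $R$; once you have that, the Tonelli step is unnecessary (``for every $R$ and a.e.~$\theta$'' trivially implies ``for a.e.~$(R,\theta)$''), and the corollary becomes a trivial weakening of Theorem~\ref{thm:main}. So both derivations are valid, but the paper's buys an observation that the student's does not: Corollary~\ref{cor:eatonprel} was already within reach of the older random-lattice result plus a Fubini argument in the $SL_2(\R)$-homogeneous space, which is exactly why the authors phrase it with ``a.e.\ $R$'' rather than ``every $R$''.
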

\begin{proof}
Fix a unimodular lattice $\Lambda_0$ and
$0<R_0<1/\sqrt{2\sqrt{3}}$. Suppose contrary to our claim that
there exists a set $A\subset (0,s(\Lambda_0)/2)\times [0, 2\pi]$ not of
zero Lebesgue measure such that for every $(R,\theta)\in A$ the
vertical direction is not trapped on $F(r_\theta\Lambda_0,R,v)$.
For every $t\in R$  let $h(t):=\left(\begin{array}{cc}
         1 & 0 \\
         t & 1
       \end{array}\right)$.
Then for every admissible $(\Lambda, R)$
\[v\text{ is trapped on }F(\Lambda,R,v) \Leftrightarrow v\text{ is trapped on }a_sF(\Lambda,R,v)=F(a_s\Lambda,e^sR,v)\]
for all $s\in\R$ and
\[v\text{ is trapped on }F(\Lambda,R,v) \Leftrightarrow v\text{ is trapped on }F(h(t)\Lambda,R,v)\]
for all $t\in(-\vep,\vep)$ for some $\vep>0$. The two equivalences
essentially follows from the fact that both operations does not
change horizontal positions between flat lenses (up to rescaling
in the first case). Therefore, $(R,\theta)\in A$ implies that $v$
is not trapped on $F(h(t)a_{\log(R_0/R)}r_\theta\Lambda_0,R_0,v)$
for all $t\in(-\vep,\vep)$. Since the map
\[(t,s,\theta)\mapsto h(t)a_{\log(R_0/s)}r_\theta\Lambda_0\]
locally is a diffeomorphism between $\R\times \R\times [0, 2\pi]$ and
the space of unimodular lattices $SL_2(\R)/SL_2(\Z)$, the fact
that $A\subset \R^2$ is not of measure zero together with a Fubini
argument yield a set $\mathcal{A}\subset SL_2(\R)/SL_2(\Z)$ not of
measure zero such that $\Lambda\in\mathcal{A}$ implies that $v$ is
not trapped on $F(\Lambda,R_0,v)$. This contradicts Theorem 1.2 in
\cite{Fr-Sch}. The second part of the corollary follows
immediately from the first.
\end{proof}

\subsection{Light behaviour vs Oseledets genericity}\label{sec:trapped_vs_genericity}
In this section we relate the trapping phenomenon for light rays
in  almost all directions with Oseledets genericity along a curve of
translation surfaces. We rely for this section on  some basic
steps of the proof of Theorem~\ref{thm:maux} which were developed
in \cite{Fr-Sch} and a technical result recently proved in
\cite{Fr-Hu}.


Suppose that the pair $(\Lambda,R)$ is admissible ($\Lambda$ is
unimodular) and let us consider  flat lenses system
$F(\Lambda,R,v)$. By an unfolding procedure similar to the one
used in \S~\ref{sec:ellipses_proof}, the system of flat lenses
$F(\Lambda,R,v)$ can be reduced to a noncompact periodic
translation surface, which is a $\mathbb{Z}^2$-cover of a compact
translation surface $M(\Lambda,R)\in\mathcal{M}^{dc}_2$ obtained
as follows (we refer the reader to \cite{Fr-Sch} for further
details).
Take the
translation torus $\T^2_\Lambda:=\R^2/\Lambda$ with a horizontal
interval (slit) $I\subset\R^2/\Lambda$ of length $2R$. Since
$(\Lambda,R)$ is admissible, $I$ has no self-intersections. Next
take two copies of such slitted torus and glue them together so
that the bottom part of the slit on one torus is glued by
translation to its top counterpart on the other torus, see
Figure~\ref{surfcomlens}.
\begin{figure}[h]
\includegraphics[width=0.5\textwidth]{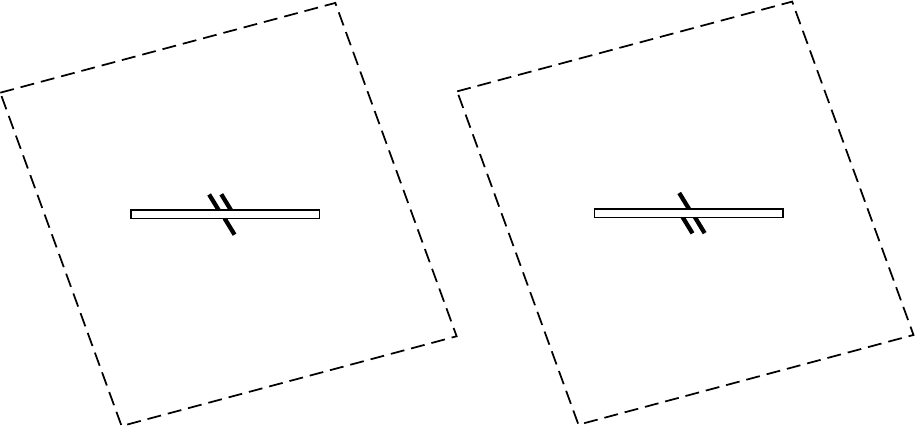}
\caption{The surface $M(\Lambda,R)$} \label{surfcomlens}
\end{figure}
Then
\begin{equation}\label{form:curv}
\Psi(M(\Lambda,R))=(h,(2R,0))\Gamma_2,\quad\text{where}\quad\Lambda=h(\Z^2).
\end{equation}

It turns out that the behaviour of the vertical trajectories in $F(\Lambda,R,v)$ (and hence in $L(\Lambda,R)$) can be described by studying the asymptotic
behaviour of  the following homology classes. For every translation surface $(M,\omega)$ and $x\in M$ such that its positive vertical semi-orbit
$\{\psi^{v}_tx:t\geq 0\}$ is well defined and for every $t>0$ denote by $\sigma_t(x)\in H_1(M,\Z)$ the homology class of the curve formed by the segment of
the vertical orbit starting from $x$ until time $t$ closed up by the shortest curve returning to $x$. Then we have the following.

\begin{prop}[Theorem 3.2 in \cite{Fr-Sch}]\label{prop:existband}
Let $\tau:M(\Lambda,R)\to M(\Lambda,R)$ be the only nontrivial
element of  the deck transformation group of the double cover.
Suppose that there is a non-zero homology class $\zeta\in
H_1(M(\Lambda,R),\R)$  and $C>0$ such that
\[\tau_*\zeta=-\zeta\text{ and }|\langle\sigma_t(x),\zeta\rangle|\leq C\text{ for all }x, t\text{ for which }\sigma_t(x)\text{ is defined.}\]
If the surface $M(\Lambda,R)$ has no vertical saddle connection,
i.e.\  there is no  vertical orbit segment that connect singular
points, then the vertical direction on $F(\Lambda,R,v)$ is
trapped.
\end{prop}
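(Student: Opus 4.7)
The plan is to relate the abstract hypothesis on the homology class $\zeta$ to the physical horizontal position of light rays in the unfolded Eaton lens system. The key geometric observation is that the infinite translation surface obtained by unfolding $F(\Lambda,R,v)$ is naturally a $\mathbb{Z}^2$-cover $p:\widetilde{M}(\Lambda,R)\to M(\Lambda,R)$ whose deck group is $\Lambda\simeq \mathbb{Z}^2$ acting on the plane by translation. Such a cover is classified by a surjective homomorphism $H_1(M(\Lambda,R),\mathbb{Z})\to\mathbb{Z}^2$, and by Poincar\'e--Lefschetz duality this homomorphism is given by intersection pairing with two distinguished integer homology classes $\zeta_h,\zeta_v\in H_1(M(\Lambda,R),\mathbb{Z})$.

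The first step I would carry out is to verify that these two classes span the same $\tau$-antisymmetric subspace $W$ to which the hypothesis refers. Physically, rotation by $\pi$ about any lens center is a symmetry of the Eaton configuration that sends each $\Lambda$-translation to its inverse; in the unfolding this rotation lifts to the deck involution $\tau$ of the branched double cover $M(\Lambda,R)\to \mathbb{T}^2_\Lambda$, and therefore conjugates every deck transformation of $p$ by inversion. Dualizing this relation yields $\tau_*\zeta_h=-\zeta_h$ and $\tau_*\zeta_v=-\zeta_v$; since $\dim W=2$ and $\zeta_h,\zeta_v$ are linearly independent (the cover being exactly $\mathbb{Z}^2$), they form a basis of $W$.

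Second, I would argue that for every $x$ for which the vertical orbit is defined up to time $t$, the displacement in the plane of the lifted trajectory after time $t$ is
\[
\big(h(t,x),\,w(t,x)\big)=\big(\langle\sigma_t(x),\zeta_h\rangle,\langle\sigma_t(x),\zeta_v\rangle\big)+O(1),
\]
the $O(1)$ term coming from the short closing arc used to define $\sigma_t(x)$. This is essentially the standard asymptotic cycle interpretation: the homotopy class in $M(\Lambda,R)$ of the closed loop formed by the vertical segment and its short return determines, under the classifying homomorphism, the lattice element by which the lift has been translated inside $\widetilde{M}$. Writing the hypothesized class as $\zeta=a\zeta_h+b\zeta_v$ with $(a,b)\neq(0,0)$, the hypothesis $|\langle\sigma_t(x),\zeta\rangle|\leq C$ becomes $|a\,h(t,x)+b\,w(t,x)|\leq C+O(1)$, which confines the entire light ray to a band of finite width $C(\theta)$, parallel to the unit vector $v(\theta)$ perpendicular to $(a,b)$. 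This is precisely the trapping of the vertical direction.

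Finally, the no-vertical-saddle-connection assumption enters to guarantee that the vertical flow on $M(\Lambda,R)$ (and hence on $\widetilde{M}(\Lambda,R)$) has no orbit terminating at a singularity after finite forward or backward time other than the countably many singular separatrices; this is precisely what is needed so that, for every $x$ outside a countable set and every $t$, $\sigma_t(x)$ is defined and the displacement estimate above is meaningful for every light ray (not only for almost every one). The main obstacle I anticipate is the first step, namely the careful identification of the cover classes $\zeta_h,\zeta_v$ and the verification of their $\tau$-antisymmetry; this requires tracking how the slit gluing defining $M(\Lambda,R)$ interacts with the $\mathbb{Z}^2$-cover coming from $\Lambda$-periodicity. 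Once this geometric identification is established, the rest of the argument reduces to a direct computation with the intersection form on $H_1(M(\Lambda,R),\mathbb{R})$.
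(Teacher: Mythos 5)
This proposition is not proved in the paper; it is imported as Theorem 3.2 of \cite{Fr-Sch}, so there is no internal argument to compare against, and the proposal must stand on its own.

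The geometric mechanism you propose is sound. The $\mathbb{Z}^2$-cover $\widetilde{M}(\Lambda,R)\to M(\Lambda,R)$ that unfolds $F(\Lambda,R,v)$ is classified, via the intersection pairing, by a pair $\zeta_h,\zeta_v\in H_1(M(\Lambda,R),\mathbb Z)$, and your observation that the central symmetry about a lattice point conjugates each $\Lambda$-translation to its inverse correctly forces $\tau_*\zeta_h=-\zeta_h$ and $\tau_*\zeta_v=-\zeta_v$; since the cover is of rank two these classes are independent, so they span $W$. Writing $\zeta=a\zeta_h+b\zeta_v$, the asymptotic-cycle identity converts $|\langle\sigma_t(x),\zeta\rangle|\le C$ into a uniform bound on the linear combination $a\,h(t,x)+b\,w(t,x)$ of the deck coordinates of the lifted orbit, which is exactly confinement of the light ray to a band.

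The genuine gap is in your last paragraph, where you discuss the no-vertical-saddle-connection hypothesis. The fact you attribute to it --- that $\sigma_t(x)$ is defined for every $x$ outside a countable set and every $t$ --- is true on any translation surface, with or without the hypothesis, and the jump from there to the estimate being ``meaningful for every light ray (not only for almost every one)'' is unjustified. The rays not yet covered by your argument are exactly the countably many whose lifted orbit passes through a cone point of $\widetilde{M}(\Lambda,R)$, and this is precisely where the hypothesis is used. It guarantees that such an orbit meets a singularity at most once, so it decomposes into at most two semi-infinite regular pieces, each controlled by the bound: the forward one directly, and the backward one by observing that the reversed closed-up cycle from $\psi^v_{-t}x$ to $x$ is $-\sigma_t(\psi^v_{-t}x)$, so the same hypothesis applies. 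Without the no-saddle-connection hypothesis an exceptional ray could traverse a chain of vertical saddle connections, and the bound --- which resets past each singularity --- would no longer control the accumulated drift. You flagged the cohomological identification as the expected difficulty, but that step is actually the clean one; the real care is needed here.
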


Oseledets genericity plays a central role in verifying this type of assumptions in Proposition~\ref{prop:existband}.
The technical tool which allows us to check the assumptions  in our specific context will be provided by the following general proposition recently proved by Fr\k{a}czek and Hubert in \cite{Fr-Hu}. We should mention that the idea behind this result is not new (can be found also in \cite{Fr-Ulc:nonerg} and \cite{DHL}) and exploits the phenomenon of bounded deviation discovered by Zorich in \cite{Zor1,Zor2}.

\begin{prop}[\cite{Fr-Hu}]\label{prop:genbound}
Let $\mathcal M\subset \mathcal{M}_1(M,\Sigma)$ be an $SL_2(\R)$-orbit closure.  Suppose that $W\subset H_1(M,\R)$ is a symplectic subspace which is
$SL_2(\R)$-invariant over $\mathcal M$. Suppose that the top Lyapunov exponent of the restricted cocycle $A_W^{KZ}:\R\times\mathcal{M}\to GL(W)$ is
positive. If $(M,\omega)$ is Birkhoff and Oseledets generic with respect to $(\mathcal M,\mu_{\mathcal M}, a_t)$ then for every $\zeta\in E_{\omega}^-$
where
\[
E_{\omega}^- =\Big\{v\in W:\lim_{t\to\infty}\frac{\log\|A^{KZ}_W( t,\omega)v\|}{t}<0\Big\},
\]
there exists $C>0$ such that
\[|\langle\sigma_t(x),\zeta\rangle|\leq C\text{ for all }x\in M, t>0\text{ for which }\sigma_t(x)\text{ is defined}.\]
\end{prop}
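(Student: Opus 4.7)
The plan is to establish the bounded–deviation estimate through the classical Zorich-style renormalization scheme, reducing the long asymptotic cycle $\sigma_T(x;\omega)$ on $\omega$ to a sum of unit-length cycles on renormalized surfaces $a_{\tau_k}\omega$ sitting inside a fixed compact set, each of which pairs against an exponentially contracted copy of $\zeta$.

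The first step is a renormalization identity. Since $A^{KZ}_W(t,\omega)$ is induced by the mapping class $\psi_{a_t,\tilde\omega}$ that identifies $H_1(M,\R)$ of $\omega$ with that of $a_t\omega$ and preserves the algebraic intersection form, for every $T>0$, every $t\ge 0$ and every admissible $x\in M$ one has
$$\langle \sigma_T(x;\omega),\zeta\rangle \;=\; \langle \sigma_{e^{-t}T}(x;a_t\omega),\,A^{KZ}_W(t,\omega)\zeta\rangle,$$
because the vertical flow of length $T$ on $\omega$ corresponds under $a_t$ to the vertical flow of length $e^{-t}T$ on $a_t\omega$, and the cocycle sends the former homology class to the latter. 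This is the tool that trades length for contraction.

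Next I would fix a compact set $K\subset\mathcal{M}$ with $\mu_{\mathcal{M}}(K)>0$ on which, using compactness together with a uniform lower bound on the systole, the asymptotic cycles satisfy $\|\sigma_s(y;\omega')\|\le B_K$ for every $\omega'\in K$, every $y\in M$ and every $s\in [0,1]$. Birkhoff genericity of $\omega$, applied to a continuous compactly supported approximation of $\mathbf 1_K$, produces an increasing sequence $\tau_1<\tau_2<\cdots$ of times with $a_{\tau_k}\omega\in K$ and $\tau_k\le k/\mu_{\mathcal{M}}(K)+o(k)$; after slight thinning one may also assume $\tau_{k+1}-\tau_k\ge\delta$ for some fixed $\delta>0$, so that $\tau_k\ge\delta k$. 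Oseledets genericity together with $\zeta\in E_\omega^-$ then gives $\|A^{KZ}_W(t,\omega)\zeta\|\le C_\zeta e^{-\lambda t}$ for some $\lambda,C_\zeta>0$. Setting $T_0=0$, $T_k=e^{\tau_k}$ and letting $k^\ast$ be the largest $k$ with $T_k\le T$, the telescoping decomposition
$$\sigma_T(x;\omega)=\sum_{k=1}^{k^\ast}\bigl(\sigma_{T_k}(x)-\sigma_{T_{k-1}}(x)\bigr)+\bigl(\sigma_T(x)-\sigma_{T_{k^\ast}}(x)\bigr)$$
combined with the renormalization identity at $t=\tau_k$ realizes each of the first $k^\ast$ differences as a vertical asymptotic cycle of length $\le 1$ on $a_{\tau_k}\omega\in K$, of norm $\le B_K$, paired against $A^{KZ}_W(\tau_k,\omega)\zeta$ of norm $\le C_\zeta e^{-\lambda\tau_k}\le C_\zeta e^{-\lambda\delta k}$, so the corresponding geometric sum is uniformly bounded in $T$ and $x$.

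\emph{The main obstacle} is the residual piece $\sigma_T(x)-\sigma_{T_{k^\ast}}(x)$: renormalized onto $a_{\tau_{k^\ast}}\omega\in K$ it has vertical length up to $e^{\tau_{k^\ast+1}-\tau_{k^\ast}}$, which is not a priori bounded and may correspond to a trajectory that leaves $K$ before the next return. Controlling this piece requires showing that, for Birkhoff-generic $\omega$, the return gaps $\tau_{k+1}-\tau_k$ grow at most logarithmically in $\tau_k$ (a consequence of the logarithm law for cusp excursions of the Teichmüller flow, available on any affine invariant submanifold), so that the resulting at most polynomial-in-$k$ growth of the pairing on $a_{\tau_{k^\ast}}\omega$ is still dominated by the Oseledets factor $e^{-\lambda\tau_{k^\ast}}$. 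This quantitative control, together with the telescoping estimate above, yields the uniform bound; this is precisely the technical heart of the argument carried out in \cite{Fr-Hu}, which builds on Zorich's original bounded-deviation phenomenon \cite{Zor1,Zor2}.
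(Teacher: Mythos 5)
First, note that the paper never proves this proposition: it is imported verbatim from \cite{Fr-Hu}, so there is no in-paper argument to compare yours against, and the benchmark is the Zorich-type bounded-deviation scheme of \cite{Fr-Hu}, \cite{DHL}, \cite{Fr-Ulc:nonerg}. Your skeleton is the right one and matches that scheme: equivariance of the intersection pairing under $A^{KZ}_W$, positive-density returns of $a_t\omega$ to a compact set $K$ extracted from Birkhoff genericity, the uniform bound $B_K$ on classes of short segments over $K$, and the exponential decay $\|A^{KZ}_W(t,\omega)\zeta\|\le C_\zeta e^{-\lambda t}$ coming from $\zeta\in E^-_\omega$. However, what you single out as the main obstacle is not one: the residual segment from $T_{k^\ast}$ to $T$ should simply be renormalized at the \emph{next} return time $\tau_{k^\ast+1}$ rather than at $\tau_{k^\ast}$; since $T<T_{k^\ast+1}$ its renormalized vertical length is $<1$ and it lives on $a_{\tau_{k^\ast+1}}\omega\in K$, so it obeys the same bound $B_KC_\zeta e^{-\lambda\tau_{k^\ast+1}}$ as every other piece. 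No logarithm law is needed, and invoking one is not innocent: a logarithm law for cusp excursions is a Borel--Cantelli-type statement for $\mu_{\mathcal M}$-a.e.\ point and does not follow from Birkhoff genericity of the single surface $\omega$, which is all the hypothesis gives you.

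The genuine gap is the sentence asserting that each difference $\sigma_{T_k}(x)-\sigma_{T_{k-1}}(x)$ ``is realized as a vertical asymptotic cycle of length $\le1$ on $a_{\tau_k}\omega$ of norm $\le B_K$''. By definition $\sigma_s(y)$ is a vertical segment closed up by an auxiliary return path that is short \emph{on $(M,\omega)$}; the difference $\sigma_{T_k}(x)-\sigma_{T_{k-1}}(x)$ is therefore the $k$-th vertical piece together with connecting paths whose flat length is stretched by a factor up to $e^{\tau_k}$ under $a_{\tau_k}$, so their renormalized homology norm is not controlled by $B_K$; if instead you pair these corrections at time $0$ they do not telescope away and a naive count gives a deviation of order $k^\ast\approx\log T$ rather than $O(1)$. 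Repairing this requires choosing the closing curves adaptively on each renormalized surface and organizing the orbit segment into special Birkhoff sums over Rokhlin towers (zippered rectangles) on $a_{\tau_k}\omega$, so that the endpoint corrections at level $k$ are themselves bounded on $a_{\tau_k}\omega$ and contribute a geometric series after pairing with $A^{KZ}_W(\tau_k,\omega)\zeta$. This bookkeeping, not the residual piece, is the technical heart of \cite{Fr-Hu}, and your sketch assumes it rather than supplies it.
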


\begin{proof}[Proof of Theorems~\ref{thm:maux}~and~\ref{thm:main}]
Let $(\Lambda,R)$ be a admissible pair and let $\Lambda=h(\Z^2)$
for some $h\in SL_2(\R)$. As we have already mentioned in the
beginning of \S~\ref{sec;Eaton reduction}  that Theorem~\ref{thm:main}
reduces to Theorem~\ref{thm:maux}, so we need to prove that for
a.e.\ $\theta\in [0,2\pi]$ the vertical direction is trapped
in $F(r_\theta\Lambda,R,v)$.

Let us consider the restricted Kontsevich-Zorich cocycle $A^{KZ}_W:\R\times\mathcal{M}^{dc}_2\to GL(W)$, where
\[W:=\{\zeta\in H_1(M,\R):\tau_*\zeta=-\zeta\}\]
and $\tau:M\to M$ is  the only nontrivial element of the deck transformation group of the double cover. The subspace $W$ is two dimensional symplectic and
$SL_2(\R)$-invariant.
Let us consider the curve $[0,2\pi]\ni\theta\mapsto M(r_\theta\Lambda,R)\in\mathcal{M}^{dc}_2$. In view of \eqref{form:curv},
\[
\Psi(M(r_\theta\Lambda,R))=(r_\theta h,(2R,0))\Gamma_2.
\]
 Taking $\psi(\theta)=(r_\theta,(2R,0))$, we have that the determinant of
\[M_\psi(\theta)=\begin{bmatrix}
  \cos\theta & -\sin\theta & 2R \\
  -\sin\theta & \cos\theta & 0 \\
  \cos\theta & \sin\theta & 0
\end{bmatrix}\]
is non-zero. Therefore, by Theorem~\ref{thm:BirOs}, for a.e.\
$\theta\in [0,2\pi]$ the surface $M(r_\theta\Lambda,R)$ is
Birkhoff and Oseledets generic. Fix such $\theta\in
[0,2\pi]$. Recall that all Lyapunov exponents for all
invariant measures supported on the space of genus two surfaces
are positive, see e.g.\ \cite{Bain}. It follows that the top
Lyapunov exponent of the reduced cocycle $A^{KZ}_W$ is positive
(in fact, is equal to $1/2$). In view of
Proposition~\ref{prop:genbound},
it follows that there exists
non-zero $\zeta\in W$ and $C>0$ such that
$|\langle\sigma_t(x),\zeta\rangle|\leq C$ for all $x\in
M(r_\theta\Lambda,R)$, $t>0$  for which $\sigma_t(x)$ is defined.

\begin{sloppypar}
By Birkhoff genericity, the vertical flow on
$M(r_\theta\Lambda,R)$ has no saddle connection. Therefore, by
Proposition~\ref{prop:existband}, the vertical direction is
trapped on $F(r_\theta\Lambda,R,v)$. The result for Eaton model follows
from the correspondence between trapped direction in
$F(\Lambda,R,v)$ and $L(\Lambda,R)$.
\end{sloppypar}
\end{proof}

\section{Gap distribution of square root of integers}\label{sec:gaps_proof}
In this section, we give the proof of
Theorem \ref{thm;g;gap}, based on our Birkhoff genericity  result for curves
(Theorem~\ref{thm;equi}). Our proof follows the same idea of the proof by Elkies and McMullen in  \cite{em04}.

Let us recall that a sequence of real numbers $\{t_n\}_{n\geq 1}$
\emph{converges in distribution} to a  probability measure $\mu$
on $\mathbb R$ if for every $ f\in C_c(\R)$ we have
\[\lim_{N\to\infty}\frac{1}{N}\sum_{n=1}^N f(t_n)=\int_\R f(s)\,d\mu(s).\]
Convergence  in distribution to $\mu$ is
equivalent to
\begin{equation}\label{convseq}
\lim_{N\to\infty}\frac{1}{N}\#\{1\leq n\leq N: t_n\in(-\infty,b]\}\to \mu((-\infty,b])
\end{equation}
for every  $b\in\R$ with $\mu(\{b\})=0$ . Let us first prove two simple
Lemmas.
\begin{lem}\label{lem;g;bound}
Let $\{t_n\}_{n\geq 1}, \{l_n\}_{n\geq 1}$ be two  sequences of positive  real numbers. Suppose that
$
\lim_{n\to \infty}\frac{t_n}{l_n}=1
$
and
$\{t_n\}_{n\geq 1}$ converges in distribution to a probability measure $\mu$, then so
does $\{l_n\}_{n\geq 1}$.
\end{lem}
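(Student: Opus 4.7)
The plan is to reduce convergence in distribution to a statement about Cesàro averages of $C_c(\mathbb{R})$-test functions and then transfer it from $\{t_n\}$ to $\{l_n\}$ using a uniform continuity and compact support argument. Concretely, recall that convergence in distribution to $\mu$ is equivalent to
\[
\frac{1}{N}\sum_{n=1}^N f(t_n)\to \int_{\mathbb{R}} f\, d\mu \quad \text{for every } f\in C_c(\mathbb{R}).
\]
So I fix $f\in C_c(\mathbb{R})$, and the goal becomes showing
\[
\frac{1}{N}\sum_{n=1}^N \bigl(f(l_n)-f(t_n)\bigr)\longrightarrow 0.
\]

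The natural route is to prove the stronger pointwise statement $|f(l_n)-f(t_n)|\to 0$ as $n\to\infty$, from which the Cesàro convergence is immediate. Let $R>0$ be such that $\supp f\subset[-R,R]$ and, using uniform continuity of $f$, for given $\varepsilon>0$ choose $\delta>0$ so that $|x-y|<\delta$ implies $|f(x)-f(y)|<\varepsilon$. Since $t_n/l_n\to 1$ and both sequences are positive, for all large $n$ one has $|l_n/t_n-1|<\min(\delta/(2R),\,1/2)$.

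I would then split into two cases according to the size of $t_n$. If $t_n\le 2R$, then $|l_n-t_n|=t_n\,|l_n/t_n-1|<\delta$, so uniform continuity gives $|f(l_n)-f(t_n)|<\varepsilon$. If $t_n>2R$, then $l_n>t_n/2>R$, so both $f(l_n)$ and $f(t_n)$ vanish. In either case $|f(l_n)-f(t_n)|<\varepsilon$ for $n$ large, which yields the desired Cesàro vanishing and hence
\[
\frac{1}{N}\sum_{n=1}^N f(l_n)=\frac{1}{N}\sum_{n=1}^N f(t_n)+o(1)\longrightarrow \int_{\mathbb{R}} f\, d\mu.
\]

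The only subtlety — which is the closest thing to an obstacle — is that $t_n/l_n\to 1$ does not imply $|l_n-t_n|\to 0$ when the sequences are unbounded. This is precisely why one cannot use uniform continuity naively and must exploit the compact support of $f$ to cut off the unbounded regime; the case split above handles this cleanly.
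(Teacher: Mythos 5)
Your proof is correct and follows essentially the same route as the paper: both reduce to the pointwise statement $|f(t_n)-f(l_n)|\to 0$ for $f\in C_c(\R)$ and handle it by splitting into a bounded regime (where $t_n/l_n\to 1$ forces $|t_n-l_n|$ small, so uniform continuity applies) and an unbounded regime (where compact support makes both values vanish). Your explicit case split at $t_n\le 2R$ versus $t_n>2R$ is, if anything, slightly cleaner than the paper's compressed version of the same argument.
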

\begin{proof}
It suffices to show that for every  $ f\in C_c(\mathbb R)$ one has
\begin{align}\label{eq;g;sequence}
\lim_{n\to \infty} | f(t_n)- f(l_n)|=0.
\end{align}
Given $\varepsilon>0$, there exists $\delta >0$ such that if $0\le
s, t\le \delta $, then $| f(s)- f(t)|<\varepsilon$.  According to the
assumption, there exits an integer $N_1>0$ such that for $n\ge N_1$
either $t_n, l_n\le \delta$ or $t_n, l_n\ge \frac{\delta }{2}$.
Since $ f$ has compact support, there exists $N_2>0$ such that if
$n\ge N_2$ and  $t_n, l_n\ge \frac{\delta}{2}$  then $| f(t_n)-
f(l_n)|< \varepsilon$. If follows that $ | f(t_n)-
f(l_n)|<\varepsilon$ provided that  $n\ge \max\{N_1, N_2\}$.
Therefore  (\ref{eq;g;sequence}) holds and the proof is complete.
\end{proof}

\begin{lem}\label{lem;g;squeeze}
Let  $\{t_n\}, \{s_n\}, \{l_n\}$ be
sequences of real numbers. Suppose that $s_n\le t_n\le l_n$ for every $n\ge 1$ and
$\{s_n\},\{l_n\}$ converge in distribution to a
 probability measure $\mu$ on $\mathbb R$, then so does
$\{t_n\}$.
\end{lem}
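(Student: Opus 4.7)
The plan is to use the equivalent characterization of convergence in distribution given in \eqref{convseq}, namely that it suffices to verify
\[
\frac{1}{N}\#\{1\leq n\leq N: t_n\in(-\infty,b]\}\to \mu((-\infty,b])
\]
for every $b\in\R$ which is a continuity point of the distribution function $b\mapsto\mu((-\infty,b])$. Since both $\{s_n\}$ and $\{l_n\}$ converge in distribution to the \emph{same} measure $\mu$, the relevant set of continuity points is the same for both, so the strategy is to use the pointwise squeeze on indicator functions of half-lines and pass to the limit.

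The key step exploits the monotonicity built into the squeezing hypothesis. From $s_n\le t_n\le l_n$ one obtains the set inclusions
\[
\{n\le N:l_n\le b\}\subseteq\{n\le N:t_n\le b\}\subseteq\{n\le N:s_n\le b\},
\]
and hence, after dividing by $N$,
\[
\tfrac{1}{N}\#\{n\le N:l_n\le b\}\le \tfrac{1}{N}\#\{n\le N:t_n\le b\}\le \tfrac{1}{N}\#\{n\le N:s_n\le b\}.
\]
Fix a continuity point $b$ of $\mu$. By the assumption on $\{s_n\}$ and $\{l_n\}$ together with the equivalence \eqref{convseq}, both the left-hand and right-hand sides converge to the same limit $\mu((-\infty,b])$. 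A standard sandwich argument then forces the middle term to converge to $\mu((-\infty,b])$ as well, and since this holds for every continuity point $b$ of $\mu$, applying \eqref{convseq} in the reverse direction gives the claim.

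I do not anticipate any serious obstacle here: the result is a straightforward consequence of the fact that the distribution-function formulation of convergence in distribution is monotone in the variable and that one can squeeze between two sequences with a common limiting distribution. The only mild subtlety is to work through the distribution-function formulation \eqref{convseq} rather than directly with test functions in $C_c(\R)$, since the pointwise inequality $s_n\le t_n\le l_n$ does not translate into a useful inequality on $f(s_n), f(t_n), f(l_n)$ for a general $f\in C_c(\R)$.
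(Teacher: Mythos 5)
Your argument is correct and is exactly the paper's proof: the paper simply states that the lemma ``follows from the usual squeeze lemma for sequences of real numbers and \eqref{convseq}'', which is precisely the sandwich of counting functions at continuity points of $\mu$ that you spell out. No issues.
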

\begin{proof}
The lemma follows from the usual squeeze lemma for sequences of real numbers and (\ref{convseq}).
\end{proof}

Let us now  turn to  the proof of Theorem \ref{thm;g;gap} using  the strategy in  \cite{em04}.
Recall that $L_r(s)$ defined in (\ref{eq;L_r(s)}) is the normalized gap containing $s$ of
fractional parts of integers up to $r$.
We approximate
$L_r(s)$ for $s>0$ by the maximal area $L'_r(s)$ of $\triangle
ABC$ (see Figure \ref{fig;g;one}) with the following properties: $B,C$ move
on the line $y=\sqrt r-s$; $B, C$ are above and below the line
$x=2sy+s^2$ respectively; the interior of  $\triangle ABC$
contains no  points of $\mathbb Z^2$.
 We remark here that if the line segment of $x=2sy+s^2$ contained
in $\triangle ABC$ has a lattice point, then $L'_r(s)=0$. For an
affine lattice $\Lambda$ we let $f(\Lambda)$  be the maximal  area
of triangles with the following properties: the triangle's interior  contains the line segment $\{0\}\times [0,1]$ but no lattice points of $\Lambda$; the triangle has
 one
vertex $(0,0)$ and the other vertices lie on the line $y=1$.

Recall that  $u(\cdot, \cdot, \cdot), a_t$ are defined in (\ref{subgroups}) and $\mu_X$ is the probability
Haar measure on $X=ASL_2(\R)/ASL_2(\Z)$.
It is  noticed  in \cite{em04} that
\begin{align}\label{eq;L_r'(s)}
L_r'(s)=f(a_{\log\sqrt r}u(-2s,-s^2, s)\mathbb Z^2).
\end{align}
Let $F: [0, \infty)\to [0, \infty)$ be the piecewise
analytic density  function given by (\ref{eq;density F}).
It is proved in \cite[\S~3.3]{em04} that
\[
\int_0^l tF(t)\dd t=\mu_X(\{\Lambda\in X:f(\Lambda)\leq l\}).
 \]

\setlength{\unitlength}{0.7cm}

\begin{figure}[h]

\center{
\begin{picture}(9.5,8.5)(-1.5,-1.5)
\put(-1.2,0){\vector(1,0){9.5}}
\put(0,-1.2){\vector(0,1){7}}
\put(-0.36,-0.6){\line(6,5){8}}
\put(4.2,5.5){$x=2sy+s^2$}
\put(-0.36,-0.6){\line(4,5){4.5}}
\put(-0.36,-0.6){\line(3,2){8.4}}
\put(8.4,-.05){$x$}
\put(-0.1,6){$y$}
\put(-1,0){\circle*{0.08}}
\put(0,-1){\circle*{0.08}}
\put(-0.36,-0.6){\circle*{0.1}}
\put (-2.9,-0.75){$A(-s^2, -s)$}
\put (-1.3, 0.13){$-1$}
\put (0.15, -1.1){$-1$}
\put (-0.4, 0.13){$O$}
\put(0,0){\circle*{0.08}}
\put (0.9, -.4){$1$}
\put(1,0){\circle*{0.08}}
\put(0,5){\circle*{0.08}}
\put (-1.7, 4.9){$\sqrt r-s$}
\multiput(0,5)(0.4,0){23}{\line(1,0){0.2}}
\put(8.05,5){\circle*{0.1}}
\put(4.13,5){\circle*{0.1}}
\put(4.13,5){\line(1,0){3.92}}
\put(3.7,5.1){$B$}
\put(8.1,5.1){$C$}
\end{picture}
}
\caption{}
\label{fig;g;one}
\end{figure}


\begin{lem}\label{lem;g;approximation}
Let $\{t_n\}_{n\in \N}$ be a sequence of natural numbers  such that $\sum_{n=1}^\infty t_n^{-1}<\infty $. Then
for Lebesgue  almost every $s\in [0, 1]$ we have
\[
\lim_{n\to \infty}\frac{L_{t_n^2}(s)}{L'_{t_n^2}(s)}=1.
\]
\end{lem}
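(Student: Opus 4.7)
The plan is a Borel--Cantelli argument built on a uniform measure estimate for the set of $s\in[0,1]$ on which $L_r(s)$ and $L'_r(s)$ disagree significantly. Concretely, I would aim to establish the following bound: for every $\epsilon>0$ there is a constant $C(\epsilon)>0$ such that for all $r\ge 1$
$$
\bigl|\bigl\{s\in[0,1]:|L_r(s)/L'_r(s)-1|>\epsilon\bigr\}\bigr|\le C(\epsilon)/\sqrt r, \qquad (\ast)
$$
with the constant $C(\epsilon)$ independent of $r$.

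Granting $(\ast)$, the rest is formal. Specializing to $r=t_n^2$ gives the bound $C(\epsilon)/t_n$, and the hypothesis $\sum_n 1/t_n<\infty$ combined with the Borel--Cantelli lemma yields, for a.e.\ $s\in[0,1]$, only finitely many indices $n$ with $|L_{t_n^2}(s)/L'_{t_n^2}(s)-1|>\epsilon$. Intersecting the resulting full-measure sets over $\epsilon=1/k$, $k\in\N$, produces the advertised pointwise limit $L_{t_n^2}(s)/L'_{t_n^2}(s)\to 1$.

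The geometric observation underlying $(\ast)$, already central in \cite{em04}, is that writing $n=k^2+m$ with $k=\lfloor\sqrt n\rfloor$ one has
$$
\sqrt n - (k+s)=\frac{m-2ks-s^2}{2k}+O(k^{-2}),
$$
so that after rescaling by $a_{\log\sqrt r}$ the first coordinate of the rescaled fractional part $\sqrt n\bmod 1$ differs from the first coordinate of the rescaled affine lattice point $u(-2s,-s^2,s)(m,k,1)^{\mathrm{tr}}$ by an absolute error $O(1/\sqrt r)$. This is precisely why $L'_r(s)$ defined by \eqref{eq;L_r'(s)} is the natural continuous-lattice proxy for $L_r(s)$.

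To prove $(\ast)$ I would split the failure event into two regimes. The \emph{cusp regime} consists of $s$ with $L'_r(s)>M$ for a large threshold $M=M(\epsilon)$; using the estimate $\mu_X(\{f>M\})=O(1/M)$ together with the equidistribution content behind Theorem~\ref{thm;equi} applied to a Siegel-type transform of $\mathbf{1}_{\{f>M\}}$, this regime has measure $O(1/M)$, which can be made $<\epsilon$ by choosing $M$ large. The \emph{boundary regime} consists of $s$ with $L'_r(s)\le M$ for which the extremal triangle defining $L'_r(s)$ misclassifies some lattice point $(m,k)$ relative to the exact constraint $n=k^2+m\le r$; each lattice point with $k$ of order $\sqrt r$ transitions across this constraint as $s$ varies within an interval of length $O(1/k)=O(1/\sqrt r)$, and once $L'_r(s)\le M$ only $O_M(\sqrt r)$ such points are geometrically relevant to the triangle, giving a combined contribution of measure $O_M(1/\sqrt r)$. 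For $s$ outside both regimes the linear-approximation error $O(1/\sqrt r)$ propagates to $|L_r(s)/L'_r(s)-1|<\epsilon$ for all $r$ sufficiently large, completing $(\ast)$.

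The main obstacle is the uniform control of the boundary regime: one has to identify precisely the finite collection of lattice points on which $L'_r(s)$ depends, verify that a small perturbation of $s$ changes neither the extremal triangle nor its area by more than the linear-approximation error, and sum the resulting intervals while keeping the dependence on the cusp cutoff $M$ explicit. Once $(\ast)$ is in hand the Borel--Cantelli step is automatic and the lemma follows.
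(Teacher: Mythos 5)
Your overall strategy matches the paper's exactly: both reduce the lemma to a uniform-in-$r$ tail estimate of the form $(\ast)$ and then feed $r=t_n^2$ into the Borel--Cantelli lemma using $\sum_n t_n^{-1}<\infty$, finally intersecting over $\epsilon=1/k$. The difference is that the paper simply cites \cite[Lemmas 3.3 and 3.7]{em04}, which supply the quantitative statement directly: for integers $a,A>1$, outside a set of measure at most $\bigl((A+2)(A-1)+2\bigr)/(a-1)$ and outside a $\tfrac{1}{a-1}$-neighborhood of $\{0,1\}$, one has $\tfrac{2A+1}{2A+2}L'_{a^2}(s)\le L_{a^2}(s)\le \tfrac{2A+1}{2A}L'_{a^2}(s)$. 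Taking $A$ large (depending on $\epsilon$) and $a=t_n$ is exactly your $(\ast)$. You instead attempt to rederive the estimate from scratch.

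Two concrete gaps in your sketch of $(\ast)$. First, the boundary-regime arithmetic does not close as written: $O_M(\sqrt r)$ lattice points each contributing a transition interval of length $O(1/\sqrt r)$ gives a union bound of $O_M(1)$, not the claimed $O_M(1/\sqrt r)$. Extracting the extra factor of $1/\sqrt r$ requires the finer accounting of which near-boundary lattice points can actually perturb the extremal triangle at a given $s$ and how their transition intervals are distributed across $[0,1]$; that bookkeeping is precisely the content of the cited Elkies--McMullen lemmas, and you should cite them rather than reconstruct them. Second, invoking ``the equidistribution content behind Theorem~\ref{thm;equi}'' for the cusp bound $|\{s:L'_r(s)>M\}|=O(1/M)$ is the wrong tool: Theorem~\ref{thm;equi} is an asymptotic Birkhoff statement along the $a_t$-orbit holding for almost every \emph{fixed} $s$, so it cannot by itself produce a measure bound in $s$ that is \emph{uniform} in $r$; it is also part of what Lemma~\ref{lem;g;approximation} is ultimately used to support, so there is a risk of circularity. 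What is needed for the cusp is a direct counting / non-divergence estimate for the map $s\mapsto a_{\log\sqrt r}\,u(-2s,-s^2,s)\Z^2$, again contained in \cite{em04}. Replacing your sketch of $(\ast)$ with the citation of \cite[Lemmas 3.3, 3.7]{em04} recovers exactly the paper's proof.
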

\begin{proof}
Let $a, A> 1$ be integers. According to  \cite[Lemmas 3.3 and 3.7]{em04} the set
\[
\left\{\frac{1}{a-1}\le s\le 1-\frac{1}{a-1}:\frac{2A+1}{2A+2}L'_{a^2}(s)\le L_{a^2}(s)\le
\frac{2A+1}{2A} L'_{a^2}(s)\right\}
\]
has Lebesgue measure
\[
\ge1- \frac{ (A+2)(A-1)+2}{a-1}.
\]
The conclusion follows from
  the assumption  $\sum_{n=1}^\infty t_n^{-1}<\infty$ and the  Borel-Cantelli lemma.

\end{proof}
\begin{proof}[Proof of Theorem \ref{thm;g;gap}]
For every $r\ge 1$ and $s\in [0, 1]$ it can be checked directly
that
\begin{align}\label{eq;g;app}
\frac{\lfloor r\rfloor}{(\lfloor \sqrt{r}\rfloor +1)^2}L_{(\lfloor\sqrt{r}\rfloor+1)^2}(s)\le
 L_r(s)\le
 \frac{\lfloor r\rfloor}{\lfloor \sqrt{r}\rfloor ^2} L_{\lfloor\sqrt{r}\rfloor^2}(s).
\end{align}
Note that  coefficients of left and right hand sides of (\ref{eq;g;app}) converge to $1$ as
$r\to \infty$.

Let $r_n:=cq^n$ and set
\begin{align*}
t_n(s):=L'_{(\lfloor\sqrt{r_n}\rfloor+1)^2}(s),\ l_n(s):=L'_{\lfloor\sqrt{r_n}\rfloor^2}(s).
\end{align*}
By Lemmas \ref{lem;g;bound},  \ref{lem;g;squeeze} and \ref{lem;g;approximation},
it suffices to prove that for Lebesgue almost every $s\in [0, 1]$ the sequences
$\{t_n(s)\}_{n\in \N}$ and $\{l_n(s)\}_{n\in \N}$
converge in distribution to $\mu=tF(t)\,dt$ on $[0, \infty)$.
By (\ref{eq;L_r'(s)}),
\begin{align*}
t_n(s)&=f(a_{\log( \lfloor \sqrt {cq^n} \rfloor+1)}u(-2s, -s^2, s)\mathbb Z^2) =f(a_{nt+t_0+c^-_n}u(-2s, -s^2, s)\mathbb Z^2)\\
l_n(s)&=f(a_{\log \lfloor \sqrt {cq^n} \rfloor}u(-2s, -s^2, s)\mathbb Z^2) =f(a_{nt+t_0+c^+_n}u(-2s, -s^2, s)\mathbb Z^2)
\end{align*}
  where $t_0=\log\sqrt c, t=\log \sqrt q$ and
  \[
 c^-_n=\log\frac{\lfloor \sqrt {cq^n} \rfloor+1}{\sqrt {cq^n}}\to 0,\quad  c^+_n=\log\frac{\lfloor \sqrt {cq^n} \rfloor}{\sqrt {cq^n}}\to 0 \quad \text{as }n\to \infty.
  \]

We claim that
 for a.e.~$s\in[0,1]$
 and any $\varphi\in C_c(X)$ one has
 \[
 \lim_{N\to \infty}\frac{1}{N}\sum_{n=1}^N\varphi ({a_{(nt+t_0+c^\pm_n)}u(-2s, -s^2, s)\Gamma})=\int_X \varphi\dd \mu_X.
 \]
The claim follows from a discrete version of
  Corollary~\ref{cor:gencurve} which can be derived from the continuous version using
  entropy method.
  Note that for an $a_t$-invariant probability measure $\mu$ on $X$, the measure $\nu:=\int_0^t (a_r)_* \mu \dd r$ is invariant with respect to the geodesic flow. Moreover, the measures $\mu$ and  $\nu$ have the same entropy with respect to $a_t$.
   Recall that  $\mu_X$ is the unique
$a_t$-invariant probability measure on $X$ with maximal entropy (see e.g.~\cite{el}).  So if $\nu=\mu_X$, then so does $\mu$.

 For every $l\geq 0$ let $E_l=\{\Lambda\in X: f(\Lambda)\le l\}$
 and let $\mathbbm 1_{E_l}$ be the indicator function of $E_l\subset X$.
 It is proved in \cite[Proposition 3.9]{em04}
that $\mu_X(\partial E_l)=0$. Therefore, for a.e.\ $s\in[0,1]$
we have
\begin{align*}
\frac{1}{N}&\#\{1\leq n\leq N:f(a_{(nt+t_0+c^\pm_n)}u(-2s, -s^2, s)\Z^2)\leq l\}\\
&=\frac{1}{N}\sum_{n=1}^N\mathbbm{1}_{E_l}({a_{(nt+t_0+c^\pm_n)}u(-2s, -s^2, s)\Gamma})\to\mu_X(E_l)=\int_{0}^l tF(t)\,dt.
\end{align*}
Therefore, for a.e.\ $s\in [0,1]$  the sequences
$\{t_n(s)\}_{n\in \N}$ and $\{l_n(s)\}_{n\in\N}$ converge in distribution to $\mu$, which completes the proof.
\end{proof}

\section{Proof of the Birkhoff genericity along curves}\label{sec:Birkhoff_proof}

The aim of this section is to prove Theorem \ref{thm;equi}. We
first setup some notation that will be used throughout this
section. Let $G,\Gamma, X, a_t, u, u_\varphi$ be as in \S~\ref{sec:Birkhoff}
and $\varphi$ be as in Theorem \ref{thm;equi}.
For convenience we let $u(a)=u(a, 0, 0)$ and
$u(a,b)=u(a,b, 0)$.
The following notation will also be used during the proof:
\begin{align*}
 H&=\{(h,0): h\in SL_2(\mathbb R)\} \le G \\
 U& =\{u(t): t\in \mathbb R\} \\
 D& =\{a_t: t\in \mathbb R\}\\
  \rho&:G\to SL_2(\mathbb R) \quad\text{ which maps } (h,\xi)\to h.
\end{align*}
We also denote by $ \rho$ the induced  map of homogeneous spaces
\begin{equation*}
\rho:G/\Gamma\to SL_2(\mathbb R)/SL_2(\mathbb Z), \quad
\rho(g\Gamma)=\rho(g)SL_2(\mathbb Z).
\end{equation*}


 For every $s\in[0,1]$ and $T>0$
let $\mu_{s,T}$ be a probability measure on $X$ given by
\[\mu_{s,T}=\frac{1}{T}\int_{0}^T\delta_{a_tu_{\varphi}(s)\Gamma}\,dt\]
where $\delta_{a_tu_\varphi(s)\Gamma}$ is the point mass measure
on $a_t\, u_\varphi(s)\Gamma$.
All the constants depending on $\varphi$ will not be specified in
this section.

\smallskip

\subsection{Outline  of the proof of Theorem \ref{thm;equi}.} The proof consists of two parts. We first consider any  weak$^*$ limit of
$\mu_{s, T}$ as $T\to \infty$ and prove that it is a measure on $X$ invariant under the unipotent subgroup $U$ (see \S~\ref{sec:outline_Birkhoff}).
It is easy to see that any weak$^*$ limit $\mu$ is also  invariant under the group $D$. According to a result by Mozes in \cite[Theorem 1]{m95}, a finite measure on $X$ invariant under
the group $DU$ is automatically $H$-invariant. Hence, one can apply the celebrated Ratner's Theorem (see \cite{r91}), which gives  that
any $H$-invariant and ergodic probability
measure on $X$ is homogeneous and supported on some orbit closure
of $H$. The   $H$-orbits  are well known: each orbit is  either closed or dense. To prove Theorem \ref{thm;equi} we need a precise description of  closed orbits, which is  given  at the beginning of
 \S~\ref{sec:non_concentration}.  The main result to prove the second part is Proposition \ref{prop;main}. In \S~\ref{sec:non_concentration}, assuming Proposition \ref{prop;main}, we conclude the proof of  Theorem \ref{thm;equi}. The proof of Proposition \ref{prop;main}, which is rather long and technical, is postponed to the last two  sections, i.e. \S~\ref{sec;height}, where a suitable mixed height function is constructed,  and \S~\ref{sec;singular}, where the height function is used to show that there is no accumulation of mass on closed orbits. Before the proof of Proposition \ref{prop;main}, in \S~\ref{sec:proof_cor}, we deduce from Theorem \ref{thm;equi}  the  Birkhoff genericity result for more general curves, i.e. Corollary \ref{cor:gencurve}.

\subsection{Unipotent invariance}\label{sec:outline_Birkhoff}
In this section  we prove the unipotent invariance of every weak$^*$ limit of
$\mu_{s, T}$ as $T\to \infty$.  The methods of the proof are inspired by the work of Eskin-Chaika \cite{Es-Ch} and extends their technique to more general curves in our setup.

\begin{lem}[Lemma~3.4 in \cite{Es-Ch}]
\label{lem:ECh}
Let $\eta:[0,+\infty)\times[0,1]\to \R$ be a bounded measurable function for which there exist $C, L, \lambda>0$ such that
\begin{itemize}
\item[(i)] $|\eta(t,s)-\eta(l,s)|\leq L|t-l|$;
\item[(ii)] $\big|\int_0^1  \eta(t, s) \eta(l,s)\, ds\big| \le C e^{-\lambda|l-t|}$.
\end{itemize}
Then $\frac{1}{T}\int_0^T \eta(t,s)\,dt \to 0$ for a.e.\ $s\in[0,1]$.
\end{lem}
The above lemma  was proved in \cite{Es-Ch} (cf.\ Lemma 3.4) with a redundant assumption $\int_0^1\eta(t,s)\,ds=0$ for every $t\geq 1$. However, this assumption was not used in the proof.

\begin{prop}\label{prop;inva}
Let $\varphi:[0,1]\to \mathbb{R}$ be a $C^1$-function. Then
for a.e.\ $s\in [0,1]$  any weak$^*$ limit $\mu$ of $\mu_{s, T}$ as $T\to \infty$ is
invariant under the group $U$.
\end{prop}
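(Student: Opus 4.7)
The approach adapts the Chaika--Eskin shearing argument \cite{Es-Ch} from circles to our more general curves. The key algebraic input is the conjugation identity
\[
a_t\, u(h,k,0)\, a_{-t}\ =\ u\bigl(e^{2t}h,\ e^t k,\ 0\bigr).
\]
Specializing to $h=\tau e^{-2t}$ and combining with the group law $u_\varphi(s+h)u_\varphi(s)^{-1}=u(h,\varphi(s+h)-\varphi(s),0)$, the first step is to derive
\[
a_t\, u_\varphi(s+\tau e^{-2t}) = u\bigl(\tau,\ e^{t}(\varphi(s+\tau e^{-2t})-\varphi(s)),\ 0\bigr)\cdot a_t\, u_\varphi(s).
\]
The $C^1$ hypothesis forces the middle coordinate to be $O(\tau e^{-t})$ uniformly in $s\in[0,1]$, so the prefactor equals $u(\tau)$ times a group element tending to the identity. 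Uniform continuity of any $f\in C_c(X)$ then gives, uniformly in $s$,
\[
\int f\circ u(\tau)\, d\mu_{s,T}\ =\ J_T(s)+o_T(1), \qquad J_T(s):=\frac{1}{T}\int_0^T f\bigl(a_t\, u_\varphi(s+\tau e^{-2t})\Gamma\bigr)\, dt.
\]

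The next step compares $J_T(s)$ with $I_T(s):=\int f\, d\mu_{s,T}$. Integrating the difference over $s\in[0,1]$, interchanging the order of integration, and substituting $\sigma=s+\tau e^{-2t}$ turns $\int_0^1 F(s+\tau e^{-2t},t)\, ds$ (where $F(s,t)=f(a_t u_\varphi(s)\Gamma)$) into $\int_{\tau e^{-2t}}^{1+\tau e^{-2t}} F(\sigma,t)\, d\sigma$, which differs from $\int_0^1 F(\sigma,t)\, d\sigma$ only by two boundary integrals of length $\tau e^{-2t}$. This yields the signed-average bound
\[
\left|\int_0^1 \bigl(J_T(s)-I_T(s)\bigr)\, ds\right|\ \le\ \frac{C\|f\|_\infty\, \tau}{T}.
\]

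The main obstacle is promoting this on-average estimate to pointwise a.e.\ convergence $J_T(s)-I_T(s)\to 0$. The plan is to run the analogous Fubini-plus-change-of-variables argument on the second moment: writing $(J_T-I_T)^2$ as a double integral over $(t,t')\in[0,T]^2$ and substituting in each time variable produces, after collecting the telescoping boundary terms, an $L^2$-bound of the form $\int_0^1 (J_T-I_T)^2\, ds\le C(f,\tau)/T$. Chebyshev and Borel--Cantelli along the dyadic sequence $T_n=2^n$ then yield $J_{T_n}(s)-I_{T_n}(s)\to 0$ for a.e.\ $s$, and the elementary bound $|I_T(s)-I_{T_n}(s)|\le 2\|f\|_\infty |T-T_n|/T_n$ for $T\in[T_n,T_{n+1}]$ (with the analogous bound for $J_T$) interpolates to general $T\to\infty$.

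Finally, fixing a countable dense family $\{(f_k,\tau_k)\}\subset C_c(X)\times(0,\infty)$ and taking the union of the corresponding null exceptional sets produces a single conull $S\subset[0,1]$ such that for every $s\in S$ and every weak$^*$ subsequential limit $\mu$ of $\mu_{s,T}$ one has $\int f_k\circ u(\tau_k)\, d\mu=\int f_k\, d\mu$ for all $k$. Density in $C_c(X)$ and continuity of $\tau\mapsto u(\tau)_*\mu$ then upgrade this to $u(\tau)_*\mu=\mu$ for every $\tau\in\R$, which is the claimed $U$-invariance. The real work sits in the second-moment estimate: the double substitution produces a bracket $F(\sigma+\alpha,t')-F(\sigma+\beta,t')-F(\sigma+\gamma,t')+F(\sigma,t')$ with $\alpha-\beta-\gamma=0$, so one needs to exploit this second-order-difference structure to offset the potential $e^{2t'}$-expansion of $F$ in the $\sigma$ variable; controlling this cancellation is the principal technical point.
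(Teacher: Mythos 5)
Your overall architecture is the same as the paper's (which follows Chaika--Eskin): use the shearing identity to convert $u(\tau)$-invariance of limit measures into the a.e.\ vanishing of $J_T(s)-I_T(s)$, reduce that to a second-moment (equivalently, covariance) bound, and conclude by Chebyshev plus Borel--Cantelli along dyadic times. The first-moment Fubini bound is fine, and the final reduction to a countable family $\{(f_k,\tau_k)\}$ is also what one does. However, you explicitly defer the crux --- the $L^2$/covariance estimate --- calling it ``the principal technical point'' without resolving it, and the naive ``second-order difference'' bound you gesture at does not close: taking $l>t$ and writing the bracket as a double difference of $F(\cdot,l)$ with offsets $\tau e^{-2t}$ and $\tau e^{-2l}$, the Taylor bound gives $\|F''(\cdot,l)\|_\infty\,\tau^2 e^{-2t}e^{-2l}\sim\tau^2 e^{2(l-t)}$, which grows in $|l-t|$ rather than decaying, and the cruder Lipschitz bound gives only $O(\tau)$ with no decay at all.

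The missing idea is the paper's intermediate-scale partition. Fix $l>t$ and cut $[0,1]$ into intervals $I(s_0)$ of length $\approx e^{-(l+t)}$. On each such interval two things happen simultaneously: (a) the slower factor $f_\tau(t,\cdot)$ (built from $F(\cdot,t)$, which varies at scale $e^{-2t}\gg e^{-(l+t)}$) is nearly constant, with oscillation $O(e^{-(l-t)})$; and (b) the faster factor $f_\tau(l,\cdot)$ nearly integrates to zero over $I(s_0)$, because it is a shift-difference by $\tau e^{-2l}\ll e^{-(l+t)}$, so only two boundary strips of relative length $O(e^{-(l-t)})$ survive. Multiplying and summing over the $\approx e^{l+t}$ intervals yields the exponential covariance decay
\[
\Bigl|\int_0^1 f_\tau(t,s)\,f_\tau(l,s)\,ds\Bigr|\ \le\ M(f,\tau)\,e^{-|l-t|},
\]
from which the second-moment bound $\int_0^1 (J_T-I_T)^2\,ds=O(1/T)$ follows by integrating $\frac{1}{T^2}\int_0^T\int_0^T$. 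This partition, together with the two sub-estimates (near-constancy of the slow factor, near-cancellation of the fast factor), is the step your proposal is missing; without it the $e^{2t'}$-expansion cannot be offset by a direct second-order-difference estimate.
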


\begin{proof}
For every function  $f\in C_c^{\infty}(X)$ and every $r\in \mathbb R$ we
let
\begin{align}\label{eq;f r t s}
 f _{ r}(t,s)=  f (u(r) a_t u_\varphi(s)\Gamma)- f (a_t u_\varphi(s)\Gamma).
\end{align}
Fix a countable set $\mathcal S\subset C_c^\infty(X)$ which is
dense in $C_c(X)$ with respect to  sup-norm. Then it suffices to show that for every
$f\in\mathcal S$ and $r\in \R$ we have
\begin{align}\label{eq;invariance}
 \frac{1}{T}\int_0^T f _r(t,s)\,dt \to 0\quad \text{ for a.e.}\ s\in[0,1] .
\end{align}
By  Lemma~\ref{lem:ECh}, to prove (\ref{eq;invariance}) it
suffices to show that
\begin{align}\label{eq;psi l t}
\Big|\int_0^1  f _{r}(t, s) f _{r}(l,s)\, ds\Big| \le M( f , r)
e^{- |l-t|},
\end{align}
where $M( f , r)$ is a constant depending on $ f $  and $r$. Since
in (\ref{eq;psi l t}) the roles of $l$ and $t$ are symmetric we
assume that $l>t$. Then for any
\[
s\in I(s_0)=[s_0-e^{{-l-t}}, s_0+e^{{-l-t}}]\subset [0, 1]
\]
we have
\begin{align}\label{eq;g t u phi}
\begin{aligned}
a_tu_\varphi(s)&=a_t u(s-s_0, \varphi(s)-\varphi(s_0))a_{-t}\cdot a_tu_\varphi(s_0) \\
&=u\big(e^{2t}(s-s_0)), e^t\varphi'(\tau(s))(s-s_0)\big)\cdot a_tu_\varphi(s_0)  \\
& =u(O(e^{-l+t}), O(e^{-l+t})) a_tu_\varphi(s_0),
\end{aligned}
\end{align}
where $\tau(s)$ is a real number determined  by the mean value
theorem. Since $f:X\to\R$ is compactly supported there exists
$C>0$ such that
\begin{equation}\label{neq:lip}
|f(g_1\Gamma)-f(g_2\Gamma)|\leq C\|(Id,0)-g_2g_1^{-1}\|\ \text{ for
all }\ g_1,g_2\in G,
\end{equation}
where if $g_2g_1^{-1}=(h, \xi)$ then $\|(Id,0)-g_2g_1^{-1}\|=\|Id-h\|+\|\xi\|$.
It follows from the definition
of $f_r(t, s)$ in (\ref{eq;f r t s}), (\ref{eq;g t u phi}) and the
smoothness of $f$ that
\begin{align*}
f_{r}(t, s)-f_r(t, s_0)=O_{f,r}(e^{-l+t}).
\end{align*}
Therefore
\begin{align}\label{eq;estimate}
\int_{I{(s_0)}}  f _{ r}(t,s) f _{ r}(l,s)\,  ds =  { f _{
r}(t,s_0)}\int_{I{(s_0)}}  f _{ r}(l,s)\,
ds+|I(s_0)|O_{f,r}(e^{-l+t}).
\end{align}
On the other hand
\begin{align*}
u(r)  a_l  u_\varphi(s)&=  a_l   u(0, \varphi(s)-\varphi(s+re^{-2l}))a_{-l}\cdot  a_l  u_\varphi(s+re^{-2l}) \\
&=u(0, -re^{-l}\varphi'(s+\tau(r) e^{-2l}))\cdot  a_l  u_\varphi(s+re^{-2l}) \\
&=  u(0, O_r(e^{-l+t})) a_l  u_\varphi(s+re^{-2l}),
\end{align*}
where $\tau(r)$ is determined by the mean value theorem. According
to \eqref{neq:lip}, it follows that
\begin{align*}
 f _r(l, s)=  f (  a_l  u_\varphi(s+re^{-2l})\Gamma)- f ( a_l  u_\varphi(s)\Gamma)+O_{f, r}(e^{-l+t}),
\end{align*}
and hence
\begin{align}\label{eq;psi l}
\int_{I(s_0)} f _r(l, s)\, ds=O_{f, r}(e^{-l+t})|I(s_0)|.
\end{align}
In view of (\ref{eq;estimate}) and (\ref{eq;psi l}), we have
 \begin{equation}\label{eq:lastest}
  \int_{I{(s_0)}}  f _{ r}(t,s) f _{ r}(l,s)\,  ds\\=O_{f, r}(e^{-l+t})|I(s_0)|.
 \end{equation}
Now (\ref{eq;psi l t}) follows by splitting $[0,1]=\bigcup_{1\le
k\le m} I_k$ into intervals $I_k=[s_{k-1}, s_k]$ with
$s_k-s_{k-1}=2e^{-l-t}$ for $1\le k< m$ and $s_m-s_{m-1}\le
2e^{-l-t}$, and then by applying \eqref{eq:lastest} to intervals
$I_k$ $(1\le k< m)$.
\end{proof}

\subsection{Proof of Theorem \ref{thm;equi} assuming Proposition \ref{prop;main}}\label{sec:non_concentration}
For every $h\in SL_2(\mathbb R)$ we have \[
 \rho^{-1}( h\, SL_2(\mathbb Z))\cong  \mathbb R^2/\mathbb Z^2,
\]
as $(h, \xi_1)\Gamma=(h, \xi_2)\Gamma$ if and only if
$\xi_1=\xi_2+h\xi_0$ for some $\xi_0\in \mathbb Z^2$. If the $H$-orbit
of  $x=(Id,  \xi)\Gamma\in X$ is closed, then
\[
 \rho^{-1}( SL_2(\mathbb Z))\cap Hx=\{(Id , \gamma \xi)\Gamma: \gamma\in SL_2(\mathbb Z)\}
\]
is closed. In view of \cite[Theorem 2]{gs04}, the $SL_2(\mathbb
Z)$-orbit of $\xi+ \mathbb Z^2\in \mathbb R^2/\mathbb Z^2$ is
finite if $\xi\in \mathbb Q^2$ and dense otherwise. Therefore the
closed $H$-orbits in $X$ are exactly orbits of $(Id,  \xi)$ with
$\xi\in \mathbb Q^2$.

For  every $n\in\N$ and $i, j\in \mathbb Z$ we let
\[
G[n]^{ij}=\left \{ (h, h\left({i}/{n}, {j}/{n}\right)): h\in
SL_2(\mathbb R) \right\}\subset G.
\]
Let $X[n]$ be  the image of $\bigcup_{i,j\in\Z} G[n]^{ij}\subset
G$ in $X$. Then $X[n]$ is a finite union of  closed $H$-orbits and
any closed $H$-orbit is contained in some $X[n]$. Therefore, it suffices to   show that for almost every $s\in [0,1 ]$ any  weak$^*$
limit $\mu$ of $\mu_{s,T}$ as $T\to \infty$ is a probability measure and $\mu$ puts no mass on
$X[n]$.


\begin{prop}\label{prop;work}
Under the assumptions of Theorem \ref{thm;equi}, for a.e.\ $s\in
[0, 1]$ any weak$^*$ limit $\mu$ of $\mu_{s, T}$ as $T\to \infty$
is a probability measure on $X$ and satisfies $\mu(X[n])=0$ for
every positive integer $n$.
\end{prop}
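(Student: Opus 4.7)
My plan is to deduce Proposition~\ref{prop;work} from Proposition~\ref{prop;main} via a Chebyshev--Fatou argument. The statement asks for two things simultaneously: that every weak$^*$ subsequential limit is a probability measure (no escape of mass in the non-compact space $X$), and that no mass is carried by the countable family of closed $H$-orbits $X[n]$. Both fit into the same framework. The natural output expected from Proposition~\ref{prop;main} and the mixed height function constructed in \S\ref{sec;height} is, for each $n\in\N$ and each $\varepsilon>0$, a compact subset $K_{n,\varepsilon}\subset X\setminus X[n]$ together with an averaged non-concentration estimate of the form
\[
\limsup_{T\to\infty}\int_0^1\frac{1}{T}\int_0^T\mathbbm{1}_{X\setminus K_{n,\varepsilon}}(a_t u_\varphi(s)\Gamma)\,dt\,ds \;\le\; \varepsilon.
\]
Equivalently, one expects a proper mixed height function $F_n$ on $X\setminus X[n]$, blowing up at the cusp of $X$ \emph{and} on $X[n]$, with a uniform bound $\int_0^1 F_n(a_t u_\varphi(s)\Gamma)\,ds\le C_n$ for $t\ge 0$; then $K_{n,\varepsilon}$ is chosen as a sublevel set of $F_n$ of height $C_n/\varepsilon$.

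\textbf{From averaged to pointwise.} Given such an estimate, the integrand equals $\mu_{s,T}(X\setminus K_{n,\varepsilon})$, so Fatou's lemma applied to the $s$-integral yields
\[
\int_0^1 \liminf_{T\to\infty}\mu_{s,T}(X\setminus K_{n,\varepsilon})\,ds \;\le\; \varepsilon.
\]
Choosing countable sequences $\varepsilon_k\downarrow 0$ for each $n\in\N$ and applying Chebyshev's inequality together with a diagonal/Borel--Cantelli argument, one obtains a single full-measure subset of parameters $s\in[0,1]$ on which, for every $n$ and every $\varepsilon>0$, $\liminf_{T\to\infty}\mu_{s,T}(X\setminus K_{n,\varepsilon})\le\varepsilon$.

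\textbf{Conclusion for weak$^*$ limits and main obstacle.} Fix such a good $s$ and let $\mu$ be any weak$^*$ limit of $\mu_{s,T_j}$ with $T_j\to\infty$. After passing to a further subsequence realising the liminf, the Portmanteau theorem applied to the compact (hence closed) set $K_{n,\varepsilon}$ gives $\mu(K_{n,\varepsilon})\ge \limsup_j \mu_{s,T_j}(K_{n,\varepsilon})\ge 1-\varepsilon$. Letting $\varepsilon\to 0$ with $n$ fixed simultaneously forces $\mu(X)=1$, so $\mu$ is a probability measure, and yields $\mu(X[n])\le \varepsilon$ for every $\varepsilon>0$, hence $\mu(X[n])=0$; a further intersection over $n$ gives the full conclusion. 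The genuine difficulty is buried in Proposition~\ref{prop;main}: the mixed height function must be adapted so that its integral along the curve $s\mapsto a_t u_\varphi(s)\Gamma$ admits a bound uniform in $t$. This is where the non-degeneracy hypothesis on $\varphi$ (that $\{s:(s,\varphi(s))\in\mathcal{L}\}$ has Lebesgue measure zero for every rational line $\mathcal{L}\subset\R^2$) must enter decisively, for otherwise the curve could be trapped on some $X[n]$ over a positive measure set of parameters and no such uniform integral bound could hold. I expect that step to occupy \S\ref{sec;height}--\S\ref{sec;singular}, via a careful balance between the standard Margulis-type contraction properties for $a_t$ acting on the lattice height, and a new ingredient controlling the distance of $a_t u_\varphi(s)\Gamma$ to each $X[n]$ in terms of the Taylor expansion of $\varphi$ at good points of the curve.
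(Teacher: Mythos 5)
Your high-level architecture is right — construct compact sets $K_{n,\varepsilon}\subset X\setminus X[n]$ from sublevel sets of a mixed height function, show the Teichm\"uller-flow averages charge them, and finish with the Portmanteau inequality $\mu(K)\ge\liminf_j\mu_{s,T_j}(K)$ for compact $K$. You also correctly identify where the non-degeneracy of $\varphi$ must enter and why. But the bridge from the averaged-in-$s$ estimate to the pointwise conclusion is where your argument breaks.

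The Fatou--Chebyshev route only produces
\[
\int_0^1\liminf_{T\to\infty}\mu_{s,T}(X\setminus K_{n,\varepsilon})\,ds\le\varepsilon,
\]
hence for a.e.\ $s$ merely $\liminf_{T\to\infty}\mu_{s,T}(X\setminus K_{n,\varepsilon})\le\varepsilon'$. That $\liminf$ is over \emph{all} $T$; it guarantees the existence of \emph{some} sequence $T_k'\to\infty$ along which $\mu_{s,T_k'}(K)$ is close to $1$, but says nothing about an arbitrary sequence $T_j$ along which you have extracted a weak$^*$ limit $\mu$. Your phrase ``passing to a further subsequence realising the liminf'' conflates two different sequences: a subsequence of $T_j$ cannot, in general, realise a $\liminf$ taken over a continuum of $T$'s. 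Moreover, even a $\sup_T$ version of the averaged bound, $\sup_T\int_0^1\mu_{s,T}(X\setminus K)\,ds\le\varepsilon$, is insufficient: Chebyshev gives a bad set of measure $\le\sqrt\varepsilon$ \emph{for each $T$}, but these bad sets can wander with $T$, and summing $\sqrt\varepsilon$ over a discretised family of times diverges, so Borel--Cantelli fails.

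The paper's Proposition \ref{prop;main} supplies exactly the missing ingredient: an \emph{exponential} rate
\[
|\{s\in I:\mathcal A_K^T(s)\le 1-\varepsilon\}|\le e^{-\vartheta T}|I|
\]
rather than a mere averaged $\varepsilon$-bound. Restricting to integer times $T=m$, the measures $e^{-\vartheta m}|I|$ are summable, so Borel--Cantelli yields a full-measure set $I_\varepsilon\subset I$ on which $\mathcal A_K^m(s)>1-\varepsilon$ \emph{for all sufficiently large $m$} (and hence, after a trivial interpolation, for all large real $T$). That eventual pointwise statement — not a $\liminf$ — is what feeds into Portmanteau for an arbitrary weak$^*$ limit. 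The reduction via Remark~\ref{rem:redint} to closed intervals $I$ where $\varphi$ is uniformly bounded away from the relevant rational lines (the $\sigma>0$ in \eqref{eq;sigma}) is also an essential preliminary step that your sketch subsumes into a vague ``Taylor expansion at good points of the curve.'' If you want to keep a Fatou-flavoured write-up, you would need to prove the exponential rate first, and then the Fatou step becomes unnecessary; the rate is the real content.
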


\begin{rem}\label{rem:redint}
We will  prove that the condition $\mu(X[n])=0$ holds for a.e.~$s\in I$, where $I\subset (0, 1)$ is a closed
interval such that $\varphi|_I$ satisfies an additional regularity property, i.e.~(\ref{eq;sigma}) holds. 	
The constant
\begin{align}
M_1=2\sup_{s\in [0,1]}|\varphi'(s) |+1  \label{eq;m 1}
\end{align}
plays an important role while defining this regularity property.
According to the assumption of $\varphi$ the set
\[
\{s\in [0, 1]: \mbox{there are } i,j\in \Z \mbox{ such that }|j/n|\le M_1 \mbox{ and }
\varphi(s)=js/n+i/n \}
\]
has Lebesgue measure zero. Since this set is closed, there exist
at most countably many open intervals $\{I_k\}$ such that:
\begin{itemize}
\item elements of $\{I_k\}$ are pairwise disjoint;
\item $\bigcup I_k$ has full measure in $[0,1]$;
\item for any $s\in I_k$ and any $i, j\in \Z$ we have $\varphi(s)\neq js/n+i/n$ if  $|j/n|\le M_1$.
\end{itemize}
Since  each interval $I_k$ is a countable union of closed intervals,
it suffices to show the condition $\mu(X[n])=0$ holds for
every closed interval $I$ contained in some $I_k$. For every such closed interval $I\subset I_k$
the condition (\ref{eq;sigma}) obviously holds.
\end{rem}

Let $K$ be a measurable subset of $X$, $T>0$ and $s\in [0,1]$. The
proportion of the trajectory $\{a_tu_\varphi(s)\Gamma:0\le t\le
T\}$ in $K$ is expressed by the function  $\mathcal A _K^{T}:[0,
1]\to [0, 1]$ defined by
\[
\mathcal A_K^T(s)=\frac{1}{T}\int_0^T {\mathbbm
1}_K(a_tu_\varphi(s)\Gamma)\, dt=\frac{1}{T}\int_0^T
\delta_{a_tu_\varphi(s)\Gamma}(K)\, dt=\mu_{s,T}(K),
\]
where $\mathbbm 1_K$ is the characteristic function of $K$.
Proposition \ref{prop;work} will   follow from the following
quantitative estimate of $\mathcal A_K^T(s)$.
\begin{prop}\label{prop;main}
Let $I$ be a closed interval of $(0,1)$ and let $n\in \N$. Suppose
that
\begin{equation}\label{eq;sigma}
\inf\{|\varphi(s)-js/n+i/n|: s\in I; i,j\in\mathbb Z \mbox{ and } |j/n|\le M_1  \}=\sigma>0.
\end{equation}
Then for any $\varepsilon >0$ there exists a compact subset $K$ of $X\setminus X[n]$
and  $\vartheta>0$ such that for any $T>0$
\begin{equation}\label{eq;proportion}
|\{s\in I :  \mathcal A_K^T(s)\le 1-\varepsilon  \}|\le e^{-\vartheta T}|I|.
\end{equation}
\end{prop}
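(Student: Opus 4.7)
My plan follows the standard Margulis nondivergence strategy, adapted to the affine lattice $X$ and the specific curve $s \mapsto u_\varphi(s)\Gamma$. The essential ingredient will be a \emph{mixed height function} $f : X \to [1,\infty]$ which is finite on $X \setminus X[n]$, blows up on $X[n]$, and satisfies a one-step contraction inequality under the averaged dynamics $s \mapsto a_{t_0} u_\varphi(s) x$ over subintervals of $I$. Once $f$ is constructed, the compact set $K$ in (\ref{eq;proportion}) will be taken to be a sublevel set $\{f \leq M\}$.

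The height $f$ would be built as a sum $f = f_{\mathrm{SL}_2} + f_n$. The first piece $f_{\mathrm{SL}_2}$ is a classical Margulis-type cusp function on $SL_2(\R)/SL_2(\Z)$, pulled back via the projection $\rho$; it detects short vectors of the underlying lattice. The second piece $f_n$ is an affine height blowing up on $X[n]$: a natural candidate at $(h,\xi)\Gamma$ is a smoothed version of
\[
f_n(h,\xi) \;=\; \operatorname{dist}_{\R^2/h\Z^2}\!\bigl(\xi,\tfrac{1}{n}h\Z^2\bigr)^{-\delta_0}
\]
for a small parameter $\delta_0>0$. Combined sublevel sets of $f_{\mathrm{SL}_2}$ and $f_n$ yield compact subsets of $X\setminus X[n]$.

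The heart of the proof is to establish a drift inequality of the form
\[
\int_J f(a_{t_0}\, u_\varphi(s)\, x)\, ds \;\leq\; \lambda\, f(x)\, |J| \;+\; B(\sigma)\, |J|
\]
for every $x \in X$, every sufficiently small subinterval $J \subset I$, some fixed $t_0>0$, a constant $\lambda < 1$, and $B(\sigma)$ polynomial in $\sigma^{-1}$. For $f_{\mathrm{SL}_2}$ this is the classical horocycle contraction on $SL_2(\R)/SL_2(\Z)$, because $\rho(u_\varphi(s)) = u(s)$ is the standard horocycle. For $f_n$ the computation carried out in the proof of Proposition \ref{prop;inva} shows that the fiber coordinate of $a_{t_0} u_\varphi(s) x$ varies with $s$ in a way whose closeness to the $n$-torsion sections is governed by the quantities $\varphi(s) - js/n + i/n$ for relevant integer pairs $(i,j)$, so that the hypothesis (\ref{eq;sigma}) directly forbids the orbit from accumulating near $X[n]$ for too long. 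The restriction $|j/n| \leq M_1$ with $M_1$ as in (\ref{eq;m 1}) enters through the elementary fact that at scale $t_0$ only finitely many slopes $j/n$ can contribute to closeness to a closed $H$-orbit, the remaining ones being suppressed by the expansion $e^{t_0}$ built into $f_n$.

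Once this drift inequality is in hand, iteration at discrete times $t_k = kt_0$ yields the uniform bound $\int_I f(a_{t_k} u_\varphi(s)\Gamma)\, ds \leq B(\sigma)/(1-\lambda) + \lambda^k f(\Gamma)$, and interpolation across $[t_{k-1},t_k]$ extends this to all $t \geq 0$. The exponential-in-$T$ bound (\ref{eq;proportion}) should then follow by applying a Bernstein/Chernoff-type concentration estimate to the discrete process $s \mapsto \mathbbm{1}_{K^c}(a_{kt_0} u_\varphi(s)\Gamma)$: choosing $M$ large makes the per-step escape probability smaller than $\varepsilon/2$, and the drift inequality delivers the exponential-moment control needed to bound the large-deviation event $\{s : \mathcal{A}_K^T(s) \leq 1-\varepsilon\}$, with $\vartheta$ emerging from the gap $1-\lambda$. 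I anticipate that the hardest step will be the drift inequality for $f_n$ --- verifying that the non-resonance condition (\ref{eq;sigma}) indeed produces $\lambda < 1$ subject to the cutoff $|j/n| \leq M_1$ requires a careful Dani--Margulis style estimate on the Lebesgue measure of those $s \in J$ for which $a_{t_0} u_\varphi(s) x$ enters a small tubular neighborhood of each closed $H$-orbit in $X[n]$, and this is where the $C^1$-regularity of $\varphi$ enters essentially.
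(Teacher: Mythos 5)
Your high-level architecture matches the paper's: a mixed height function $\beta_n = \alpha_n + 8ne^t\alpha_0$ (your $f_n + f_{SL_2}$), a drift/contraction inequality for averages over subintervals, and a large-deviation bound to convert the drift into the exponential estimate (\ref{eq;proportion}). The compact set $K$ is indeed taken as a sublevel set, and the hypothesis (\ref{eq;sigma}) does exactly what you say: it forbids close approach to $X[n]$ via the slopes $|j/n|\le M_1$. So the outline is sound.

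There is, however, a genuine gap in your drift inequality. You posit $\int_J f(a_{t_0}u_\varphi(s)x)\,ds \le \lambda f(x)|J| + B(\sigma)|J|$ for ``every sufficiently small subinterval $J\subset I$,'' with $\lambda < 1$ independent of how far along the trajectory one is. That cannot work as stated: after time $mt$ the arc $\{a_{mt}u_\varphi(s)\Gamma : s\in J\}$ has spread out to a horocycle arc of length $\sim e^{2mt}|J|$, and if $|J|$ is held fixed this arc eventually covers an enormous range, over which no uniform pointwise drift toward the sublevel set can hold. The paper's Lemma~\ref{lem;key}(i) is calibrated exactly to avoid this: it compares $\int_J\beta_n(a_{(m+1)t}u_\varphi(s)\Gamma)\,ds$ to $\int_J\beta_n(a_{mt}u_\varphi(s)\Gamma)\,ds$ only for $|J|\ge e^{-2mt}$, and the proof of part (i) relies on the renormalization $a_{(m+1)t}u_\varphi(s+e^{-2mt}\tilde s) \approx a_t u(\tilde s)\,a_{mt}u_\varphi(s)$ being accurate precisely at that scale. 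This forces the nested filtration $\{\mathcal{I}_m\}$ of $I$ into intervals of length $\sigma e^{-2mt}$, which your proposal does not introduce. Without this scale-matching, neither the iteration nor the subsequent concentration step goes through.

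Two secondary issues. First, your Chernoff step waves at ``exponential-moment control'' but the indicators $\mathbbm{1}_{K^c}(a_{kt_0}u_\varphi(s)\Gamma)$ at different $k$ are far from independent; the paper handles the dependence by making the $r$-th return time $\omega_r$ measurable with respect to the filtration $\{\mathcal A_k\}$ generated by the interval partition (Lemmas~\ref{lem;exponential}--\ref{lem;discrete}, invoking~\cite[Lemma~3.1]{s}). That conditional/martingale structure is essential and is absent from your sketch. Second, the role of the cutoff $|j/n|\le M_1$ is the reverse of what you describe: it is not that slopes beyond $M_1$ are ``suppressed by expansion''; rather, the hypothesis~(\ref{eq;sigma}) forces the nearby $n$-torsion representative (when it exists at time $mt\ge t$) to satisfy $|j/n|>M_1$ (Lemma~\ref{lem;readable}), and for those large slopes one has $|\varphi'(\hat s)| < |j|/2n$, so $|\varphi'(\hat s)-j/n|$ is comparable to $|j/n|$ and the Kleinbock--Margulis integral estimate for $\alpha_n$ delivers the contraction. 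The small slopes are excluded outright, not damped.
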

The proof of Proposition \ref{prop;main} will be given in
\S~\ref{sec;height}~and~\S~\ref{sec;singular}. Here we use it to
derive Proposition \ref{prop;work} and Theorem \ref{thm;equi}.
\begin{proof}[Proof of Proposition \ref{prop;work}]
Fix $n\in \mathbb N$. By Remark~\ref{rem:redint}, it suffices to
show that the conclusion holds for almost every $s\in I$ whenever
$I$ is a closed interval  so that (\ref{eq;sigma}) holds. Given
 $0<   \varepsilon<1$ we choose a compact
subset $K_\vep\subset X\setminus X[n]$ and $\vartheta>0$ so that
(\ref{eq;proportion}) holds. By taking $T=m$ for $m\in \mathbb N$
in (\ref{eq;proportion}) and using the Borel-Cantelli lemma we can
find a  subset $I_ {\varepsilon}$ of $I$ with the following
property: $I_ {\varepsilon }$ has
 full measure in $I$ and
for any $s\in I_   {\varepsilon}$ any weak$^*$ limit $\mu$ of
$\mu_{s,T}$ as $T\to \infty$ satisfies $\mu(K_\vep)\geq
1-\vep$. Therefore, $\mu(X)\ge 1-\vep$ and $\mu(X[n])\le  \varepsilon$. It follows that for
any $s$ in the full measure subset  $\bigcap_{k\in \mathbb
N}I_{\frac{1}{k}}$ of $I$ we have $\mu(X)=1$ and $\mu(X[n])=0$.
\end{proof}
\begin{proof}[Proof of Theorem \ref{thm;equi}]
We claim that there exists a
full measure  subset $I'$
of $[0,1]$  such that for any $s\in I'$ any weak$^*$  limit $\mu $
of $\mu_{s,T}$ as $T\to \infty$ has the following properties:
\begin{enumerate}[label=(\roman*)]
\item $\mu$ is a probability measure;
\item $\mu$ is invariant under the group $U$;
\item $\mu(X[n])=0$ for any positive integer $n$.
\end{enumerate}
The claims (\rmnum{1}) and (\rmnum{3}) follow from Proposition
\ref{prop;work} and (\rmnum{2}) follows  from Proposition~\ref{prop;inva}.

Since  $\mu$ is  $DU$-invariant, it is also $H$-invariant, by
\cite[Theorem 1]{m95}. According to Ratner's measure
classification theorem any ergodic $H$-invariant probability
measure on $X$ is either $\mu_X$ or supported on some $X[n]$. Therefore
claim (\rmnum{3}) implies that $\mu=\mu_X$. Since $\mu$ is an
arbitrary weak$^*$ limit, it follows that $\mu_{s,T}\to\mu_{X}$ as
$T\to\infty$  for any $s\in I'$.
\end{proof}

\subsection{Birkhoff genericity for more general curves}\label{sec:proof_cor}
We conclude this section by considering some extensions of Theorem \ref{thm;equi}. In particular, we give the proof of Corollary \ref{cor:gencurve}.
\begin{cor}\label{cor;general base}
Let $\Gamma'$ be a lattice in $G$ commensurable with $\Gamma$.
Let $\varphi:[0,1]\to \mathbb R$ be a $C^1$-function, $h\in SL_2(\R)$ and $\xi=(v_1, v_2)^{\mathrm {tr}}\in \R^2$. Suppose for any $(b, l)^{\mathrm{tr}}\in h \Q^2$  the Lebesgue measure of $\{s\in [0, 1]:  \varphi(s)=(l-v_2)s+(b-v_1) \}$
is zero. Then for almost every $s\in [0, 1]$ the coset $u_\varphi(s)(h, \xi)\Gamma'$ is Birkhoff generic
with respect to  $(G/\Gamma', \mu_{G/\Gamma'},a_t)$.
\end{cor}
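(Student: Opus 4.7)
The strategy is to reduce Corollary~\ref{cor;general base} to Theorem~\ref{thm;equi} by a group-law rewriting that absorbs the base point into the lattice. Using the semidirect product law $(h_1,\xi_1)(h_2,\xi_2)=(h_1 h_2,h_1\xi_2+\xi_1)$, a direct computation gives the identity
\[
u_\varphi(s)(h,\xi) \;=\; u_{\tilde\varphi}(s)\,g^{*}, \qquad \tilde\varphi(s):=\varphi(s)+v_1, \quad g^{*}:=(h,(0,v_2)^{\mathrm{tr}}).
\]
Setting $\Gamma^{*}:=g^{*}\Gamma'(g^{*})^{-1}$, the map $\phi:G/\Gamma'\to G/\Gamma^{*}$, $g\Gamma'\mapsto g(g^{*})^{-1}\Gamma^{*}$, is a $G$-equivariant diffeomorphism pushing $\mu_{G/\Gamma'}$ to $\mu_{G/\Gamma^{*}}$, and it carries the curve $u_\varphi(s)(h,\xi)\Gamma'$ to $u_{\tilde\varphi}(s)\Gamma^{*}$. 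Hence the corollary is equivalent to showing that $u_{\tilde\varphi}(s)\Gamma^{*}$ is Birkhoff generic in $(G/\Gamma^{*},\mu_{G/\Gamma^{*}},a_t)$ for a.e.~$s$.

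Next, I would re-run the proof of Theorem~\ref{thm;equi} with $(\Gamma,\varphi)$ replaced by $(\Gamma^{*},\tilde\varphi)$. The argument for $U$-invariance of any weak-$*$ limit (Proposition~\ref{prop;inva}) uses only the local structure of the curve together with the Lipschitz estimate \eqref{neq:lip} on the test function, so it transfers verbatim to any lattice in $G$. Together with the automatic $D$-invariance, Mozes's theorem yields $H$-invariance, and Ratner's measure classification then reduces the problem to ruling out mass accumulation on the closed $H$-orbits of $G/\Gamma^{*}$. Since $\Gamma'$ is commensurable with $\Gamma$, the lattice $\Gamma^{*}$ is commensurable with $g^{*}\Gamma(g^{*})^{-1}$, so its closed $H$-orbits form a countable family parameterized (modulo the $\rho(\Gamma^{*})$-action) by rational points of the lattice $h\Z^{2}$, twisted by the affine shift induced by $\xi$.

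Tracing the definitions of $g^{*}$, $\Gamma^{*}$ and the curve $u_{\tilde\varphi}(s)$ through this identification, one finds that $u_{\tilde\varphi}(s)\Gamma^{*}$ meets or accumulates along a closed $H$-orbit exactly when $\tilde\varphi(s)=(l-v_2)s+b$ for some $(b,l)^{\mathrm{tr}}\in h\Q^{2}$, equivalently when $\varphi(s)=(l-v_2)s+(b-v_1)$. The hypothesis of the corollary says precisely that each such set has Lebesgue measure zero, which is the analog of \eqref{eq;sigma} needed to re-prove Proposition~\ref{prop;main} in the new setting; under this non-degeneracy the height-function construction of \S\ref{sec;height} and the non-concentration estimate of \S\ref{sec;singular} carry over with only cosmetic changes. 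As a consistency check, taking $(h,\xi)=(Id,0)$ and $\Gamma'=\Gamma$ recovers the rational-line condition of Theorem~\ref{thm;equi}. The main technical obstacle is precisely this final step: making the dictionary between closed $H$-orbits of $G/\Gamma^{*}$ and the $h\Q^{2}$-parameterized affine relations fully explicit, and checking that the constant $\sigma>0$ in the analog of \eqref{eq;sigma} can be obtained on each of a countable collection of subintervals of $[0,1]$ of full measure, as in Remark~\ref{rem:redint}.
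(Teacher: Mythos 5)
Your rewriting $u_\varphi(s)(h,\xi)=u_{\tilde\varphi}(s)\,g^*$ is correct, and the conjugation move $G/\Gamma'\to G/\Gamma^*$ with $\Gamma^*=g^*\Gamma'(g^*)^{-1}$ is a genuinely different decomposition from the paper's: you push the base point into the \emph{lattice}, whereas the paper pushes it into the \emph{curve}. But this trades a known lattice for an unknown one. Since $g^*=(h,(0,v_2)^{\mathrm{tr}})$ is in general not in the commensurator of $\Gamma$, the lattice $\Gamma^*$ is not commensurable with $\Gamma=ASL_2(\Z)$, so Lemma~\ref{lem;homepage} does not apply to it, and ``re-running the proof of Theorem~\ref{thm;equi}'' is not a cosmetic change. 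The $U$-invariance step (Proposition~\ref{prop;inva}) does transfer as you say, but Proposition~\ref{prop;main} and the height function $\beta_n=\alpha_n+8ne^t\alpha_0$ of \S\ref{sec;height} are built directly from the arithmetic of $ASL_2(\Z)$: $\alpha_0$ measures shortest vectors of $h\Z^2$, $\alpha_n$ the displacement to $\frac{1}{n}\Z^2$, and the constant $\sigma$ in \eqref{eq;sigma} is read off the explicit description of the closed $H$-orbits $X[n]$. None of this is supplied for $\Gamma^*$, and you flag exactly this as ``the main technical obstacle'' --- so the proposal stops short of a proof at the point where the real work would begin.

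The paper's route never changes the lattice in the non-concentration part. Lemma~\ref{lem;homepage} first reduces from $G/\Gamma'$ to $G/\Gamma$ by an entropy argument, for any $\Gamma'$ commensurable with $\Gamma$. Remark~\ref{rem;homepage} then records that, because the lower-triangular and second-coordinate-translation parts of an element of $G$ are contracted under conjugation by $a_t$, the forward $a_t$-orbits of $(h,\xi)x$ and of $u(h_{12}/h_{11},v_1/h_{11})x$ are asymptotically parallel and hence have the same Birkhoff averages. Applying this pointwise in $s$ to $u_\varphi(s)(h,\xi)\Gamma$ and reparameterizing by $r(s)=(h_{12}+h_{22}s)/(h_{11}+h_{21}s)$ gives a curve of the exact form $u(r,\psi(r))\Gamma$ to which Theorem~\ref{thm;equi} applies verbatim; a short computation shows the rational-line condition for $\psi$ is precisely the stated hypothesis on $\varphi$ with $(b,l)^{\mathrm{tr}}=h(\tilde b,\tilde l)^{\mathrm{tr}}$. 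Thus the height-function machinery is reused unchanged rather than rebuilt for a new lattice.
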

\begin{rem}\label{rem;homepage}
Let us consider $h=\left(
\begin{array}{cc}
h_{11} & h_{12}\\
h_{21}& h_{22}
\end{array} \right)\in SL_2(\R)$ with $h_{11}\neq 0$ and $\xi=(v_1, v_2)^{\mathrm{tr}}\in\R^2$.
Then for any $x\in G/\Gamma'$ the forward trajectories  of $(h, \xi)x$ and $u(\frac{h_{12}}{h_{11}},\frac{ v_1}{h_{11}})x$ with
respect to  the action of  $D$ are asymptotically parallel, i.e.~there
exists $g\in G$ such that the distance  between  $a_t(h, \xi)x$
and $ga_tu(\frac{h_{12}}{h_{11}},\frac{ v_1}{h_{11}})x$ tends to zero as $t\to \infty$.
Therefore if one of them equidistributes so does the other.
 This observation and the description of closed $H$-orbits before Proposition \ref{prop;work} explains the
assumption of $\varphi$ in Theorem \ref{thm;equi} and Corollary \ref{cor;general base}.
\end{rem}
\begin{lem}\label{lem;homepage}
Let $\Gamma'$ be a lattice in $G$ commensurable with $\Gamma$. An element $(g, \xi)\Gamma$ is Birkhoff generic with respect to  $(X,\mu_X,a_t)$ if and only if $(g, \xi)\Gamma'$ is Birkhoff generic with respect to  $(G/\Gamma',\mu_{G/\Gamma'}, a_t)$.
\end{lem}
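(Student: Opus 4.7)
The plan is to reduce to a finite-index inclusion of lattices and then handle the two directions of the equivalence. Set $\Gamma_0 := \Gamma \cap \Gamma'$; commensurability gives that $\Gamma_0$ is a lattice with finite index in both $\Gamma$ and $\Gamma'$, so it suffices to establish the following finite-index analogue: for any finite-index inclusion $H \subseteq L$ of lattices of $G$, the coset $(g,\xi)L$ is Birkhoff generic on $G/L$ if and only if $(g,\xi)H$ is Birkhoff generic on $G/H$. Applying this to the inclusions $\Gamma_0 \subseteq \Gamma$ and $\Gamma_0 \subseteq \Gamma'$ then yields the lemma.

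For the finite-index setup, let $\pi : G/H \to G/L$ denote the natural finite covering, which intertwines the $a_t$-actions and satisfies $\pi_*\mu_{G/H} = \mu_{G/L}$. One direction is routine: for any $f \in C_c(G/L)$ the pullback $f \circ \pi$ lies in $C_c(G/H)$ and $f(a_t(g,\xi)L) = (f \circ \pi)(a_t(g,\xi)H)$, so if $(g,\xi)H$ is generic on $G/H$ the Birkhoff averages of $f$ along $(g,\xi)L$ tend to $\int(f \circ \pi)\,d\mu_{G/H} = \int f\,d\mu_{G/L}$, giving genericity of $(g,\xi)L$ on $G/L$.

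The converse is the main content. I would study weak-$*$ subsequential limits $\nu$ of the empirical measures $\mu_T := \frac{1}{T}\int_0^T \delta_{a_t(g,\xi)H}\,dt$. Since $\pi$ is proper (as a finite cover) and $\pi_*\mu_T$ converges to the probability measure $\mu_{G/L}$ by hypothesis, the sequence $\{\mu_T\}$ is tight; hence any subsequential limit $\nu$ is an $a_t$-invariant probability measure on $G/H$ with $\pi_* \nu = \mu_{G/L}$, and the task becomes $\nu = \mu_{G/H}$. Replacing $H$ by its normal core $\widetilde H := \bigcap_{\ell \in L} \ell H \ell^{-1}$, a finite-index normal sublattice of $L$ contained in $H$ (and using the already-established easy direction for $\widetilde H \subseteq H$ to transfer genericity between $G/H$ and $G/\widetilde H$), one may assume $H \triangleleft L$. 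Then $F := L/H$ acts freely on $G/H$ by right multiplication, commuting with $a_t$, realizing the deck transformations of $\pi$. By ergodic decomposition of $\nu$ and extremality of the $a_t$-ergodic measure $\mu_{G/L}$ (each component still projects to $\mu_{G/L}$), the problem reduces to showing that every $a_t$-ergodic probability measure $\nu_0$ on $G/H$ with $\pi_*\nu_0 = \mu_{G/L}$ equals $\mu_{G/H}$.

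For this final step, consider the $F$-average $\overline{\nu_0} := \frac{1}{|F|}\sum_{\gamma \in F}(R_\gamma)_* \nu_0$, which is $F$-invariant, $a_t$-invariant, and projects to $\mu_{G/L}$; since the free $F$-action on the fibres of $\pi$ forces the unique $F$-invariant probability lift of $\mu_{G/L}$ to be $\mu_{G/H}$ itself, one gets $\overline{\nu_0} = \mu_{G/H}$. Now $\mu_{G/H}$ is $a_t$-ergodic on $G/H$ (a fact that lifts from the known ergodicity of $a_t$ on $G/\Gamma$ through the finite cover, via a standard Mautner-type argument), hence is an extreme point of the convex set of $a_t$-invariant probability measures on $G/H$; the representation $\mu_{G/H} = \frac{1}{|F|}\sum_\gamma (R_\gamma)_*\nu_0$ as a convex combination of $a_t$-ergodic measures then forces each $(R_\gamma)_* \nu_0 = \mu_{G/H}$, and in particular $\nu_0 = \mu_{G/H}$. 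The main technical difficulty is precisely this extremality argument together with securing the $a_t$-ergodicity of $\mu_{G/H}$ for sublattices in the non-semisimple group $G = ASL_2(\R)$, which is what makes the final step go through.
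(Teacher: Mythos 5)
Your proof is correct, and it takes a genuinely different route from the paper's. The paper's argument, after the same reduction to a finite-index inclusion $\Gamma'\le\Gamma$ and the same easy (pushforward) direction, handles the nontrivial direction via entropy: from the finite factor map one gets $h_{\mu'}(a_1)\ge h_{\mu}(a_1)=h_{\mu_{G/\Gamma}}(a_1)=h_{\mu_{G/\Gamma'}}(a_1)$, and then invokes the rigidity fact that among $a_t$-invariant probability measures on $G/\Gamma'$ only the Haar measure attains the maximal entropy $h_{\mu_{G/\Gamma'}}(a_1)$; this relies on the maximal-entropy machinery for diagonalizable actions cited from \cite{el}. You instead pass to the normal core so that the covering $\pi:G/H\to G/L$ is a free $F$-cover with $F=L/H$ acting by deck transformations commuting with $a_t$; after ergodic decomposition and tightness (via properness of $\pi$), you observe that the $F$-average of an ergodic component is the unique $F$-invariant lift of $\mu_{G/L}$, hence equals $\mu_{G/H}$, and then the $a_t$-ergodicity of $\mu_{G/H}$ (Howe--Moore/Moore-type, which holds for $ASL_2(\R)$ acting on finite-volume homogeneous quotients) forces each translate $(R_\gamma)_*\nu_0$ to coincide with $\mu_{G/H}$. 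What your approach buys is elementarity: it replaces the deep input of maximal-entropy rigidity by Moore-type ergodicity plus soft structure of finite covers. What the paper's approach buys is brevity and robustness, since the two entropy facts it cites are standard and do not require normalizing the sublattice or any disintegration over fibers. Both arguments hinge on the same basic observation that the finite cover forces the limit to be a probability measure projecting to Haar; the difference lies entirely in how one upgrades that to equality with Haar upstairs.
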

\begin{proof}
In the proof we will not use  $\Gamma=ASL_2(\mathbb Z)$,  we only need to use commensurability of $\Gamma$ and $\Gamma'$.
So  without loss of generality we can assume that $\Gamma'\le \Gamma$. Let $\mu$ and $\mu'$
be weak$^*$ limits of
\begin{equation}\label{weaklim}
\frac{1}{T}\int_0^T\delta_{a_t(h, \xi)\Gamma}\dd t\ \text{ and }\
\frac{1}{T}\int_0^T\delta_{a_t(h, \xi)\Gamma'}\dd t
\end{equation}
respectively along the same sequence
$\{T_n\}$ with $T_n\to \infty$.  It suffices to show that $\mu=\mu_{G/\Gamma}$  if and only if $\mu'=\mu_{G/\Gamma'}$.

Let us consider the natural projection $\pi:G/\Gamma'\to G/\Gamma$ which is
a finite covering map. Since $\mu$ and $\mu'$ are weak$^*$ limits of \eqref{weaklim} along the same sequence,
it is easy to see that $\mu$ is  a probability measure if and only if so is $\mu'$ and then $\mu=\pi_*(\mu')$.

Suppose that $\mu'=\mu_{G/\Gamma'}$. Since the map $\pi:G/\Gamma'\to G/\Gamma$ is $G$-equivariant, the measure $\mu$ is also $G$-invariant
and probabilistic.  So $\mu$ is equal to $\mu_{G/\Gamma}$.

The other direction (assuming $\mu=\mu_{G/\Gamma}$) is proved using entropy theory and we refer the readers to \cite{el}
for backgrounds. Since $\mu$ and $\mu'$ are $D$-invariant, $\pi$ yields a factor map between $(G/\Gamma',a_1,\mu')$ and $(G/\Gamma, a_1,\mu)$.
Therefore $h_{\mu'}(a_1)\ge h_{\mu}(a_1)$.  The conclusion that $\mu'=\mu_{G/\Gamma'}$
follows from the following two facts, c.f. \cite[\S~7]{el}:
  (\rmnum{1}) $h_{\mu_{G/\Gamma}}(a_1)=h_{\mu_{G/\Gamma'}}(a_1)$;   (\rmnum{2})
 $h_{\mu'}(a_1)\le h_{\mu_{G/\Gamma'}}(a_1)$ and  the equality holds
  if and only if $\mu'=\mu_{G/\Gamma'}$.
\end{proof}

\begin{proof}[Proof of Corollary \ref{cor;general base}]
According to the  observation in Remark \ref{rem;homepage}
and Lemma \ref{lem;homepage} it suffices to show that  for a.e.~$s$ the coset
\[
u\left(\frac{h_{12} + h_{22}s}{h_{11}+h_{21}s}, \frac{v_2s+v_1+\varphi(s)}{h_{11}+h_{21}s}\right) \Gamma
\]
is Birkhoff  generic. By Theorem \ref{thm;equi} it suffices to check that for any $\tilde b, \tilde l\in \Q$ the set
\[
\{s\in [0, 1]: v_2s+v_1+\varphi(s)=\tilde l(h_{12} + h_{22}s)+\tilde b(h_{11}+h_{21}s)\}
\]
has Lebesgue measure zero. This follows from the assumption with
$l=h_{21}\tilde b+h_{22}\tilde l$ and $b=h_{11}\tilde b+h_{12}\tilde l$.
\end{proof}

\begin{proof}[Proof of Corollary~\ref{cor:gencurve}]
According to  Remark \ref{rem;homepage}
 and Lemma \ref{lem;homepage} it suffices to show that  for a.e.~$s\in [0,1]$ the coset
\begin{equation}\label{eleu}
u\left(\frac{h_{12}(s)}{h_{11}(s)}, \frac{v_1(s)}{h_{11}(s)}\right) (h,v)\Gamma
\end{equation}
is Birkhoff  generic. First we show that the closed sets
\begin{align*}
I_{l, b}&=\{s\in [0,1 ]: v_1(s)=l h_{12}(s) +bh_{11}(s) \}, \ \text{ where }\ l,b\in \R\\
I_1& =\{s\in [0,1]: h_{11}(s)=0  \} \ \text{ and }\\
I_2 & =\{s\in[0, 1]: h_{11}(s)h_{12}'(s)-h_{12}(s)h_{11}'(s)=0\}
\end{align*}
have Lebesgue measure zero. Indeed, since $h_{11}, h_{12}, v_1$
are $C^2$-functions, it is easy to see that for any Lebesgue
density point $s$ from $I_1$, $I_2$ and $I_{l,b}$ respectively we
have
\begin{equation}\label{I1}
h_{11}(s)=0,\ h'_{11}(s)=0, \ h''_{11}(s)=0;
\end{equation}
\begin{align}\label{I2}
\begin{aligned}
h_{11}(s)h_{12}'(s)-h_{12}(s)h_{11}'(s)=&0, \\ h_{11}(s)h_{12}''(s)-h_{12}(s)h_{11}''(s)=&(h_{11}h_{12}'-h_{12}h_{11}')'(s)=0;
\end{aligned}
\end{align}
\begin{align}\label{Ilb}
\begin{aligned}
v_1(s)=&l h_{12}(s) +bh_{11}(s), \ v'_1(s)=l h'_{12}(s)
+bh'_{11}(s), \\ v''_1(s)=&l h''_{12}(s) +bh''_{11}(s)
\end{aligned}
\end{align}
respectively. Moreover, each of conditions \eqref{I1}, \eqref{I2},
\eqref{Ilb} implies $\det M_\psi(s)=0$. Since we assume $\det
M_\psi(s)\neq 0$ almost every, the conclusion follows.

Since $[0,1]\setminus (I_1\cup I_2)$ is open and its Lebesgue
measure is one, we need to prove that for every closed interval
$I\subset[0,1]\setminus (I_1\cup I_2)$ the element \eqref{eleu} is
Birkhoff  generic in $X$ for a.e.\ $s\in I$. For every such
interval $I$ the map $s\mapsto h_{12}(s)/h_{11}(s)$ yields a
$C^2$-diffeomorphism between $I$ and a compact interval $J$.
Moreover, it gives a $C^2$-map $\varphi:J\to\R$ such that
\[\frac{v_1(s)}{h_{11}(s)}=\varphi\Big(\frac{h_{12}(s)}{h_{11}(s)}\Big)\ \text{ for all }\ s\in I.\]
Since
\[u\left(\frac{h_{12}(s)}{h_{11}(s)},
\frac{v_1(s)}{h_{11}(s)}\right) (h,v)\Gamma =
u\left(\frac{h_{12}(s)}{h_{11}(s)},
\varphi\Big(\frac{h_{12}(s)}{h_{11}(s)}\Big)\right) (h,v)\Gamma,\]
in view of Corollary~\ref{cor;general base}, it suffices to show
that for all real numbers $l,b$ the set $\{s\in J:\varphi(s)=ls+b\}$ has
zero measure which follows from the fact that the   Lebesgue measure of  $I_{l,b}$ is    zero . This completes the proof.
\end{proof}

\subsection{Height function}\label{sec;height}
 The aim of this section is to show that for $t$ sufficiently large there
 is  a mixed height function on $X$, with respect to $X[n]$, satisfying certain contraction property along the orbits of $a_t$.
Here mixed refers to the fact that we mix the height with respect to the cusp and
$X[n]$. This  height function will be applied in \S~\ref{sec;singular} to prove  the crucial Proposition \ref{prop;main}. Throughout this section let  $M_1$ be as in (\ref{eq;m 1}).
 \begin{lem}\label{lem;key}
Let $n\in \mathbb N$ and let
$I$ be the closed
interval as in Proposition \ref{prop;main}.
For $t$ sufficiently large (depending on $\sigma$ and $I$) there exists a
measurable function $\beta_n: X\to [1, \infty]$ with the following properties:
\begin{enumerate}[label=(\roman*)]
\item
there exists $b>0$ (depending on $\sigma, n, t$) such that for any  $m\in \mathbb Z_{\ge 0}$ and  any interval
$J\subset I$  with either  $|J|\ge e^{-2mt}$  or $J=I$
one has
\begin{align}\label{eq;linear ineq}
\int_{J}\beta_n( a_{(m+1)t}u_\varphi(s)\Gamma)\, ds< \frac{1}{2}
\int_{J}\beta_n( a_{mt}u_\varphi(s)\Gamma)\, ds+b|J|;
\end{align}

\item for any $c>0$ the set
$\{x\in X: \beta_n(x)\le c\}$
 is  compact;

\item for $x\in X$ one has $\beta_n(x)=\infty$ if and only if $x\in X[n]$;

\item
 for any $m\in \mathbb Z_{\ge 0}$, interval $J\subset I$ with $|J|\le 2e^{-2mt}$ and any $s, \tilde  s\in J $ one has
\begin{align}\label{eq;key 4}
\beta_n({a_{mt}u_\varphi(\tilde s)\Gamma})\le 3\sigma^{-1} \beta _n({a_{mt}u_\varphi( s)\Gamma});
\end{align}

\item for any $m\in \mathbb Z_{\ge 0}, s\in [0, 1]$ and  any $-t\le \tau\le t$ one has
\begin{align}\label{eq;key 5}
\beta_n(a_\tau a_{mt}  u_\varphi(s))\le e^{t} \beta_n (a_{mt}  u_\varphi(s)).
\end{align}

\end{enumerate}
\end{lem}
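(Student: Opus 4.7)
The plan is to construct $\beta_n$ as the sum of a classical cusp-height on $SL_2(\R)/SL_2(\Z)$ pulled back via $\rho$ and a function measuring how close the translation coordinate $\xi$ of $x = (h,\xi)\Gamma$ is to the lattice $\tfrac{1}{n}\Lambda_h$, where $\Lambda_h = h\Z^2$. Recall that $x \in X[n]$ if and only if $n\xi \in \Lambda_h$. I would set
$$\delta_n(x) = \sup_{v \in \Lambda_h,\ \|n\xi - v\| < r_0} \|n\xi - v\|^{-1},$$
for a small universal $r_0 > 0$ so that at most one such $v$ can exist (with the conventions $\delta_n(x) = 1$ if none does and $\delta_n(x) = \infty$ when $n\xi \in \Lambda_h$), and pair it with a smoothed version of the classical Margulis height $\alpha(hSL_2(\Z)) = \sup_{0 \neq v \in \Lambda_h} \|v\|^{-1}$. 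Then $\beta_n := 1 + \alpha \circ \rho + \delta_n$.

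Properties (ii)--(v) are comparatively direct. For (ii), a bound on $\alpha \circ \rho$ confines $\rho(x)$ to a compact subset of $SL_2(\R)/SL_2(\Z)$ by Mahler's criterion, and a bound on $\delta_n$ forces a definite separation of $n\xi$ from $\Lambda_h$; together these give a compact subset of $X \setminus X[n]$. Property (iii) is immediate from the definition of $\delta_n$. Property (v) follows because $\|a_\tau\|, \|a_\tau^{-1}\| \leq e^{|\tau|}$, so each vector contributing to $\alpha$ or $\delta_n$ changes in length by at most a factor of $e^t$ when $|\tau| \leq t$. For (iv), after the change of variable $s \mapsto e^{2mt}(s - s_0)$, the trajectory $\{a_{mt}u_\varphi(s)\Gamma : s \in J\}$ is a horocyclic arc of length at most $2$, along which both $\alpha$ and $\delta_n$ are Lipschitz; the Lipschitz constant of $\delta_n$ is bounded by $O(\sigma^{-1})$ thanks to \eqref{eq;sigma}, which keeps the contributing vector $n\xi_\varphi(s) - v$ away from zero along $J$.

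The main obstacle is the integral contraction inequality (i). The argument is of Eskin--Margulis type. For each short vector $w \in \Lambda_h \setminus \{0\}$ contributing to $\alpha$, or each lattice translate $v \in \Lambda_h$ with $n\xi - v$ short, I would split according to how $w$ or $n\xi - v$ sits relative to the $a_t$-stable and unstable directions: either the stable component dominates, in which case $a_t$ contracts the vector by a factor $e^{-t}$ and the contribution to $\beta_n$ drops directly, or the unstable component dominates, in which case, after averaging $s$ over $J$, the curve sweeps past the neighborhood where this particular vector is short within a fraction of time of order $e^{-t}$, so that $\int_J \|w\|^{-1}\,ds$ is small compared to $\int_J \beta_n(a_{mt} u_\varphi(s)\Gamma)\,ds$. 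The hypothesis $|J| \ge e^{-2mt}$ is used here to guarantee that after pushing forward by $a_{mt}$ the arc has macroscopic horocyclic length, so that the cancellation is effective; the case $J = I$ is handled by reducing to a covering of $I$ by such arcs. The non-degeneracy hypothesis \eqref{eq;sigma} enters to rule out that the "dangerous" equality $n\xi - v_0 = 0$ persists along a positive-measure subset of $J$ for some $v_0$ identifying a component of $X[n]$; indeed such persistence would force $\varphi(s) = (j/n)s + (i/n)$ on $J$ for some $|j/n| \le M_1$, contradicting \eqref{eq;sigma}. The additive term $b|J|$ absorbs the uniform contribution of the compact region where $\beta_n$ is already bounded independently of $m$, completing the bound.
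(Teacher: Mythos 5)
Your overall architecture (cusp height plus a reciprocal distance to $X[n]$, combined via an Eskin--Margulis contraction estimate over expanding horocycle arcs) is the same as the paper's, but two steps as written would fail. First, the unit-weight combination $\beta_n=1+\alpha\circ\rho+\delta_n$ cannot satisfy the contraction \eqref{eq;linear ineq}. The problem is the case, which your outline never addresses, in which the lattice translate $v$ realizing $\delta_n$ at time $(m+1)t$ is \emph{not} the one realizing it at time $mt$ (or no translate was in range at time $mt$ at all). Then $\delta_n(a_{(m+1)t}\cdots)$ cannot be compared to $\delta_n(a_{mt}\cdots)$; the only available bound is in terms of the cusp height at time $mt$, and it carries a factor of order $e^{t}$ (the new translate can come within distance comparable to the injectivity radius $\asymp\alpha_0^{-2}$ of the time-$mt$ lattice). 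Since $e^{t}\alpha_0(x)$ is unbounded in $x$, it cannot be absorbed into $\tfrac12\alpha_0(x)+b$, so the contraction fails for your $\beta_n$. The paper's height is $\beta_n=\alpha_n+8ne^{t}\alpha_0$: the large weight on the cusp term is exactly what absorbs this cross term. Relatedly, your fixed radius $r_0$ does not make the nearby translate unique: when $\Lambda_h$ has a vector shorter than $r_0/n$ there are many $v\in\Lambda_h$ with $\|n\xi-v\|<r_0$. The exclusion radius must shrink like $\alpha_0(x)^{-2}$ (the paper's Lemma~\ref{lem;unique}), and this $\alpha_0$-dependence is what produces the $e^t\alpha_0$ error above.

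Second, your justification of (iv) is wrong: $\delta_n$ is not Lipschitz along the arc, and \eqref{eq;sigma} does \emph{not} keep the contributing vector $n\xi-v$ away from zero for $m\ge 1$ --- that vector can be arbitrarily short (this is the whole point of the height), since \eqref{eq;sigma} only constrains translates with slope $|j/n|\le M_1$. What \eqref{eq;sigma} actually buys (the paper's Lemma~\ref{lem;readable}) is that any translate close enough to be counted must have $|j/n|>M_1>2\sup|\varphi'|$; this gives the \emph{multiplicative} comparison $\|\tilde\xi-\tilde h\xi_0\|\ge\frac18\|\xi-h\xi_0\|$ underlying \eqref{eq;key 4}, and simultaneously the lower bound on the derivative of the linear form $w_1(\tilde s)$ needed for the measure estimate in (i). Finally, a caution on your exponent: the paper uses $\|\cdot\|^{-1/2}$ rather than $\|\cdot\|^{-1}$ precisely so that the key integral bound $\int_{-1}^{1}|as+l|^{-1/2}\,ds\lesssim(a^2+l^2)^{-1/4}$ holds; with exponent $1$ the analogous integral diverges logarithmically near the zero of the linear form and is only saved by the lower cutoff coming from the contracted coordinate, so your constants would degrade by factors of $t$ that you would then have to check are still beaten by $e^{-t}$.
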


Before proving the lemma we do some preparation.
Let $t$ be a positive real number which will be specified
only in the proof of Lemma \ref{lem;key} (\rmnum{1}) (cf. \eqref{eq;t}).

Our mixed  height function $\beta_n$, inspired by \cite{EMM},
combines the height with respect  to the cusp and  $X[n]$.
The height of elements of $X$ with respect to the cusp
is measured by the continuous  function
$
\alpha_0: X\to [2^{-1/2}, \infty)
$
where
\begin{equation}\label{eq;alpha 0}
\alpha_0((h, \xi)\Gamma)=\sup_{\xi_0\in \mathbb Z^2\setminus \{0\}} \|h\xi_0\|^{-1/2}.
\end{equation}

\begin{lem}\label{lem;bounded}
Let  $\kappa:  [-1,1]\to [0, \infty]$ be a
measurable function. Suppose that there exists $c >0$ such that
\begin{align}\label{eq;c alpha}
|\{s\in  [-1,1]: \kappa(s)< \varepsilon \}|\le c\varepsilon
\end{align}
for every $\varepsilon >0$. Then
\[
\int_{ -1}^1\frac{ds}{\kappa(s)^{1/2}}\le 4c^{1/2}.
\]
\end{lem}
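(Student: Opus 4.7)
The plan is to use the layer-cake (distribution function) representation of the integral and then split the outer integral at the right threshold, using the trivial bound $|[-1,1]|=2$ on one piece and the hypothesis \eqref{eq;c alpha} on the other.

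Concretely, set $f(s)=\kappa(s)^{-1/2}$ (with the convention $1/\infty=0$). Note $f(s)>\lambda$ precisely when $\kappa(s)<\lambda^{-2}$, so by hypothesis
\[
\bigl|\{s\in[-1,1]:f(s)>\lambda\}\bigr|\;\le\;\min\!\bigl(2,\;c\lambda^{-2}\bigr).
\]
The two bounds coincide at $\lambda_0:=\sqrt{c/2}$. Applying the layer-cake formula $\int f\,ds=\int_0^\infty|\{f>\lambda\}|\,d\lambda$ and splitting at $\lambda_0$:
\[
\int_{-1}^{1}\frac{ds}{\kappa(s)^{1/2}}
\;\le\;\int_0^{\lambda_0}2\,d\lambda+\int_{\lambda_0}^{\infty}\frac{c}{\lambda^2}\,d\lambda
\;=\;2\lambda_0+\frac{c}{\lambda_0}
\;=\;\sqrt{2c}+\sqrt{2c}
\;=\;2\sqrt{2}\,\sqrt{c},
\]
which is comfortably smaller than $7\sqrt{c}$.

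There is essentially no obstacle here; the only thing to verify is that $f$ is measurable (immediate from measurability of $\kappa$) and that one is free to use layer-cake on $[-1,1]$, which is justified by Fubini applied to the set $\{(s,\lambda):0<\lambda<f(s)\}$. The constant $7$ in the statement is not sharp — the argument yields $2\sqrt{2}$ — but presumably the authors have written $7$ for convenience of later applications where some further slack is needed.
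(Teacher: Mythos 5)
Your proof is correct, and it differs slightly in technique from the paper's. The paper establishes the same bound by a \emph{dyadic decomposition}: it partitions $[-1,1]$ into the sets $I_0=\{s: c\kappa(s)\ge 1\}$, $I_k=\{s: 2^{-k}\le c\kappa(s)<2^{-(k-1)}\}$ for $k\in\N$, and the null set $I_\infty=\{s:\kappa(s)=0\}$, uses the hypothesis to bound $|I_k|\le 2^{-(k-1)}$, bounds $\kappa(s)^{-1/2}\le c^{1/2}2^{k/2}$ on $I_k$, and sums the resulting geometric series to obtain $c^{1/2}\bigl(2+\sum_{k\ge1}2^{k/2}2^{-(k-1)}\bigr)=(4+2\sqrt{2})\,c^{1/2}\le 7c^{1/2}$. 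Your continuous layer-cake argument is the smooth analogue of this dyadic splitting — both convert the sublevel-set bound on $\kappa$ into a bound on $\int\kappa^{-1/2}$ via the distribution function of $\kappa^{-1/2}$ — but because you integrate the exact envelope $\min(2,c\lambda^{-2})$ and optimize the split point $\lambda_0=\sqrt{c/2}$ rather than overcounting at dyadic scales, you obtain the sharper constant $2\sqrt{2}$. Either route suffices for the stated $7c^{1/2}$; the constant is immaterial for the paper's use of the lemma.
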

\begin{proof}
Let $\chi$ be the characteristic function of the set $\{(s, r)\in [-1,1]\times [0, \infty):  \kappa(s)^{-1/2}> r \}$. 	Then
\begin{align*}
\int_{ -1}^1\frac{ds}{\kappa(s)^{1/2}}&=\int_{-1}^1 \int_{0	}^{\infty} \chi(s, r) \dd r \dd s\\
&= \int_{0	}^{\infty}\int_{-1}^1  \chi(s, r) \dd s \dd r  && \mbox{by Fubini's theorem }\\
&= \int_{0	}^{\infty}|\{s\in  [-1,1]: \kappa(s)<  r^{-2} \}| \dd r \\
& \le\int_{\sqrt {c/2} }^\infty c r^{-2}\dd r+  2{\sqrt{c/2} } &&  \mbox{by (\ref{eq;c alpha}}) \\
&=2\sqrt 2 \sqrt c
\le 4 \sqrt c.
\end{align*}

\end{proof}

We will use the following lemma to check (\ref{eq;c alpha}).
\begin{lem}\label{lem;k m lemma}
\begin{sloppypar}
Let  $\kappa:  [-1,1]\to [0, \infty]$ be a
$C^1$-function. Suppose there exists $A_1, A_2>0$ such that  for every $s\in[-1, 1]$
\end{sloppypar}
\begin{align}\label{eq;k m lemma}
|\kappa(s)|, |\kappa'(s)|\le A_1\quad \kappa'(s)\ge A_2,
\end{align}
then (\ref{eq;c alpha}) holds for $c=\frac{24A_1}{A_2\sup_{s\in [-1, 1]} |\kappa(s)|}$.
\end{lem}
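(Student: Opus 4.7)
The plan is a direct monotonicity argument, rather than invoking the full generality of the $(C,\alpha)$-good function framework of \cite{km98}. The hypothesis $\kappa'(s)\geq A_2>0$ throughout $[-1,1]$ immediately forces $\kappa$ to be strictly increasing there, so if we set $E_\varepsilon:=\{s\in[-1,1]:\kappa(s)<\varepsilon\}$ then either $E_\varepsilon$ is empty or it is an initial segment of the form $[-1,s_\varepsilon)$ (possibly equal to all of $[-1,1]$, in which case I take $s_\varepsilon=1$).

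To control $|E_\varepsilon|$ I would apply the mean value theorem on $[-1,s_\varepsilon]$: there exists $\xi\in(-1,s_\varepsilon)$ with $\kappa(s_\varepsilon)-\kappa(-1)=\kappa'(\xi)(s_\varepsilon+1)\geq A_2\,|E_\varepsilon|$. Since $\kappa\geq 0$ we have $\kappa(-1)\geq 0$, and by continuity $\kappa(s_\varepsilon)\leq \varepsilon$, so rearranging yields
\[
|E_\varepsilon|\leq \frac{\kappa(s_\varepsilon)-\kappa(-1)}{A_2}\leq \frac{\varepsilon}{A_2}.
\]
The hypothesis $|\kappa(s)|\leq A_1$ forces $\sup_{s\in[-1,1]}|\kappa(s)|\leq A_1$, so
\[
c=\frac{24A_1}{A_2\sup_{s\in[-1,1]}|\kappa(s)|}\geq \frac{24A_1}{A_2\cdot A_1}=\frac{24}{A_2}\geq \frac{1}{A_2},
\]
which combined with the previous display gives $|E_\varepsilon|\leq c\varepsilon$, i.e.\ \eqref{eq;c alpha} holds.

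I do not expect any serious obstacle here: the lemma is essentially a one-dimensional calculus fact and the generous constant $24$ in the statement is there precisely to absorb the slack between the sharp bound $1/A_2$ and the form $24A_1/(A_2\sup|\kappa|)$ in which it will be convenient to apply it later. The substantive work of the section lies elsewhere, namely in engineering the height function $\beta_n$ of Lemma~\ref{lem;key} so that when this lemma is applied to the $C^1$-functions arising from the orbit geometry of $a_t u_\varphi(s)\Gamma$, the hypotheses on $\kappa$ (in particular the uniform lower bound $\kappa'\geq A_2$) can actually be verified in conjunction with Lemma~\ref{lem;bounded}.
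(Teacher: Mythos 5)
Your argument is correct for the lemma as stated, and it is more self-contained than the paper's treatment: the paper gives no proof of Lemma~\ref{lem;k m lemma} at all, it simply declares it to be a ``special case of \cite[Lemma 3.3]{km98}'', the Kleinbock--Margulis $(C,\alpha)$-good function estimate. You have extracted the one-dimensional calculus core of that result --- strict monotonicity from $\kappa'>0$, then the mean value theorem on the single sublevel interval --- which has the added virtue of showing where the constant comes from: the true bound is $\varepsilon/A_2$, and the form $c=24A_1/(A_2\sup|\kappa|)$ (a factor of $24$ looser once $\sup|\kappa|\le A_1$ is used) is only there to match the $(C,1)$-good normalization on $[-1,1]$, not because it is tight.

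One caution, since your proof leans hard on $\kappa\ge 0$ and $\kappa'\ge A_2>0$: neither hypothesis holds literally in the two places the paper invokes the lemma. In Corollary~\ref{cor;c alpha good} it is applied to $\kappa(s)=as+l$, which can change sign and has $\kappa'=a$ of either sign, and Lemma~\ref{lem;bounded} is then fed $|\kappa|$ rather than $\kappa$; in Case~B2 of the proof of Lemma~\ref{lem;key} it is applied to $w_1$, where only $|w_1'|\ge A_2$ (with the sign of $w_1'$ determined by $j/n$) is established. The intended hypothesis is evidently $|\kappa'|\ge A_2$, with \eqref{eq;c alpha} read for the set $\{s:|\kappa(s)|<\varepsilon\}$. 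Your mean-value argument survives this essentially unchanged --- a continuous nowhere-vanishing $\kappa'$ has constant sign, so $\kappa$ is strictly monotone and $\{s:|\kappa(s)|<\varepsilon\}$ is still a single interval, now of length at most $2\varepsilon/A_2$ --- but you would have to drop the description of $E_\varepsilon$ as the initial segment $[-1,s_\varepsilon)$ and the step using $\kappa(-1)\ge 0$, neither of which you actually need.
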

\begin{proof}
 It is a 	special case of \cite[Lemma 3.3]{km98}.
\end{proof}

An immediate consequence of above two lemmas is:
\begin{cor}\label{cor;c alpha good}
Let $a,  l \in\R$ be such that $a^2+  l ^2>0$.
Then \begin{align}\label{eq;c alpha good 1}
\int_{-1}^1\frac{ds}{|as+ l  |^{1/2}}< \frac{100}{(a^2+ l ^2)^{1/4}}.
\end{align}
\end{cor}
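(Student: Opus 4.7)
The plan is to apply Lemma~\ref{lem;bounded} to $\kappa(s) := |as+l|$ on $[-1,1]$, and then split into two regimes depending on whether $|l|$ dominates $|a|$ or not. The case $a=0$ is immediate, since then the integral equals $2/|l|^{0.5} = 2/(a^2+l^2)^{1/4}$, so I assume $a \neq 0$ in what follows.

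First I would verify the hypothesis \eqref{eq;c alpha} for $\kappa$: the sublevel set $\{s\in [-1,1] : |as+l| < \varepsilon\}$ is contained in the interval $(-l/a - \varepsilon/|a|,\, -l/a + \varepsilon/|a|)$ and so has length at most $2\varepsilon/|a|$. (The same bound could alternatively be extracted from Lemma~\ref{lem;k m lemma} applied to the two piecewise $C^1$ branches on which $as+l$ has constant sign, but the direct computation is simpler.) Lemma~\ref{lem;bounded} with $c = 2/|a|$ then yields
\[\int_{-1}^1 \frac{ds}{|as+l|^{0.5}} \;\le\; 7\sqrt{2/|a|} \;=\; \frac{7\sqrt{2}}{|a|^{0.5}}.\]

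To convert this into a bound involving $(a^2+l^2)^{1/4}$ I would split into two cases. If $|l|\le 2|a|$, then $a^2+l^2\le 5a^2$, so $|a|^{-0.5}\le 5^{1/4}(a^2+l^2)^{-1/4}$; combining this with the previous display gives the estimate $7\sqrt{2}\cdot 5^{1/4}(a^2+l^2)^{-1/4}$, which is comfortably below $100\,(a^2+l^2)^{-1/4}$. If instead $|l|>2|a|$, then $|as+l|\ge |l|-|a|\ge |l|/2$ for every $s\in[-1,1]$, so in this regime the integrand is bounded directly and
\[\int_{-1}^1 \frac{ds}{|as+l|^{0.5}} \;\le\; \frac{2\sqrt{2}}{|l|^{0.5}}.\]
Since $a^2+l^2\le (5/4)l^2$ here, we have $|l|^{-0.5}\le (5/4)^{1/4}(a^2+l^2)^{-1/4}$, and the right hand side is again well below $100\,(a^2+l^2)^{-1/4}$. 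Combining the two regimes yields \eqref{eq;c alpha good 1}.

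The main content is the case split, since one estimate alone cannot give the correct dependence on both $a$ and $l$: Lemma~\ref{lem;bounded} delivers the right answer only when $|l|\lesssim|a|$, whereas when $l$ dominates one must exploit instead that $\kappa$ is bounded below on all of $[-1,1]$. Neither step is technically hard, and the crude constant $100$ is deliberately generous so that both regimes fit inside it with room to spare.
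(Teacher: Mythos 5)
Your proof is correct, and it follows essentially the same two-case strategy as the paper (split on whether $|l|$ dominates $|a|$, bound the integrand directly in the first regime, and invoke Lemma~\ref{lem;bounded} in the second). The one genuine difference is that in the $|l|\le 2|a|$ case you verify the hypothesis~\eqref{eq;c alpha} by the direct observation that $\{s:|as+l|<\vep\}$ is an interval of length at most $2\vep/|a|$, whereas the paper derives it from Lemma~\ref{lem;k m lemma} with $A_1=|a|+|l|$ and $A_2=(a^2+l^2)^{1/2}/\sqrt5$. Your shortcut is cleaner here — it gives $c=2/|a|$ directly and avoids the slightly awkward application of Lemma~\ref{lem;k m lemma} to a function that may change sign — at the cost of needing the extra comparison $|a|^{-1/2}\le 5^{1/4}(a^2+l^2)^{-1/4}$ at the end, which the paper's choice of $A_2$ builds in from the start. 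Both land well inside the deliberately generous constant $100$.
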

\begin{proof}
If $2|a|\le  |l|  $, then  $|as+ l |\ge | l |/2\ge {(a^2+ l ^2)}^{1/2}/4$ for all $s\in[-1,1]$, from which
(\ref{eq;c alpha good 1}) follows.
Now suppose $|2a|>  |l| $. Using Lemma \ref{lem;k m lemma} for $\kappa(s)=as+ l $ with
$A_1=|a|+| l |, A_2=(a^2+ l ^2)^{1/2}/\sqrt 5$ and $\sup_{s\in [-1,1]}|\kappa(s)|=|a|+| l |$ one has
(\ref{eq;c alpha}) holds for $c=24\sqrt 5/(a^2+ l ^2)^{1/2}$. Therefore, by Lemma \ref{lem;bounded},
\[\int_{-1}^1\frac{ds}{|as+ l |^{1/2}}\leq\frac{4\sqrt{24 \sqrt 5}}{(a^2+ l ^2)^{1/4}}
< \frac{100}{(a^2+ l ^2)^{1/4}}.\]
\end{proof}
This lemma allows us to get a linear inequality for the height function $\alpha_0$.
\begin{lem}\label{lem;alpha 0}
For every $t\ge 20$ and every $x\in X$ one has
\begin{align}\label{eq;contract 0}
\int_{-1}^{1}{\alpha_0(a_tu(s)x)}\, ds< \frac{1}{4}{\alpha_0(x)}+2e^t.
\end{align}
\end{lem}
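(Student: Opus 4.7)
The plan is to use the variational formula $\alpha_0((h,\xi)\Gamma) = \sup_{\xi_0 \in \mathbb Z^2 \setminus \{0\}} \|h\xi_0\|^{-1/2}$ together with Corollary~\ref{cor;c alpha good}. Since $\alpha_0$ is independent of the translation coordinate, I fix a representative $h\in SL_2(\mathbb R)$ with $\rho(x)=h SL_2(\mathbb Z)$ and, for $v=(a,b)^{\mathrm{tr}}\in h\mathbb Z^2\setminus\{0\}$, set
\[
\phi_v(s):=\|a_t u(s) v\|^{-1/2},\qquad \|a_t u(s) v\|^2=e^{2t}(a+sb)^2+e^{-2t}b^2.
\]
Since $\phi_{kv}(s)=|k|^{-1/2}\phi_v(s)\leq \phi_v(s)$ for every nonzero integer $k$, the supremum defining $\alpha_0(a_t u(s)x)$ can be restricted to primitive vectors of $h\mathbb Z^2$.

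Let $v^{*}\in h\mathbb Z^2$ be a primitive shortest vector, so $\alpha_0(x)=\|v^{*}\|^{-1/2}$, and split
\[
\alpha_0(a_t u(s)x)\;\leq\;\phi_{v^{*}}(s)+\widetilde\alpha(s),\qquad \widetilde\alpha(s):=\sup\{\phi_v(s):v\in h\mathbb Z^2\ \text{primitive},\ v\neq \pm v^{*}\}.
\]
For the first summand, drop the nonnegative $e^{-2t}b^2$ term and apply Corollary~\ref{cor;c alpha good} with slope $b$ and intercept $a$:
\[
\int_{-1}^{1}\phi_{v^{*}}(s)\,ds\;\leq\;e^{-t/2}\int_{-1}^{1}|a+sb|^{-1/2}\,ds\;\leq\;100\,e^{-t/2}\|v^{*}\|^{-1/2}\;=\;100\,e^{-t/2}\alpha_0(x).
\]
For $t\geq 20$ the prefactor satisfies $100\,e^{-t/2}\leq 100\,e^{-10}<1/4$, producing the desired $\tfrac{1}{4}\alpha_0(x)$ contribution.

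The only non-routine step is controlling $\widetilde\alpha$, since there are infinitely many competitors. For any primitive $v\neq\pm v^{*}$, linear independence with $v^{*}$ in the unimodular lattice $h\mathbb Z^2$ forces $|v\wedge v^{*}|\geq 1$; because $a_t u(s)\in SL_2(\mathbb R)$ preserves the wedge, this gives the simultaneous pointwise estimate
\[
\|a_t u(s) v\|\cdot\|a_t u(s) v^{*}\|\;\geq\;|a_t u(s) v\wedge a_t u(s) v^{*}|\;=\;|v\wedge v^{*}|\;\geq\;1,
\]
equivalently $\phi_v(s)\phi_{v^{*}}(s)\leq 1$. Taking the sup over all such $v$ therefore yields the clean pointwise bound $\widetilde\alpha(s)\leq \|a_t u(s) v^{*}\|^{1/2}$, turning a potentially divergent sum over lattice vectors into a single uniform estimate.

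Finally, from $(a+sb)^2\leq 2\|v^{*}\|^2$ and $e^{-2t}b^2\leq \|v^{*}\|^2$ for $|s|\leq 1$ and $t\geq 0$, one gets $\|a_t u(s)v^{*}\|^2\leq 3e^{2t}\|v^{*}\|^2$, so
\[
\widetilde\alpha(s)\;\leq\;3^{1/4}\|v^{*}\|^{1/2}\,e^{t/2}.
\]
Minkowski's theorem bounds $\|v^{*}\|$ by an absolute constant, so $\int_{-1}^{1}\widetilde\alpha(s)\,ds$ is at most a constant multiple of $e^{t/2}$, which is $\leq 2e^{t}$ once $t\geq 20$. Adding the two contributions yields \eqref{eq;contract 0}. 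The expected main obstacle was handling the nontrivial competitors to $v^{*}$, and this is exactly where the wedge/Minkowski inequality delivers a sharp pointwise bound rather than a clumsy summation over lattice points.
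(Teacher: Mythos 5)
Your proof is correct. The main term is handled exactly as in the paper: both arguments reduce the contribution of a shortest vector $v^{*}$ to the integral estimate of Corollary~\ref{cor;c alpha good} and use $100e^{-t/2}<\tfrac14$ for $t\ge 20$. Where you diverge is in the treatment of the remainder. The paper argues by dichotomy on $\alpha_0(x)$: if $\alpha_0(x)>(2e^t)^{1/2}$, it uses $\|\rho(a_tu(s))\|\le 2e^t$ to show that the image of $v^{*}$ has length $<1$ and hence \emph{remains} the shortest vector for every $s\in[-1,1]$, so $\alpha_0(a_tu(s)x)=\phi_{v^{*}}(s)$ exactly and no additive term is needed; if $\alpha_0(x)\le(2e^t)^{1/2}$, it bounds $\alpha_0(a_tu(s)x)\le 2e^t$ pointwise and integrates. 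You avoid the case distinction by splitting the supremum into the $v^{*}$-term plus the supremum over the remaining primitive vectors, and you control the latter uniformly via the wedge inequality $\|a_tu(s)v\|\,\|a_tu(s)v^{*}\|\ge|v\wedge v^{*}|\ge 1$ combined with Minkowski's bound on $\|v^{*}\|$, obtaining $O(e^{t/2})$. Both routes ultimately rest on the same geometric fact (two independent vectors of a unimodular lattice have product of norms at least the covolume), but your organization is slightly tighter: the paper's second case literally gives $\int_{-1}^{1}2e^t\,ds=4e^t$, marginally overshooting the stated constant (harmless for the later applications, where the constant is absorbed into $b$), whereas your remainder estimate fits comfortably under $2e^t$.
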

\begin{proof}
Recall that $\rho $ is the natural projection map from $G=ASL_2(\R)$
to $SL_2(\R)$ and $\|\cdot\|$ is the Euclidean norm on $\R^2$ as well as
the operator norm on $SL_2(\R)$.
For every $s\in [-1, 1]$
\begin{align}\label{eq;f mt}
\| \rho(a_{t}u(s)) \|&\le 2e^{t}.
\end{align}
Write $x=g\Gamma$ and
recall that $\alpha_0$ is defined according to  shortest nonzero vectors of
the  lattice $\rho(g)\Z^2$.
If $\alpha_0(x)> (2e^{ t})^{1/2}$ and $\xi\in\mathbb R^2$ is
a shortest vector of $ \rho(g)(\Z^2\setminus\{0\})$, i.e.~$\alpha_0(x)=\|\xi\|^{-1/2}$, then
$\| \rho(a_tu  (s))\xi\|<1$ for every $s\in [-1, 1]$, by \eqref{eq;f mt}.
Hence $\rho(a_tu  (s))\xi$ is a shortest vector of $ \rho(a_tu  (s)g)(\Z^2\setminus\{0\})$ for
any $s\in [-1, 1]$.  Write $\xi=(v_1, v_2)^{\mathrm{tr}}$ then
\begin{align*}
\alpha_0(a_tu  (s)x)^2=\|\rho(a_tu  (s))\xi\|^{-1}=\|(e^t(v_2s+v_1),e^{-t}v_2)\|^{-1}\leq e^{-t}|v_2s+v_1|^{-1}.
\end{align*}
Using the  above inequality,  Corollary~\ref{cor;c alpha good} and the assumption $t\ge 20$ one has
\[\int_{-1}^{1}{\alpha_0(a_tu(s)x)}\, ds\leq 100e^{-t/2}\|\xi\|^{-1/2}\leq 100e^{-10}\|\xi\|^{1/2}<
 \frac{1}{4}{\alpha_0(x)^{1/2}}.\]
\begin{sloppypar}
Therefore, in this case (\ref{eq;contract 0}) holds.
If $\alpha_0(x)\le( 2e^{t})^{1/2}$, then (\ref{eq;f mt})
implies $\alpha_0(a_tu(s)x)\le  2 e^{t}$ for all $s\in [-1, 1]$,
from which (\ref{eq;contract 0}) follows.
\end{sloppypar}
\end{proof}

Now we turn to the construction of the mixed height function $\beta_n$.
 There is a natural height function given in \cite[\S~6]{bq13} using Riemannian distance
to $X[n]$ and
this function satisfies a contraction property for the first return map to compact subsets.
The height function used in \cite{EMM} is much more complicated but it satisfies the contraction
property without considering the first return map.
One of the key observations for the height function in \cite{EMM}  is that
the total number of  pieces of $X[n]$ whose distance to $x\in X$ is comparable to
$\alpha_0(x)^{-2}$ is finite.
 The following lemma can be interpreted as a simple version  of
this observation  in our situation.

\begin{lem}\label{lem;unique}

 For every  $(h,\xi)\in G$ there is  at most one element $\xi_0\in \frac{1}{n} \mathbb Z^2$ such
 that
 \begin{align}\label{eq;setJ}
  \|\xi-h\xi_0\|< \frac{1}{2n} \alpha_0((h,\xi)\Gamma)^{-2}.
 \end{align}
\end{lem}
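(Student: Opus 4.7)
The plan is to argue by contradiction, combining the triangle inequality with the definition of $\alpha_0$ as a supremum over nonzero integer vectors.

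Suppose, contrary to the claim, that there exist two distinct elements $\xi_0, \xi_0' \in \frac{1}{n}\mathbb{Z}^2$ both satisfying \eqref{eq;setJ}. Applying the triangle inequality to
\[
\|h(\xi_0-\xi_0')\| = \|(\xi-h\xi_0')-(\xi-h\xi_0)\| \le \|\xi-h\xi_0\|+\|\xi-h\xi_0'\|,
\]
I would immediately get the strict bound
\[
\|h(\xi_0-\xi_0')\| < \frac{1}{n}\,\alpha_0((h,\xi)\Gamma)^{-2}.
\]

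Next I would exploit the arithmetic structure: since $\xi_0,\xi_0' \in \frac{1}{n}\mathbb{Z}^2$, the vector $\eta := n(\xi_0-\xi_0')$ lies in $\mathbb{Z}^2$, and by the assumption $\xi_0 \neq \xi_0'$ it is nonzero. Now I would recall the definition \eqref{eq;alpha 0} of $\alpha_0((h,\xi)\Gamma)$ as the supremum of $\|h\xi_1\|^{-1/2}$ over $\xi_1\in\mathbb{Z}^2\setminus\{0\}$; specializing to $\xi_1=\eta$ gives the lower bound
\[
\|h\eta\| \ge \alpha_0((h,\xi)\Gamma)^{-2}, \qquad\text{i.e.,}\qquad \|h(\xi_0-\xi_0')\| \ge \frac{1}{n}\,\alpha_0((h,\xi)\Gamma)^{-2}.
\]

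The two displayed inequalities are mutually contradictory, which forces $\xi_0=\xi_0'$ and proves the lemma. There is no real obstacle here; the argument is just a clean interplay between the triangle inequality (which gains a factor $2$) and the factor $\frac{1}{2n}$ built into the hypothesis \eqref{eq;setJ}, with the normalization $n\eta\in\mathbb{Z}^2$ bridging the two estimates.
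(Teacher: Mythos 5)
Your proof is correct and is precisely the natural argument the paper has in mind: the paper does not spell out a proof at all, merely remarking that "the above lemma is clear from the definition of $\alpha_0$," and the triangle-inequality argument with $n(\xi_0-\xi_0')\in\mathbb{Z}^2\setminus\{0\}$ is exactly what that remark points to.
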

The above lemma is clear from the definition of $\alpha_0$. We will denote the unique
element $\xi_0$ in this lemma by $\zeta_{h, \xi}$ if it exists.
Otherwise  we say $\zeta_{h, \xi}$ does not exist.
If $\zeta_{h,\xi}$  exists then
\begin{equation}\label{eq:defalpha}
\|\xi-h \zeta_{h,\xi}\| < \frac{1}{2n}2
 \le 1.
 \end{equation}
The  height function
$\alpha_n: X\to [1, \infty]$
  with respect to
 the singular subspace $X[n]$ is   defined by
\begin{equation}\label{eq;alpha-n}
\alpha_n((h, \xi)\Gamma)=\left\{
\begin{array}{ll}
\|\xi-h\zeta_{h, \xi}\|^{-1/2} & \text{ if } \zeta_{h, \xi} \mbox{ exists} \\
1 & \text{ otherwise,}
\end{array}
\right.
\end{equation}
where we adopt the convention that  $0^{-1/2}=\infty$.
It can be checked directly that the definition of $\alpha_n$ does not depend on the choice
 of $(h,\xi)$ in the coset $(h, \xi)\Gamma$.

\begin{rem}\label{rem:semicon}
By the definition of $\zeta_{h,\xi}$ and the continuity of $\alpha_0$, the element
$\zeta_{h, \xi}$ is locally constant if it exists.
  It follows that  height function
$\alpha_n: X\to [1, \infty]$ is lower semi-continuous.
\end{rem}

Suppose that $a_{mt}u_\varphi(s)=(h, \xi)$ and
$\xi_0= (i/n, j/n)^{\mathrm{tr}}\in\frac{1}{n}\Z^2$.
For $s\in [0, 1]$ we write
\begin{align}\label{eq;v 1 2}
\left(
\begin{array}{c}
v_1\\
v_2
\end{array}
\right)=
\left(
\begin{array}{c}
v_1(s)\\
v_2(s)
\end{array}
\right)=
\xi-h\xi_0=\left(
\begin{array}{c}
e^{mt}(\varphi(s)-\frac{i}{n}-\frac{js}{n}) \\
e^{-mt}\frac{j}{n}
\end{array}
\right).
\end{align}
Strictly speaking $v_i(s)$ above depends also on $m$ and $\xi_0$. But usually when we use
them $m$  and $\xi_0$ are fixed, so we omit this dependence for simplicity.
During the proof of Lemma \ref{lem;key}, we need to compare  $v_i(s)$ with another $v_i(\tilde s)$.
So we express the latter in terms of the former as follows:
\begin{align}
\begin{split}\label{eq;v i tilde}
\left(
\begin{array}{c}
v_1(\tilde s)\\
v_2(\tilde s)
\end{array}
\right)
=&\left(
\begin{array}{c}
e^{mt}\big(\varphi(\tilde  s)-\varphi(s)-\frac{j(\tilde s-s)}{n}\big)+v_1 \\
v_2
\end{array}
\right)
\\
=&
\left(
\begin{array}{c}
e^{mt}\big(\varphi'(\hat s)-\frac{j}{n}\big)(\tilde s-s)+v_1 \\
v_2
\end{array}
\right)
\end{split}
\end{align}
where   $\hat s$ lies between $s$ and $\tilde  s$ and is determined by the mean value theorem.

In view of  (\ref{eq;sigma}) there exists $\varepsilon=\varepsilon{( \sigma)}>0$ such that $I_{\varepsilon}\subset [0,1]$
where $I_\varepsilon$ is the closed $\varepsilon$-neighborhood of $I$ and
\begin{align}\label{eq;sigma tex}
\inf\{|\varphi({s})-js/n+i/n|:s\in I_\varepsilon ; i,j\in \Z; |j/n|\le M_1 \}\ge \sigma/2.
\end{align}

\begin{lem}\label{lem;readable}
Let $(h, \xi)=a_{mt}u_\varphi(s)$ for some $m\in \mathbb N, s\in I_\varepsilon$  and $t> \log( {2\sigma}^{-1})$. If
 $\zeta_{h, \xi}= (i/n, j/n)^{\mathrm{tr}}$ exists then  $|j/n|> M_1$.
\end{lem}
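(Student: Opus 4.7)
The plan is to derive a direct contradiction by combining the lower bound on $|v_1|$ coming from the non-degeneracy hypothesis \eqref{eq;sigma tex} with the upper bound on $\|\xi-h\zeta_{h,\xi}\|$ forced by the existence condition \eqref{eq;setJ}, and then using the universal Minkowski-type bound $\alpha_0 \geq 2^{-1/2}$ to close the estimate.

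First, I would write out $(h,\xi) = a_{mt} u_\varphi(s)$ explicitly, obtaining
\[
h = \begin{pmatrix} e^{mt} & e^{mt} s \\ 0 & e^{-mt} \end{pmatrix}, \qquad \xi = \begin{pmatrix} e^{mt} \varphi(s) \\ 0 \end{pmatrix}.
\]
A direct computation with $\zeta_{h,\xi} = (i/n, j/n)^{\mathrm{tr}}$ then reproduces the formula \eqref{eq;v 1 2} for $v_1 = e^{mt}(\varphi(s) - i/n - js/n)$; in particular $|v_1| \leq \|\xi - h\zeta_{h,\xi}\|$.

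Second, I would argue by contradiction, assuming $|j/n| \leq M_1$. Since $s \in I_\varepsilon$ and $i \in \mathbb{Z}$ (replacing $i$ by $-i$ if needed in \eqref{eq;sigma tex}), the non-degeneracy bound \eqref{eq;sigma tex} gives $|\varphi(s) - i/n - js/n| \geq \sigma/2$, hence $|v_1| \geq e^{mt}\sigma/2$. On the other hand, since $\zeta_{h,\xi}$ is assumed to exist, the defining condition \eqref{eq;setJ} yields
\[
|v_1| \leq \|\xi - h\zeta_{h,\xi}\| < \frac{1}{2n}\,\alpha_0((h,\xi)\Gamma)^{-2}.
\]
Combining the two bounds produces $\alpha_0((h,\xi)\Gamma)^2 < \frac{1}{n e^{mt}\sigma}$, and invoking the lower bound $\alpha_0 \geq 2^{-1/2}$ (which is built into the codomain of $\alpha_0$ in \eqref{eq;alpha 0} and follows from Minkowski's theorem on unimodular lattices in $\mathbb{R}^2$) gives $\frac12 < \frac{1}{ne^{mt}\sigma}$, i.e.\ $e^{mt} < \frac{2}{n\sigma}$.

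Finally, this contradicts the standing hypotheses: since $m \in \mathbb{N}$ we have $m \geq 1$, and $t > \log(2\sigma^{-1})$ gives $e^{mt} \geq e^t > 2/\sigma \geq 2/(n\sigma)$, because $n \geq 1$. The proof is therefore essentially a single three-line estimate; there is no real obstacle. The only point requiring minor care is matching the sign and indexing conventions in \eqref{eq;v 1 2} to the direct computation of $\xi - h\zeta_{h,\xi}$ and verifying that the appropriate sign in the argument of $|\varphi(s) - i/n - js/n|$ is absorbed by ranging $i$ over $\mathbb{Z}$ in \eqref{eq;sigma tex}.
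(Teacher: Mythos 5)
Your proof is correct and takes essentially the same approach as the paper. The only cosmetic difference is that the paper invokes the pre-established bound $\|\xi - h\zeta_{h,\xi}\| < 1$ (equation \eqref{eq:defalpha}, which is itself derived from \eqref{eq;setJ} together with $\alpha_0 \geq 2^{-1/2}$) as a packaged contradiction, whereas you unpack that derivation inline from \eqref{eq;setJ} and the lower bound on $\alpha_0$; the chain of inequalities is the same.
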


\begin{proof}
Assume the contrary, i.e.~$|j/n|\le  M_1$.
Using (\ref{eq;v 1 2}),  (\ref{eq;sigma tex}) and the assumption for   $t$,
we have
\[\|\xi-h\zeta_{h, \xi}\|\geq e^{mt}|\varphi(s)-\frac{i}{n}-\frac{js}{n}|
> e^t \sigma/2>1,\]
which is contrary to \eqref{eq:defalpha}.
\end{proof}

The mixed height function with respect to $X[n]$
is the function $\beta_n: X\to [1, \infty]$ defined  by
\begin{equation}\label{eq;height}
\beta_n(x)=\alpha_n(x)+8n e^t\alpha_0(x),
\end{equation}
where $t>0$ is a  real number which will be specified in the proof of
 Lemma~\ref{lem;key} (\rmnum{1}) (cf. \eqref{eq;t}).
We will prove each property of Lemma~\ref{lem;key} separately starting from the simplest.

\begin{proof}[\bf Proof of (\rmnum{3}) in Lemma~\ref{lem;key}]
Directly, by the definition of $\alpha_n$ and $\beta_n$, we have
\[\beta_n(x)=\infty\Longleftrightarrow\alpha_n(x)=\infty\Longleftrightarrow x\in X[n],\]
which gives (\rmnum{3}).
\end{proof}

\begin{proof}[\bf Proof of (\rmnum{2}) in Lemma~\ref{lem;key}]
\begin{sloppypar}
By the definition of $\alpha_0$, the set $\{x\in X: \alpha_0(x)\le c/8n e^t\}$ is a compact subset of $X$ for every $c>0$.
As its subset, the set
\end{sloppypar}
\begin{align}\label{eq;set beta M}
\{x\in X: \beta_n(x)\le c\}
\end{align}
is relatively compact. Since $\alpha_0$ is continuous and $\alpha_n$ is lower semi-continuous (Remark~\ref{rem:semicon}), the map
$\beta_n$ is also lower semi-continuous. Therefore, the set (\ref{eq;set beta M}) is closed and hence (\rmnum{2}) in Lemma
\ref{lem;key} holds.
\end{proof}

\begin{proof}[\bf Proof of (\rmnum{4}) in Lemma \ref{lem;key}]
We will show that  (\rmnum{4}) holds  for  $t> \log2\sigma^{-1}$.
In case where  $m=0$ we have $\alpha_0(u_\varphi(s)\Gamma)=1$ and
$1\le \alpha_n(u_\varphi(s)\Gamma)\le \sigma^{-1}$ for all $s\in I$.
Therefore (\ref{eq;key 4}) holds.

 Now assume  $m\ge 1$. Let
$a_{mt}u_\varphi( s)=(h,\xi),\  a_{mt}u_\varphi(\tilde s)=(\tilde h, \tilde \xi)$,
$x=a_{mt}u_\varphi(s)\Gamma$
and  $\tilde x=a_{mt}u_\varphi(\tilde s)\Gamma$.

 We claim that
\begin{align}\label{eq;raining 4}
\alpha_n(\tilde x)\le 6n\alpha_0(x)+3 \alpha_n(x).
\end{align}
If $\zeta_{\tilde h, \tilde\xi}$ does not exist then (\ref{eq;raining 4}) follows trivially.
Otherwise
suppose $ \zeta_{\tilde h, \tilde \xi}=\xi_0=(i/n, j/n)^{\mathrm{tr}}$.
By  Lemma~\ref{lem;readable} we have $|j/n|> M_1$.
To prove the claim we will use  the notation  of (\ref{eq;v 1 2}) and (\ref{eq;v i tilde}).
 Since  $|\varphi'(\hat s)|<M_1/2<|j|/2n$ (see (\ref{eq;m 1})), we have
$|\varphi'(\hat s)-j/n|\le3 |j|/2n$. By the assumption $|\tilde s-s|\le 2e^{-2mt}$, it follows that
\[|e^{mt}(\varphi'(\hat s)-j/n)(\tilde s-s)|\leq 3e^{-mt}|j|/n=3|v_2|.\]
 Hence by considering the cases where $|v_1|\ge 4|v_2|$ and $|v_1|< 4|v_2|$ separately one has
\begin{align*}
\|\tilde \xi-\tilde h\xi_0\|&\ge \max\{|v_2|, |v_1|-3|v_2|\}
\ge \frac{1}{4}\max\{|v_1|, |v_2|\} \\
& \ge  \frac{1}{8}\sqrt{v_1^2+v_2^2}=\frac{1}{8}\|\xi- h\xi_0\|.
\end{align*}
Therefore
\begin{align}
\alpha_n(\tilde x) = \|\tilde \xi-\tilde h\xi_0\|^{-1/2}
 \le 3\| \xi- h\xi_0\|^{-1/2} .
\label{eq;raining 3}
\end{align}
If $\zeta_{h, \xi}=\xi_0$ then (\ref{eq;raining 4}) follows from (\ref{eq;raining 3}).
If $\zeta_{h, \xi}\neq\xi_0$ (which means either $\zeta_{h, \xi}$ does not exist
or it exists but is  not equal to $\xi_0$), then it follows  from the definition of $\zeta_{h, \xi}$
 that
$ \|\xi- h\xi_0\|^{-1/2}\le  2n \alpha_0(x).
$
Combine this with
(\ref{eq;raining 3}) we get
 (\ref{eq;raining 4}).

We choose  $\xi_0\in \Z^2\setminus\{0\}$ with
 $\alpha_0(\tilde x)=\|\tilde h \xi_0\|^{-1/2}$.
 Note that by assumption $\|h\tilde h^{-1}\|=\|u(e^{2mt}(s-\tilde s))\|\le 3$. So
\begin{align}
\alpha_0(x)\ge \|h\xi_0\| ^{-1/2}=\|h \tilde h^{-1}\cdot\tilde h\xi_0\|^{-1/2}
\label{eq;change room}
\ge  \frac{1}{2}\| \tilde h\xi_0\|^{-1/2} = \frac{1}{2}\alpha(\tilde x).
\end{align}
Therefore \begin{align*}
\beta_n(\tilde x)&= \alpha_n(\tilde x)+8ne^t\alpha_0(\tilde x) \\
& \le 3\alpha_n(x)+6n\alpha_0(x)+16ne^t
 \alpha_0(x)&& \mbox{by (\ref{eq;raining 4}), (\ref{eq;change room})}\\
& \le 3\beta_n(x).
\end{align*}
This completes the proof.
\end{proof}

\begin{proof}[\bf Proof of (\rmnum{5}) in Lemma \ref{lem;key}]
It is easy to see that
\begin{align}\label{eq;txunion}
\alpha_0(a_\tau x)\le e^{t/2} \alpha_0(x).
\end{align}
Let $a_{mt}u_\varphi( s)=(h,\xi)$ and $x=a_{mt}u_\varphi(s)\Gamma$.
If $\zeta_{a_\tau h, a_\tau \xi}$ does not exist
then  (\ref{eq;key 5})   follows from (\ref{eq;txunion}).
Otherwise  take $ \zeta_{a_\tau h, a_\tau \xi}=\xi_0$.
We have
\begin{align}
\begin{split}\label{eq;txunion1}
\alpha_n(a_{\tau}x)&=\|a_\tau\xi-a_\tau h\xi_0 \|^{-1/2}  \le  e^{t/2}\|\xi-h\xi_0\|^{-1/2} \\
&\le \left \{
\begin{array}{ll}
2ne^{t/2}\alpha_0(x)&  \text{if }\xi_0\neq \zeta_{h, \xi} \\
e^{t/2}\alpha_n(x)  &\text{if }\xi_0 = \zeta_{h, \xi}
\end{array}
\right. \\
&\le  e^{ t/2} 2n\alpha_0(x)+e^{ t/2}\alpha_n(x).
\end{split}
\end{align}
By (\ref{eq;txunion}) and  (\ref{eq;txunion1})
we have
\begin{align*}
\beta_n(a_\tau x)
 \le   e^{t/2} 2n\alpha_0(x)+e^{t/2} \alpha_n(x)+e^{t/2}8ne^t\alpha_0( x)  \le e^{t} \beta_n(x).
\end{align*}
\end{proof}

\begin{proof}[\bf Proof of (\rmnum{1}) in Lemma \ref{lem;key}]

We will  show that for any $t, b>0$ with
\begin{align}
t& \ge   30 +\log{2\sigma^{-1}}+\log |I|^{-1}-{\log \varepsilon} \label{eq;t}\\
\quad b&\ge 32ne^{2t} +e^t\sigma^{-1/2}
\label{eq;b}
\end{align}
 property (\rmnum{1}) of
Lemma \ref{lem;key} holds.
We note that   (\rmnum{2}),  (\rmnum{3}), (\rmnum{4})  and (\rmnum{5})  of Lemma \ref{lem;key} hold according to the upper bound of $t$. So together with (\rmnum{1}) the proof of Lemma \ref{lem;key} is complete.

In the remaining proof $t\ge 30$ will be used several times without being explicitly mentioned.
The last term of the lower bound of $t$ guarantees that $s+e^{-2mt}\tilde s\in I_{\varepsilon}$
 for every $s\in I, \tilde s\in [-1, 1]$ and $m\in \N$.  The third term of the upper bound
 of $t$ implies that for $m\in \N$ we must have $|J|\ge e^{-2mt}$ according to the requirement
 of $J$.  The second term of the upper bound of $t$ shows that Lemma \ref{lem;readable} holds.

In case where $m=0$ we must have $J=I$ since  otherwise   $|J|\ge 1>|I|$ which contradicts
the assumption $J\subset I$.
For every $s\in I$ one has
\begin{align}
\begin{split}
\beta_n({a_tu_\varphi(s)\Gamma})&\le e^{t}\beta_n({u_\varphi(s)\Gamma})\qquad\qquad\qquad\qquad\qquad \mbox{by (\rmnum{5})}\label{eq;austin}\\
& =e^t \alpha_n(u_\varphi(s)\Gamma)+8ne^{2t}\alpha_0(u_\varphi(s)\Gamma).
\end{split}
\end{align}
It follows directly from the definition (see (\ref{eq;alpha 0}))  that $\alpha_0(u_\varphi(s)\Gamma)=1$.
Also using (\ref{eq;v 1 2}) and the definition (see  (\ref{eq;alpha-n})) one has
$\alpha_n(u_\varphi(s)\Gamma)\le \sigma^{-1/2}$.
Therefore continuing (\ref{eq;austin}) we have
\[
\beta_n({a_tu_\varphi(s)\Gamma})\le e^t\sigma^{-1/2}+8ne^{2t},
\]
which in view of (\ref{eq;b}) implies (\rmnum{1}) of Lemma \ref{lem;key}.

In the rest of the proof we assume
 $m\ge 1$ and
show that  (\ref{eq;linear ineq}) holds for the interval $J\subset I$
 with $|J|\ge e^{-2mt}$.
It follows from the lower bound of $|J|$ that
\begin{align}
\label{eq;double}
\int_J \beta_n(a_{(m+1)t}u_\varphi(s)\Gamma)\,
ds \le \int_J\int_{-1}^1 \beta_n(a_{(m+1)t}u_\varphi(s+e^{-2mt}
\tilde s )\Gamma)\, d \tilde s  ds.
\end{align}
Recall that $\beta_n$ consists two summands $\alpha_0$ and $\alpha_n$ and
the former is well understood by Lemma \ref{lem;alpha 0}. More precisely, since $t \ge 30$ and
\[
\alpha_0(a_{(m+1)t}u_\varphi(s+e^{-2mt} \tilde s)\Gamma)=\alpha_0(a_tu(\tilde s) a_{mt}u_\varphi(s)\Gamma),
\]
Lemma \ref{lem;alpha 0} provides
\begin{align}\label{eq;alpha 0 final}
\int_{-1}^1\alpha_0(a_{(m+1)t}u_\varphi(s+e^{-2mt} \tilde s)\Gamma)\, ds
\le \frac{1}{4}\alpha_0( a_{mt}u_\varphi(s)\Gamma)  +2e^t.
\end{align}

Our strategy for the term $\alpha_n$ is the following: for fixed $s$ we will give an upper bound of  the integral
\begin{align}\label{eq;omega n}
\omega(s):=\int_{-1}^1 \alpha_n(a_{(m+1)t}u_\varphi(s+e^{-2mt}\tilde s)\Gamma)\, d\tilde s
\end{align}
using the data in the definition of $\beta_n(a_{mt}u_{\varphi}(s)\Gamma)$.
This will be completed in (\ref{eq;omega final}).
To simplify the notation we will not specify the dependence on $s$ and
set
\begin{align}
&x  = a_{mt}u(s, \varphi(s))\Gamma, \quad
(h , \xi)= a_{mt}u(s, \varphi(s)), \notag\\
&(h( \tilde s ), \xi( \tilde s )) = a_{(m+1)t}u_\varphi(s+e^{-2mt} \tilde s ).
\notag
\end{align}
We  use $\zeta(\tilde{s})$ to denote
 $\zeta_{h( \tilde s ), \xi( \tilde s ) }$ for simplicity.
Let     $\xi_0=(i/n, j/n)^{\mathrm{tr}}\in \frac{1}{n}\mathbb Z^2$
and let the  notation be as in  (\ref{eq;v 1 2}) and (\ref{eq;v i tilde}).  We set
\begin{align}
\begin{split}
\left(
\begin{array}{c}
w_1( \tilde s ) \\
w_2( \tilde s )
\end{array}
\right):&= \xi( \tilde s )-h( \tilde s ) \xi_0 =
\left(
\begin{array}{c}
v_1( s+\tilde s e^{-2mt} ) \\
v_2(s+ \tilde s e^{-2mt} )
\end{array}
\right)
 \\
&=
\left(
\begin{array}{c}
  e^{(m+1)t}(\varphi(s+e^{-2mt}\tilde s)-\varphi(s))- e^t
  e^{-mt}\frac{j}{n}\tilde s+e^tv_1\\
e^{-t}v_2
\end{array}
\right),\label{eq;w 1 tilde}
\end{split}
\end{align}
where $v_1=e^{mt}(\varphi(s)-\frac{i}{n}-\frac{js}{n})$ and $v_2=e^{-mt}\frac{j}{n}$
as in (\ref{eq;v 1 2}).
By mean value theorem there exists $\hat s\in [0, 1]$ such that
\begin{align}\label{eq;reduce w 1}
 w_1(\tilde s)=e^t\left [e^{-mt}\left(\varphi'(\hat s)-\frac{j}{n}\right) \tilde s +v_1\right].
\end{align}
If $|j/n|>M_1$ which is always satisfied    below by Lemma \ref{lem;readable}
(recall that $m\ge 1$ and $t\ge \log 2\sigma^{-1}$),
then
we have $|\varphi'(\hat s)|<M_1/2<|j/2n|$, and hence $|j/2n|<|\varphi'(\hat s)-j/n|<2|j/n|$. It follows that
\begin{align}\label{eq;clarify}
|v_2|/2= {e^{-mt}|j|}/{(2n)}\le  |e^{-mt}(\varphi'(\hat s)-{j}/{n}) |
\end{align}
which will be used several times below for  estimating $w_1(\tilde s)$.
To estimate (\ref{eq;omega n}) we consider two cases of $\xi_0$ ({\bf Case A} and {\bf Case B} below).

{\bf Case A}:
Assume $\xi_0=\zeta(\tilde s)$ where $\tilde s\in [-1, 1]$ but $\xi_0\neq \zeta_{h, \xi}$ which include the case where $\zeta_{h, \xi}$ does not exist.
  We will show
that
\begin{align}\label{eq;setminus}
{\|\xi(\tilde s)-h(\tilde s)\xi_0\|^{-1/2}}\le ne^t\alpha_0(x).
\end{align}
If $|v_1|\ge 4|v_2|$,  by (\ref{eq;reduce w 1}) and  (\ref{eq;clarify}) we have
\begin{align*}
\|\xi(\tilde s)-h(\tilde s)\xi_0\| \ge |w_1(\tilde s)| \ge  e^t (|v_1|-2|v_2|)
\ge e^t\|\xi-h\xi_0\|/2\sqrt 2\ge   \frac{1}{n}\alpha_0^{-2}(x),
\end{align*}
from which (\ref{eq;setminus}) follows.
If $|v_1|<4 |v_2|$,
we have
\begin{align*}
\|\xi(\tilde s)-h(\tilde s)\xi_0\| \ge |w_2(\tilde s)| \ge  e^{-t} |v_2|
\ge e^{-t}\|\xi-h\xi_0\|/4\sqrt 2\ge  \frac{1}{ne^{2t}}\alpha_0^{-2}(x),
\end{align*}
from which (\ref{eq;setminus}) follows.

{\bf Case B}:
Assume  $\xi_0=\zeta_{h, \xi}$.
 We will show that
\begin{align}\label{eq;sum j 2}
\int_{-1}^1\|\xi(\tilde s)-h(\tilde s)\xi_0\|^{-1/2}\, d\tilde s< \frac{1}{4}\|\xi-h\xi_0\|^{-1/2}.
\end{align}
Since
$
\| \xi( \tilde s )-h( \tilde s ) \xi_0\|\ge |w_1( \tilde s )|,
$
it suffices to estimate the  lower bound of
$w_1(\tilde s)$.
Here we consider two subcases
({\bf Case B1} and {\bf Case B2} below).

{\bf Case B1:} $4|v_2|\le|v_1|$. Then, in view of \eqref{eq;reduce w 1} and  \eqref{eq;clarify}, for every $\tilde s\in[-1,1]$ we have
\[
 |w_1(\tilde s)|\ge e^t (|v_1|- 2|v_2|)\ge \frac{e^t}{2}|v_1|\ge  \frac{e^t}{2\sqrt 2}\sqrt{v_1^2+v_2^2}> {64}\|\xi-h\xi_0\|.
 \]
Therefore, \eqref{eq;sum j 2} holds.

{\bf Case B2:} $4|v_2|>|v_1|$. In this case we use
Lemma \ref{lem;k m lemma} for the $C^1$-function $\kappa=w_1$.
For this purpose we need upper and lower bound for $|w_1|$ and $|w_1'|$.
 By (\ref{eq;w 1 tilde}) one has
 \begin{align}
 \label{eq;w 1 prime}
 w_1'(\tilde s)=e^{(1-m)t}(\varphi'(s+e^{-2mt}\tilde s)-j/n)
 \end{align}
 whose sign is the same as $j/n$.
 By \eqref{eq;clarify}  we have
 \[
 |w'_1(\tilde s)|\le  2e^t |v_2|\le 2e^t\sqrt{v_1^2+v_2^2}=  2e^t \|\xi-h\xi_0\|
 \]
 and
 \[
 | w_1'(\tilde s)|\ge \frac{e^t}{2}|v_2|\ge \frac{e^t}{10}\sqrt{v_1^2+v_2^2}=\frac{e^t}{10} \|\xi-h\xi_0\|.
 \]
In view of  (\ref{eq;reduce w 1}) and  \eqref{eq;clarify}, we also have
\[
|w_1(\tilde s)|\le e^t(|v_1|+2|v_2|)\leq 3e^t\sqrt{v_1^2+v_2^2}= 3e^t\|\xi-h\xi_0\|.
\]
Moreover,  (\ref{eq;w 1 prime}) implies
\begin{align}\label{eq;mud}
\sup_{\tilde s\in [-1, 1]}|w_1(\tilde s)| \ge\inf_{\tilde s\in [-1, 1]}|w'_1(\tilde s)|\ge  \frac{e^t}{10}\|\xi -h\xi_0\|.
\end{align}
Therefore, applying
Lemma \ref{lem;k m lemma}  to $\kappa=w_1$ with
  $A_1=3e^t \|\xi-h\xi_0\|, A_2=\frac{e^t}{10}\|\xi-h\xi_0\|$ (here we use the notation of (\ref{eq;k m lemma})),
we have
\begin{align*}
\big|\{\tilde s\in [-1, 1]: |w_1(\tilde s)|\le\vep\}\big|\le
\frac{24 \cdot 30\varepsilon}{\sup_{\tilde s\in [-1, 1]}|w_1(\tilde s)|}
\leq \frac{7200\varepsilon}{e^t\|\xi-h\xi_0\|},
\end{align*}
where in the last inequality we use \eqref{eq;mud}.
Therefore, by Lemma \ref{lem;bounded} and $t\ge 30$, we get
\[
\int_{-1}^1\|\xi(\tilde s)-h(\tilde s)\xi_0\|^{-1/2}\le\int_{-1}^1\frac{ds}{|w_1(\tilde s)|^{1/2}}\le \frac{600}{e^{t/2}\|\xi-h\xi_0\|^{1/2}}< \frac{1}{4\|\xi-h\xi_0\|^{1/2}},
\]
which gives \eqref{eq;sum j 2}.

Now we are ready to estimate $\omega(s)$ in (\ref{eq;omega n}). Assume $\zeta_{h, \xi}$ exists for the moment.
Let
\begin{align*}
J_1&=\{\tilde{s}\in [-1, 1]: \zeta(\tilde s)\mbox{ exists and }\zeta(\tilde s)\neq\zeta_{h, \xi}\}, \\
J_2&=\{\tilde{s}\in [-1, 1]: \zeta(\tilde s)\mbox{ exists and }\zeta(\tilde s)=\zeta_{h, \xi}\}.
\end{align*}
By the definition of $\omega$
 (see (\ref{eq;omega n})) and $\alpha_n$ (see (\ref{eq;alpha-n})),   we have
 \begin{align}\label{eq;omega final}
 \begin{split}
\omega(s)
 &\le 2+\int_{J_1}\|\xi(\tilde s)-h(\tilde s)\zeta(\tilde s)\| ^{-1/2} \,d \tilde s+
\int_{J_2}\|\xi(\tilde s)-h(\tilde s)\zeta_{h, \xi}\| ^{-1/2} \,d \tilde s \\
 &< 2+2ne^t \alpha_0(x)+\int_{-1}^1\|\xi(\tilde s)-h(\tilde s)\zeta_{h, \xi}\| ^{-1/2} \,d \tilde s
\qquad\qquad\quad\mbox{ by (\ref{eq;setminus})} \\
&<   2+2ne^t \alpha_0(x)+\frac{1}{4}\|\xi-h\zeta_{h, \xi}\|^{-1/2} \qquad\qquad\qquad\qquad\qquad\mbox{by (\ref{eq;sum j 2})}
\\
&=   2+2ne^t \alpha_0(x)+\frac{1}{4}\alpha_n(x).
\end{split}
 \end{align}
 It is not hard to see using \eqref{eq;setminus} above that the final estimate of (\ref{eq;omega final}) still holds even though
 $\zeta_{h, \xi}$ does not exist.
Therefore,
\begin{align*}
\int_{J}&\beta_n(a_{(m+1)t}u_\varphi(s)\Gamma)\, ds\\
 &=\int_J\int_{-1}^1(\alpha_n+8ne^t\alpha_0)(a_{(m+1)t}u_\varphi(s+e^{-2mt}
\tilde s ))\,d  \tilde s\,d  s
 && \mbox{by (\ref{eq;double})}\\
 &< \int_J  \big(\omega( s)+2ne^t \alpha_0(a_{mt}u_\varphi(s)\Gamma)+16ne^{2t}\big)\,d  s
&& \mbox{by (\ref{eq;alpha 0 final}),  (\ref{eq;omega n})}\\
&< \int_{J}\big(\frac{1}{4}\alpha_n+4ne^t\alpha_0\big)
(a_{mt}u_\varphi(s)\Gamma)\,ds+(16ne^{2t}+2)|J| && \mbox{by (\ref{eq;omega final})}\\
&\le  \frac{1}{2}\int_{J}\beta_n(a_{mt}u_\varphi(s)\Gamma)\, ds+b|J|
&&\mbox{by (\ref{eq;b})} ,
\end{align*}
which completes the proof.
\end{proof}

\subsection{Proof of non-concentration on singular subspaces}\label{sec;singular}
In this section we provide the proof of the crucial Proposition~\ref{prop;main}. We first describe a general strategy to derive quantitative results from an inequality
similar to  (\ref{eq;linear ineq}). We believe that it is of independent interest and can be used
in other places.

Let $Y$ be a locally compact, Hausdorff and second countable topological space.
Let $\phi: I_0\to Y$ be a continuous map  from a compact interval
$I_0$ with positive length.
Let $f: Y\to Y$ be a Borel measurable map.
Let $\mathcal I_0=\{I_0\}$ and for every $m\in\N$
 let $\mathcal I_m$   be a partition of $I_0$ into at
most countably many subintervals with positive length.
   We assume that
  $\{\mathcal I_m\}_{m\in \Z_{\ge 0}}$ is
 a filtration, i.e.~$\mathcal I_{m+1}$ is a refinement of $\mathcal I_m$.
   We use
 $I_m(s)$  where  $s\in I_0$ to denote the unique interval in
 $\mathcal I_m$ that contains $s$.
Let $\beta: Y\to [1, \infty]$ be a measurable function  such that
 there exists $0<a<1$ and $b>0$ with the property that  for any $m\in\Z_{\geq 0}$
 and any
 atom $I_m$ of $\mathcal I_m$
 \begin{align}\label{eq;abstract}
 \int _{I_m}\beta(f^{m+1}\phi(s))\, ds<
a \int_{I_m}\beta (f^m\phi(s))\, ds+b|I_m|.
 \end{align}
 Note that (\ref{eq;abstract}) implies that the same inequality  holds for any $\mathcal I_m$ measurable subset  $I_m$.
 We assume that  $\beta$ satisfies the following Lipschitz properties
for some constant $M\ge 1$:
 for any $ \tilde s\in I_0, m\in \mathbb Z_{\ge 0}$ and  $ s\in I_m(\tilde s)$  one has
 \begin{align}
 \label{eq;lipschitz}
 \beta(f^m\phi( s))&\le     M \beta(f^m\phi(\tilde  s)), \\
\label{eq;lip f}
\beta(f^{m+1}\phi(\tilde s))&\le M  \beta (f^m\phi(\tilde s)).
\end{align}
We also assume that $\beta$ is bounded
on $\phi(I_0)$, i.e. for sufficiently large positive number $ l $
\begin{align}\label{eq;condition l}
\phi(s)\in Y_ l :=\{y\in Y: \beta(y)\le  l \}\quad\text{for all}\quad s\in I_0.
\end{align}
 Let $ l $ be  a positive number such that (\ref{eq;condition l}) holds and
 \begin{align}\label{eq;condition c}
 c:=\Big(a+\frac{b}{ l }\Big)<1.
 \end{align}
 Let $\omega_0:I_0\to \mathbb Z$ be the constant function
 $\omega _0(s)\equiv 0$.
 Next for every $r\in \mathbb N$ we  define the $r$-th return time to $Y_ l $
 inductively by
 \begin{align}\label{eq;omega r s}
 \omega_r(s):=\inf\{m>\omega_{r-1}(s): f^m\phi(\tilde s)\in Y_{ l }\text{ for some  }\tilde s\in I_{m}(s)\}.
 \end{align}
 In the degenerate case where  $\omega_{r-1}(s)=\infty$,  we define $\omega_r(s)=\infty$.
 The reason we use this unnatural-looking definition is that
 the $r$-th return  time  function $\omega_r: I_0\to \mathbb Z_{\ge 0}\cup \{\infty\}$
 is locally constant with respect to the filtration $\{\mathcal I_m\}_{m\in \Z_{\ge 0}}$.  That is, if $\omega_r(s)=m<\infty$, then $\omega_r(\tilde s)=m$ for
 all $\tilde s\in I_m(s)$. This kind of flattening (making a complicated function locally constant) allows us to calculate certain conditional expectations easily.

 \begin{lem}\label{lem;exponential}
  There exist $Q>0$ and $\vartheta>0$ such that for any integer
 $k\ge Q$, $r\in \mathbb Z_{\ge 0}$ and
 $\tilde s\in I_0$ with $m=\omega_r(\tilde s)<\infty$ the measure of the set
 \begin{align}\label{eq;J r k}
 J_{r,k}(\tilde s):=\big\{s\in I_{m}(\tilde s):\omega_{r+1}(s)-\omega_r(\tilde s)\ge k\big\}
 \end{align}
 is less than or equal to $e^{-\vartheta k}|I_{m}(\tilde s)|$.
 \end{lem}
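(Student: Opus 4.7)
The plan is to establish a two-variable recursion for the quantities $A_k := \int_{J_{r,k}(\tilde{s})} g_{k-1}(s)\,ds$ and $B_k := |J_{r,k}(\tilde{s})|$, where $g_j(s) := \beta(f^{m+j}\phi(s))$. The starting point is the structural observation that the event $\{s : \omega_{r+1}(s) \ge m+k\}$ depends only on the atoms $I_j(s)$ for $j \le m+k-1$, and since $\mathcal{I}_{m+k-1}$ refines every coarser $\mathcal{I}_j$, the set $J_{r,k}(\tilde{s})$ is a union of atoms of $\mathcal{I}_{m+k-1}$. A second, crucial observation is that for $k \ge 2$ and every $s \in J_{r,k}(\tilde{s})$ the defining condition applied with $j = m+k-1$ (and $s' = s$) forces $g_{k-1}(s) > l$; in particular $A_k \ge l B_k$.

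Applying (\ref{eq;abstract}) with $m$ replaced by $m+k-1$ to each atom of $\mathcal{I}_{m+k-1}$ contained in $J_{r,k}(\tilde{s})$ and summing gives $\int_{J_{r,k}(\tilde{s})} g_k(s)\,ds \le a A_k + b B_k$. Since $J_{r,k+1}(\tilde{s}) \subseteq J_{r,k}(\tilde{s})$ and, by the same reasoning, $g_k(s) > l$ for every $s \in J_{r,k+1}(\tilde{s})$, one obtains the simultaneous estimates
\begin{equation*}
A_{k+1} \le a A_k + b B_k \quad\text{and}\quad l B_{k+1} \le A_{k+1}.
\end{equation*}
For $k \ge 2$ one may insert $B_k \le A_k / l$ into the first inequality to get the clean one-step recursion $A_{k+1} \le (a + b/l) A_k = c A_k$ with $c \in (0,1)$ by hypothesis (\ref{eq;condition c}).

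It remains to bound the initial value $A_2$. Since $m = \omega_r(\tilde{s})$, there exists $s_0 \in I_m(\tilde{s})$ with $f^m \phi(s_0) \in Y_l$, i.e.\ $g_0(s_0) \le l$; the Lipschitz assumption (\ref{eq;lipschitz}) then yields $g_0(s) \le M l$ for every $s \in I_m(\tilde{s})$, hence $A_1 \le M l\, |I_m(\tilde{s})|$ and $A_2 \le a A_1 + b B_1 \le (aMl + b)\, |I_m(\tilde{s})|$. Iterating the recursion produces $A_k \le c^{k-2}(aMl + b)\, |I_m(\tilde{s})|$ for $k \ge 2$, whence
\begin{equation*}
B_k \le A_k / l \le c^{k-2}\bigl(aM + b/l\bigr) |I_m(\tilde{s})|.
\end{equation*}
Choosing any $\vartheta \in (0, -\log c)$ and $Q$ so large that $c^{k-2}(aM + b/l) \le e^{-\vartheta k}$ for every $k \ge Q$ then delivers the claimed bound.

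The main obstacle is identifying the right closing identity: the naive inequality $A_{k+1} \le a A_k + b B_k$ by itself stabilizes at a positive level proportional to $b\, |I_m(\tilde{s})|/(1-a)$ and yields no decay of $B_k$. The decisive point is that on $J_{r,k}$ (for $k \ge 2$) the pointwise bound $g_{k-1} > l$ lets one absorb the term $b B_k$ into $a A_k$ with combined rate $c = a + b/l < 1$; this is precisely the role played by hypothesis (\ref{eq;condition c}), without which no exponential decay could be extracted from (\ref{eq;abstract}) alone.
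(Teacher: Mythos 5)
Your proof is correct and follows essentially the same route as the paper's: both establish the one-step contraction $A_{k+1}\le cA_k$ by combining the integral inequality \eqref{eq;abstract} on the atoms of $\mathcal I_{m+k-1}$ with the pointwise lower bound $\beta>l$ on $J_{r,k}$, and both seed the recursion via the Lipschitz property \eqref{eq;lipschitz} at the return point $s_0$. The only differences are an index shift in the definition of $A_k$ and your slightly more careful treatment of the base case (starting the contraction at $k=2$, where the bound $B_k\le A_k/l$ is actually available).
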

 \begin{rem}\label{rem;exponential july}
 The above lemma says that  for $k$ sufficiently  large the probability of
 $s\in I_m(\tilde s)$ with the next return time to $Y_{ l }$ greater than or equal to $k$
 decays exponentially, i.e.~less than or equal to $e^{-\vartheta k}$.
Note that $\omega_0(\tilde s)\equiv 0$, so Lemma \ref{lem;exponential} implies
that for a.e.~$s\in I_0$ one has $\omega_r(s)<\infty$ for all $r\in \N$.
\end{rem}

 \begin{proof}
 We will show that the lemma holds  for \[
 \vartheta=-\frac{1}{2}\log c\quad\text{and} \quad Q= \frac{2\log c-2\log M}{\log c}.
 \]
Let $r,\tilde s$  and hence $m=\omega_r(\tilde s)$ be fixed.  For simplicity, set
$J_k={J_{r,k}}(\tilde s)$ and let
\[
 A_k:=\int_{J_{k+1}}\beta(f^{m+k}\phi(s))\, ds
\le  \int_{J_{k}}\beta(f^{m+k}\phi(s))\, ds.
\]
Note that the complement of  $J_k$ and hence $J_k$ itself is a disjoint union of intervals in $\mathcal I_{m+k-1}$,
so by  (\ref{eq;abstract}) we have
\begin{align}\label{eq;A k}
A_k\le aA_{k-1}+b|J_{k}|.
\end{align}
For all $s\in J_k$ we have $f^{m+k-1}\phi(s)\not \in Y_{ l }$, hence
 $\beta(f^{m+k-1}\phi(s))>  l $.
Therefore
\begin{align}\label{eq;J k}
|J_k|\le \frac{1}{ l }\int_{J_k}\beta(f^{m+k-1}\phi(s))\, ds=\frac{A_{k-1}}{ l }.
\end{align}
In view of (\ref{eq;A k}) and (\ref{eq;J k}), it follows that
\begin{align*}
A_k\le \Big(a+\frac{b}{ l }\Big)A_{k-1}\le cA_{k-1},
\end{align*}
and hence
\begin{align}\label{eq;A k 0}
A_k\le  c^{k}A_0.
\end{align}
According to (\ref{eq;omega r s}), there is $s_0\in I_m(\tilde s)$ with $f^m\phi(s_0)\in Y_{ l }$.
Since $J_1=I_m(\tilde s)$, by  Lipschitz property (\ref{eq;lipschitz}), we have
\[
A_0=\int_{I_m(\tilde s)}\beta(f^m\phi(s))\, ds\le
M \int_{I_m(\tilde s)}\beta(f^m\phi(s_0))\, ds\le   M  l  |I_m(\tilde s)|.
\]
Therefore,  by (\ref{eq;J k}) and (\ref{eq;A k 0}), we have
\[
|J_k|\le \frac{A_{k-1}}{ l }\le c^{k-1}    M |I_m(\tilde s)|.
\]
It is easy to check that the conclusion holds for  $k\ge Q$.
 \end{proof}

 For a measurable subset $K\subset Y$ and $N\in \mathbb N$ the proportion of
 the trajectory $ \{f^m\phi(s): 0\le m\le N-1\}$
 in $K$ is defined  by
 \[
 \mathcal D_K^N(s):=\frac{1}{N}\#\{0\le m\le N-1: f^m\phi(s)\in K\}.
 \]

 \begin{lem}\label{lem;discrete}
 Let $f:Y\to Y$, $\beta: Y\to [1, \infty]$ be measurable maps  such that
 (\ref{eq;abstract}), (\ref{eq;lipschitz}), (\ref{eq;lip f})
 hold
 for a filtration $\{\mathcal I_m\}_{m\ge 0}$ and $\beta$ is bounded on $\phi(I_0)$.
 Then
for every $0<\varepsilon_0<1$
there exist $0< l _0<\infty$ and $0<c_0<1$ such that
for   $K_0=Y_{ l _0}$  and  $N\in \mathbb N$
\begin{equation}\label{eq;gotoUK}
\left| \left\{
s\in I_0:\mathcal D_{K_0}^N(s)\le 1-\varepsilon_0\}
\right\}
\right|\le c_0^N|I_0|.
\end{equation}
\end{lem}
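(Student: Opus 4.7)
The plan is to take $K_0:=Y_{l_0}$ for $l_0:=M^{k+1}l$, where $l$ is chosen large enough that \eqref{eq;condition l} and \eqref{eq;condition c} hold and $k$ is a free parameter to be tuned large at the end. With $l$ fixed, Lemma \ref{lem;exponential} supplies constants $Q,\vartheta>0$ governing the exponential tail of the return-time gaps $X_r(s):=\omega_{r+1}(s)-\omega_r(s)$. I will fix $\lambda:=\vartheta/2\in(0,\vartheta)$ and set
\[
A_k\;:=\;e^{-\vartheta k}\,\frac{e^{\lambda-\vartheta}}{1-e^{\lambda-\vartheta}},
\]
which tends to $0$ as $k\to\infty$; then I choose $k\ge Q$ large enough that $c_0:=(1+A_k)\,e^{-\lambda\varepsilon_0}<1$.

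The first step (\emph{Lipschitz propagation}) shows that $K_0$ contains the orbit for a full window of length $k+1$ after each return to $Y_l$. Indeed, the definition of $\omega_r(s)$ supplies $\tilde s\in I_{\omega_r(s)}(s)$ with $\beta(f^{\omega_r(s)}\phi(\tilde s))\le l$; then \eqref{eq;lipschitz} gives $\beta(f^{\omega_r(s)}\phi(s))\le Ml$, and iterating \eqref{eq;lip f} yields $\beta(f^{\omega_r(s)+j}\phi(s))\le M^{j+1}l\le l_0$ for $0\le j\le k$. Setting $B(s):=\#\{0\le m<N:f^m\phi(s)\notin K_0\}$, and using that $\omega_r\ge r$ forces at most $N$ returns to occur in $[0,N)$ together with Remark~\ref{rem;exponential july} (so that all $\omega_r$ are finite almost everywhere), one concludes
\[
B(s)\;\le\;\sum_{r=0}^{N-1}\bigl(X_r(s)-k\bigr)_{+}\quad\text{for a.e.\ }s\in I_0.
\]

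The second step is a moment generating function estimate. Note that $\{\mathcal D_{K_0}^N\le 1-\varepsilon_0\}=\{B\ge\varepsilon_0 N\}$, so by Markov's inequality
\[
\bigl|\{s\in I_0:\mathcal D_{K_0}^N(s)\le 1-\varepsilon_0\}\bigr|\;\le\;e^{-\lambda\varepsilon_0 N}\int_{I_0}\prod_{r=0}^{N-1}e^{\lambda(X_r(s)-k)_{+}}\,ds,
\]
and it suffices to show by induction on $N$ that the remaining integral is at most $(1+A_k)^N|I_0|$. For the inductive step, the key observation is that since each $\omega_r$ with $r\le N-1$ is locally constant with respect to $\mathcal I_{\omega_r(s)}$ and the filtration is nested, the partial product $F_{N-1}(s):=\prod_{r=0}^{N-2}e^{\lambda(X_r-k)_{+}}$ is constant on each interval of the form $J=I_{\omega_{N-1}(\tilde s)}(\tilde s)$, and these intervals partition $I_0$ up to a null set. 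On each such $J$, splitting by the values of $X_{N-1}$ and using the tail bound of Lemma \ref{lem;exponential} (valid since $k\ge Q$ and $\lambda<\vartheta$) gives
\[
\int_J e^{\lambda(X_{N-1}(s)-k)_{+}}\,ds\;\le\;|J|+\sum_{j>k}e^{\lambda(j-k)}e^{-\vartheta j}|J|\;\le\;(1+A_k)|J|.
\]
Summing over all such $J$ and using the constancy of $F_{N-1}$ on each $J$ yields $\int_{I_0}F_N\,ds\le(1+A_k)\int_{I_0}F_{N-1}\,ds$, which completes the induction and establishes the required bound $c_0^N|I_0|$.

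The main obstacle is the clean conditioning in the inductive step: the reason one may ``peel off'' the last factor is that the nested filtration $\{\mathcal I_m\}$ together with the local constancy of every $\omega_r$ forces $F_{N-1}$ to be measurable with respect to the coarser partition into $\mathcal I_{\omega_{N-1}}$-intervals. Once this measurability is verified, Lemma \ref{lem;exponential} applied on each interval of that partition contributes a uniform factor $(1+A_k)$, and the rest is a bookkeeping exercise that combines with the Markov bound to produce the exponential decay rate $c_0$.
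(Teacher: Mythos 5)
Your proof is correct, but it takes a genuinely different route from the paper's. The paper reduces the estimate to an external large-deviations result, \cite[Lemma 3.1]{s}, applied to the truncated gap sums $\sum_{r}\mathbbm 1_{Q_0}(\omega_r-\omega_{r-1})$ with respect to the filtration of $\sigma$-algebras generated by the return intervals; the combinatorial link is that each excursion outside $K_0=Y_{lM^{Q_0}}$ must sit inside a return gap of length at least $Q_0$, so the number of bad times is dominated by the sum of the truncated gaps. You instead prove the large-deviation bound from scratch: the same Lipschitz propagation (with the window length $k+1$ as a free parameter, hence $l_0=M^{k+1}l$) gives the sharper domination $B(s)\le\sum_{r=0}^{N-1}(X_r(s)-k)_+$, and then a Chernoff/exponential-moment argument closes the estimate. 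The one point that needs care — and which you correctly identify and justify — is that the partial product $F_{N-1}$ is constant on each interval $I_{\omega_{N-1}(\tilde s)}(\tilde s)$, because $\omega_r(s)\le\omega_{N-1}(s)$ for $r<N-1$ and the partitions $\mathcal I_m$ are nested; this is exactly the "flattening" property the authors built into the definition \eqref{eq;omega r s} of $\omega_r$, and it makes the conditional application of Lemma~\ref{lem;exponential} on each such interval legitimate (together with Remark~\ref{rem;exponential july} to discard the null set where some $\omega_r=\infty$). The trade-off is transparency versus brevity: your argument is self-contained and makes the dependence of $c_0$ on $\vartheta$, $Q$ and $\varepsilon_0$ explicit, whereas the paper's is shorter but opaque without consulting \cite{s}. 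Both rest on the same two inputs, namely Lemma~\ref{lem;exponential} and the Lipschitz properties \eqref{eq;lipschitz}--\eqref{eq;lip f}.
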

\begin{proof}
We choose $ l >0$ so that  properties (\ref{eq;condition c}) and (\ref{eq;condition l})  hold.
 Let $\mathcal A_0=\{\emptyset, I_0\}$ be the trivial $\sigma$-algebra on $I_0$.
Inductively, let
$\mathcal A_k$ for $k\ge 1$ be the $\sigma$-algebra on $I_0$ generated by  $\mathcal A_i$  for $i<k$ and the sets
\[
\{I_{\omega_k(s)}(s):s\in I_0,  \omega_k(s)<\infty\}.
\]
It follows that $\{\mathcal A_k\}_{k\geq 0}$ is a filtration
of $\sigma$-algebras on $I_0$
and $\omega_k$ is $\mathcal A_k$
measurable.
Let $I_0'$ be the subset of $I_0$ consisting of elements $s$ such that
$\omega_r(s)\neq \infty$ for any $r\in \N$.
As noted in Remark \ref{rem;exponential july} the set $I_0'$ has full measure in
$I_0$.
By Lemma \ref{lem;exponential}, there exist $Q>0$ and $\vartheta>0$
such that for every $k\ge Q, r\in \Z_{\ge 0} $ and   $s\in I_0$ we get the exponential decay for the measure of the set $J_{r, k}(s)$
in (\ref{eq;J r k}).
So all the conditions of
 \cite[Lemma 3.1]{s}
 are satisfied. It follows that there   exist  $Q_0\in \N, 0< c_0<1$ such that for every $N\in \N$ the measure of the  set
\[
J_N=\left\{s\in I_0':\frac{1}{N} \sum_{r=1}^N  \mathbbm 1 _{Q_0} (\omega_{r}(s)-\omega_{r-1}(s))\ge \varepsilon_0
\right \}
\]
is less than or equal to $ c_0^N|I_0|$.
Here  $ \mathbbm 1 _{Q_0}:\mathbb N\to \mathbb Z_{\ge 0}$ is the truncation of the  identity function    defined
by
\begin{equation*}\label{eq;truncation}
 \mathbbm 1 _{Q_0}(k)=\left\{
\begin{array}{cl}
k & \text{ if } k\ge {Q_0} \\
0 & \text{otherwise.}
\end{array}
\right.
\end{equation*}

We claim that the lemma holds for this $c_0$ and  $ l _0= l  M^{Q_0}$.
We now show that
\begin{align}\label{eq;readable 3}
\left\{
s\in I_0':\mathcal D_{K_0}^N(s)\le 1-\varepsilon_0\right\}\subset J_N,
\end{align}
which will complete the proof.

We fix $s\in I_0'$ with $\mathcal D_{K_0}^N(s)\le 1-\varepsilon_0$.
By \eqref{eq;condition l}, $\phi(s)\in Y_ l \subset Y_{ l _0}=K_0$.
Denote by $0<n_1< \cdots<  n_k<N$ the sequence of consecutive times for which $f^{n_j}\phi(s)\not\in K_0$, i.e.
\begin{align}\label{eq;contradict}
\beta(f^{n_j}\phi(s))> M^{Q_0}l.
\end{align}
Since $\mathcal D_{K_0}^N(s)\le 1-\varepsilon_0$, we have $k/N\ge \vep_0$.
To prove (\ref{eq;readable 3}) it suffices to find
a subset $R$ of $\{1,2,  \ldots,N \}$ such that
\begin{align}
&\omega_r(s)-\omega_{r-1}(s)\ge  Q_0\mbox{ for every }r\in R
\label{eq;readable 1}; \\
&\mbox{for every }1\le j\le k \mbox{ there exists }
 r\in R \mbox{ such that } \omega_{r-1}(s)<n_j< \omega_r(s).
 \label{eq;readable 2}
\end{align}
 Let
\begin{align*}
0\le m_1& =\max \{ m<n_1: m=\omega_r(s) \mbox{ for  some } r\ge 0\}, \\
\infty > m'_1& =\min \{ m>n_1: m =\omega_r(s)\mbox{ for  some } r\ge 0\}.
\end{align*}
Then there exists a positive integer $r$ with   $r\le n_1< N$ such that
 $m_1'=\omega_{r}(s)$ and $m_1=\omega_{r-1}(s)$.
We claim that $n_1-m_1\ge Q_0$.
Indeed, otherwise, by \eqref{eq;lip f} and (\ref{eq;lipschitz}) we have (note that $Q_0\in \N$)
\[
\beta(f^{n_1}\phi(s))=\beta(f^{n_1-m_1}f^{m_1}\phi(s))\le M^{n_1-m_1}\beta(f^{m_1}\phi(s))\le M^{Q_0}l,
\]
which contradicts (\ref{eq;contradict}).
Therefore, (\ref{eq;readable 1}) holds for $r_1=r$.
If  $n_k< m_1'$ then $R=\{r_1\}$ satisfies (\ref{eq;readable 2}) and
we are done. Otherwise we choose the smallest $j$ such that
$ n_{j}>m_1'$.
Then we can repeat the construction to find $r_2=r$ with $r_1<r\le n_{j}$ so that
$\omega_{r-1}(s)<n_{j}<\omega_{r}(s)$ and (\ref{eq;readable 1}) holds.
We continue this procedure until
for $r_i=r$ we have
 $n_k<\omega_{r}(s)$.
It follows directly from the construction that $R=\{r_1, \dots, r_i\}$ satisfies (\ref{eq;readable 1})
and (\ref{eq;readable 2}).
\end{proof}

Let $\{\psi_t\}_{t\in\R}$ be a one-parameter flow on $Y$, see \S~\ref{sec;concept}.
Suppose that $f=\psi_{\tau}$ for some $\tau>0$ and  $\beta$ satisfies the following Lipschitz  property stronger than (\ref{eq;lip f}):
\begin{align}\label{eq;continuous lip}
\beta(\psi_t f^m \phi(s) )\le M\beta(f^m\phi(s))\ \text{ for all }t\in [0, \tau], m\in \mathbb Z_{\ge 0} \text{ and } s\in I_0.
\end{align}
For any $K\subset Y$ and $T>0$ denote by  $\mathcal A_K^T: I_0\to [0, 1]$ the function
\[
\mathcal A_K^T(s):=\frac{1}{T}\int_0^T {\mathbbm 1}_K(\psi_t\phi(s))\, dt.
\]

\begin{lem}\label{lem;continuous}
Let $\{\psi_t\}_{t\in\R}$ be a continuous flow on $Y$ and let $f=\psi_\tau$ for some $\tau>0$.
Let $\beta: Y\to [1, \infty]$ be a measurable function such that
(\ref{eq;abstract}), (\ref{eq;lipschitz})  (\ref{eq;continuous lip})  hold for
 a filtration $\{\mathcal I_m\}_{m\geq 0}$ and $\beta$ is bounded on $\phi(I_0)$.
Then for any $\varepsilon>0$ there exist
$0< l _1<\infty$ and
$0< c_1<1$  such that for $ K=Y_{  l _1}$ and every $T>0$ one has
\begin{equation}\label{eq;booktrain}
\left |\{s\in I_0: \mathcal A_{ K}^T(s)\le 1- \varepsilon    \}\right |\le   c_1^T|I_0|.
\end{equation}
\end{lem}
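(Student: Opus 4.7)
The plan is to bootstrap the continuous statement from the discrete Lemma \ref{lem;discrete} by comparing the continuous average $\mathcal A_K^T$ with a discrete average $\mathcal D_{K_0}^N$ along the time-$\tau$ map $f=\psi_\tau$, using the extra Lipschitz property \eqref{eq;continuous lip} to pass from ``in $K_0$ at integer multiples of $\tau$'' to ``in a slightly enlarged $K$ throughout the continuous trajectory''.

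Concretely, given $\varepsilon>0$ I would first apply Lemma~\ref{lem;discrete} with $\varepsilon_0=\varepsilon/2$ to obtain constants $l_0>0$, $c_0\in (0,1)$ and the set $K_0=Y_{l_0}$ satisfying \eqref{eq;gotoUK}. I would then choose a threshold integer $N_0=\lceil 2/\varepsilon\rceil$ (so that $N/(N+1)\ge 1-\varepsilon/2$ for $N\ge N_0$) and set $l_1:=M^{N_0+1}l_0$, $K:=Y_{l_1}\supset K_0$. The key geometric observation, using \eqref{eq;continuous lip} only, is that whenever $f^m\phi(s)\in K_0$ one has $\psi_t\phi(s)\in K$ for every $t\in[m\tau,(m+1)\tau]$, since $\beta(\psi_{t'}f^m\phi(s))\le M\beta(f^m\phi(s))\le Ml_0\le l_1$.

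The argument then splits into two regimes. For \emph{short times} $T\le N_0\tau$, iterating \eqref{eq;continuous lip} gives $\beta(f^m\phi(s))\le M^m l_0\le M^{N_0}l_0$ for every $0\le m\le N_0$ and every $s\in I_0$, and a further application of \eqref{eq;continuous lip} shows $\beta(\psi_t\phi(s))\le M^{N_0+1}l_0=l_1$ for every $t\in[0,T]$, so $\mathcal A_K^T(s)\equiv 1$ and the bad set is empty. For \emph{long times} $T>N_0\tau$, I would let $N:=\lfloor T/\tau\rfloor\ge N_0$ and, using the key observation together with $N\tau\le T<(N+1)\tau$, obtain
\[
\mathcal A_K^T(s)\;\ge\;\frac{\tau N}{T}\mathcal D_{K_0}^N(s)\;\ge\;\frac{N}{N+1}\mathcal D_{K_0}^N(s).
\]
Hence $\mathcal D_{K_0}^N(s)>1-\varepsilon/2$ forces $\mathcal A_K^T(s)>(1-\varepsilon/2)^2\ge 1-\varepsilon$, so the bad set in \eqref{eq;booktrain} is contained in $\{s\in I_0:\mathcal D_{K_0}^N(s)\le 1-\varepsilon_0\}$, which by Lemma~\ref{lem;discrete} has measure at most $c_0^N|I_0|\le c_0^{T/\tau-1}|I_0|$.

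Finally, it remains to convert the exponent $N=\lfloor T/\tau\rfloor$ into a clean exponent linear in $T$. A concrete choice is $c_1:=c_0^{1/(2\tau)}\in(c_0^{1/\tau},1)$: then $c_0^N\le c_1^T$ is equivalent to $N\ge T/(2\tau)$, which holds whenever $T\ge 2\tau$, and in particular whenever $T>N_0\tau$ (since $N_0\ge 2$). Combining this with the trivial bound $0\le c_1^T|I_0|$ in the short-time regime yields \eqref{eq;booktrain} for all $T>0$. The only mildly delicate point is the consistent bookkeeping of the three constants $l_0\to l_1$, $N_0$ and $c_0\to c_1$ so that the short-time ``automatic trapping'' matches with the long-time ``discrete exponential decay''; no deeper obstacle is expected since both \eqref{eq;continuous lip} and Lemma~\ref{lem;discrete} do all the essential work.
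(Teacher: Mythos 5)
Your proof is correct and follows essentially the same route as the paper's: apply Lemma~\ref{lem;discrete} with $\varepsilon_0=\varepsilon/2$, enlarge the level set by a power of $M$ so that \eqref{eq;continuous lip} converts membership of $f^m\phi(s)$ in $K_0$ into trapping of $\psi_t\phi(s)$ in $K$ over $[m\tau,(m+1)\tau]$, treat short times by automatic trapping and long times via $\mathcal D_{K_0}^{\lfloor T/\tau\rfloor}$, and take $c_1=c_0^{1/(2\tau)}$ — exactly the paper's choices up to the cosmetic difference in the threshold ($N_0=\lceil 2/\varepsilon\rceil$ versus the paper's $(1-\tau/T)(1-\varepsilon_0)\ge 1-\varepsilon$). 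The only point to make explicit is that the short-time bound $\beta(f^m\phi(s))\le M^m l_0$ needs $\phi(I_0)\subset Y_{l_0}$, which follows from the initial finiteness \eqref{eq;condition l} after harmlessly enlarging $l_0$ (enlarging $K_0$ only improves \eqref{eq;gotoUK}), as the paper notes.
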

\begin{proof}
By Lemma \ref{lem;discrete} applied to $f:Y\to Y$ and $\varepsilon_0=\varepsilon/2$, we can find
$0<c_0<1$  and  $ l _0>0$ such that
(\ref{eq;gotoUK}) holds for every positive integer $N$ and ${K_0}=Y_{ l _0}$.
By possibly enlarging $ l _0$ we assume that (\ref{eq;condition l}) holds for $l=l_0$.
We fix  $n\in \N$ so that for $T\ge n\tau$ we have
\begin{equation}\label{eq;easyread}
\left(1-\frac{\tau}{T}\right)\left(1-{\varepsilon_0}\right)\ge 1-\varepsilon.
\end{equation}
 We claim that the lemma holds for     $  l _1= l _0M^{n}$ and $ c_1=c_0^{1/2\tau}$.

For $T\le n\tau$ it follows from (\ref{eq;condition l}) and (\ref{eq;continuous lip})
that  (\ref{eq;booktrain}) holds, since the left hand side of it is always zero.
Now assume  $T> n\tau$.
In view of \eqref{eq;continuous lip},  for every  $m\in \mathbb N$,
\[f^m\phi(s)\in  K_0\ \Longrightarrow\ \psi_t\phi(s)\in K \ \text{ for every }
\ t\in [m\tau, (m+1)\tau].\]
Therefore,  in view of  (\ref{eq;easyread}), we have
\[\mathcal A_{ K}^T(s)\le 1-\varepsilon\ \Longrightarrow \mathcal D_{K_0}^{\lfloor T/\tau\rfloor}(w)\le 1-{\varepsilon_0}.\]
It follow that
\[\left |\{s\in I_0: \mathcal A_{ K}^T(s)\le 1- \varepsilon    \}\right |\leq \left |\{s\in I_0: \mathcal D_{K_0}^{\lfloor T/\tau\rfloor}(w)\le 1-{\varepsilon_0}\}\right |\leq c_0^{\lfloor T/\tau\rfloor}|I_0|\leq c_1^T|I_0|.\]
\end{proof}

\begin{proof}[Proof of Proposition \ref{prop;main}]
We will  apply Lemma \ref{lem;continuous} in the case where $Y=X$, $I_0=I$, $\psi_t=a_t$ and $\phi(s)=u_\varphi(s)\Gamma$ as in the statement of Theorem
\ref{thm;equi}.
We fix $t=\tau$ and $\beta=\beta_n$ such that Lemma \ref{lem;key} holds.
We need to check that for this $\beta$ and $f=\psi_\tau$ the assumptions of  Lemma \ref{lem;continuous} hold.  First we construct a filtration $\{\mathcal I_m\}_{m\in \N}$.
Suppose we have already constructed $\mathcal I_{m-1}$. If $e^{-2mt}\le  |I|$, then
we set $\mathcal I_m=\mathcal I_{m-1}$. Otherwise we  divide each interval  $J$ of
$\mathcal I_{m-1}$ consecutively into intervals of length $e^{-2mt}$, except for the
last one which we allow to have length between $e^{-2mt}$ and $2e^{-2mt}$.
It is easy to see that
(\ref{eq;abstract}), (\ref{eq;lipschitz})  and  (\ref{eq;continuous lip}) follow
 from (\rmnum{1}),   (\rmnum{4}) and  (\rmnum{5})   of Lemma \ref{lem;key} respectively.
 The assumption (\ref{eq;sigma})
 in Proposition \ref{prop;main} and (\rmnum{3}) of
 Lemma \ref{lem;key}   imply that $\beta$ is finite on $\phi(I_0)$.

Therefore we can use Lemma \ref{lem;continuous}  to find  $0< l <\infty$ and $\vartheta>0$ so that
(\ref{eq;proportion}) holds for $K=X_{ l }:=\{x\in X: \beta_n(x)\le  l \}$.
Finally we note that
  (\rmnum{2}) and  (\rmnum{3}) of  Lemma~\ref{lem;key}  imply
 $K$ is a compact subset of $X\setminus X[n]$.
\end{proof}

\section{Proof of the Oseledets genericity along curves}\label{sec:Oseledets_proof}
In this section, we prove Theorems~\ref{thm:Oseledets} and \ref{thm:Oseledetsgen}. Throughout
the section, we denote by  $\mu$ the probability affine measure on
an $SL_2( \mathbb{R})$-orbit closure $\mathcal{M}$.
Suppose that $W\subset H_1(M,\R)$ is a symplectic subspace which is invariant for the cocycle $A^{KZ}:SL_2(\R)\times\mathcal{M}\to GL(H_{1}(M,\R))$.
This gives a symplectic $SL_2(\R)$-invariant bundle $\mathcal{W}\to \mathcal{M}$ which is locally constant.
We deal with the restricted cocycle $A^{KZ}_W:SL_2(\R)\times\mathcal{M}\to GL(W)$.
\begin{rem}\label{rem:reduct}
For every $1\leq p\leq 2d$ ($2d$ is the dimension of $W$) we consider the $p$-th exterior power cocycle $A^{p}:SL_2(\R)\times\mathcal{M}\to GL(\bigwedge^p
W)$ of $A^{KZ}_W$.
It is well known (see e.g.\ \cite{Go-Ma} and \cite{r79}) that in order to prove the Oseledets genericity of some
$x\in \mathcal{M}$ with respect to  $(\mathcal{M},\mu,a_t, A_W^{KZ})$ it suffices to show
an easier condition relating to the top Lyapunov exponent of every
exterior power of the Kontsevich-Zorich cocycle. More precisely, it is enough to show that
for every $1\leq p\leq 2d$ we have
\[\lim_{t\to\infty}\frac{1}{t}\|A^p(a_t,x)\|=\lambda_{top}(A^p).\]
Moreover, the top Lyapunov exponent of the $p$-th exterior power cocycle $A^{p}:SL_2(\R)\times\mathcal{M}\to GL(\bigwedge^p
W)$ is equal to the sum of $p$ greatest Lyapunov exponents of $A_W^{KZ}:SL_2(\R)\times\mathcal{M}\to GL(W)$ (see also \cite{Go-Ma} and \cite{r79}).
\end{rem}
Therefore, the proof of Theorem~\ref{thm:Oseledets} reduces to the following result:

\begin{thm}\label{thm:Oseledets_red}
Assume that the top Lyapunov exponent $\lambda_{top}$ of $A^{p}:SL_2(\R)\times\mathcal{M}\to GL(\bigwedge^p
W)$ is less than $1$.
Suppose that $\varphi:I\to\mathcal{M}$
is a $C^1$-curve which is well approximated by horocycles (in the
sense of Definition~\ref{def:well_approx}) and such that for a.e.\
$s\in I$ the element $\varphi(s)\in\mathcal{M}$ is Birkhoff
generic with respect to  $(\mathcal{M},\mu,a_t)$. Then for a.e.\ $s\in I$ we
have
\[\lim_{t\to\infty}\frac{1}{t}\|A^p(a_t,\varphi(s))\|=\lambda_{top}.\]
\end{thm}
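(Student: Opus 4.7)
The proof follows and extends the strategy of Chaika--Eskin \cite{Es-Ch} for circles $\{r_\theta x_0\}$ to general curves well-approximated by horocycles in the sense of Definition~\ref{def:well_approx}. Writing $\lambda:=\lambda_{top}$, the goal is to show, for a.e.\ $s\in I$,
\[ \lim_{T\to\infty}\frac{1}{T}\log\|A^p(a_T,\varphi(s))\|=\lambda. \]
The argument splits into an upper bound (routine) and a lower bound (the main content).

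\textbf{Upper bound.} By Kingman's subadditive ergodic theorem applied to the subadditive cocycle $t\mapsto \log\|A^p(a_t,\cdot)\|$, one has $\lambda=\inf_{t>0}\tfrac{1}{t}\int_{\mathcal{M}}\log\|A^p(a_t,x)\|\,d\mu(x)$. Given $\epsilon>0$, fix $t_0$ so that the integral is at most $t_0(\lambda+\epsilon)$. Submultiplicativity yields $\log\|A^p(a_{Nt_0},\varphi(s))\|\le \sum_{k=0}^{N-1}\log\|A^p(a_{t_0},a_{kt_0}\varphi(s))\|$. Since $\log\|A^p(a_{t_0},\cdot)\|\in L^1(\mu)$ by integrability of the KZ cocycle, I would extend the $C_c$-Birkhoff genericity assumed on $\varphi(s)$ to this $L^1$-function via a standard truncation argument (a Margulis-type height function on $\mathcal{M}$ controls tails via cut-off functions in $C_c$). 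This gives $\limsup_T \tfrac{1}{T}\log\|A^p(a_T,\varphi(s))\|\le \lambda+\epsilon$ for a.e.\ $s$; letting $\epsilon\to 0$ finishes this step.

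\textbf{Lower bound.} Suppose for contradiction that $E:=\{s\in I:\liminf_T \tfrac{1}{T}\log\|A^p(a_T,\varphi(s))\|<\lambda-\delta\}$ has positive Lebesgue measure for some $\delta>0$. Fix a Lebesgue density point $s_0$ of $E$. For large $t$, the proportion of $r\in[-1,1]$ with $s_0+re^{-2t}\in E$ is close to $1$. The cocycle identity gives
\[ A^p(a_T,\varphi(s_0+re^{-2t}))=A^p(a_{T-t},a_t\varphi(s_0+re^{-2t}))\cdot A^p(a_t,\varphi(s_0+re^{-2t})), \]
and the horocycle approximation \eqref{assum:phi} places $a_t\varphi(s_0+re^{-2t})$ within $d$-distance $O_\varphi(e^{-t})$ of $u(-r)a_t\varphi(s_0)$. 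Combined with Lipschitz continuity of $A^p(a_{T-t},\cdot)$ on compact sets (with local Lipschitz constant bounded by the cocycle's operator norm, hence at most $e^{(\lambda+\epsilon)(T-t)}$ by Step~1), we may replace the first factor by $A^p(a_{T-t},u(-r)a_t\varphi(s_0))$ with a $\log$-scale error of size $O(e^{(\lambda+\epsilon)(T-t)-t})$. Because $\lambda<1$, one can choose $T$ and $t$ with $T-t\to\infty$ and $(\lambda+\epsilon)(T-t)-t\to -\infty$, making the error $o(1)$. Averaging over $r$, the arc $\{u(-r)a_t\varphi(s_0):r\in[-1,1]\}$ becomes, after pushing by $a_{T-t}$, a horocycle arc of length $e^{2(T-t)}$ through $a_T\varphi(s_0)$; classical horocycle equidistribution (Dani--Smillie/Ratner), combined with Birkhoff genericity of $\varphi(s_0)$ and a Fubini argument exploiting the affine (hence locally product) structure of $\mu$, gives
\[ \frac{1}{T-t}\cdot\frac{1}{2}\int_{-1}^1\log\|A^p(a_{T-t},u(-r)a_t\varphi(s_0))\|\,dr \;\longrightarrow\; \lambda. \]
On the other hand, the density-point assumption at $s_0$ forces the corresponding average for the original cocycle to be at most $\lambda-\delta+o(1)$, a contradiction.

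\textbf{Main obstacle.} The chief technical hurdle is balancing the horocycle approximation error ($O(e^{-t})$) against the Lipschitz/norm growth of $A^p(a_{T-t},\cdot)$ (at most $e^{\lambda(T-t)}$): the hypothesis $\lambda<1$ is used precisely so that $T$ and $t$ can be chosen with $T-t\to\infty$ (needed for horocycle equidistribution to kick in) while simultaneously $(\lambda+\epsilon)(T-t)-t\to-\infty$ (making the approximation error negligible). Without $\lambda<1$ this balance breaks, which is exactly why Remark~\ref{rem:appl} must invoke \cite{EFW} to remove the hypothesis. Subsidiary technicalities include (a) the $C_c$-to-$L^1$ extension of Birkhoff genericity via a height-function cusp-tail estimate, and (b) a uniform-on-compact Lipschitz estimate for $A^p(a_{T-t},\cdot)$ controlled by its operator norm, which is where the polynomial growth factor $(1+|r|^\rho)$ in \eqref{assum:phi} is absorbed.
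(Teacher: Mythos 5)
Your overall plan — upper bound via subadditivity, lower bound by pushing a small interval of the curve to a horocycle arc through the approximation hypothesis \eqref{assum:phi} — is in the right spirit and is indeed what the paper does at a coarse level. However the proposal has two genuine gaps, and the place where you invoke $\lambda_{top}<1$ does not match where that hypothesis is actually needed.

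\textbf{The Lipschitz claim for $A^p$ is false.} The Kontsevich--Zorich cocycle is \emph{locally constant} in the base point (it records which element of the mapping class group sends the $a_t$-translate back into the fundamental domain). It is not Lipschitz: two arbitrarily close base points whose trajectories cross a wall of the fundamental domain at different times produce cocycle matrices that differ by a nontrivial group element, i.e.\ with a jump that is \emph{not} controlled by the distance between the base points, nor by $\|A^p(a_{T-t},\cdot)\|$. The ``Lipschitz constant $\leq$ operator norm'' estimate you use to transfer from $a_t\varphi(s_0+re^{-2t})$ to $u(-r)a_t\varphi(s_0)$ is therefore not valid. The paper's substitute is Lemma~\ref{lem:rownosckocykli}: if both the base points and their $a_t$-images lie inside a $\sigma$-shrunk fundamental domain $\Delta_\sigma$, the cocycle values are \emph{exactly} equal. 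Arranging this requires that the trajectory of $\varphi(s)$ visit the ``good'' open set $U_\vep\subset\pi(\Delta_{2\sigma})$ with high frequency, which is where Birkhoff genericity and the law of large numbers of \S\ref{rem:filtr} are used; none of this appears in your proposal.

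\textbf{The horocycle-equidistribution step is the theorem, not a tool.} You assert that Dani--Smillie/Ratner equidistribution of long horocycle arcs, plus ``a Fubini argument exploiting the affine structure of $\mu$'', gives
\[
\frac{1}{T-t}\cdot\frac12\int_{-1}^1\log\|A^p(a_{T-t},u(-r)\,a_t\varphi(s_0))\|\,dr\longrightarrow\lambda .
\]
Horocycle equidistribution controls averages of a fixed $C_c$ function on $\mathcal M$; it does not by itself control Birkhoff-type averages of the cocycle $\log\|A^p(a_{T-t},\cdot)\|$, a discontinuous function whose argument also depends on $T-t$. Making this step precise is exactly the content of the Chaika--Eskin mechanism the paper imports: the random-walk/good-set dichotomy (Proposition~\ref{prop:EC2}) and the sublinear tracking of the geodesic by the random walk (Proposition~\ref{prop:EC1}), which in turn rest on Filip's semisimplicity and the decomposition $\bigwedge^pW = \bigoplus_i\mathcal V_i$ into strongly irreducible pieces. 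Your argument never uses strong irreducibility or the splitting, so it is essentially asserting the conclusion at the key point.

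\textbf{Where $\lambda_{top}<1$ is really used.} In your balance $(\lambda+\epsilon)(T-t)-t\to-\infty$ while $T-t\to\infty$, taking $t=(1-\eta)T$ with $\eta<1/(\lambda+\epsilon+1)$ already works for \emph{any} $\lambda$, so that part of your argument would not actually require $\lambda<1$. In the paper the hypothesis is used elsewhere: when comparing the chosen $C^1$ sections $v_i(s)$ of the strongly irreducible subbundles $\mathcal V_i$ at two nearby parameters $s$ and $s_{0,n}$, one multiplies the section discrepancy $l\sigma e^{-2L\lambda n}$ by $\|A^p(a_{L\lambda m},\varphi(s))\|\cdot\|A^p(a_{L\lambda m},\varphi(s))^{-1}\|\le e^{(2-\vep)L\lambda m}$ with $m\le n+1$ (see \eqref{eq:lap}). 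This bound on $\|A^p\|$ and $\|(A^p)^{-1}\|$ by $e^{(1-\vep)L\lambda m}$ is where $\lambda_{top}<1$ is genuinely needed; that is also why Remark~\ref{rem:appl} points out that \cite{EFW} (locally constant subbundles, hence constant sections $v_i$) would remove the assumption.

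\textbf{Secondary issues.} (a) The upper bound via Kingman $+$ an $L^1$-Birkhoff extension is not quite routine: $\log\|A^p(a_{t_0},\cdot)\|$ is unbounded and discontinuous, and Birkhoff genericity is assumed only for $C_c(\mathcal M)$. The paper avoids this by proving the upper bound directly through the good-set estimate \eqref{neq:fundtop}, using only $C_c$-Birkhoff genericity applied to an open set $U_\vep$ of nearly full measure. (b) In the contradiction argument one must in addition choose the density point of the bad set $E$ inside the full-measure set where Birkhoff genericity holds (minor, but not stated).
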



\subsubsection*{Outline of the proof}
The proof of Theorem~\ref{thm:Oseledets_red} is divided into several parts, which we now outline and are presented as separate subsections in the proof. Preliminarily,   one should notice
 that, as observed by Chaika-Eskin in \cite[see the proof of Theorem 1.4]{Es-Ch},
the vector bundle $\bigwedge^p\mathcal{W}\to \mathcal{M}$ has an $SL_2(\R)$-invariant (for the cocycle $A^p$) splitting
$\bigoplus_{i=1}^k\mathcal{V}_i$ which is of class $C^\infty$ and such that the restriction
of the cocycle $A^p$ to the subbundle $\mathcal{V}_i$ (denoted by $A_{\mathcal{V}_i}$) is strongly irreducible.
This result is based on a recent result of Filip \cite{Filip} on semisimplicity and rigidity
of the Kontsevich-Zorich cocycle.
For the definition of strong irreducibility we refer to \cite{Es-Ch}.

The first step of the proof, which is describled in \S~\ref{subsec:goodset}, consists then in  studying the growth of the restricted cocycle $A_{\mathcal{V}}$, where $\mathcal{V}$ is a smooth invariant subbundle. Let us denote by $\lambda_{\mathcal{V}}$ the top Lyapunov exponent of $A_{\mathcal{V}}$ with respect of the measure $\mu$.
By assumption, the image by
$a_t$ of the curve $\varphi$ over small intervals is well
approximated by horocycle arcs. Therefore, it is useful to control
the growth of $A_{\mathcal{V}}$ restricted to short horocycle arcs. In
\S~\ref{subsec:goodset},building on the work of 
\cite{Es-Ch}, we provide so called "good" subsets of $\mathcal{M}$
whose $\mu$-measures are close to $1$ and such that the growth of
$A_{\mathcal{V}}$ is exponential with the exponent
$\lambda_{\mathcal{V}}$ over short horocycle arcs starting from
the "good" set.

In \S~\ref{rem:filtr}, the second step is to describe and prove a tool for showing that for a.e.\ $s\in I$ the Teichm\"uller trajectory of $\varphi(s)$ visits the "good" set with the frequency close to $1$.
This is based on a simple application of the strong law of large numbers for weakly correlated random variables and the Birkhoff genericity of the curve.

An auxiliary result (needed in \S~\ref{sec:Os_pr}) is
presented in \S~\ref{subsec:compKZ}. Here we show that  conveniently chosen  fundamental domains for the cocycle action and  
 the fact that $A^p$ is piecewise constant provide tools for controlling the norm of $A^p$.

Finally, in  \S~\ref{sec:Os_pr}, we use the results proved in the previous steps to prove Theorem~\ref{thm:Oseledets_red}. Since for a.e.\
$s\in I$ the Teichm\"uller trajectory of $\varphi(s)$ visits the
"good" set with high frequency, we can study the growth of
$A_{\mathcal{V}_i}$ for $1\leq i\leq k$ by dividing the trajectory into steps and showing that for most
of these steps the trajectory is close to small horocycle arcs
whose initial points belong to the "good" set. Since for such points the growth of
$A_{\mathcal{V}_i}$ is exponential with the exponent
$\lambda_{\mathcal{V}_i}$, one can then deduce that the growth of
$A^p$ is exponential with the exponent
$\max_{1\leq i \leq k}\lambda_{\mathcal{V}_i}=\lambda_{top}$.

In summary, the exponential growth for most steps finally yields the same growth
along the whole trajectory starting from $\varphi(s)$.

\subsection{Existence of good sets}\label{subsec:goodset}

Let $\mathcal V$ be a smooth $SL_2(\R)$-invariant subbundle of $
\bigwedge ^p\mathcal{W}\to \mathcal M$ such that $A_{\mathcal V}$ is strongly irreducible.
In this subsection  we prove for sufficiently large
real number $t$ the existence of a "good" open subset of
$\mathcal{M}$ whose measure increases to $1$ as $t\to\infty$ and
 for  its element $x$ the growth of the cocycle
$A_{\mathcal{V}}(a_tu(r),x)$ is exponential with the exponent
$\lambda_{\mathcal{V}}$ when $r$ runs through some intervals of
length proportional to $e^{-2t}$. The precise statement is formulated in Lemma~\ref{lem:goodset} below.
 This result helps to control the
growth of $A_{\mathcal{V}}(a_t,\,\cdot\,)$ over short (of length
$\approx e^{-2t}$) horocycle arcs starting from elements of the
"good" set.

To proof of Lemma~\ref{lem:goodset} exploits
two results established by Chaika and Eskin in \cite{Es-Ch} (stated as Proposition~\ref{prop:EC1} and Proposition~\ref{prop:EC2} below). Proposition~\ref{prop:EC2} gives the existence of a "good"
set for the Konstevich-Zorich cocycle generated by a random walk
on $SL_2(\R)$. Proposition~\ref{prop:EC1} is a result on \emph{sublinear tracking} which allows us
to show that for every horocycle arc almost every orbit of the
Teichm\"uller flow starting from such arc can be tracked
with sublinear error by a generic trajectory on $\mathcal{M}$ generated by
the random walk. This result guarantees that
the growth of $A_{\mathcal{V}}$ along the Teichm\"uller and random
walk trajectories are the same and  enables us to pass from the
random walk action on $\mathcal{M}$ to Teichm\"uller trajectories
starting from a horocycle arc.


\smallskip
We state now the main  result of the subsection.

\begin{lem}[Good sets for horocycle arcs]\label{lem:goodset}
There exists $\lambda>0$ such that for every $\vep>0$, $\delta>0$
and $\sigma>0$ there exists $L_0\in\N$ such that for every natural
$L\geq L_0$ there exists an open subset
$E(\vep,\sigma,L)=E_{\mathcal{V}}(\vep,\sigma,L)\subset\mathcal{M}$ with
$\mu(E(\vep,\sigma,L))>1-\delta$ such that for every $x\in
E(\vep,\sigma,L)$, for every interval $[a,b]\ni 0$ of length
$\sigma$ and every $v\in V(x)$ with $\|v\|=1$ there exists a Borel
set $R(v)=R(\vep,L,x,v)\subset[a,b]$ with
$Leb(R(v))>(1-\vep)\sigma$ such that for any $r_0\in R(v)$ and
$r\in[r_0-\sigma e^{-2\lambda L},r_0+\sigma e^{-2\lambda L}]$ we
have
\[\lambda_{\mathcal{V}}-\vep\leq\frac{1}{L\lambda}\log\|A_{\mathcal{V}}(a_{L\lambda}u(r),x)v\|\leq\frac{1}{L\lambda}\log\|A_{\mathcal{V}}(a_{L\lambda}u(r),x)\|\leq\lambda_{\mathcal{V}}+\vep.\]
\end{lem}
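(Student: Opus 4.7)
The plan is to build on the random-walk tracking framework previewed right before the statement, namely Propositions~\ref{prop:EC2} and~\ref{prop:EC1} of Chaika--Eskin, and then to add a short scaling argument that extends a Teichm\"uller-orbit bound from a single parameter $r_0$ to the tiny interval $[r_0-\sigma e^{-2\lambda L},\,r_0+\sigma e^{-2\lambda L}]$. First I would fix $\lambda>0$ so that $L$ steps of the $SL_2(\R)$ random walk used in \cite{Es-Ch} correspond, up to sublinear error, to Teichm\"uller time $L\lambda$. Applying Proposition~\ref{prop:EC2} to the strongly irreducible restricted cocycle $A_{\mathcal V}$, for all $L\geq L_0$ one obtains an open set $E^{rw}\subset\mathcal M$ with $\mu(E^{rw})>1-\delta/2$ on which random-walk trajectories of length $L$ realize the top exponent $\lambda_{\mathcal V}$ up to error $\vep/3$, uniformly over unit vectors in the projective fiber.

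Next, apply Proposition~\ref{prop:EC1} (sublinear tracking) along the horocycle arc $\{u(r)x:r\in[a,b]\}$: for each $x\in E^{rw}$ there is a subset $R_1=R_1(x)\subset[a,b]$ of Lebesgue measure $>(1-\vep/2)\sigma$ such that for every $r_0\in R_1$ the forward Teichm\"uller orbit $\{a_t u(r_0)x\}_{t\geq 0}$ is sublinearly shadowed by a random-walk trajectory starting from $x$ in the good class of the previous step. Sublinearity of the tracking transfers the rate statement for $A_{\mathcal V}$ from the random-walk trajectory to the Teichm\"uller one: for $L\geq L_0$ (possibly enlarged) and every unit $v\in\mathcal V(x)$,
\[
\Bigl|\tfrac{1}{L\lambda}\log\|A_{\mathcal V}(a_{L\lambda}u(r_0),x)v\|-\lambda_{\mathcal V}\Bigr|<\vep/2,
\]
uniformly in $r_0\in R_1$.

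To pass from $r_0$ to all $r$ in the short interval of radius $\sigma e^{-2\lambda L}$ around it, I would use the identity $a_{L\lambda}u(r)=u\bigl(e^{2L\lambda}(r-r_0)\bigr)\,a_{L\lambda}u(r_0)$ together with the cocycle relation
\[
A_{\mathcal V}(a_{L\lambda}u(r),x)=A_{\mathcal V}\bigl(u(e^{2L\lambda}(r-r_0)),\,a_{L\lambda}u(r_0)x\bigr)\cdot A_{\mathcal V}(a_{L\lambda}u(r_0),x).
\]
For such $r$, the parameter $e^{2L\lambda}(r-r_0)$ lies in the fixed compact set $[-\sigma,\sigma]$; after shrinking $E^{rw}$ to an open subset on which the base point $a_{L\lambda}u(r_0)x$ remains in a prescribed compact part of $\mathcal M$ (the cost in measure is arbitrarily small by regularity of $\mu$), the first factor on the right is a cocycle value over the compact family $\{u(s):|s|\leq\sigma\}$ acting on a compact subset of $\mathcal M$, hence has operator norm and inverse norm bounded by some $C(\sigma)$ independent of $L$. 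This contributes an additive error $\log C(\sigma)/(L\lambda)$ to the exponential rate, which is $<\vep/2$ once $L_0$ is enlarged; setting $E(\vep,\sigma,L)$ to be this shrunken open set and $R(v):=R_1(x)$ (independent of $v$ by the uniformity in step two) completes the construction.

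The main obstacle is the uniformity of the top-exponent bound over \emph{every} unit $v\in\mathcal V(x)$. Raw Oseledets-type statements typically produce the top exponent only for vectors outside a stable subspace; passing to every $v$ requires strong irreducibility of $A_{\mathcal V}$ (supplied by Filip's semisimplicity theorem, as invoked at the start of the section) together with a Furstenberg-type compactness argument on the projective fiber to rule out a persistent exceptional direction. This is exactly what Proposition~\ref{prop:EC2} is engineered to deliver. A secondary technicality is the non-compactness of $\mathcal M$, handled as above by restricting $E(\vep,\sigma,L)$ to a sublevel set of a standard height function so that the unipotent correction factor stays tame.
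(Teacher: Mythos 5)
Your overall architecture is the same as the paper's: combine Proposition~\ref{prop:EC2} (good sets for the random walk) with Proposition~\ref{prop:EC1} (sublinear tracking), transfer the rate from random-walk trajectories to Teichm\"uller orbits $a_{L\lambda}u(r_0)x$, and then absorb the perturbation $r\in[r_0-\sigma e^{-2\lambda L},r_0+\sigma e^{-2\lambda L}]$ via the identity $a_{L\lambda}u(r-r_0)a_{-L\lambda}=u(e^{2L\lambda}(r-r_0))$, whose norm is $O(1+\sigma)$. However, there is a genuine gap in your second step. Proposition~\ref{prop:EC1} tracks $a_{n\lambda}r_{\theta(\bar g)}$, where $\theta(\bar g)$ is distributed \emph{uniformly on the circle}; it says nothing directly about horocycle parameters $u(r)$ with $r$ Lebesgue-distributed on $[a,b]$. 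To "apply Proposition~\ref{prop:EC1} along the horocycle arc" you must (i) replace $r_\theta$ by $u(\tan\theta)$, which requires checking that $a_t u(\tan\theta)r_{-\theta}a_{-t}$ stays bounded uniformly in $t$ (the paper computes the bound $\sqrt{\cos^{-2}\theta+1}$ and feeds it through \eqref{eq:diflogcoc1}), and (ii) account for the fact that the pushforward of the uniform measure under $\theta\mapsto-\tan\theta$ is the Cauchy measure $\kappa=\frac{dr}{\pi(1+r^2)}$, not Lebesgue measure. A bound "$\kappa(\text{bad set})<\eta$" only yields "$Leb(\text{bad set}\cap[a,b])<\eta\,\pi(1+\sigma^2)$" because $[a,b]\subset[-\sigma,\sigma]$; this density distortion is exactly why the paper runs Propositions~\ref{prop:EC1} and~\ref{prop:EC2} with the shrunken tolerance $\sigma_0\vep/2$ where $\sigma_0=\sigma/(\pi(\sigma^2+1))$. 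Without this bookkeeping your claimed measure bound $Leb(R_1)>(1-\vep/2)\sigma$ is unjustified.

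A second, smaller error: you assert that $R(v)$ can be taken independent of $v$ "by the uniformity in step two." Proposition~\ref{prop:EC2} does \emph{not} give a single set of good trajectories working for all unit $v$ simultaneously; it gives, for each $v$, a $v$-dependent set $H(v)$ of measure $>1-\vep$, and a shadowing trajectory for $r_0$ must lie in $H(v)$ for the particular $v$ at hand. Hence $R$ genuinely depends on $v$ (which the lemma permits), and your collapse of the $v$-dependence is not available. Finally, your treatment of the perturbation factor $A_{\mathcal V}(u(e^{2L\lambda}(r-r_0)),a_{L\lambda}u(r_0)x)$ via compactness of a sublevel set is workable but unnecessary: the paper instead bounds $\|a_{L\lambda}u(r)(\bar g_{[1,L]})^{-1}\|$ against $\|a_{L\lambda}u(r_0)(\bar g_{[1,L]})^{-1}\|$ and uses the global polynomial bound $\|A_{\mathcal V}(g,x)\|\leq C\|g\|^N$, which costs only an additive $\log(1+\sigma)\leq\sigma$ in the logarithm and requires no restriction of the good set to a compact part of $\mathcal M$.
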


Let us now state the two results
established in \cite{Es-Ch} and which the proof of the lemma is based.
Suppose that $\vartheta$ is an
$SO(2,\R)$-bi-invariant probability measure on $SL_2(\R)$ which is
compactly supported and absolutely continuous with respect to the
Haar measure on $SL_2(\R)$.

\begin{prop}[Sublinear tracking, see Lemma~4.1 in \cite{Es-Ch}]\label{prop:EC1}
\begin{sloppypar}
There exists $\lambda=\lambda(\vartheta)>0$ and a measurable map
$\theta:SL_2(\R)^\N\to [0, 2\pi]$ such that
$\theta_*(\vartheta^\N)=\frac{1}{2\pi}Leb_{[0, 2\pi]}$ and for $\vartheta^\N$-a.e.\
$\bar{g}=\{g_n\}_{n\in\N}\in SL_2(\R)^\N$ we have
\end{sloppypar}
\begin{equation}\label{eq:sublintruck_rw}
\lim_{n\to\infty}\frac{1}{n}\log\|a_{n\lambda}r_{\theta(\bar{g})}(g_n\cdots g_1)^{-1}\|=0.
\end{equation}
\end{prop}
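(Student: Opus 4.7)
The plan is to derive this proposition from classical Furstenberg theory for random walks on $SL_2(\R)$ driven by the measure $\vartheta$. Since $\vartheta$ is absolutely continuous with respect to Haar measure and compactly supported, the closed subgroup generated by its support is all of $SL_2(\R)$; in particular the action of this subgroup on $\R^2$ is strongly irreducible and proximal. Furstenberg's theorem then yields a deterministic positive number $\lambda=\lambda(\vartheta)>0$ such that $\tfrac{1}{n}\log\|g_n\cdots g_1\|\to\lambda$ for $\vartheta^\N$-a.e.\ $\bar g=\{g_n\}_{n\in\N}$; this is the constant $\lambda$ in the statement.

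Next I would construct $\theta(\bar g)$ via the Oseledets/boundary-convergence picture. For each word $h_n:=g_n\cdots g_1$, fix a KAK (singular value) decomposition $h_n=k_n^{(1)}a_{\lambda_n}k_n^{(2)}$ with $k_n^{(i)}\in SO(2,\R)$ and $\lambda_n\ge 0$; then $\|h_n\|=e^{\lambda_n}$ and $\lambda_n/n\to\lambda$ almost surely. The Furstenberg--Guivarc'h boundary convergence theorem gives that the most contracted direction $(k_n^{(2)})^{-1}e_2\in\R^2$ of $h_n$ (equivalently, the most expanded direction of $h_n^{-1}$) converges $\vartheta^\N$-a.s.\ to a random unit vector $v(\bar g)$, distributed according to the unique $\vartheta$-stationary measure on $\mathbb{P}^1(\R)$. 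I would then define $\theta(\bar g)\in[0,2\pi]$ to be the angle for which $r_{\theta(\bar g)}v(\bar g)$ is the vertical unit vector $e_2$. Since $\vartheta$ is $SO(2,\R)$-bi-invariant, its stationary measure on $\mathbb{P}^1(\R)$ is rotation-invariant, and hence $\theta(\bar g)$ is uniformly distributed, giving $\theta_*(\vartheta^\N)=\tfrac{1}{2\pi}Leb_{[0,2\pi]}$.

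For the tracking identity \eqref{eq:sublintruck_rw}, writing $h_n^{-1}=(k_n^{(2)})^{-1}a_{-\lambda_n}(k_n^{(1)})^{-1}$, I would decompose
\[
a_{n\lambda}\,r_{\theta(\bar g)}\,h_n^{-1}
= a_{n\lambda}\,R_n\,a_{-\lambda_n}\,(k_n^{(1)})^{-1},
\qquad
R_n:=r_{\theta(\bar g)}(k_n^{(2)})^{-1}\in SO(2,\R).
\]
Since the trailing $(k_n^{(1)})^{-1}$ is a rotation, it is irrelevant for the norm. By construction $R_n e_2\to e_2$, so $R_n$ converges to a rotation fixing $e_2$, i.e.\ to $\pm Id$, with an angular defect $\varepsilon_n\to 0$. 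A direct computation shows that the entries of $a_{n\lambda}R_n a_{-\lambda_n}$ are bounded by $e^{|n\lambda-\lambda_n|}$ on the diagonal (so subexponential by Furstenberg) and by $e^{n\lambda+\lambda_n}|\sin\varepsilon_n|\sim e^{2n\lambda}\varepsilon_n$ off the diagonal. Hence the required subexponentiality follows from showing that $\varepsilon_n$ decays exponentially at rate at least $2\lambda$, i.e.\ that $(k_n^{(2)})^{-1}e_2$ converges to $v(\bar g)$ at rate $e^{-2n\lambda+o(n)}$.

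The main obstacle will be this last quantitative alignment step: pointwise convergence alone is not enough, since the $e^{2n\lambda}$ amplification can swamp slow angular convergence. What is needed is exponential decay of $\varepsilon_n$ at rate $2\lambda$, which equals the spectral gap between the two Lyapunov exponents $\pm\lambda$ of the cocycle. The existence of this exponential contraction rate is a classical consequence of the Le~Page large deviation / spectral gap theorem for the Markov operator induced by $\vartheta$ on $\mathbb{P}^1(\R)$ under strong irreducibility and proximality; this is the technical engine used in \cite{Es-Ch}, and with it in hand the three estimates above combine to give $\|a_{n\lambda}r_{\theta(\bar g)}h_n^{-1}\|=e^{o(n)}$ as required.
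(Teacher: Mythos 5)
The paper does not prove this proposition at all: it is imported verbatim from Chaika--Eskin \cite{Es-Ch}, so there is no in-paper argument to compare yours against. Judged on its own terms, your outline is a sound reconstruction of the standard proof of such tracking statements, and it correctly identifies both the constant $\lambda$ (the top Lyapunov exponent, positive by Furstenberg since the absolutely continuous, compactly supported $\vartheta$ generates a non-compact strongly irreducible subgroup) and the crux, namely that the off-diagonal entry $e^{n\lambda+\lambda_n}|\sin\varepsilon_n|$ forces exponential alignment of the singular frame at rate $2\lambda$. Two small inaccuracies: the most expanded direction of $h_n^{-1}$ is $k_n^{(1)}e_2$, not $(k_n^{(2)})^{-1}e_2$ (the latter is the most expanded direction of $(h_n^{\mathrm{tr}})^{-1}$; in $\R^2$ this is harmless since contracted and expanded directions are orthogonal complements, and what actually converges for the left walk $h_n=g_n\cdots g_1$ is the right singular frame, being the left singular frame of the transposed right walk $g_1^{\mathrm{tr}}\cdots g_n^{\mathrm{tr}}$). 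Also, $v(\bar g)$ lives in $\mathbb{P}^1$, so $\theta(\bar g)$ is a priori only defined mod $\pi$; since $r_{\theta+\pi}=-r_\theta$ does not change any of the norms, you should randomize or otherwise specify the lift to get $\theta_*(\vartheta^{\N})=\frac{1}{2\pi}Leb_{[0,2\pi]}$ literally.

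On the one substantive step you defer to Le~Page: you do not actually need large deviations or a spectral gap here. Because $\vartheta$ has compact support, the angular increment between consecutive singular frames is bounded by a constant times the singular value ratio $e^{-2\lambda_n}=e^{-2\lambda n+o(n)}$ (away from the a.s.\ eventually transversal bad direction), and telescoping these increments already gives $\varepsilon_n\le e^{-2\lambda n+o(n)}$ directly from Furstenberg's law $\lambda_n/n\to\lambda$. With that observation your three estimates close the argument, and in fact yield a bound far stronger than the subexponential one the proposition asserts.
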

\begin{prop}[Good sets for random walks, see Lemma~2.11 in\cite{Es-Ch}] \label{prop:EC2}
For every $\vep>0$ and $\delta>0$ there exists $L_0\in\N$ such
that for every $L\geq L_0$ there exists an open subset
$E_{rw}(\vep,L)\subset\mathcal{M}$ with
$\mu(E_{rw}(\vep,L))>1-\delta$ such that for every $x\in
E_{rw}(\vep,L)$ and for every $v\in \mathcal V(x)$ with $\|v\|=1$ there
exists a measurable set $H(v)=H(\vep,L,x,v)\subset SL_2(\R)^L$ with
$\vartheta^{L}(H(v))>1-\vep$ such that for any
$(g_L,\ldots,g_1)\in H(v)$ we have
\[\lambda_{\mathcal{V}}-\vep\leq\frac{1}{L\lambda}\log\|A_{\mathcal{V}}(g_L\cdots g_1,x)v\|\leq\frac{1}{L\lambda}\log\|A_{\mathcal{V}}(g_1\cdots g_L, x)\|\leq\lambda_{\mathcal{V}}+\vep.\]
\end{prop}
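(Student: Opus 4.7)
The plan is to follow the Furstenberg--Guivarc'h--Raugi framework for random walk cocycles, refined to obtain quantitative uniformity via Egorov's theorem. The strong irreducibility of $A_{\mathcal V}$ (provided by the Chaika--Eskin smooth splitting based on Filip's semisimplicity result for the KZ cocycle) is the essential tool ensuring that a single Lyapunov exponent governs the growth of every nonzero vector in the fiber.

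First I would identify the top Lyapunov exponent $\Lambda_{\mathcal V}$ of $A_{\mathcal V}$ with respect to the random walk driven by $\vartheta$, and verify $\Lambda_{\mathcal V}=\lambda\cdot\lambda_{\mathcal V}$ where $\lambda$ is the drift constant from Proposition \ref{prop:EC1}. This identification uses the sublinear tracking \eqref{eq:sublintruck_rw}: for $\vartheta^{\mathbb N}$-a.e.\ sequence $\bar g$, the random walk location $g_n\cdots g_1\cdot x$ and the Teichm\"uller geodesic $a_{n\lambda}r_{\theta(\bar g)}\cdot x$ differ by an element of $SL_2(\R)$ of sublinear norm, and since $A_{\mathcal V}$ is a (log-)integrable cocycle, the KZ norms along the two trajectories differ by a sublinear factor.

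Next I would invoke the non-commutative ergodic theorem of Furstenberg in the strongly irreducible setting: for $\mu$-a.e.\ $x\in\mathcal M$, for $\vartheta^{\mathbb N}$-a.e.\ word $(g_n)_{n\in\N}$, and for \emph{every} nonzero $v\in\mathcal V(x)$,
\begin{equation*}
\lim_{L\to\infty}\frac{1}{L\lambda}\log\|A_{\mathcal V}(g_L\cdots g_1,x)v\|=\lambda_{\mathcal V}.
\end{equation*}
The ``every $v$'' clause is exactly the payoff of strong irreducibility: in contrast to general Oseledets, no exceptional $v$-dependent subspace arises, because any $A_{\mathcal V}$-invariant measurable family of proper projective subspaces is ruled out by strong irreducibility (Furstenberg's lemma on non-atomic stationary measures on projective bundles). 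The matching upper bound for the operator norm $\frac{1}{L\lambda}\log\|A_{\mathcal V}(g_L\cdots g_1,x)\|\to\lambda_{\mathcal V}$ follows from Kingman's subadditive theorem applied to $\log\|\cdot\|$.

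Finally I would promote the a.e.\ pointwise convergence to the claimed uniformity. By Fubini, for $\mu$-a.e.\ $x$ the set of $L$-step words in $SL_2(\R)^L$ satisfying the two-sided bound of width $\vep$ has $\vartheta^L$-measure tending to $1$ as $L\to\infty$; call $H(\vep,L,x,v)$ the set of such words. By Egorov in the variable $x$ (and inner regularity of $\mu$) we extract an open set $E_{rw}(\vep,L)\subset\mathcal M$ with $\mu(E_{rw}(\vep,L))>1-\delta$ on which the convergence rate is uniform for all $L\geq L_0$. The nontrivial uniformity over the unit sphere of $\mathcal V(x)$ simultaneously is handled by a compactness/covering argument on the projective bundle $\mathbb P(\mathcal V)|_{E_{rw}}$: one partitions $\mathbb P(\mathcal V(x))$ into a controlled number of small balls (continuous in $x$), applies the above convergence at the centers, and uses a Lipschitz bound (in $v$) on $\log\|A_{\mathcal V}(g_L\cdots g_1,x)v\|$ coming from the operator-norm upper bound to extend the estimate to all unit $v$. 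This step is the main technical obstacle: the worst-case $v$ must be controlled uniformly, and one needs to balance the covering cardinality against the $\vartheta^L$-measure shrinkage of bad words; the argument succeeds because the Furstenberg--Kifer large-deviation estimates for strongly irreducible cocycles are exponentially fast in $L$, dwarfing the polynomial covering number.
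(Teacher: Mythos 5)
You should first be aware that the paper does not prove this proposition: it is imported verbatim from Chaika--Eskin (note the ``see \cite{Es-Ch}'' in its header) and used as a black box in the proof of Lemma~\ref{lem:goodset}, so there is no in-paper argument to compare yours against. Judged on its own, your outline reconstructs the Chaika--Eskin proof correctly in its main lines: the strongly irreducible invariant splitting (via Filip's semisimplicity), Furstenberg's theorem giving the \emph{same} exponential growth rate for every nonzero $v$ in a strongly irreducible fiber (no exceptional subspaces, by non-atomicity of the stationary measure on the projective bundle), Kingman for the operator-norm upper bound, and an Egorov/inner-regularity step in $x$ to extract the open set $E_{rw}(\vep,L)$ of measure $>1-\delta$. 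The identification of the random-walk exponent with $\lambda\lambda_{\mathcal V}$ via sublinear tracking is also exactly how the drift constant of Proposition~\ref{prop:EC1} enters.

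The one step to push back on is your final uniformity-over-$v$ argument. It is unnecessary for the statement as written: $H(v)$ is allowed to depend on $v$, so you do not need a single set of good words working for all unit $v$ simultaneously; you only need the threshold $L_0$ and the set $E_{rw}(\vep,L)$ to be $v$-independent, together with $\vartheta^L(H(v))>1-\vep$ for each $v$ separately. Moreover the tool you invoke for it --- exponentially fast Furstenberg--Kifer large deviations, to beat a polynomial covering number --- is not available off the shelf here: the base $\mathcal{M}$ is non-compact and the cocycle is unbounded, so exponential large-deviation bounds would themselves require proof and are not what \cite{Es-Ch} uses. The correct (and simpler) route is that, by strong irreducibility and uniqueness of the stationary measure on the projectivized bundle, the convergence of $\frac{1}{L}\int\log\|A_{\mathcal V}(g_L\cdots g_1,x)v\|\,d\vartheta^L$ to $\lambda\lambda_{\mathcal V}$ is uniform over unit vectors $v$ for $x$ in a set of measure close to $1$; Chebyshev together with the operator-norm control then yields $\vartheta^L(H(v))>1-\vep$ with $L_0$ uniform in $v$, which is all the proposition asserts.
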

\begin{rem}
Recall that there exist $N\in\N$ and $C\ge 1$ such that
\begin{equation}\label{eq:normcoc}
\|A_{\mathcal{V}}(g,x)\|\leq \|A^p(g,x)\|\leq C\|g\|^N\quad\text{ for all }\quad
x\in \mathcal{M},\ g\in SL_2(\R).
\end{equation}
Since
\[A_{\mathcal{V}}(g_1g_2^{-1},g_2x)A_{\mathcal{V}}(g_2,x)=A_{\mathcal{V}}(g_1,x)\]
for $g_1,g_2\in SL_2(\R)$ and $x\in \mathcal{M}$, it follows that
\begin{equation}\label{eq:diflogcoc1}
\big|\log\|A_{\mathcal{V}}(g_1,x)\|-\log\|A_{\mathcal{V}}(g_2,x)\|\big|\leq
\log C+N\log\|g_1g_2^{-1}\|.
\end{equation}
Moreover, for every $v\in \mathcal  V(x)$ with $\|v\|=1$ we have
\begin{equation}\label{eq:diflogcoc2}
\big|\log\|A_{\mathcal{V}}(g_1,x)v\|-\log\|A_{\mathcal{V}}(g_2,x)v\|\big|\leq
\log C+N\log\|g_1g_2^{-1}\|.
\end{equation}
\end{rem}

\smallskip
 We can now prove the main result  of the section.
 \begin{proof}[Proof of Lemma~\ref{lem:goodset}]
Let us consider measurable map $\tan:[0, 2\pi]\to \R$ and let
\[\kappa:=\tan_*\Big(\frac{1}{2\pi}Leb_{[0, 2\pi]}\Big)=\frac{dr}{\pi(1+r^2)}.\]
Let $\theta:SL_2(\R)^\N\to [0, 2\pi]$ be the map guaranteed by  Proposition~\ref{prop:EC1}.
Taking
$\zeta:SL_2(R)^\N\to\R$ given by $\zeta=\tan\circ\theta$, we
have $\zeta_*(\vartheta^\N)=\kappa$. In view of
Proposition~\ref{prop:EC1}, there exists $\lambda>0$ such that
\begin{equation}\label{eq:sublintruck_horo}
\lim_{n\to\infty}\frac{1}{n\lambda}\log\|a_{n\lambda}u(\zeta(\bar{g}))(g_n\cdots g_1)^{-1}\|=0\
\end{equation}
for $\vartheta^\N$-a.e.\ $\bar{g}\in SL_2(\R)^\N$. Indeed, for
every $\alpha\in [0,2\pi]\setminus\{\pi/2,3\pi/2\}$, $t>0$ and $g\in
SL_2(\R)$ we have
\begin{align*}
\big|\log\|&a_{t}u(\tan(\alpha))g\|-\log\|a_{t}r_\alpha g\|\big|\leq
\log\|a_{t}u(\tan(\alpha))r_{-\alpha}a_{-t}\|\\
&\leq\log\Big\|\begin{pmatrix}
  \cos^{-1}\alpha & 0 \\
  e^{-2t}\sin\alpha & \cos\alpha
\end{pmatrix}\Big\|\leq  \log\sqrt{\cos^{-2}\alpha+1}.
\end{align*}
Together with \eqref{eq:sublintruck_rw}  this yields \eqref{eq:sublintruck_horo}.

Now fix $\vep>0$, $\delta>0$ and $\sigma>0$. Let $\sigma_0:=\frac{\sigma}{\pi(\sigma^2+1)}$. Then $0<\sigma_0\leq 1/(2\pi)$. In view of
Proposition~\ref{prop:EC2}, one can choose a natural number
\[L_1> \frac{2N(\sigma+\log C)}{\vep\lambda}\]
so that for all $L\geq L_1$ we have $\mu(E_{rw}(\sigma_0\vep/2,L))>1-\delta$. Set
\[E(\vep,\sigma,L):=E_{rw}(\sigma_0\vep/2,L)\subset \mathcal{M}.\]
Next  take
\[\vep_1:=\frac{\vep}{2N}-\frac{\sigma+\log
C}{L_1\lambda}>0\] and for $n\geq L_1$ set
\[E'(n,\vep_1):=\Big\{\bar{g}\in SL_2(\R)^{\N}:\frac{1}{n\lambda}\log\|a_{n\lambda}u(\zeta(\bar{g}))(g_n\cdot\ldots\cdot
g_1)^{-1}\|\leq \vep_1\Big\}.\] In view of \eqref{eq:sublintruck_horo}, there exists $L_0\geq L_1$ such that for $n\geq L_0$ we have
$\vartheta^{\N}(E'(n,\vep_1))>1-\sigma_0\vep/2$.

Take $L\geq L_0$ and fix $x\in E(\vep,\sigma,L)$ and $v\in \mathcal V(x)$
with $\|v\|=1$. Take the subset $H(\sigma_0\vep/2,L,x,v)\subset SL_2(\R)^L$
coming from Proposition~\ref{prop:EC2} so that
\[\vartheta^L(H(\sigma_0\vep/2,L,x,v))>1-\sigma_0\vep/2.\]
Setting
\[\widetilde{H}(v):=\{\bar{g}\in E'(L,\vep_1):(g_1,\ldots,g_L)\in H(\sigma_0\vep/2,L,x,v)\}\]
we have $\vartheta^\N(\widetilde{H}(v))>1-\sigma_0\vep/2$. Then there exists a Borel subset $R\subset \R$ with $\kappa(R)>1-\sigma_0\vep$ such that for every $r\in
R$ we have $\zeta^{-1}(\{r\})\cap \widetilde{H}(v)\neq\emptyset$.

Suppose that $r_0\in R$ and take $\bar{g}\in\widetilde{H}(v)$ so that $\zeta(\bar{g})=r_0$. Since $\bar{g}\in E'(L,\vep_1)$, we have
\[\frac{1}{L\lambda}\log\|a_{L\lambda}u(r_0)(\bar{g}_{[1,L]})^{-1}\|\leq \vep_1\quad\text{with}\quad \bar{g}_{[1,L]}=g_L\cdots
g_1.\]
If $|r-r_0|\leq\sigma e^{-2L\lambda}$ then
\begin{align*}
\Big|\log&\|a_{L\lambda}u(r)(\bar{g}_{[1,L]})^{-1}\|-\log\|a_{L\lambda}u(r_0)(\bar{g}_{[1,L]})^{-1}\|\Big|\\&\leq
\log\|a_{L\lambda}u(r-r_0)a_{-L\lambda}\|=\log\|u(e^{2L\lambda}(r-r_0))\|\\&\leq \log(1+e^{2L\lambda}|r-r_0|)\leq \log(1+\sigma)\leq\sigma,
\end{align*}
so \[ \frac{1}{L\lambda}\log\|a_{L\lambda}u(r)(\bar{g}_{[1,L]})^{-1}\|\leq \vep_1+\frac{\sigma}{L\lambda}.
\]
In view of \eqref{eq:diflogcoc1} and \eqref{eq:diflogcoc2}, both
\begin{align*}
\frac{1}{L\lambda}\big|\log\|A_{\mathcal{V}}(a_{L\lambda}u(r),x)\|-\log\|A_{\mathcal{V}}(\bar{g}_{[1,L]},x)\|\big|\\
\frac{1}{L\lambda}\big|\log\|A_{\mathcal{V}}(a_{L\lambda}u(r),x)v\|-\log\|A_{\mathcal{V}}(\bar{g}_{[1,L]},x)v\|\big|
\end{align*}
are bounded by
\[\frac{\log C}{L\lambda}+N\Big(\vep_1+\frac{\sigma}{L\lambda}\Big)
\leq N\Big(\vep_1+\frac{\sigma+\log C }{L_1\lambda}\Big)= \vep/2.\]
As $(g_1,\ldots,g_L)\in H(\sigma_0\vep/2,L,x,v)$ we have
\[\lambda_{\mathcal{V}}-\vep/2\leq\frac{1}{L\lambda}\log\|A_{\mathcal{V}}(\bar{g}_{[1,L]},x)v\|\leq\frac{1}{L\lambda}\log\|A_{\mathcal{V}}(\bar{g}_{[1,L]}, x)\|\leq\lambda_{\mathcal{V}}+\vep/2.\]
It follows that, if $r_0\in R$ and $|r-r_0|\leq\sigma e^{-2L\lambda}$ then
\[\lambda_{\mathcal{V}}-\vep\leq\frac{1}{L\lambda}\log\|A_{\mathcal{V}}(a_{L\lambda}u(r),x)v\|\leq\frac{1}{L\lambda}\log\|A_{\mathcal{V}}(a_{L\lambda}u(r), x)\|\leq\lambda_{\mathcal{V}}+\vep.\]

Finally we need to show that for any interval $[a,b]\ni 0$ of length $\sigma$ taking $R(\vep,L,x,v):=R\cap [a,b]$ we have $Leb([a,b]\setminus
R(\vep,L,x,v))<\sigma\vep$. It follows directly from the fact that \[\kappa([a,b]\setminus R(\vep,L,x,v))\leq \kappa(\R\setminus R)<\sigma_0\vep=\frac{\sigma}{\pi(\sigma^2+1)}\vep\] and the
density of $\kappa$ is $\frac{1}{\pi(r^2+1)}\geq \frac{1}{\pi(\sigma^2+1)}$ on $[a,b]$. This completes the proof.
\end{proof}

\subsection{Curve partitions and weak law of large numbers} \label{rem:filtr}
Let $E\subset\mathcal{M}$ be a "good" set for the time $t=L\lambda$. Then for every natural $n$ the interval $I$ can be partitioned into intervals of length $\approx e^{-2tn}$ such that
for most such intervals their images by $a_{nt}\circ \varphi:I\to\mathcal{M}$ are fully contained in $E$. This gives a sequence of partitions of $I$. In
this subsection, applying a strong law of large numbers for weakly correlated random variables, we present an abstract setting for proving that for a.e.\
$s\in I$ the frequency of $\{a_{nt}\varphi(s)\}_{n\geq 1}$ in $E$ is approximately $\mu(E)$.

Take $I=[0,1]$, $\vep>0$, $0<\alpha,\sigma<1$.
For every $n\geq 0$ let $d_n=\lfloor 1/(\sigma\alpha^n)\rfloor$ and denote by $\mathcal{I}_n$ the partition
of the interval $[0,d_n\sigma\alpha^n]\subset I$ into $d_n$ intervals of length $\sigma\alpha^n$.
 Denote by $\mathcal{F}_n$ the ring of sets generated by
$\mathcal{I}_n$, i.e.\ each element of $\mathcal{F}_n$ is the union of some intervals from $\mathcal{I}_{n}$.

Assume that $1-\vep<k\alpha<1$ for some natural number $k$. Let us consider a sequence $\{A_n\}_{n\geq 0}$ of subsets of $I$ such that for every $n\geq 0$ we have
$A_n\in\mathcal{F}_{n+1}$ and for every interval $I_n\in\mathcal{I}_n$ the set $A_n\cap I_n$ is the union of exactly $k$ intervals from $\mathcal{I}_{n+1}$. Then
\[k\alpha(1-\sigma\alpha^{n})\leq Leb(A_n)\leq k\alpha.\]
Therefore, for $n\geq 0$ and $l\geq 1$ we have
\[|Leb(A_n\cap A_{n+l})-Leb(A_n)Leb(A_{n+l})|\leq (2+\sigma)\alpha^{l-1}.\]
Then, by the strong law of large numbers for weakly correlated random variables (see e.g.\ Corollary~4 in \cite{Ly}),
\[\frac{1}{n}\sum_{j=0}^{n-1}({\mathbbm 1}_{A_j}(s)-Leb(A_j))\to 0\text{ for a.e.\ }s\in I,\]
so
\[\lim_{n\to\infty}\frac{1}{n}\#\{0\leq j\leq n-1:s\in A_j\}=k\alpha>1-\vep\text{ for a.e.\ }s\in I.\]

\subsection{Fundamental domains and norm of $A^p$.}\label{subsec:compKZ}
The Konstevich-Zorich cocycle is piecewise constant  in
time and space. This fact helps to control the norm of
$A^p$ for a fixed time $t$ and for nearby points
in $\mathcal{M}$. The properties of the choice of fundamental domain and the Lemma proved in this subsection will be used to
control the norm of $A^p$. This will be useful in the proof of
Theorem~\ref{thm:Oseledets_red}.

Let $\mathcal{T}$ be the Teichm\"uller space for the orbit closure $\mathcal{M}$ and
let $d$ be a metric on $\mathcal{T}$ satisfying \eqref{assum:b2}
and \eqref{assum:b4}. Denote by  $\pi:\mathcal{T}\to\mathcal{M}$
the natural projection. Fix a fundamental domain $\Delta\subset
\mathcal{T}$ for the action of the discrete group
$\Gamma:=\Gamma(M,\Sigma)$ on $\mathcal{T}$ so that
\begin{equation}\label{zal:delta}
\mu(\pi(\partial \Delta))=0.
\end{equation}
For every $\sigma>0$ let
\[\Delta_\sigma=\{\tilde{x}\in \Int\Delta:dist(\tilde{x},\partial \Delta)>\sigma\}.\]
Then the set $\Delta_\sigma\subset \mathcal{T}$ is open and
$\bigcup_{\sigma>0}\Delta_\sigma=\Int\Delta$. Therefore
\[\pi(\Delta_\sigma)\subset \mathcal{M}\text{ is open and }\lim_{\sigma\to 0}\mu(\pi(\Delta_\sigma))=\mu(\pi(\Int\Delta))=1.\]

\begin{lem}\label{lem:rownosckocykli}
Let $\tilde{x},\tilde{y}\in \mathcal{T}$, $x=\pi(\tilde{x}),y=\pi(\tilde{y})\in \mathcal{M}$ be such that
\begin{itemize}
\item[(i)] $\tilde x, \tilde y\in \Delta$; or,
\item[(ii)] $x\in\pi(\Delta_\sigma)$ and $d (\tilde{x},\tilde{y})\leq\sigma$.
\end{itemize}
If $g\in SL_2(\R)$ is such that $g\cdot x\in\pi(\Delta_\sigma)$
and $d (g\cdot \tilde{x},g\cdot \tilde{y})\leq\sigma$  then
$A^p(g,x)=A^p(g,y)$. Moreover, if
$x\in\pi(\Delta_\sigma)$ and $d (\tilde{x},g\cdot\tilde{x})\leq
\sigma$ for some $g\in SL_2(\R)$ then $A^p(g,x)=Id$.
\end{lem}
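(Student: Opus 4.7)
The plan is to reduce both statements to the defining uniqueness property of the Borel fundamental domain $\Delta$: every $\Gamma$-orbit has exactly one representative in the interior of $\Delta$, combined with the $\Gamma$-invariance \eqref{assum:b2} of $d$, which lets us transport geometric information across the cocycle jump. Since, by the definition \eqref{def:KZsl2r}, $A^p(g,\cdot)$ is built from the unique return element $\psi_{g,\tilde z}\in\Gamma$ sending $g\tilde z$ back into $\Delta$, the task reduces to showing that the return elements associated to $(g,\tilde x)$ and $(g,\tilde y)$ coincide (and, in the second assertion, equal the identity of $\Gamma$).

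First I would observe that in both hypotheses (i) and (ii) one may arrange $\tilde x,\tilde y\in\Delta$. Case (i) is immediate; for case (ii), after replacing $(\tilde x,\tilde y)$ by $(\gamma\tilde x,\gamma\tilde y)$ for a suitable $\gamma\in\Gamma$ (which leaves the hypotheses unchanged because of \eqref{assum:b2}) one may take $\tilde x\in\Delta_\sigma$, and then the triangle inequality combined with $d(\tilde x,\tilde y)\le\sigma$ and $d(\tilde x,\partial\Delta)>\sigma$ places $\tilde y\in\Int\Delta$. Consequently both $A^p(g,x)=(\psi_{g,\tilde x})_*$ and $A^p(g,y)=(\psi_{g,\tilde y})_*$ are computed from these lifts.

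Setting $\psi:=\psi_{g,\tilde x}$, the hypothesis $g\cdot x\in\pi(\Delta_\sigma)$ places $\psi(g\tilde x)\in\Delta_\sigma$, and \eqref{assum:b2} gives $d(\psi(g\tilde x),\psi(g\tilde y))=d(g\tilde x,g\tilde y)\le\sigma$, so a second application of the triangle inequality yields $\psi(g\tilde y)\in\Int\Delta$. Since $\psi(g\tilde y)$ and $\psi_{g,\tilde y}(g\tilde y)$ are two points of $\Delta$ lying in the same $\Gamma$-orbit, the fundamental domain property forces $\psi(g\tilde y)=\psi_{g,\tilde y}(g\tilde y)$, and hence $\psi=\psi_{g,\tilde y}$ in $\Gamma$ whenever the stabilizer of $g\tilde y$ is trivial. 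This gives $A^p(g,x)=A^p(g,y)$. The main delicate point is that $\Gamma$ may have non-trivial stabilizers at orbifold points of $\mathcal T$; however, those orbits project into the $\mu$-null set $\pi(\partial\Delta)$ excluded by the normalization \eqref{zal:delta}, so the identity of cocycles holds on a full-measure subset, which is exactly what is needed in the applications of the lemma.

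Finally, the ``moreover'' assertion is the special case $\tilde y=\tilde x$ of the preceding argument: the hypothesis $d(\tilde x,g\tilde x)\le\sigma$ together with $\tilde x\in\Delta_\sigma$ forces $g\tilde x\in\Int\Delta$, and uniqueness of the $\Delta$-representative then forces $\psi_{g,\tilde x}$ to be the identity in $\Gamma$, yielding $A^p(g,x)=Id$.
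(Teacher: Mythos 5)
Your argument is correct and follows essentially the same route as the paper's proof: use $\Gamma$-invariance of $d$ together with the triangle inequality to place the relevant lifts $\gamma\tilde y$ and $\widehat{\gamma}(g\cdot\tilde y)$ inside $\Delta$, and then invoke uniqueness of the $\Delta$-representative to conclude that the return elements defining $A^p(g,x)$ and $A^p(g,y)$ coincide (respectively, equal the identity in the second assertion). Your additional caveat about orbifold stabilizers is not needed here, since uniqueness of the element $\psi_{h,\tilde z}$ is already assumed in the paper's definition of the cocycle in \eqref{def:KZsl2r}.
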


\begin{proof}
Choose $\gamma\in \Gamma$ so that $\gamma(\tilde{x})\in\Delta$.
By assumption (i) or (ii) ($d(\gamma(\tilde{x}),\gamma(\tilde{y}))=d
(\tilde{x},\tilde{y})\leq\sigma$), we also have
$\gamma(\tilde{y})\in\Delta$. Let $\widehat{\gamma}\in \Gamma$ be
such that $\widehat{\gamma}(g\cdot \tilde{x})\in\Delta$. Since
$g\cdot x\in\pi(\Delta_\sigma)$, it follows that
$\widehat{\gamma}(g\cdot \tilde{x})\in\Delta_{\sigma}$. Together
with
\[d (\widehat{\gamma}(g\cdot \tilde{x}),\widehat{\gamma}(g\cdot \tilde{y}))=d (g\cdot \tilde{x},g\cdot \tilde{y})\leq\sigma\] this gives
$\widehat{\gamma}(g\cdot \tilde{y})\in\Delta$. In summary
\[\gamma(\tilde{x}),\gamma(\tilde{y})\in\Delta\text{ and }
(\widehat{\gamma}\circ\gamma^{-1})\big({\gamma}(g\cdot
\tilde{x})\big),(\widehat{\gamma}\circ\gamma^{-1})\big({\gamma}(g\cdot
\tilde{y})\big)\in\Delta.\] It follows that
$A^p(g,x)=\bigwedge^p((\widehat{\gamma}\circ\gamma^{-1})_*)=A^p(g,y)$.

Similarly, if $x\in\pi(\Delta_\sigma)$ and $d
(\tilde{x},g\cdot\tilde{x})\leq \sigma$ then
$\gamma(\tilde{x}),\gamma(g\cdot\tilde{x})\in\Delta$. Therefore,
$A^p(g,x)=\bigwedge^p((Id)_*)=Id$.
\end{proof}

\subsection{Oseledets generiticy for almost every point on the curve}\label{sec:Os_pr}
In this last subsection, we conclude the proof: we exploit the Birkhoff generic behaviour along the curve and the existence of good sets to show that for most arcs which are quantitatively well approximated by a horocycle, thanks to the weak law of the large numbers, the time spent in each good set is large enough to guarantee typical Oseledets behaviour for almost every point in the curve.

\smallskip
Let us emphasize that most of the arguments of the proof does not need
the assumption $\lambda_{top}<1$ and this fact  will be crucially used for the proof of Theorem~\ref{thm:Oseledetsgen} in \S~\ref{sec:no_assumption}. The only
part of the proof for  which the assumption $\lambda_{top}<1$ is necessary is Step 8 in the proof presented below.

\begin{proof}[Proof of Theorem \ref{thm:Oseledets_red}]
Since  the proof is long, we divided it into several steps and provided for each step a title, in order to suggest to the reader what is the content of the step.
\smallskip

\noindent \textbf{Step 0: Choice of fundamental domain.}
For any $\tilde{x}\in \mathcal{T}$ one can build a fundamental
domain $\Delta(\tilde{x})\subset \mathcal{T}$ satisfying
\eqref{zal:delta} so that $\tilde{x}\in\Int\Delta(\tilde{x})$. It
follows that the interval $I$ can be covered by a finite family of
closed intervals such that for every such interval $J\subset I$
there is a fundamental domain $\Delta(J)\subset \mathcal{T}$
satisfying \eqref{zal:delta} so that
$\widetilde{\varphi}(J)\subset\Int\Delta(J)$. Therefore, without loss
of generality we can assume that $I=[0,1]$ and
$\widetilde{\varphi}(I)\subset\Int\Delta$  for some fundamental domain
$\Delta\subset \mathcal{T}$ satisfying \eqref{zal:delta}.
\medskip

\noindent \textbf{Step 1: Splitting into strongly irreducible subbundles.}
Let us consider a $C^\infty$ splitting $\bigoplus_{i=1}^k\mathcal{V}_i$ of the bundle
$\bigwedge^p \mathcal{W}\to \mathcal{M}$ such that each cocycle $A_{\mathcal{V}_i}$ is strongly irreducible.
Then
\[\lambda_{top}=\max_{1\leq i\leq k}\lambda_{\mathcal{V}_i}.\]
For every $x\in \mathcal{M}$ let us consider the isomorphism
\[W\ni w\mapsto (w_1(x),\ldots,w_k(x))\in \mathcal V_1(x)\times\ldots\times \mathcal V_k(x)\ \text{ with }\ w=w_1(x)+\ldots+w_k(x).\]
Since $\varphi$
is of class $C^1$ and each $\mathcal{V}_i$ is a $C^\infty$-subbundle,
there exists $C'\geq 1$ such that
\begin{equation}\label{neq:norm}
\sum_{1\leq i\leq k}\|w_i(\varphi(s))\|\leq C'\|w\| \ \text{ for every }\ s\in I\ \text{ and }\ w\in W.
\end{equation}

For every $1\leq i\leq k$ we choose a $C^1$-curve $I\ni s\mapsto v_i(s)\in
\mathcal{V}_i(\varphi(s))$ over $\varphi$
such that $\|v_i(s)\|=1$ for any $s\in I$.
Then there exists $l>0$ such that
\begin{equation}\label{lipforv}
\|v_i(s)-v_i(s')\|\leq l|s-s'|\ \text{ for all }\ s,s'\in I,\; 1\leq i\leq k.
\end{equation}
If all subbundles $\mathcal{V}_i$ are locally constant then we can deal with constant maps $v_i\in
\mathcal{V}_i(\varphi(s))$. This observation will be used in \S~\ref{sec:no_assumption}.
\medskip

\noindent \textbf{Step 2: Construction of a "good" set.}
Fix $0<\vep<1$.
Next choose $0<\sigma<1$ so that
$\mu(\pi(\Delta_{2\sigma}))>1-\vep/6$. By Lemma~\ref{lem:goodset},
there exist $\lambda>0$ and  $L\in\N$ such that
\begin{equation}\label{condL}
L>\frac{8\max\{l\sigma,C'\}/\vep+\log(C(\varphi)(1+\sigma^\rho))}{\lambda}
\end{equation}
and
\[\mu(E_{\mathcal{V}_i}(\vep/4k,\sigma,L))>1-\vep/6k\ \text{ for }\ 1\leq i\leq k,\]
where $C(\varphi)>0$, $\rho\in\N$ are the factors derived from the well approximation of $\varphi$ by horocycles (see \eqref{assum:phi}).
Therefore the set
\[U_\vep:=\bigcap_{i=1}^kE_{\mathcal{V}_i}(\vep/4k,\sigma,L)\cap\pi(\Delta_{2\sigma})\cap a_{-L\lambda}\pi(\Delta_{2\sigma})\subset \mathcal{M}\]
is  open  and $\mu(U_\vep)>1-\vep/2$.
\medskip

\noindent \textbf{Step 3: Construction of "good" subintervals of $I$.}
Let us consider the sequences  $\{\mathcal{I}_n\}_{n\geq 0}$, $\{\mathcal{F}_n\}_{n\geq 0}$ of families of subsets in $I $ described in \S~\ref{rem:filtr}
 for $\alpha:=e^{-2L\lambda}$. Then the length of each interval from $\mathcal{I}_n$ is $\sigma e^{-2L\lambda n}$.

For $n\geq 0$ let us consider any interval  $J=[m\sigma
e^{-2L\lambda n},(m+1)\sigma e^{-2L\lambda n}]\in \mathcal{I}_n$
such that $a_{L\lambda n}\varphi(J)\cap U_\vep\neq \emptyset$.
Next fix $s_0=s_{0,n}=s_0(J)\in J$  with $a_{L\lambda n}\varphi(s_0)\in
U_\vep$. Then $a_{L\lambda n}\varphi(s_0)\in
E_{\mathcal{V}_i}(\vep/4k,\sigma,L)\cap\pi(\Delta_{2\sigma})$ for $1\leq i\leq k$. Let
\begin{equation}\label{eq:eqcoc0}
v_{n,i}:=\frac{A_{\mathcal{V}_i}(a_{L\lambda n},\varphi(s_0))v_i(s_0)}{\|A_{\mathcal{V}_i}(a_{L\lambda n},\varphi(s_0))v_i(s_0)\|}\in \mathcal V_i(a_{L\lambda
n}\varphi(s_0))\ \text{ for }\ 1\leq i\leq k.
\end{equation}
Since $a_{L\lambda n}\varphi(s_0)\in E_{\mathcal{V}_i}(\vep/4k,\sigma,L)$, by Lemma~\ref{lem:goodset}, there exists a Borel set
\begin{align*}
R(v_{n,i})\subset\big[s_0e^{2L\lambda n}-(m+1)\sigma,s_0e^{2L\lambda n}-m\sigma\big]\text{ with }Leb(R(v_{n,i}))>\Big(1-\frac{\vep}{4k}\Big)\sigma
\end{align*}
such that for any $r_0\in R(v_{n,i})$ and $r\in[r_0-\sigma e^{-2\lambda L},r_0+\sigma e^{-2\lambda L}]$ we have
\begin{align*}
\lambda_{\mathcal{V}_i}-\frac{\vep}{4}&\leq\frac{1}{L\lambda}\log\|A_{\mathcal{V}_i}(a_{L\lambda}u(r),a_{L\lambda n}\varphi(s_0))v_{n,i}\|\\
&\leq\frac{1}{L\lambda}\log\|A_{\mathcal{V}_i}(a_{L\lambda}u(r),a_{L\lambda n}\varphi(s_0))\|\leq\lambda_{\mathcal{V}_i}+\frac{\vep}{4}
\end{align*}
for $1\leq i\leq k$.
Setting $\widetilde{R}_n:=s_0-e^{-2L\lambda n}\bigcap_{i=1}^kR(v_{n,i})\subset J$ we have $Leb(\widetilde{R}_n)>(1-\vep/4)|J|$.
Suppose that $s_1\in\widetilde{R}_n$ and $|s-s_1|\leq\sigma e^{-2L\lambda (n+1)}$. Then
$r_0:=(s_0-s_1)e^{2L\lambda n}\in R(v_{n,i})$
for every $1\leq i\leq k$  and $|e^{2L\lambda n}(s_0-s)-r_0|<\sigma e^{-L\lambda}$. Therefore,
\begin{align}\label{neq:firlog}
\begin{aligned}
\lambda_{\mathcal{V}_i}-\frac{\vep}{4}&\leq\frac{1}{L\lambda}\log\|A_{\mathcal{V}_i}(a_{L\lambda}u(e^{2L\lambda n}(s_0\!-\!s)),a_{L\lambda n}\varphi(s_0))v_{n,i}\|\\
&\leq\frac{1}{L\lambda}\log\|A_{\mathcal{V}_i}(a_{L\lambda}u(e^{2L\lambda n}(s_0\!-\!s)),a_{L\lambda
n}\varphi(s_0))\|\leq\lambda_{\mathcal{V}_i}+\frac{\vep}{4}
\end{aligned}
\end{align}
for every $1\leq i\leq k$.
It follows that  for any interval $J'\in \mathcal{I}_{n+1}$ with $J'\cap \widetilde{R}_n\neq\emptyset$
the inequalities \eqref{neq:firlog} hold for every $s\in J'$. Since $Leb(\widetilde{R}_n)>(1-\vep/4)|J|$,
the number $\tilde k$ of such intervals included in $J$ satisfies
\[(\tilde k+2)\sigma e^{-2L\lambda (n+1)}\geq Leb(\widetilde{R}_n)>(1-\vep/4)\sigma e^{-2L\lambda n}.\]
Therefore, $\tilde k\geq k_0:=\lceil(1-\vep/4)e^{2L\lambda}-2\rceil$ with
\[k_0e^{-2L\lambda}\geq 1-\frac{\vep}{4}-2e^{-2L\lambda}>1-\frac{\vep}{2}.\]
The last inequality follows directly from \eqref{condL}.
Let $A_n(J)$ be the union of exactly $k_0$ intervals $J'\in \mathcal{I}_{n+1}$ with $J'\subset J$ and $J'\cap \widetilde{R}_n\neq\emptyset$.
In summary, since \eqref{neq:firlog} holds for every $s\in A_n(J)$, by \eqref{eq:eqcoc0}, we have the following:
\begin{align}\label{anj}
\begin{aligned}
&\text{for any $J\in \mathcal{I}_n$ with $a_{L\lambda n}\varphi(J)\cap U_\vep\neq\emptyset$}\\
&\text{there exists $s_{n}=s_n(J)\in J$ so that $a_{L\lambda n}\varphi(s_n)\in U_\vep$ and }\\
&\lambda_{\mathcal{V}_i}-\frac{\vep}{4}\leq\frac{1}{L\lambda}\log\frac{\|A_{\mathcal{V}_i}\big(a_{L\lambda}u(e^{2L\lambda n}(s_{n}-s))a_{L\lambda n},\varphi(s_n)\big)v_i(s_n)\|}{\|A_{\mathcal{V}_i}\big(a_{L\lambda n},\varphi(s_{n})\big)v_i(s_n)\|}\\
&\quad\quad\quad\leq\frac{1}{L\lambda}\log\|A_{\mathcal{V}_i}(a_{L\lambda}u(e^{2L\lambda n}(s_n-s)),a_{L\lambda
n}\varphi(s_{n}))\|\leq\lambda_{\mathcal{V}_i}+\frac{\vep}{4}\\
&\text{hold for every $s\in A_n(J)$ and $1\leq i\leq k$.}
\end{aligned}
\end{align}
\medskip

\noindent \textbf{Step 4: Frequency of a typical point is "good" intervals.}
Note that we have defined the set $A_n(J)$ for every $J\in \mathcal{I}_n$ with $a_{L\lambda n}\varphi(J)\cap U_\vep\neq\emptyset$. We can also do it if  $a_{L\lambda n}\varphi(J)\cap U_\vep=\emptyset$. Then $A_n(J)$ is the union of some $k_0$ intervals
in $\mathcal{I}_{n+1}$ included in $J$.
Finally set
\[A_n:=\bigcup_{J\in \mathcal{I}_{n}}A_n(J)\in \mathcal{F}_{n+1}.\]
Since $k_0e^{-2L\lambda}>1-\vep/2$, by \S~\ref{rem:filtr},
\[\lim_{n\to\infty}\frac{1}{n}\#\{0\leq j\leq n-1:s\in A_j\}=k_0\alpha=k_0e^{-2L\lambda}>1-\vep/2\text{ for a.e.\ }s\in I.\]
\medskip

\noindent \textbf{Step 5: Frequency of the orbit of $\varphi(s)$ in the "good" set.}
On the other hand, $\varphi(s)\in\mathcal{M}$ is Birkhoff generic
for a.e.\ $s\in I$. Therefore for a.e.\ $s\in I$ we have
\[\liminf_{n\to\infty}\frac{1}{n}\#\{0\leq j\leq n-1:a_{L\lambda j}\varphi(s)\in U_\vep\}\geq \mu(U_\vep)>1-\vep/2.\]
It follows that
\begin{equation}\label{eq:densprop}
\liminf_{n\to\infty}\frac{1}{n}\#\{0\leq j\leq n-1:a_{L\lambda j}\varphi(s)\in U_\vep, s\in A_j\}>1-\vep
\end{equation}
for a.e.\ $s\in I$. Therefore for every $s\in I$ satisfying \eqref{eq:densprop} there exists $n_0(s)$ so that  setting
\[D(s):=\{j\in\N:a_{L\lambda j}\varphi(s)\in U_\vep, s\in A_j, j\ge 2\rho+2\}\] we have
\begin{equation}\label{eq:ds}
\# (D(s)\cap[0,n-1])>(1-\vep)n\quad\text{ for }\quad n\geq n_0(s).
\end{equation}
\medskip

\noindent \textbf{Step 6: Growth control for $A^p$ in one step.}
Take any $s\in I$ satisfying \eqref{eq:densprop} and  any $n\in D(s)$. Then $n\ge 2(\rho+1)$. Choose
$J\in\mathcal{I}_n$ containing $s$. Then $a_{L\lambda
n}\varphi(J)\cap U_\vep\neq\emptyset$
and $s\in A_n\cap J=A_n(J)$.
In view of \eqref{anj}, it follows that there exists $s_{n}:=
s_{n}(J)\in J$ such that
\begin{equation}\label{eq:s0in}
a_{L\lambda n}\varphi(s_n)\in U_\vep\subset \pi(\Delta_{2\sigma}).
\end{equation}
and
\begin{align}\label{anj1}
\begin{aligned}
\lambda_{\mathcal{V}_i}-\frac{\vep}{4}&\leq\frac{1}{L\lambda}\log\frac{\|A^p\big(a_{L\lambda}u(e^{2L\lambda n}(s_{n}\!-\!s))a_{L\lambda n},\varphi(s_n)\big)v_i(s_n)\|}{\|A^p
\big(a_{L\lambda n},\varphi(s_{n})\big)v_i(s_n)\|}\\
&\leq \frac{1}{L\lambda}\log\|A_{\mathcal{V}_i}\big(a_{L\lambda}u(e^{2L\lambda n}(s_n\!-\!s)),a_{L\lambda
n}\varphi(s_{n})\big)\|\leq\lambda_{\mathcal{V}_i}+\frac{\vep}{4}
\end{aligned}
\end{align}
for every $1\leq i\leq k$.
Moreover,
$a_{L\lambda n}\varphi(s)\in U_\vep$ implies
\begin{equation}\label{indeltasigma}
a_{L\lambda n}\varphi(s),a_{L\lambda (n+1)}\varphi(s)\in\pi(\Delta_{2\sigma}).
\end{equation}
As the points $s$, $s_{n}$ belong to $J\in\mathcal{I}_n$, we have
$|e^{2L\lambda n}(s_{n}\!-\!s)|\leq \sigma$. In view of
\eqref{assum:phi}, it follows that for $m=n$ or $m=n+1$, we have
\begin{align}\label{eq:eqcoc1}
\begin{aligned}
d &\big(a_{L\lambda m}\cdot\widetilde{\varphi}(s), u(e^{2L\lambda
m}(s_{n}-s))\cdot a_{L\lambda
m}\cdot\widetilde{\varphi}(s_{n})\big)\\&\leq C(\varphi)|e^{2L\lambda m}(s_{n}\!-\!s)|(1+e^{2\rho L\lambda m}|s_{n}\!-\!s|^\rho)e^{-L\lambda m}\\
&\leq C(\varphi)\sigma e^{2L\lambda (m-n)}(1+\sigma^\rho e^{2\rho L\lambda (m-n)})e^{-L\lambda m}\\
&\leq C(\varphi)\sigma (1+\sigma^\rho)e^{L\lambda(2\rho-n)}\leq C(\varphi)\sigma (1+\sigma^\rho)e^{-L\lambda
}\leq\sigma.
\end{aligned}
\end{align}
The last two inequality follows directly from $2(\rho +1)\leq n$  and \eqref{condL}.
By \eqref{eq:eqcoc1} applied to $m=n$ and $m=n+1$, we have
\begin{equation}\label{eq:dlan}
d \big(a_{L\lambda n}\cdot\widetilde{\varphi}(s),u(e^{2L\lambda n}(s_{n}\!-\!s))\cdot a_{L\lambda n}\cdot\widetilde{\varphi}(s_{n})\big)\leq \sigma
\end{equation}
and\
\[d \big(a_{L\lambda}\cdot a_{L\lambda n}\cdot\widetilde{\varphi}(s),a_{L\lambda }\cdot u(e^{2L\lambda n}(s_{n}\!-\!s))\cdot a_{L\lambda n}\cdot\widetilde{\varphi}(s_{n})\big)\leq \sigma.\]
Since $a_{L\lambda n}\varphi(s)$ and $a_{L\lambda}a_{L\lambda n}\varphi(s)$ belong to $\pi(\Delta_{2\sigma})$ (cf.~\eqref{indeltasigma}), by the first assertion of Lemma~\ref{lem:rownosckocykli},
it follows that
\begin{equation}\label{eq:eqcoc2}
A^p\big(a_{L\lambda},a_{L\lambda
n}\varphi(s)\big)=A^p\big(a_{L\lambda},u(e^{2L\lambda
n}(s_{n}\!-\!s)) a_{L\lambda n}{\varphi}(s_{n})\big).
\end{equation}
In view of \eqref{assum:b4}, we have
\begin{equation}\label{in:b4}
d \big(a_{L\lambda n}\widetilde{\varphi}(s_{n}),u(e^{2L\lambda
n}(s_{n}\!-\!s))a_{L\lambda n}\widetilde{\varphi}(s_{n})\big)\leq
|e^{2L\lambda n}(s_{n}\!-\!s)|\leq \sigma.
\end{equation}
Since $a_{L\lambda n}\varphi(s_n)\in\pi(\Delta_{2\sigma})$ (cf.~\eqref{eq:s0in}), by the second assertion  of
Lemma~\ref{lem:rownosckocykli}, it follows that
\[A^p\big(u(e^{2L\lambda n}(s_{n}\!-\!s)),a_{L\lambda
n}\varphi(s_{n})\big)=Id.\] Therefore, by \eqref{eq:eqcoc2},
\begin{align}\label{eq:eqcoc3}
\begin{aligned}
A^p&\big(a_{L\lambda}u(e^{2L\lambda n}(s_{n}\!-\!s)),a_{L\lambda n}{\varphi}(s_{n})\big)\\
&=A^p\big(a_{L\lambda},u(e^{2L\lambda n}(s_{n}\!-\!s)) a_{L\lambda n}{\varphi}(s_{n})\big)
A^p\big(u(e^{2L\lambda n}(s_{n}\!-\!s)),a_{L\lambda n}\varphi(s_{n})\big)\\
&= A^p\big(a_{L\lambda},a_{L\lambda n}\varphi(s)\big).
\end{aligned}
\end{align}
In view of  \eqref{eq:dlan} and \eqref{in:b4}, we also have
\begin{align*}
d \big(a_{L\lambda n}\widetilde{\varphi}(s),a_{L\lambda
n}\widetilde{\varphi}(s_{n})\big)\leq & d \big(a_{L\lambda
n}\widetilde{\varphi}(s),u(e^{2L\lambda n}(s_{n}\!-\!s))a_{L\lambda
n}\widetilde{\varphi}(s_{n})\big)\\ &+d \big(a_{L\lambda
n}\widetilde{\varphi}(s_{n}),u(e^{2L\lambda
n}(s_{n}\!-\!s))a_{L\lambda n}\widetilde{\varphi}(s_{n})\big) \leq
2\sigma.
\end{align*}
Since $\tilde \varphi(s), \tilde \varphi(s_{n})\in \Delta$
and $a_{L\lambda
n}\varphi(s)\in\pi(\Delta_{2\sigma})$, by the first assertion of
Lemma~\ref{lem:rownosckocykli}, it follows that
\begin{equation}\label{eq:eqcoc4}
A^p\big(a_{L\lambda
n},\varphi(s)\big)=A^p\big(a_{L\lambda n},{\varphi}(s_{n})\big).
\end{equation}
In view of \eqref{eq:eqcoc3}, this gives
\begin{align}\label{eq:eqcoc5}
\begin{aligned}
A^p&\big(a_{L\lambda}u(e^{2L\lambda n}(s_{n}\!-\!s))a_{L\lambda n},{\varphi}(s_{n})\big)\\
&=A^p\big(a_{L\lambda}u(e^{2L\lambda n}(s_{n}\!-\!s)),a_{L\lambda n}{\varphi}(s_{n})\big)A^p\big(a_{L\lambda n},{\varphi}(s_{n})\big)\\
&= A^p\big(a_{L\lambda},a_{L\lambda n}\varphi(s)\big)A^p\big(a_{L\lambda n},\varphi(s)\big)= A^p\big(a_{L\lambda (n+1)}\varphi(s)\big).
\end{aligned}
\end{align}
In summary, \eqref{anj1} combined with \eqref{eq:eqcoc3}, \eqref{eq:eqcoc4} and \eqref{eq:eqcoc5} yields that if $s\in I$ satisfies \eqref{eq:densprop} then for every
or every $n\in D(s)$ there exists $s_n\in I$ with $|s_n-s|\leq\sigma e^{-nL\lambda}$ so that
\begin{align}\label{neq:fundav}
\begin{aligned}
\lambda_{\mathcal{V}_i}-\frac{\vep}{4}&\leq\frac{1}{L\lambda}\log\frac{\|A^p\big(a_{L\lambda (n+1)},\varphi(s)\big)v_i(s_{n})\|}{\|A^p\big(a_{L\lambda n},\varphi(s)\big)v_i(s_{n})\|}\\
&\leq\frac{1}{L\lambda}\log\|A^p\big(a_{L\lambda},a_{L\lambda n}\varphi(s)\big)|_{\mathcal V_i(a_{L\lambda n}\varphi(s_{n}))}\|\leq\lambda_{\mathcal{V}_i}+\frac{\vep}{4}
\end{aligned}
\end{align}
for every $1\leq i\leq k$.
\medskip

\noindent \textbf{Step 7: Upper bound.}
Suppose that $s\in I$ satisfies \eqref{eq:densprop}.
In view of \eqref{eq:normcoc}, for every $j\geq 0$ we have
\[\frac{1}{L\lambda}\big|\log\|A^p(a_{L\lambda},a_{L\lambda j}\varphi(s))\|\big|\leq \frac{\log C+NL\lambda}{L\lambda}\leq C+N\quad\text{for any}\quad j\geq 0.\]
Therefore, by \eqref{neq:fundav} and \eqref{eq:ds}, for any $n\geq n_0(s)$ we have
\begin{align*}
\frac{1}{L\lambda n}&\log\|A^p(a_{L\lambda n},\varphi(s))\|\leq\frac{1}{n}\sum_{j=0}^{n-1}\frac{1}{L\lambda}\log\|A^p(a_{L\lambda},a_{L\lambda j}\varphi(s))\|\\
&\leq \frac{1}{n}\sum_{j\in D(s)\cap[0,n-1]}\frac{1}{L\lambda}\max_{1\leq i\leq k}\log C'\|A^p(a_{L\lambda},a_{L\lambda j}\varphi(s))|_{\mathcal V_i(a_{L\lambda
j}\varphi(s_{j}))}\|\\
&\quad+\frac{1}{n}\sum_{j\in[0,n-1]\setminus D(s)}\frac{1}{L\lambda}\log\|A^p(a_{L\lambda},a_{L\lambda j}\varphi(s))\|\\
&\leq \Big(\lambda_{top}+\frac{\vep}{4}+\frac{\log C'}{L\lambda}\Big)\frac{\#(D(s)\cap[0,n-1])}{n}
+\frac{\#([0,n-1]\setminus D(s))}{n}(C+N)\\
&\leq \lambda_{top}+\frac{\vep}{2}+\vep(C+N)\leq
\lambda_{top}+\vep(1+C+N).
\end{align*}
It follows that  for every $0<\vep<1$ there exist $\lambda>0$ and $L\in\N$ such that for a.e.\ $s\in I$ we have
\begin{equation}\label{neq:fundtop}
\limsup_{n\to\infty}\frac{1}{L\lambda n}\log\|A^p(a_{L\lambda n},\varphi(s))\|\leq\lambda_{top}+\vep(1+C+N).
\end{equation}
\medskip

\noindent \textbf{Step 8: Lower bound.}
Finally, we show that for a.e.\ $s\in I$
\begin{equation}\label{neq:fundbottom}
\liminf_{n\to\infty}\frac{1}{L\lambda n}\log\|A^p(a_{L\lambda n},\varphi(s))\|\geq\lambda_{top}-\vep(\lambda_{top}+1+C+N).
\end{equation}
We stress that the proof of \eqref{neq:fundbottom} is the first and only part which needs the assumption $\lambda_{top}<1$.

Let us consider the cocycle $((A^p)^{-1})^{tr}(g,x)=(A^p(g,x)^{-1})^{tr}$. The Lyapunov exponents of $((A^p)^{-1})^{tr}$ (with respect to $\mu$)
coincide with the additive inverse of the Lyapunov exponents of $A^p$. As the last set is symmetric, the Lyapunov exponents of $((A^p)^{-1})^{tr}$ and $A^p$ are the same.
Moreover, $((A^p)^{-1})^{tr}$ meets all properties of $A^p$ used in the current part of the proof, more precisely, $((A^p)^{-1})^{tr}$ is the composition of $A$ with a
group endomorphism so that the results of \cite{Filip} can be applied. It follows that for every $0<\vep<1$ there exist $\lambda>0$ and $L\in\N$ such that
for a.e.\ $s\in I$ we have
\begin{equation}\label{neq:fundtoptr}
\limsup_{n\to\infty}\frac{1}{L\lambda n}\log\|((A^p)^{-1})^{tr}(a_{L\lambda n},\varphi(s))\|\leq\lambda_{top}+\vep(1+C+N)
\end{equation}
and \eqref{neq:fundtop} holds.

Take any $0<\vep<(1-\lambda_{top})/(1+C+N)$. Since $\lambda_{top}+\vep(1+C+N)<1-\delta<1$ for some $\delta>0$, by \eqref{neq:fundtop} and \eqref{neq:fundtoptr},
for a.e.\ $s\in I$ there exists $n_1(s)\geq 1/\delta$ such that
\begin{equation}\label{neq:top}
\|A^p(a_{L\lambda j},\varphi(s))\|, \|A^p(a_{L\lambda j},\varphi(s))^{-1}\|\leq e^{(1-\delta)L\lambda j}\quad\text{for}\quad j\geq n_1(s).
\end{equation}

Let $s\in I$ so that \eqref{eq:densprop} and \eqref{neq:top} are valid. Recall that a.e.\ $s\in I$ satisfies these conditions.
Assume that $j\in D(s)$ and $j\geq n_1(s)$. Then there exists $J\in\mathcal{I}_j$ and $s_j\in J$ such that \eqref{neq:fundav} holds (for $n=j$).
As $s,s_{j}\in J$ and  $|J|=\sigma e^{-2L\lambda j}$, we have $|s-s_j|\leq \sigma e^{-2L\lambda j}$. Therefore, by \eqref{lipforv}, if $m=j$ or $m=j+1$ then for every $1\leq i\leq k$ we have
\begin{align*}
\Big|\log&\frac{\|A^p\big(a_{L\lambda m},\varphi(s)\big)v_i(s)\|}{\|A^p\big(a_{L\lambda m},\varphi(s)\big)v_i(s_{j})\|}\Big|\\
&\leq\|A^p\big(a_{L\lambda m},\varphi(s)\big)\|\|A^p\big(a_{L\lambda m},\varphi(s)\big)^{-1}\|\|v_i(s)-v_i(s_{j})\|\\
& \leq e^{2(1-\delta)L\lambda m} l \sigma e^{-2L\lambda j}= l\sigma e^{2(1-\delta(j+1))L\lambda}\leq l\sigma.
\end{align*}
In view of \eqref{neq:fundav} and \eqref{condL}, it follows that for every $j\in D(s)$ with $j\geq n_1(s)$ we have
\begin{equation}\label{neq:unique}
\frac{1}{L\lambda}\log\frac{\|A^p\big(a_{L\lambda (j+1)},\varphi(s)\big)v_i(s)\|}{\|A^p\big(a_{L\lambda j},\varphi(s)\big)v_i(s)\|}\geq\lambda_{\mathcal{V}_i}-\frac{\vep}{4}-\frac{2 l\sigma}{L\lambda}\geq\lambda_{\mathcal{V}_i}-\frac{\vep}{2}
\end{equation}
for every $1\leq i\leq k$. Note that \eqref{neq:unique} is the only inequality such that its proof uses the assumption $\lambda_{top}<1$
and it is applied to show \eqref{neq:fundbottom}.

Choose a subbundle $\mathcal{V}_i$ so that $\lambda_{\mathcal{V}_i}=\lambda_{top}$.
In view of \eqref{eq:diflogcoc2}, for every $j\geq 0$ we have
\begin{equation}\label{neq:genfr}
\frac{1}{L\lambda}\Big|\log\frac{\|A^p\big(a_{L\lambda (j+1)},\varphi(s)\big)v_i(s)\|}{\|A^p\big(a_{L\lambda j},\varphi(s)\big)v_i(s)\|}\Big|\leq
\frac{\log C+NL\lambda}{L\lambda}\leq  C+N.
\end{equation}
In view of \eqref{neq:unique}, \eqref{neq:genfr} and  \eqref{eq:ds}, for any $n\geq n_0(s)$ we have
\begin{align*}
\frac{1}{L\lambda n}&\log\|A^p\big(a_{L\lambda n},\varphi(s)\big)\|\geq  \frac{1}{L\lambda n}\log\|A^p\big(a_{L\lambda n},\varphi(s)\big)v_i(s)\|\\
&=\frac{1}{n}\sum_{j=0}^{n-1}\frac{1}{L\lambda}\log\frac{\|A^p\big(a_{L\lambda (j+1)},\varphi(s)\big)v_i(s)\|}{\|A^p\big(a_{L\lambda j},\varphi(s)\big)v_i(s)\|}\\
&=\frac{1}{n}\sum_{j\in [n_1(s),n-1]}\frac{1}{L\lambda}\log\frac{\|A^p\big(a_{L\lambda (j+1)},\varphi(s)\big)v_i(s)\|}{\|A^p\big(a_{L\lambda j},\varphi(s)\big)v_i(s)\|}\\
&\quad+\frac{1}{n}\sum_{j\in([0,n-1]\setminus D(s))\cup[0,n_1(s)-1]}\frac{1}{L\lambda}\log\frac{\|A^p\big(a_{L\lambda (j+1)},\varphi(s)\big)v_i(s)\|}{\|A^p\big(a_{L\lambda j},\varphi(s)\big)v_i(s)\|}\\
&\geq \Big(\lambda_{\mathcal{V}_i}-\frac{\vep}{2}\Big)\frac{\#\big(D(s)\cap[n_1(s),n-1]\big)}{n}
-\frac{n_1(s)+\#\big([0,n-1]\setminus D(s)\big)}{n}(C+N)\\
&\geq \Big(\lambda_{top}-\frac{\vep}{2}\Big)\Big(1-\vep-\frac{n_1(s)}{n}\Big)-\Big(\vep+\frac{n_1(s)}{n}\Big)(C+N)\\
&\geq \lambda_{top}-\Big(\vep+\frac{n_1(s)}{n}\Big)(\lambda_{top}+1+C+N).
\end{align*}
This implies \eqref{neq:fundbottom}.
\medskip

\noindent \textbf{Step 9: Final arguments.}
In view of \eqref{neq:fundtop} and \eqref{neq:fundbottom}, for a.e.\ $s\in I$ we have
\begin{align*}
\lambda_{top}-\vep(\lambda_{top}+1+C+N)&\leq \liminf_{n\to\infty}\frac{1}{L\lambda n}\log\|A^p(a_{L\lambda n},\varphi(s))\|\\
\leq \limsup_{n\to\infty}& \frac{1}{L\lambda
n}\log\|A^p(a_{L\lambda
n},\varphi(s))\|\leq\lambda_{top}+\vep(1+C+N).
\end{align*}
Suppose that $L\lambda n\leq t<L\lambda (n+1)$. Since $0\leq t-L\lambda n<L\lambda$, by \eqref{eq:diflogcoc1},
\[\big|\log\|A^p(a_{t},\varphi(s))\|-\log\|A^p(a_{L\lambda n},\varphi(s))\|\big|\leq \log C+NL\lambda\leq L\lambda(C+N).\]
Therefore,
\begin{align*}
\frac{\log\|A^p(a_{L\lambda n},\varphi(s))\|}{L\lambda(n+1)}-\frac{C+N}{n+1}& \leq \frac{\log\|A^p(a_{t},\varphi(s))\|}{t}\\
&\leq \frac{\log\|A^p(a_{L\lambda
n},\varphi(s))\|}{L\lambda n}+\frac{C+N}{n}.
\end{align*}
It follows that for every $0<\vep<(1-\lambda_{top})/(1+C+N)$ we have
\begin{align*}
\lambda_{top}-\vep&(\lambda_{top}+1+C+N)\leq \liminf_{t\to+\infty}\frac{1}{t}\log\|A^p(a_{t},\varphi(s))\|\\
&\leq \limsup_{t\to+\infty}
\frac{1}{t}\log\|A^p(a_{t},\varphi(s))\|\leq\lambda_{top}+\vep(1+C+N)
\end{align*}
for a.e.\ $s\in I$.
Since $\vep>0$ can be chosen arbitrary small, we obtain
\[\lim_{t\to+\infty} \frac{1}{t}\log\|A^p(a_{t},\varphi(s))\|=\lambda_{top}\text{ for a.e.\ }s\in I.\]
\end{proof}

\subsection{Proof of Theorem~\ref{thm:Oseledetsgen} and applications}\label{sec:no_assumption}
In this subsection we explain how the proof of Theorem~\ref{thm:Oseledets}, in virtue of the results proved in \cite{EFW}, can be modified in order to prove Theorem~\ref{thm:Oseledetsgen} in the introduction (which has no assumption on the sum of Lyapunov exponents). We then deduce a result for loci of translation surfaces which are ramified covers of another surface  (see Theorem \ref{thm:no_sum_assumption} 
at the end of this subsection).
\smallskip
Let us consider the splitting $\bigoplus_{i=1}^k\mathcal{V}_i$ of the bundle
$\bigwedge^p \mathcal{W}\to \mathcal{M}$ into strongly irreducible invariant subbundles $\mathcal{V}_i$.
\begin{thm}\label{thm:Oseledets_Apgen}
Assume that $\lambda_{top}>0$ is the top Lyapunov exponent $A^{p}:SL_2(\R)\times\mathcal{M}\to GL(\bigwedge^p
W)$ with respect to the measure $\mu$. Suppose that all subbundles $\mathcal{V}_i\to\mathcal{M}$, $1\leq i\leq k$ are all locally constant.
Suppose that $\varphi:I\to\mathcal{M}$
is a $C^1$-curve which is well approximated by horocycles  and  $\varphi(s)\in\mathcal{M}$ is Birkhoff
generic with respect to  $(\mathcal{M},\mu,a_t)$  for a.e.\
$s\in I$. Then for a.e.\ $s\in I$ we
have
\[\lim_{t\to\infty}\frac{1}{t}\|A^p(a_t,\varphi(s))\|=\lambda_{top}.\]
\end{thm}

\begin{proof}
The proof is essentially the same  as that of Theorem~\ref{thm:Oseledets_red}. As we have already observed,
the only part of the proof of Theorem~\ref{thm:Oseledets_red} where the assumption $\lambda_{top}<1$ is used is Step 8, for the the proof  the inequality \eqref{neq:fundbottom}.   More precisely, the  assumption on the Lyapunov exponent is only used in Step 8 to show the inequality \eqref{neq:unique}.

If all subbundles $\mathcal{V}_i$, $1\leq i\leq k$ are all locally constant then we can choose the curves $s\mapsto v_i(s)$, $1\leq i\leq k$
to be constant. Then \eqref{neq:unique} follows directly from \eqref{neq:fundav}, since $v_i(s_n)=v_i(s)$. This completes the proof.
\end{proof}

Filip in \cite{Filip} showed that the subbundles $\mathcal{V}_i\to\mathcal{M}$, $1\leq i\leq k$ locally vary polynomially in the period coordinates.
In view of \cite{EFW}, one can deduce  that $\mathcal{V}_i\to\mathcal{M}$, $1\leq i\leq k$ are indeed locally constant 
whenever the space $W\subset H_1(M,\R)$ satisfies the assumptions of Theorem~\ref{thm:Oseledetsgen}, as we now explain.

Let $\mathcal{M}\subset \mathcal{M}_1(M,\Sigma)$ be an orbit closure and let $\mu$ be the corresponding probability affine measure. Since $\mathcal{M}$ is an affine submanifold its tangent  bundle $T\mathcal{M}$,
in period coordinates (see \cite{AEM}) is determined by a subspace $T_{\C}\mathcal{M}\subset H^1(M,\Sigma,\C)$ such that
\[T_{\C}\mathcal{M}=\C\otimes T_{\R}\mathcal{M}\quad\text{ where }\quad T_{\R}\mathcal{M}\subset H^1(M,\Sigma,\R).\]
  Let $p:H^1(M,\Sigma,\R)\to H^1(M,\R)$ be the natural projection. Let us consider the space $p(T_{\R}\mathcal{M})\subset H^1(M,\R)$.
Since $H^1(M,\R)$ and $H_1(M,\R)$ can be identified by the Poincar\'e duality, we will also denote by  $p(T_{\R}\mathcal{M})$ the corresponding
subspace of $H_1(M,\R)$. Avila-Eskin-M\"oller, in \cite{AEM}, proved that $p(T_{\R}\mathcal{M})$ is an $SL_2(\R)$-invariant symplectic subspace.

\begin{thm}[\cite{EFW}]\label{thm:EFW}
Suppose that $W\subset H_1(M,\R)$ is an $SL_2(\R)$-invariant symplectic subspace which
is symplectic orthogonal to $p(T_{\R}\mathcal{M})$.
Let $\mathcal{W}\to\mathcal{M}$ be the corresponding bundle.
Then for every $p\geq 1$ the bundle $\bigwedge^p \mathcal{W}\to\mathcal{M}$ has a splitting into
\emph{locally constant} strongly irreducible $SL_2(\R)$-invariant subbundles.
\end{thm}

We can now conclude the proof of Theorem~\ref{thm:Oseledetsgen}.
\begin{proof}[Proof of Theorem~\ref{thm:Oseledetsgen}]
After the reduction described in Remark~\ref{rem:reduct}, Theorem~\ref{thm:Oseledetsgen} follows directly form
Theorem~\ref{thm:Oseledets_Apgen} combined with Theorem~\ref{thm:EFW}.
\end{proof}

Finally, let us remark that the key assumption in Theorem~\ref{thm:Oseledetsgen}, namely that $W$ is symplectic orthogonal to $p(T_{\R}\mathcal{M})$, holds for some natural classes of  invariant subspaces when $M$ is a ramified
cover of another surface.
Suppose that $q:M\to N$ is a covering map,
ramified over a finite set $\Sigma_q\subset N$. Let us consider a stratum $\mathcal{M}({\alpha})\subset \mathcal{M}(N,\Sigma_q)$.
Denote by  $\widetilde{\mathcal{M}}({\alpha})$ the moduli space of all translation surfaces of the form $(M,q^*\nu)$ for $\nu\in \mathcal{M}({\alpha})$.
Denote by $D$ the deck group of the ramified cover $q:M\to N$. Let us consider the subspace $H_1^{+}(M,\R)\subset H_1(M,\R)$ of all $D$-invariant elements.
Then $H_1^{+}(M,\R)$ is a symplectic $SL_2(\R)$-invariant subspace which is naturally identified with $H_1(N,\R)$. The
symplectic orthocomplement
\[
H_1^{-}(M,\R):=H_1^{+}(M,\R)^{\perp}
\]
plays a crucial role in billiard or lenses applications.
Also the space  $H_1^{-}(M,\R)$ is clearly symplectic  and $SL_2(\R)$-invariant.
\begin{lem}\label{lem:orthogonality}
\begin{sloppypar}
If an orbit closure $\mathcal{M}$ is contained in $\widetilde{\mathcal{M}}({\alpha})$ then
$ p(T_{\R}\mathcal{M})\subset H_1^{+}(M,\R)$. Consequently, $H_1^{-}(M,\R)$ is symplectic orthogonal to $p(T_{\R}\mathcal{M})$.
\end{sloppypar}
\end{lem}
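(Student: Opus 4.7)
The plan is to exploit the fact that all translation surfaces in $\widetilde{\mathcal{M}}(\alpha)$ arise as pullbacks $q^{*}\nu$, so their Abelian differentials are $D$-invariant by construction. This $D$-invariance will then transfer to the tangent bundle in period coordinates, and finally project to absolute cohomology to give the desired containment in $H_{1}^{+}(M,\R)$.

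First I would work in period coordinates. Recall that around any $(M,\omega)\in\mathcal{M}(M,\Sigma)$, a neighborhood is locally modeled on $H^{1}(M,\Sigma,\R)$, and $T_{\R}\mathcal{M}$ is, by the Eskin-Mirzakhani-Mohammadi theorem, a fixed linear subspace in these coordinates. Since the deck group $D$ acts on $M$ fixing $\Sigma$ pointwise (up to permutation of the preimages of $\Sigma_{q}$) it acts on $H^{1}(M,\Sigma,\R)$, and a class is of the form $[q^{*}\nu]$ if and only if it lies in the $D$-invariant subspace $H^{1}(M,\Sigma,\R)^{D}=q^{*}H^{1}(N,\Sigma_{q},\R)$ (the latter equality following by the standard transfer/averaging argument $\tfrac{1}{|D|}\sum_{\delta\in D}\delta^{*}$). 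Consequently, $\widetilde{\mathcal{M}}(\alpha)$ is locally an open subset of $H^{1}(M,\Sigma,\R)^{D}$, so any orbit closure $\mathcal{M}\subset\widetilde{\mathcal{M}}(\alpha)$ satisfies
\[
T_{\R}\mathcal{M}\ \subset\ H^{1}(M,\Sigma,\R)^{D}.
\]

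Next I would push this invariance down to absolute cohomology. The natural projection $p:H^{1}(M,\Sigma,\R)\to H^{1}(M,\R)$ is $D$-equivariant (since $D$ preserves $\Sigma$), so $p\bigl(H^{1}(M,\Sigma,\R)^{D}\bigr)\subset H^{1}(M,\R)^{D}$. Using the Poincar\'e duality identification, $H^{1}(M,\R)^{D}$ corresponds exactly to $H_{1}^{+}(M,\R)$ (this identification is $D$-equivariant because the action of $D$ commutes with cap product with the fundamental class). Combining, $p(T_{\R}\mathcal{M})\subset H_{1}^{+}(M,\R)$, which is the first claim.

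The second claim then is immediate from the definition: $H_{1}^{-}(M,\R)$ is by definition the symplectic orthogonal complement of $H_{1}^{+}(M,\R)$ with respect to the intersection form, and this form is $D$-invariant (orientation-preserving deck transformations act symplectically), so every element of $H_{1}^{-}(M,\R)$ is symplectically orthogonal to every element of $H_{1}^{+}(M,\R)$, and in particular to the subspace $p(T_{\R}\mathcal{M})$.

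The only mildly delicate point in this plan is verifying the identification $H^{1}(M,\Sigma,\R)^{D}=q^{*}H^{1}(N,\Sigma_{q},\R)$, i.e.\ that every $D$-invariant relative cohomology class on $M$ is a pullback from $N$. One inclusion is obvious; the reverse uses the transfer map $\tfrac{1}{|D|}\sum_{\delta}\delta^{*}$, which is a projection onto the $D$-invariants whose image can be identified with $H^{*}(M/D,q(\Sigma),\R)=H^{*}(N,\Sigma_{q},\R)$ via the quotient (this is a standard fact for finite group actions on CW pairs). This is the one step I would write carefully; the rest is formal.
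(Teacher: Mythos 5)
Your proposal is correct and follows essentially the same line as the paper's argument: use the $D$-invariance of every surface in $\widetilde{\mathcal{M}}(\alpha)$ to see that the tangent space $T_{\R}\mathcal{M}$ lands in $\Fix_D H^1(M,\Sigma,\R)$ (via period coordinates and the EMM theorem), then push forward under the $D$-equivariant projection $p$ and the $D$-equivariant Poincar\'e duality to land in $H_1^+(M,\R)$, with the second claim immediate from the definition of $H_1^-(M,\R)$ as the symplectic orthocomplement. One small remark: the identification $H^1(M,\Sigma,\R)^D = q^*H^1(N,\Sigma_q,\R)$ that you flag as the delicate step is actually not needed --- the paper (and your own argument) only uses the easy inclusion $T_\R\mathcal{M}\subset H^1(M,\Sigma,\R)^D$, which follows directly from $D$-invariance of $\widetilde{\mathcal{M}}(\alpha)$ and its tangent bundle, so the transfer/averaging step can be omitted.
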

\begin{proof}
Let us consider the induced action of the group $D$ on $\mathcal{M}(M,\Sigma)$. By assumption,
$\mathcal{M}$ is a subset of the set $\Fix_{G}\mathcal{M}(M,\Sigma)$ of fixed points  of this action.
Next, let us consider the induced action of $D$ on the tangent bundle $T\mathcal{M}(M,\Sigma)$.
Then \[T\mathcal{M}\subset \Fix_{D}T\mathcal{M}(M,\Sigma).\]
Since each tangent space of $T\mathcal{M}(M,\Sigma)$
is identified with $H^1(M,\Sigma,\C)$ and the identification is $D$-equivariant, we have
\[T_\C\mathcal{M}\subset \Fix_{D}H^1(M,\Sigma,\C)\]
and hence
\[T_\R\mathcal{M}\subset \Fix_{D}H^1(M,\Sigma,\R).\]
It follows that
\[p(T_\R\mathcal{M})\subset \Fix_{D}H^1(M,\R).\]
Finally, using the Poincar\'e duality we have $p(T_{\R}\mathcal{M})\subset H_1^{+}(M,\R)$.
\end{proof}
Hence, we can now  conclude the present subsection by formulating a  result which we hope will be useful for further mathematical physics applications.

Recall that $\widetilde{\mathcal{M}}({\alpha})$ denotes the moduli space of all translation surfaces which are ramified covers $q:M\to N$ of the form $(M,q^*\nu)$ of $\nu\in \mathcal{M}({\alpha})$.
\begin{thm}\label{thm:no_sum_assumption}
Let $\mathcal{M}$ be an $SL_2(\mathbb{R})$-orbit closure contained in $\widetilde{\mathcal{M}}({\alpha})$ and let $\mu$ be the probability affine measure of $\mathcal{M}$.
Suppose that $\varphi:I\to\mathcal{M}$ is a curve  well
approximated by horocycles   such that $\varphi(s)\in\mathcal{M}$ is Birkhoff generic with respect to  $(\mathcal M,\mu, a_t)$ for a.e.\ $s\in I$.
Assume that $W\subset H_1^-(M,\R)$ is an $SL_2(\R)$-invariant  symplectic  subspace.
Then
$\varphi(s)\in\mathcal{M}$ is Oseledets generic with respect to  $(\mathcal M, \mu,  a_t, A_W^{KZ})$ for
a.e.\ $s\in I$.
\end{thm}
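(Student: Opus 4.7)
The plan is to combine the hypothesis $W\subset H_1^-(M,\R)$ with the lemma immediately preceding the theorem statement in order to verify the key orthogonality condition \eqref{cond:w}, and then to invoke the announced splitting result of Eskin-Filip-Wright (Theorem \ref{thm:EFW}) to upgrade the abstract smooth invariant splitting used in the proof of Theorem~\ref{thm:Oseledets_red} to a \emph{locally constant} one. Once the splitting is locally constant, the single obstruction that forced the sum-of-exponents assumption in Theorem~\ref{thm:Oseledets_red} disappears automatically.

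More precisely, since $\mathcal{M}\subset \widetilde{\mathcal{M}}(\alpha)$, the preceding lemma gives $p(T_\R\mathcal{M})\subset H_1^+(M,\R)$, so its symplectic orthocomplement $H_1^-(M,\R)$ — and in particular its subspace $W$ — is symplectic orthogonal to $p(T_\R\mathcal{M})$; thus condition \eqref{cond:w} is fulfilled. By Theorem \ref{thm:EFW}, applied to $W$ and to each of its exterior powers, for every $p\geq 1$ the bundle $\mathcal{M}\times \bigwedge^p W$ admits an $SL_2(\R)$-invariant splitting $\bigoplus_{i=1}^k \mathcal{V}_i$ into locally constant strongly irreducible subbundles. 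As in the reduction at the start of \S\ref{sec:Oseledets_proof}, Oseledets genericity of $\varphi(s)$ for $A_W^{KZ}$ is equivalent to establishing, for each $1\leq p\leq 2d$, the equality $\lim_{t\to\infty}\tfrac{1}{t}\log\|A^p(a_t,\varphi(s))\|=\lambda_{top}^{(p)}$ for a.e.\ $s\in I$, where $\lambda_{top}^{(p)}$ is the top Lyapunov exponent of $A^p$.

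The second step is to re-examine the proof of Theorem~\ref{thm:Oseledets_red}: the hypothesis $\lambda_{top}<1$ enters in exactly one place, namely \eqref{eq:lap}, where one compares $A_{\mathcal{V}_i}(a_{L\lambda m},\varphi(s))v_i(s)$ with $A_{\mathcal{V}_i}(a_{L\lambda m},\varphi(s))v_i(s_{0,n})$; this comparison is needed only because the smooth sections $v_i(s)\in \mathcal{V}_i(\varphi(s))$ vary with $s$, and the Lipschitz bound \eqref{lipforv} has to be beaten by the negative cocycle exponent $-\vep$. When $\mathcal{V}_i$ is locally constant, one can choose $v_i$ to be a locally constant section along $\varphi|_J$ on each $J\in \mathcal{I}_n$, so that $v_i(s)=v_i(s_{0,n})$ throughout the relevant interval and \eqref{eq:lap} becomes trivial. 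All other ingredients of the proof — the good-set construction of Lemma~\ref{lem:goodset}, the tracking of the curve by short horocycle arcs using the well-approximation assumption \eqref{assum:phi}, the piecewise-constant cocycle identity from Lemma~\ref{lem:rownosckocykli}, and the weak-law-of-large-numbers step of \S\ref{rem:filtr} — are insensitive to the size of the exponents and pass through verbatim. This yields both \eqref{neq:fundtop} and \eqref{neq:fundlow} for every $p$, and letting $\vep\to 0$ finishes the argument.

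The principal obstacle is, as advertised, the conditional character of Theorem~\ref{thm:EFW}: without local constancy of the strongly irreducible pieces, one is left only with Filip's polynomial variation in period coordinates \cite{Filip}, which does not provide the uniformity in $s$ needed to defeat the Lipschitz error in \eqref{eq:lap} when $\lambda_{top}\geq 1$. Beyond this, the remaining work is a bookkeeping adaptation of an existing argument; in particular no new dynamical input beyond Birkhoff genericity of $\varphi$ and the horocycle-approximation property is required.
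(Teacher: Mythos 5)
Your proposal is correct and follows essentially the same route as the paper: you verify condition \eqref{cond:w} via the lemma on $p(T_{\R}\mathcal{M})\subset H_1^{+}(M,\R)$, invoke Theorem~\ref{thm:EFW} to obtain a locally constant strongly irreducible splitting of $\mathcal{M}\times\bigwedge^p W$, and correctly identify \eqref{eq:lap} (where constant sections $v_i$ make the comparison trivial) as the sole point in the proof of Theorem~\ref{thm:Oseledets_red} where the assumption $\lambda_{top}<1$ is used. This is exactly the argument the paper gives in \S\ref{sec:no_assumption}.
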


\section*{Acknowledgments}
We are indebted to J.~Marklof, both for asking  to R.~Shi the question on gaps distributions (answered in Theorem~\ref{thm;g;gap}) and for showing to C.~Ulcigrai the paper \cite{Dra-Ra}
by Dragovi\'c and Radnovi\'c (without knowing that he was suggesting two applications of the same result). We are grateful to J.~Chaika and A.~Eskin
 for explaining to us their work in \cite{Es-Ch} and patiently answering  all our questions on it. We thank
Y.~Benoist, Y.~Cheung, A.~Mohammadi, A.~Zorich and B.~Weiss
 for useful discussions and comments.
We would also like to thank  the
anonymous referee for suggestions to improve the clarity of the paper.

We acknowledge both the \emph{Max Planck Institute} in Bonn and the  \emph{Mathematical Science Research Institute} in Berkeley, CA for their hospitality when part of this collaborative work was done, respectively when K.~Fr\k{a}czek and C.~Ulcigrai were visitors during the program "\emph{Dynamics and Numbers}" in Bonn  and when R.~Shi and C.~Ulcigrai were research members of the program "\emph{Geometric and Arithmetic Aspects of Homogeneous Dynamics}" at MSRI.  We also  thank
 the \emph{University of Bristol} for hosting R.~Shi for  a seminar and research visit which  started this collaboration.  Finally, part of this work was done under NSF 0932078 000 while R.~Shi and C.~Ulcigrai were visiting  MSRI during Spring 2015.

K.~Fr\k{a}czek  is partially supported by the National Science Centre (Poland) grant 2014/13/B/ST1/03153.
R.~Shi is supported by NSFC (11201388) and NSFC (11271278).
C.~Ulcigrai is currently supported by the ERC Starting Grant ChaParDyn and the research leading to these results has received funding from the European Research Council under the European Union's Seventh Framework Programme (FP/2007-2013) / ERC Grant Agreement n.\ 335989. C.~Ulcigrai also gratefully acknowledges the support of the \emph{Leverhulme Trust} via a \emph{Leverhulme Prize} and of the Royal Society and the Wolfson Foundation via a \emph{Royal Society Wolfson Research Merit Award}.

\end{document}